\setlist[itemize]{topsep=0ex,itemsep=0ex,parsep=0.4ex}
\setlist[enumerate]{topsep=0ex,itemsep=0ex,parsep=0.4ex}
\crefname{lem}{Lemma}{Lemmas}
\crefname{thm}{Theorem}{Theorems}
\crefname{cor}{Corollary}{Corollaries}
\crefname{prop}{Proposition}{Propositions}
\crefname{conj}{Conjecture}{Conjectures}
\crefname{open}{Open Problem}{Open Problems}
\crefname{question}{Question}{Questions}
\crefname{claim}{Claim}{Claims}
\newcommand{\header}[1]{\textbf{\textit{#1}}\textbf{:}}
\newcommand{\defn}[1]{\textcolor{Maroon}{\emph{#1}}}
\newcommand{\CPL}{Coloured Planarisation Lemma}
\newcommand{\DL}{Distance Lemma}
\def\NAT@spacechar{~}
\DeclarePairedDelimiter{\ceil}{\lceil}{\rceil}
\renewcommand{\geq}{\geqslant}
\renewcommand{\leq}{\leqslant}
\DeclareMathOperator{\Int}{Int}
\DeclareMathOperator{\dist}{dist}
\DeclareMathOperator{\ltw}{ltw}
\DeclareMathOperator{\tw}{tw}
\DeclareMathOperator{\rtw}{rtw}
\DeclareMathOperator{\st}{st}
\DeclareMathOperator{\boxicity}{box}
\DeclareMathOperator{\scol}{scol}
\DeclareMathOperator{\wcol}{wcol}
\DeclareMathOperator{\level}{level}
\renewcommand{\thefootnote}{\fnsymbol{footnote}}
\numberwithin{equation}{section}
\theoremstyle{plain}
\newtheorem{thm}[equation]{Theorem}
\newtheorem{lem}[equation]{Lemma}
\newtheorem{cor}[equation]{Corollary}
\newtheorem{prop}[equation]{Proposition}
\newtheorem{obs}[equation]{Observation}
\newtheorem{question}[equation]{Question}
\newtheorem{claim}{Claim}[thm]
\newtheorem{subclaim}{Subclaim}[claim]
\theoremstyle{definition}
\newcommand{\PP}{\mathcal{P}}
\begin{document}

\author{Kevin Hendrey\footnotemark[2] \qquad 
Nikolai Karol\footnotemark[2] \qquad
David~R.~Wood\footnotemark[2]}

\footnotetext[2]{School of Mathematics, Monash   University, Melbourne, Australia  (\texttt{\{Kevin.Hendrey1,Nikolai.Karol,David.Wood\}@monash.edu}). Research of Hendrey is supported by the Australian Research Council. Research of Wood is supported by the Australian Research Council and by NSERC.}

\sloppy

\title{\bf\boldmath Structure of $k$-Matching-Planar Graphs}

\maketitle

\begin{abstract}
For $k \geqslant 0$, we define a simple topological graph $G$ (that is, a graph drawn in the plane such that every pair of edges intersect at most once, including endpoints) to be \textit{$k$-matching-planar} if for every edge $e \in E(G)$, every matching amongst the edges of $G$ that cross $e$ has size at most~$k$. The class of $k$-matching-planar graphs is a significant generalisation of many other existing beyond planar graph classes, including $k$-planar graphs. We prove that every simple topological $k$-matching-planar graph is isomorphic to a subgraph of the strong product of a graph with bounded treewidth and a path. This result qualitatively extends the planar graph product structure theorem of Dujmović, Joret, Micek, Morin, Ueckerdt, and Wood [\emph{J.~ACM} 2020] and recent product structure theorems for other beyond planar graph classes.  Using this result, we deduce that the class of simple topological $k$-matching-planar graphs has several attractive properties, such as bounded queue number, bounded nonrepetitive chromatic number, polynomial $p$-centred chromatic numbers, bounded boxicity, bounded strong and weak colouring numbers, and asymptotic dimension $2$. This makes the class of simple topological $k$-matching-planar graphs the broadest class of simple beyond planar graphs in the literature that has these attractive structural properties. All of our results about simple topological $k$-matching-planar graphs generalise to the non-simple setting, where the maximum number of pairwise crossing edges incident to a common vertex becomes relevant.

The paper introduces several tools and results of independent interest. We show that every simple topological $k$-matching-planar graph admits an edge-colouring with $\mathcal{O}(k^{3}\log k)$ colours such that monochromatic edges do not cross. We introduce the concept of weak shallow minors, which subsume and generalise shallow minors, a key concept in graph sparsity theory. A central element of the proof of our product structure theorem is that every simple topological $k$-matching-planar graph can be described as a weak shallow minor of the strong product of a planar graph with a small complete graph. 
We then develop new general-purpose tools to establish a product structure theorem for weak shallow minors of the strong product of a bounded genus graph with a small complete graph, from which our main product structure theorem follows. As a byproduct of our proof techniques, we establish upper bounds on the treewidth of graphs with well-behaved circular drawings that qualitatively generalise several existing results.
\end{abstract}

\renewcommand{\thefootnote}
{\arabic{footnote}}

\newpage\tableofcontents
\newpage

\section{Introduction}

Beyond planar graphs is a vibrant research topic within the graph drawing community that studies drawings of graphs in the plane, where crossings are controlled in some way (see the surveys \citep{DLM19,HT20}). One line of research on beyond planar graphs shows that certain structural properties of planar graphs also hold for specific beyond planar graph classes. Our goal is to prove such a structural result for the broadest possible class of beyond planar graphs. To this end, we consider a class of beyond planar graphs that generalises many other existing classes. Our main result establishes a product structure theorem for this class that generalises recent product structure theorems for planar graphs and other beyond planar classes.

\subsection{$k$-Matching-Planar Graphs} \label{introfirst}

A natural way to generalise planar graphs is to allow a bounded number of crossings per edge. We use the term `topological graph' to mean a drawing of a graph in the plane (see \cref{topologicalgraphs} for a detailed definition). For an integer $k \geqslant 0$, a topological graph is \defn{$k$-planar}~\citep{PachToth97} if every edge is involved in at most $k$ crossings. A graph is \defn{$k$-planar} if it is isomorphic to a topological $k$-planar graph. The class of $k$-planar graphs is a classical and well-studied example of beyond planar graphs; see \citep{GB07,KLM17,PachToth97,DEW17,HLRT20} for example.

This paper considers\footnote{\citet{AFPS14} considered topological graphs that contain no so-called $(k, 1)$-grid with distinct vertices, which are almost equivalent to $k$-matching-planar graphs (see \cref{sectionbackground,SectionColouring} for a detailed discussion).  \citet{MSSU24} considered \defn{$k$-independent crossing graphs}, which are equivalent to $k$-matching planar graphs.} the following substantial generalisation of $k$-planar graphs. For an integer $k \geqslant 0$, we define a topological graph $G$ to be \defn{$k$-matching-planar} if for every edge $e\in E(G)$, the matching number of the set of edges of $G$ that cross $e$ is at most $k$. Equivalently, this can be formulated by forbidding the configuration where an edge is crossed by $k + 1$ edges, no two of which share a common endpoint. A graph is \defn{$k$-matching-planar} if it is isomorphic to a topological $k$-matching-planar graph. Every $k$-planar graph is $k$-matching-planar, but not vice versa. For example, the complete bipartite graph  $K_{3,n}$ is $1$-matching-planar (see \cref{K3n}(a)), but in every drawing of $K_{3,n}$ some edge is crossed $\Omega(n)$ times, since $K_{3,n}$ has $\mathcal{O}(n)$ edges and crossing number $\Omega(n^2)$ \citep{Kleitman70}. More generally, $K_{2k + 2, n}$ is $k$-matching-planar for all $k \geqslant 0$ and $n \geqslant 1$. Thus, the class of $k$-matching-planar graphs is a significant generalisation of the class of $k$-planar graphs. 

  \begin{figure}[h]
  \centering
  (a) \scalebox{0.8}{\includegraphics{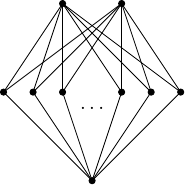}}
  \hspace*{20mm}
  (b) \scalebox{0.8}{\includegraphics{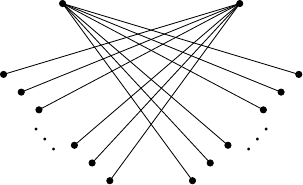}}
 \caption{(a) $K_{3, n}$ is $1$-matching-planar. 
(b) A topological $1$-matching-planar graph, where every edge crosses $n$ edges.}
      \label{K3n}
      \label{crossingstars}
  \end{figure}

While each edge of a topological $k$-planar graph is involved in a bounded number of crossings, this is not true for topological $k$-matching-planar graphs. For example, consider a topological graph that consists of two crossing stars, each with $n$ leaves (see \cref{crossingstars}(b)). Then every edge crosses $n$ edges, and this topological graph is $1$-matching-planar. Thus, every edge in a topological $k$-matching-planar graph can cross arbitrarily many other edges. This makes the study of $k$-matching-planar graphs attractive and more difficult compared to $k$-planar graphs.

\subsection{Product Structure Theory} \label{subsectionproductstructure}
 
Recently, there has been significant progress in understanding the global structure of planar graphs through the lens of graph products. Say a graph $H$ is \defn{contained} in a graph $G$ if $H$ is isomorphic to a subgraph of $G$. \citet{DJMMUW20} established that every planar graph is contained in the strong product of a graph with bounded treewidth and a path.

\begin{thm} [Planar Graph Product Structure Theorem \citep{DJMMUW20}] 
\label{GPST} Every planar graph is contained in $H \boxtimes P$ for some graph $H$ of treewidth at most $8$ and for some path $P$.    
\end{thm}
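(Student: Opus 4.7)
The plan is to construct a layered vertex partition of $G$ and translate it into the desired product structure. First, I would reduce to the case where $G$ is a plane triangulation, which is legitimate since containment in $H \boxtimes P$ is preserved by taking subgraphs. Then I would fix a vertex $r$ and take a BFS spanning tree $T$ of $G$ rooted at $r$, and let $V_i$ denote the set of vertices at distance $i$ from $r$; the sets $V_0, V_1, \ldots$ form a \emph{layering}, meaning every edge of $G$ joins vertices in the same layer or consecutive layers. Let $P$ be the path with one vertex $v_i$ for each nonempty layer.

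The heart of the argument is to build a partition $\mathcal{P}$ of $V(G)$ into \emph{vertical paths} of $T$ (paths each of whose edges joins a parent and child in $T$) such that the quotient graph $H$, whose vertices are the parts of $\mathcal{P}$ and whose edges record $G$-adjacency between distinct parts, has treewidth at most $8$. Because each vertical path meets each layer $V_i$ in at most one vertex, such a partition will immediately yield an embedding $G \hookrightarrow H \boxtimes P$ via the map sending each $v \in V_i$ to the pair $(\pi(v), v_i)$, where $\pi(v)$ is the part of $\mathcal{P}$ containing $v$; adjacency in $G$ then lifts to adjacency or equality in both coordinates, as required.

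To prove the treewidth bound on $H$, I would carry out a recursive \emph{tripod decomposition} of the triangulation guided by $T$. The recursive step locates three vertical paths of $T$ meeting at a common triangular face, forming a \emph{tripod}; removing its vertices separates the remaining triangulation into a bounded number of near-triangulations whose boundaries are each bordered by only a few vertical paths. Iterating while maintaining this invariant produces a rooted tree of subproblems, from which one reads off a tree decomposition of $H$ whose bags consist of the vertical paths bordering each piece encountered. Careful accounting then gives bags of size at most $9$, hence treewidth at most $8$.

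The hardest step will be setting up the tripod recursion: one must formulate the invariant so that each subproblem is again a near-triangulation whose outer boundary lies on a bounded number of vertical paths of $T$, argue that a suitable tripod always exists inside any such piece, and verify that the resulting tree decomposition of $H$ really has bags of size at most $9$. Once that invariant is secured, the conversion from the vertical-path partition into the product structure is routine.
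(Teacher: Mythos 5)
The paper does not prove this theorem: it is stated as a cited black-box result from \citep{DJMMUW20}, and later the paper invokes the improved bound $\tw(H)\leqslant 6$ of \citet{UWY22} via \cref{thm:goodplanarPS} and the partition-to-product translation recorded as \cref{PartitionsProduct}. Your sketch is a correct reconstruction of the original argument of \citet{DJMMUW20}: reduce to a plane triangulation, take a BFS layering and BFS tree $T$, build a partition of $V(G)$ into vertical paths of $T$ so that the quotient $H$ has bounded treewidth, and then convert this, exactly as you describe, into an embedding into $H\boxtimes P$ by pairing each vertex's path with its layer index. The recursive tripod (Sperner-type) decomposition, with the invariant that each near-triangulation piece has its boundary covered by a bounded number of vertical paths, is indeed the heart of the cited proof, and the accounting that yields bags of size at most $9$ and hence treewidth at most $8$ is the part that must be done carefully. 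Your proposal is sound and matches the approach of the cited source; the only caveat is that you have deliberately left the hardest bookkeeping step as a gap, and that bookkeeping (how many boundary paths a child piece inherits, how the tripod legs interact with them) is precisely where the constant $8$ comes from, so a full writeup would need to nail that invariant down.
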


\cref{GPST} has been the key tool to resolve several major open problems regarding queue layouts~\citep{DJMMUW20}, nonrepetitive colourings~\citep{DEJWW20}, centred colourings~\citep{DFMS21}, adjacency labelling schemes~\citep{GJ22,BGP22,EJM23,DEGJMM21}, twin-width~\citep{BDHK24,JP22,KPS24}, comparable box dimension~\cite{DGLTU22}, infinite graphs~\citep{HMSTW}, and transducibility lower bounds~\citep{HJ25,HJ25a,GPP}. 
This breakthrough result led to a new direction in the study of sparse graphs, now called graph product structure theory, which aims to describe complicated graphs as subgraphs of strong products of simpler building blocks. Treewidth is the standard measure of how similar a graph is to a tree, and is of fundamental importance in structural and algorithmic graph theory (see \cref{LTWsubsection} for a formal definition and \citep{Bodlaender98, HW17, Reed03} for surveys about treewidth). The treewidth of a graph $G$ is denoted by \defn{$\tw(G)$}. Graphs with bounded treewidth are considered to be simple and are well understood. \cref{GPST} therefore reduces problems on a complicated class of graphs (planar graphs) to a simpler class of graphs (bounded treewidth).

Motivated by \cref{GPST}, \citet{BDJMW22} defined the \defn{row treewidth} of a graph $G$, denoted \defn{$\rtw(G)$}, to be the minimum treewidth of a graph $H$ such that $G$ is contained in $H \boxtimes P$ for some path $P$. \cref{GPST} implies that planar graphs have row treewidth at most $8$. \citet{UWY22} strengthened \cref{GPST} by improving the upper bound to $6$.

Several extensions of \cref{GPST} have been established. In the setting of minor-closed classes, it has been shown that graphs with bounded Euler genus \citep{DHHW22,DJMMUW20} and apex-minor-free graphs~\citep{DJMMUW20} have bounded row treewidth. Several non-minor-closed classes also have bounded row treewidth, including various beyond planar graph classes: $k$-planar graphs~\citep{DMW23,HW24,DHSW24}, fan-planar graphs~\citep{HW24}, $k$-fan-bundle-planar graphs~\citep{HW24}, squaregraphs~\citep{HJMW24}, $d$-map graphs~\citep{BDHK24,DMW23}, $h$-framed graphs~\citep{BDHK24}, and powers of bounded degree planar graphs~\citep{DMW23,HW24,DHSW24}. \citet{HJ24a} established analogous results representing graphs of bounded row treewidth as induced subgraphs of $H \boxtimes P$.

A topological graph is \defn{simple} if any two edges intersect in at most one point including endpoints. A \defn{geometric graph} is a topological graph in which every edge is a straight line segment. Every geometric graph is simple. Much of the existing graph drawing literature focuses on simple topological graphs or geometric graphs. Our primary result is a product structure theorem for simple topological $k$-matching-planar graphs, which qualitatively generalises \cref{GPST} and resolves a conjecture of \citet[Conjecture~21]{MSSU24}. In fact, we do not require simplicity. 

\begin{thm} \label{introRTW} 
Let $G$ be a topological $k$-matching-planar graph with no $t$ pairwise crossing edges incident to a common vertex. Then $\rtw(G) \leqslant f(k, t)$ for some function $f$. That is, $G$ is contained in $H \boxtimes P$ for some graph $H$ of treewidth at most $f(k, t)$ and for some path $P$.
\end{thm}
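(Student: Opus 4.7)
The plan is to follow the toolchain announced in the abstract, which reduces the theorem to a product-structure statement for weak shallow minors of thickened bounded-genus graphs via an intermediate planar structure extracted by edge colouring. \emph{Step 1 (monochromatic planar edge colouring).} I would first prove that every topological $k$-matching-planar graph $G$ (with no $t$ pairwise crossing edges at a vertex) admits an edge colouring with $c = c(k,t) = \mathcal{O}(k^3 \log k)$ colours such that no two monochromatic edges cross. The crossing graph $X(G)$, with $V(X(G)) = E(G)$ and edges between crossing pairs, has the property that every neighbourhood, viewed as a subset of $E(G)$, has matching number at most $k$ in $G$; by K\"onig-type bounds each such neighbourhood decomposes into $O(k)$ vertex-stars of $G$, and the $t$-bound prevents too many crossing edges from accumulating at a single endpoint of such a star. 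A greedy or iterated colouring, combined with a random-partition argument to shave the $\log k$ factor, then yields a proper colouring of $X(G)$ with $c(k,t)$ colours; equivalently, an edge colouring of $G$ in which each colour class is a topological planar subgraph on $V(G)$.

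\emph{Step 2 (weak shallow minor representation).} Next I would show that $G$ is a weak shallow minor of $P \boxtimes K_r$, where $P$ is planar and $r = r(k,t)$. Fix one colour class as the underlying planar drawing $P$ and treat the remaining $c - 1$ colour classes as additional layers attached to $P$. Each vertex $v \in V(G)$ is modelled by a small connected bag in $P \boxtimes K_r$: the $K_r$-coordinate separates colour layers while the $P$-coordinate carries the planar embedding of each layer. The non-crossing property within each colour class, together with the $t$-bound controlling local configurations, ensures bags of bounded radius with bounded overlap. The weak shallow minor notion, engineered to subsume ordinary shallow minors, is precisely the flexibility needed to realise edges of $G$ that individually cross arbitrarily many other edges, as in \cref{crossingstars}(b); such edges are impossible for $k$-planar graphs but typical for $k$-matching-planar ones, which is why the weaker minor notion is essential.

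\emph{Step 3 (product structure for weak shallow minors).} Then I would prove that if $H$ has Euler genus at most $g$ and $G'$ is a weak shallow minor of $H \boxtimes K_r$ of bounded depth, then $\rtw(G')$ is bounded by a function of the depth, $r$, and $g$. The approach combines the bounded-genus product structure theorem $H \subseteq H'' \boxtimes Q$, with $\tw(H'')$ small and $Q$ a path, together with a BFS layering of $H$ that absorbs the weak-shallow-minor operation while retaining a product decomposition $H''' \boxtimes Q'$ of bounded treewidth. Applying this with $g = 0$ to the output of Step 2 yields $\rtw(G) \le f(k,t)$, as required.

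\emph{Main obstacle.} I expect Step 3 to be the technical heart of the argument. Product-structure decompositions pass cleanly to subgraphs, but the path coordinate is notoriously fragile under minor-taking: a single minor contraction can span many consecutive layers of $Q$, destroying the layered structure that keeps $\tw(H'')$ small. The weak shallow minor notion must therefore be tuned delicately so that a layered variant of treewidth is preserved under the operation, and the bag reconstruction along the new path coordinate $Q'$ must pay only a bounded price in width. Step 1 is subtle but essentially a combinatorial colouring argument, and Step 2 is a careful encoding once the colouring is in hand; the main difficulty lies in reconciling minor-taking with the strong-product decomposition.
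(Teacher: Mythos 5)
Your three-step skeleton is the same as the paper's (edge colouring with transparent colour classes, then a weak shallow minor representation over a planar host, then a product structure theorem for weak shallow minors of bounded-genus graphs blown up by a small clique), but two of your concrete steps have genuine gaps. The most serious is Step 2: taking one colour class as the planar host $P$ cannot work. The endpoints of an edge of another colour may be arbitrarily far apart (or in different components) in that colour class, so there is no way to realise such an edge by connected branch sets of bounded weak radius in $P \boxtimes K_r$; the $K_r$-coordinate cannot "separate colour layers" because the different layers have incompatible planar structures. The paper's host is not a subgraph of $G$ at all: it is the \emph{coloured planarisation} $G^{\phi}$, obtained from the ordinary planarisation by contracting, along each edge, the runs of crossing vertices whose level equals the edge's colour. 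Two further ingredients are then needed that your plan does not supply: the Coloured Planarisation Lemma (\cref{generallemma}), which uses star arboricity of planar graphs to bound the clique blow-up by $1+5(c-1)m$, and the Distance Lemma (\cref{distancelemma}), an inductive argument over colours using the vertex-cover bound on lower-coloured crossing edges, which is what bounds the weak radius of the branch sets. Without something playing the role of these two lemmas, "bags of bounded radius with bounded overlap" is an assertion, not a proof.

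Step 1 also needs more than you propose. The crossing graph $X_G$ of a $1$-matching-planar graph can be $K_{n,n}$ (two crossing stars), so no greedy/degeneracy or random-partition colouring of $X_G$ based on local matching bounds can give a bounded number of colours; bounded neighbourhood structure does not bound chromatic number. The paper's route is: an edge-density bound for $k$-cover-planar graphs (probabilistic deletion plus Hanani--Tutte), hence bounded star arboricity; for star-forests, showing the crossing graph of star-components is $K_{s,s}$-free for $s=\mathcal{O}(k^2)$ and invoking Lee's theorem that $K_{s,s}$-free string graphs are $\mathcal{O}(s\log s)$-degenerate; and, to split the edges at each vertex, $\chi$-boundedness of outerstring graphs, which is where the parameter $t$ enters. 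In particular your claimed bound $c(k,t)=\mathcal{O}(k^3\log k)$ independent of $t$ is not what this machinery gives: the dependence on $t$ is doubly exponential, and only in the simple case does one get $\mathcal{O}(k^3\log k)$. Your Step 3 correctly identifies the fragility of the path coordinate under contraction as the obstacle, and the paper confirms it is real (weak shallow minors of row treewidth~$2$ graphs can have unbounded row treewidth, \cref{bigrtw}); the fix is not a generic layering argument but exploits $K_{a,b}$-minor-freeness of bounded-genus hosts together with the $(t,y)$-good structure (\cref{lem:RTWtechnical,thm:RTWmain}), so this step too needs a substantially more specific argument than sketched.
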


\cref{introRTW} provides the broadest known criterion for a beyond planar graph class to admit a product structure theorem (see \cref{RTWbounded} for a detailed discussion).

\subsection{Applications} \label{applications}

We now describe some applications of our main result, \cref{introRTW}.

\header{Labelling Schemes} Here the task is to assign labels to the vertices of a graph so that one can decide whether two vertices are adjacent by looking at their labels. \citet{DEGJMM21} used the Planar Graph Product Structure Theorem to show that $n$-vertex planar graphs have labelling schemes using $(1+o(1))\log_2 n$ bits, which is best possible and improves on a 30-year sequence of results. Equivalently, there is a graph $U$ on $n^{1+o(1)}$ vertices
that is \defn{universal} for the class of $n$-vertex planar graphs, meaning that every $n$-vertex planar graph is isomorphic to an induced subgraph of $U$. More generally, \citet{EJM23} constructed such a universal graph with $n^{1+o(1)}$  vertices and edges. Both results hold for any class with bounded row treewidth. \cref{introRTW} thus implies the following generalisation of their results.

\begin{thm}
\label{universal}
For any fixed integers $k \geqslant 0$ and $t \geqslant 2$ and for every integer $n \geqslant 1$ there is a  graph with $n^{1+o(1)}$ vertices and edges that is universal for the class of $n$-vertex topological $k$-matching-planar graphs with no $t$ pairwise crossing edges incident to a common vertex.
\end{thm}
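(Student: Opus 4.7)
The plan is to derive \cref{universal} directly from \cref{introRTW} by invoking an off-the-shelf universality result for classes of bounded row treewidth. First, I would fix $k \geqslant 0$ and $t \geqslant 2$, and let $\mathcal{C}_{k,t,n}$ denote the class of $n$-vertex topological $k$-matching-planar graphs with no $t$ pairwise crossing edges incident to a common vertex. By \cref{introRTW}, every graph $G \in \mathcal{C}_{k,t,n}$ satisfies $\rtw(G) \leqslant f(k,t)$, so $\mathcal{C}_{k,t,n}$ is contained in the class $\mathcal{R}_c$ of $n$-vertex graphs of row treewidth at most $c := f(k,t)$, which is a constant depending only on $k$ and $t$.

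Next, I would appeal to the result of \citet{EJM23}, which gives that for every fixed $c$, the class $\mathcal{R}_c$ of $n$-vertex graphs of row treewidth at most $c$ admits a universal graph $U_n$ with $n^{1+o(1)}$ vertices \emph{and} $n^{1+o(1)}$ edges, in the sense that every such graph is isomorphic to an induced subgraph of $U_n$. Since isomorphism to an induced subgraph is stronger than isomorphism to a subgraph, $U_n$ is universal for $\mathcal{C}_{k,t,n}$ as well. This immediately yields \cref{universal}, as already noted in the paragraph preceding the statement.

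The step carrying the real mathematical content is of course \cref{introRTW}, which is the paper's main product structure theorem; the only thing to verify here is the routine fact that the reduction to bounded row treewidth is uniform in $n$, so that the $n^{1+o(1)}$ bound of \citet{EJM23} applies. The one subtlety worth flagging is that the \citet{EJM23} construction must be used in its stronger form, giving $n^{1+o(1)}$ edges as well as vertices; the earlier construction of \citet{DEGJMM21} only controls the number of vertices. Since both cited results take the row-treewidth bound as a black-box constant, nothing else is needed, and no step is genuinely an obstacle once \cref{introRTW} is in hand.
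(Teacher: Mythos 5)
Your proposal matches the paper's argument exactly: the paper derives \cref{universal} by combining \cref{introRTW} (to bound row treewidth by a constant depending only on $k$ and $t$) with the universal graph construction of \citet{EJM23}, just as you do, and you correctly note that the stronger form of \citet{EJM23} (controlling edges as well as vertices) is what is needed rather than \citet{DEGJMM21}. Nothing further to add.
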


\header{Queue Number} 
Heath, Leighton, and Rosenberg~\citep{HLR92,HR92} introduced queue number as a way to measure the power of queues to represent graphs\footnote{The \defn{queue number} of a graph $G$ is the minimum integer $k$ such that there is a vertex ordering $\sigma$ of $V(G)$ and a partition $E_1,\dots,E_k$ of $E(G)$, such that for each $i\in\{1,\dots,k\}$, no two edges in $E_i$ are nested with respect to $\sigma$. Here edges $uw, xy \in E(G)$ with $\sigma(u) < \sigma(w)$ and $\sigma(x) < \sigma(y)$  are \defn{nested} with respect to $\sigma$ if $\sigma(u) < \sigma(x) < \sigma(y) < \sigma(w)$ or $\sigma(x) < \sigma(u) < \sigma(w) < \sigma(y)$.}. \citet{DJMMUW20} proved that planar graphs have bounded queue number, resolving a long-standing conjecture of \citet{HLR92}. Upper bounds on queue number for graphs of given treewidth \citep{Wiechert17} and for graph products \citep{Wood05} imply that the queue number of every graph $G$ is at most $3 \cdot 2^{\rtw(G)} - 2$. Thus \cref{introRTW} implies the following:

\begin{thm} \label{introqueue} Let $G$ be a topological $k$-matching-planar graph with no $t$ pairwise crossing edges incident to a common vertex. Then the queue number of $G$ is at most some function~$f(k, t)$. 
\end{thm}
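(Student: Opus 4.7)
The plan is to deduce \cref{introqueue} as an immediate corollary of \cref{introRTW} by chaining two standard bounds: Wiechert's upper bound on the queue number of bounded-treewidth graphs, and Wood's upper bound on the queue number of the strong product of a graph with a path.

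Concretely, I would first apply \cref{introRTW} to obtain a graph $H$ and a path $P$ with $\tw(H) \leq f(k,t)$ such that $G$ is contained in $H \boxtimes P$. Since queue number is monotone under taking subgraphs, $\qn(G) \leq \qn(H \boxtimes P)$. Wiechert's theorem gives $\qn(H) \leq 2^{\tw(H)} - 1 \leq 2^{f(k,t)} - 1$, and Wood's product bound promotes this to $\qn(H \boxtimes P) \leq 3 \cdot 2^{\tw(H)} - 2$, as noted in the discussion preceding the theorem statement. Setting $g(k,t) := 3 \cdot 2^{f(k,t)} - 2$ yields the required bound.

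There is no substantive obstacle in this derivation: all of the real work has been absorbed into \cref{introRTW}, and the remaining step is a routine combination of off-the-shelf tools from graph product structure theory. The same mechanical pattern --- translate a row treewidth bound into a bound on the graph parameter of interest via known results for treewidth and for strong products with a path --- underlies the other applications sketched in \cref{applications}, such as bounded nonrepetitive chromatic number, polynomial $p$-centred chromatic numbers, and bounded boxicity.
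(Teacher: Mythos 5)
Your proposal matches the paper's derivation exactly: the paper states (just before the theorem) that Wiechert's bound on queue number in terms of treewidth, together with Wood's bound for strong products with a path, gives $\qn(G) \leq 3\cdot 2^{\rtw(G)} - 2$, and then invokes \cref{introRTW}. Your chaining of these two off-the-shelf results through the row treewidth bound is precisely the intended argument, with the correct numerical constants.
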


\header{Nonrepetitive Colourings} 
Nonrepetitive colourings were introduced by \citet{AGHR02}, and have since been widely studied (see the survey \citep{Wood21})\footnote{The \defn{nonrepetitive chromatic number} of a graph $G$ is the minimum number of colours in a vertex-colouring $\eta$ of $G$ such that there is no path $(v_1,v_2,\dots,v_{2t})$ in $G$ with $\eta(v_i)=\eta(v_{t+i})$ for each $i\in\{1,\dots,t\}$.}.  \citet{DEJWW20} proved that planar graphs have bounded nonrepetitive chromatic number, resolving a long-standing conjecture of \citet{AGHR02}. The proof uses their result that the nonrepetitive chromatic number of every graph $G$ is at most $4^{\rtw(G) + 1}$ (see \citep[Theorem $7$ and Corollary $9$]{DEJWW20}). Thus \cref{introRTW} implies the following:

\begin{thm} \label{intrononrepetitive} Let $G$ be a topological $k$-matching-planar graph with no $t$ pairwise crossing edges incident to a common vertex. Then the nonrepetitive chromatic number of $G$ is at most some function $f(k, t)$.
\end{thm}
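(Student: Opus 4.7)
The plan is to deduce this theorem as an immediate corollary of our main product structure result, \cref{introRTW}, combined with the known bound on nonrepetitive chromatic number in terms of row treewidth due to \citet{DEJWW20}. No new graph-drawing argument is needed at this point; the heavy lifting has already been done.

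First, I would apply \cref{introRTW} to the topological graph $G$ to obtain an integer $f_1(k,t)$, a graph $H$ of treewidth at most $f_1(k,t)$, and a path $P$ such that $G$ is contained in $H \boxtimes P$. This immediately gives $\rtw(G) \leqslant f_1(k,t)$, by the very definition of row treewidth.

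Next, I would invoke the bound of \citet{DEJWW20}, namely that the nonrepetitive chromatic number of every graph $G$ is at most $4^{\rtw(G)+1}$; this is their Theorem~7 together with Corollary~9. Applying this to our graph gives a nonrepetitive chromatic number bounded by $4^{\,f_1(k,t)+1}$, so the conclusion holds with $f(k,t) := 4^{\,f_1(k,t)+1}$.

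The only thing that must be checked is that the chain of reductions is valid in the stated level of generality, and this is automatic: the DEJWW20 bound is a purely combinatorial statement about abstract graphs, so it applies as soon as we know $G$ (as an abstract graph, forgetting the drawing) has bounded row treewidth, which is exactly what \cref{introRTW} provides. The main obstacle in the overall program, namely establishing the product structure theorem for simple (and non-simple) topological $k$-matching-planar graphs, is absorbed into \cref{introRTW}, so at this point the proof is a one-line consequence. An analogous one-line derivation handles the closely related \cref{introqueue} via the Wiechert--Wood bound $\mathrm{qn}(G) \leqslant 3 \cdot 2^{\rtw(G)} - 2$.
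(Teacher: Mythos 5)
Your proposal matches the paper's own derivation exactly: the paper states Theorem~\ref{intrononrepetitive} as an immediate consequence of \cref{introRTW} together with the bound $4^{\rtw(G)+1}$ on nonrepetitive chromatic number from \citet{DEJWW20} (their Theorem~7 and Corollary~9). There is nothing to add; the argument is correct as written.
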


\header{Centred Colourings}
\citet{NesOdM-GradI} introduced the concept of centred colourings\footnote{The \defn{$p$-centred chromatic number} of a graph $G$ is the minimum number of colours in a vertex-colouring $\eta$ of $G$ such that for every connected subgraph $X$ of $G$, $|\{\eta(v) : v \in V(X)\}| > p$ or there exists some $v \in V(X)$ such that $\eta(v) \neq \eta(w)$ for every $w \in V(X) \setminus \{v\}$.}, which are important within graph sparsity theory since they characterise graph classes with bounded expansion \citep{NesOdM-GradI}. The best known bound on the $p$-centred chromatic number of planar graphs is $\mathcal{O}(p^{3}\log p)$ due to~\citet{DFMS21}.  Combining the results of \citet{DFMS21} and 
\citet[Lemma~15]{PS21}, \citet{DMW23} observed that the $p$-centred chromatic number of every graph $G$ is 
$\mathcal{O}(p^{\rtw(G) + 1})$. Thus \cref{introRTW} implies the following:

\begin{thm} \label{introcentred}
Let $G$ be a topological $k$-matching-planar graph with no $t$ pairwise crossing edges incident to a common vertex. Then for each positive integer $p$, the $p$-centred chromatic number of $G$ is $\mathcal{O}(p^{f(k, t)})$ for some function $f$.
\end{thm}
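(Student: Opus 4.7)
The plan is to derive \cref{introcentred} as an immediate corollary of \cref{introRTW} combined with the known polynomial bound on the $p$-centred chromatic number of graphs of bounded row treewidth. So the proof will be essentially a two-line reduction: first invoke product structure, then plug into the off-the-shelf colouring bound.

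First, I would apply \cref{introRTW} to the given topological $k$-matching-planar graph $G$ (with no $t$ pairwise crossing edges at a common vertex) to obtain a graph $H$ with $\tw(H) \leqslant f(k,t)$ and a path $P$ such that $G$ is contained in $H \boxtimes P$. In particular, $\rtw(G) \leqslant f(k,t)$.

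Next, I would invoke the observation of \citet{DMW23}, which combines the polynomial upper bound of \citet{DFMS21} on the $p$-centred chromatic number of planar graphs (and more generally of bounded-treewidth graphs, in the form used there) with the product-colouring lemma of \citet[Lemma~15]{PS21}: these together yield that the $p$-centred chromatic number of any graph $G'$ is $\mathcal{O}(p^{\rtw(G')+1})$. Substituting $\rtw(G) \leqslant f(k,t)$ from the previous step gives that the $p$-centred chromatic number of $G$ is $\mathcal{O}(p^{f(k,t)+1})$, which is of the desired form $\mathcal{O}(p^{f'(k,t)})$ after renaming the function.

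Since the genuine work is entirely contained in \cref{introRTW}, there is no real obstacle here: the only thing to verify is that the cited bound of \citet{DMW23} applies as a black box to any graph (not only to planar graphs), which is the case because its statement depends only on row treewidth. The same reduction pattern will be used for \cref{introqueue} and \cref{intrononrepetitive}, so this proof can be stated uniformly once the reader is pointed at the relevant composition of results.
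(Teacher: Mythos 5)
Your proposal matches the paper's argument exactly: the paper derives \cref{introcentred} by combining \cref{introRTW} with the observation of \citet{DMW23} (built on \citet{DFMS21} and \citet[Lemma~15]{PS21}) that the $p$-centred chromatic number of any graph $G'$ is $\mathcal{O}(p^{\rtw(G')+1})$. Nothing further to add.
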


\header{Intersection Graphs} Let $S$ be a convex polygon in the plane. Denote its area by \defn{$||S||$}. A \defn{homothetic copy} of $S$ is a convex polygon in the plane that can be obtained from $S$ by scaling and translation. For a real number $\alpha \in [0, 1]$, a collection $\{S_{a} : a \in A\}$ of homothetic copies of $S$ is \defn{$\alpha$-free} if for every $a \in A$, we have $||S_{a} \setminus \bigcup_{b \in A \setminus\{a\}}S_{b}||\geqslant \alpha \cdot ||S_{a}||$. 

 \citet*{MSSU24} analysed under which conditions the class of intersection graphs of $\alpha$-free homothetic copies of a regular $k$-gon has row treewidth bounded by a function of $k$. To this end, for integers $k \geqslant \ell \geqslant 4$, they defined \defn{$\Delta_{k}^{\ell}$} to be the convex hull of $\ell$ consecutive corners of a regular $k$-gon with area $1$, and 
 \begin{equation*}
    s(k) :=
    \begin{cases*}
        ||\Delta_k^{k/2+2}||                 & if $k \equiv 0 \pmod  4$ \\
        ||\Delta_k^{\lceil k/2 \rceil+1}||   & if $k \equiv 1 \pmod  4$ \\
        ||\Delta_k^{k/2+1}|| = \frac12       & if $k \equiv 2 \pmod  4$ \\
        ||\Delta_k^{\lceil k/2 \rceil+2}||   & if $k \equiv 3 \pmod  4$.
    \end{cases*}
\end{equation*}

\citet[Proposition~$19$]{MSSU24} showed that for any $\alpha \in [s(k), 1]$, the intersection graph of any collection of $\alpha$-free homothetic copies of a regular $k$-gon is isomorphic to a topological $26(k + 1)$-matching-planar graph where no two edges incident to a common vertex cross\footnote{\citet[Proposition~$19$]{MSSU24} used the notion of so-called canonical drawings and established that the canonical drawing of this intersection graph is topological $26(k + 1)$-matching-planar. Section~$4$ of their paper shows that no two edges incident to a common vertex cross in any canonical drawing.}. Thus we have the following corollary of \cref{introRTW}, which resolves a conjecture of \citet[Conjecture~$22$]{MSSU24}.

\begin{thm} \label{introIntersection} 
For an integer $k \geqslant 4$ and fixed $\alpha \in [s(k), 1]$, if $G$ is the intersection graph of a collection of $\alpha$-free homothetic copies of a regular $k$-gon, then $\rtw(G) \leqslant f(k)$ for some function $f$.
\end{thm}

Note that \citet[Conjecture~$20$]{MSSU24} conjectured that $s(k)$ is a tight threshold for bounded row treewidth; that is, for fixed $k \geqslant 4$ and $\alpha \in [0, s(k))$, the class of intersection graphs of $\alpha$-free homothetic copies of a regular $k$-gon has unbounded row treewidth.
This remains open.

\header{Layered Treewidth} 
Layered treewidth is a precursor to graph product structure theory,  independently introduced by \citet{DMW17} and \citet{Shahrokhi13} (see \cref{LTWsubsection} for a formal definition). The layered treewidth of a graph $G$ is denoted by \defn{$\ltw(G)$}. \citet{DMW17} showed that planar graphs have bounded layered treewidth.

\begin{thm} [\citep{DMW17}] \label{layeredtreewidthplanargraphs} Every planar graph has layered treewidth at most $3$.
\end{thm}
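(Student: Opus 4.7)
The plan is to construct, for any planar graph $G$, an explicit layering together with a tree decomposition whose bags meet each layer in at most three vertices. Since adding edges to $G$ cannot decrease layered treewidth, it suffices to handle plane triangulations. Fix such a triangulation of $G$, pick any root vertex $r$, and run breadth-first search from $r$: let $V_i := \{v \in V(G) : \dist_G(r, v) = i\}$ and let $T$ be the resulting BFS spanning tree. Every edge of $G$ joins vertices in the same or consecutive layers, so $(V_0, V_1, \dots)$ is a valid layering. For each vertex $v$, let $P_v$ be the unique root-to-$v$ path in $T$. Because $T$ is a BFS tree, $|V(P_v) \cap V_j| = 1$ for every $0 \leqslant j \leqslant \dist_G(r, v)$.

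The heart of the argument is a \emph{tripod decomposition} of the triangulation $G$ with respect to $T$: there is a tree decomposition $(B_x : x \in V(\mathcal{T}))$ of $G$ such that every bag has the form $B_x = V(P_u) \cup V(P_v) \cup V(P_w)$ for some triangular face $uvw$ of $G$. I would prove this by induction on $|V(G)|$. Taking $abc$ to be the outer face, use the tripod $V(P_a) \cup V(P_b) \cup V(P_c)$ as the root bag. For a well-chosen interior triangular face $uvw$, the tripod $V(P_u)\cup V(P_v) \cup V(P_w)$ subdivides the disc bounded by the current tripod into at most three closed subregions, each of which induces a smaller plane triangulation bounded by a tripod and is handled recursively. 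Planarity, together with the Jordan curve theorem applied to tripods, guarantees that vertices and edges in distinct subregions are separated by the chosen tripod, which is precisely what the tree decomposition axioms demand.

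Combining the two ingredients yields the theorem: every bag $B_x$ is covered by three root-to-vertex BFS paths, and each such path meets each layer in at most one vertex, so $|B_x \cap V_i| \leqslant 3$ for every bag and every layer. Hence $\ltw(G) \leqslant 3$.

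The main obstacle is the tripod decomposition lemma. Existence of such a decomposition is a classical idea going back to the graph-minors project, but the clean inductive statement requires a careful choice of the splitting triangle so that each child region is again a plane triangulation whose outer boundary is (the closed walk obtained from) a tripod, together with a verification of the connectedness axiom of tree decompositions. Notably, the BFS property of $T$ is not used in the lemma itself; it enters only in the final counting step, where it is what forces each bag to meet each layer in at most three vertices rather than an unbounded number.
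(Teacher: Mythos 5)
The paper does not prove this theorem; it is stated as a black-box citation to Dujmović, Morin and Wood (2017). Your argument---reduce to plane triangulations, take a BFS layering, build a tripod tree decomposition whose bags are unions of three vertical BFS paths, and note that each vertical path meets each layer in at most one vertex---is precisely the proof given in that cited source, so it is correct and takes the same route.
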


The following relation is well-known (see, for example, \citep[Section~2]{BDJMW22} for a proof).

\begin{lem} \label{LTWandRTW} For every graph $G$, $\ltw(G) \leqslant \rtw(G) + 1$.
    
\end{lem}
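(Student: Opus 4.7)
The plan is to unpack the two definitions and transfer the tree decomposition witnessing row treewidth into a layered tree decomposition of the same width plus one. Let $k := \rtw(G)$ and fix a containment $G \subseteq H \boxtimes P$ where $\tw(H) \leqslant k$ and $P$ is a path; it suffices to show $\ltw(H \boxtimes P) \leqslant k + 1$, since layered treewidth is monotone under taking subgraphs.

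The layering will be the obvious one coming from $P$: for each $i \in V(P)$, set $V_i := V(H) \times \{i\}$. This is a valid layering of $H \boxtimes P$ because any edge of the strong product between $(u,i)$ and $(v,j)$ satisfies $ij \in E(P)$ or $i = j$, so layer indices differ by at most one. For the tree decomposition, start with an optimal tree decomposition $(B_x)_{x \in V(T)}$ of $H$ of width $k$, and lift it by setting $B'_x := B_x \times V(P)$. Each vertex $(v,i)$ lies in $B'_x$ iff $v \in B_x$, so the set of bags containing $(v,i)$ forms a connected subtree of $T$. For any edge $(u,i)(v,j) \in E(H \boxtimes P)$ we have $uv \in E(H)$ or $u = v$, so some bag $B_x$ of $H$ contains both $u$ and $v$, and then $B'_x$ contains both endpoints. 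Hence $(B'_x)_{x \in V(T)}$ is a tree decomposition of $H \boxtimes P$.

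The width computation is immediate: $|B'_x \cap V_i| = |B_x \times \{i\}| = |B_x| \leqslant k + 1$ for every $x \in V(T)$ and every $i \in V(P)$. Therefore $\ltw(H \boxtimes P) \leqslant k + 1$, and restricting the decomposition and layering to $V(G)$ yields $\ltw(G) \leqslant \rtw(G) + 1$.

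There is essentially no obstacle here; the lemma is a one-line translation between the two definitions, and the only thing to be careful about is verifying that the lifted family $(B_x \times V(P))_{x \in V(T)}$ genuinely is a tree decomposition of the strong product, which follows from the definition of $\boxtimes$ applied edge by edge.
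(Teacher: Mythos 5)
Your proof is correct and is the standard argument for this inequality. The paper itself does not prove the lemma but cites \citep[Section~2]{BDJMW22}, and your construction (layering $H\boxtimes P$ by the path coordinate, lifting a width-$k$ tree decomposition of $H$ to bags $B_x\times V(P)$, and noting each bag meets each layer in at most $k+1$ vertices) is exactly the well-known translation that reference records. One minor point you glide over: when you restrict to the subgraph $G\subseteq H\boxtimes P$, some layers may become empty, and the paper's definition of a layering requires non-empty parts; but since no edge can cross a run of empty layers, deleting empty layers and renumbering keeps the layering valid, so the monotonicity of layered treewidth under subgraphs that you invoke does hold.
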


\cref{introRTW,LTWandRTW} imply the following:

\begin{thm} \label{introLTW} Let $G$ be a topological $k$-matching-planar graph with no $t$ pairwise crossing edges incident to a common vertex. Then $\ltw(G) \leqslant f(k, t)$ for some function $f$. 
\end{thm}

Although \cref{introLTW} follows directly from \cref{introRTW,LTWandRTW}, we give a separate proof of \cref{introLTW}, providing an asymptotically better bound on layered treewidth than our bound on row treewidth.

\header{Treewidth and Separators} 
Sergey Norin showed that 
$\tw(G)\leqslant 2\sqrt{\ltw(G)\,n} - 1$
for every graph $G$ with $n$ vertices  (see \citep[Lemma~10]{DMW17}). Thus \cref{introLTW} implies the following:

\begin{thm} \label{introseparators} Let $G$ be an $n$-vertex topological $k$-matching-planar graph with no $t$ pairwise crossing edges incident to a common vertex. Then $\tw(G) \in \mathcal{O}_{k, t}(\sqrt{n})$.
\end{thm}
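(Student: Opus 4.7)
The plan is essentially to chain together the two ingredients already stated in the excerpt: the layered treewidth bound from \cref{introLTW} and Norin's treewidth-from-layered-treewidth separator inequality. Neither step requires new ideas; the theorem is a corollary of results already in hand.

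First I would invoke \cref{introLTW} to obtain a constant $c = f(k,t)$ (depending only on $k$ and $t$) such that $\ltw(G) \leqslant c$. Then I would apply Norin's bound, namely that for every $n$-vertex graph $G$,
\[
\tw(G) \;\leqslant\; 2\sqrt{\ltw(G)\cdot n} - 1,
\]
which is stated in the excerpt just above the theorem. Substituting the bound on $\ltw(G)$ gives
\[
\tw(G) \;\leqslant\; 2\sqrt{f(k,t)\cdot n} - 1 \;\in\; \mathcal{O}_{k,t}(\sqrt{n}),
\]
which is precisely the conclusion.

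Since both inputs are cited results, there is no real obstacle here; the only thing to be mindful of is that the hidden constant depends on $k$ and $t$ through the function $f$ from \cref{introLTW}, which is why the $\mathcal{O}$ notation carries $k,t$ as subscripts. If a sharper explicit constant were desired, one could track the function $f$ from the proof of \cref{introLTW} and substitute it into Norin's inequality, but this is not required by the statement. Thus the proof is a single line combining the two displayed bounds.
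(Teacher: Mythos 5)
Your proposal is correct and matches the paper's own derivation exactly: the paper also obtains this theorem by combining the layered treewidth bound of \cref{introLTW} with Norin's inequality $\tw(G)\leqslant 2\sqrt{\ltw(G)\,n}-1$. Nothing further is needed.
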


\cref{introseparators} implies that every $n$-vertex topological $k$-matching-planar graph with no $t$ pairwise crossing edges incident to a common vertex has a balanced separator of order $\mathcal{O}_{k, t}(\sqrt{n})$~\citep{RS-II}. This generalises the classical result of \citet{LT79}, which says that every $n$-vertex planar graph has a balanced separator of size $\mathcal{O}(\sqrt{n})$.

\header{Local Treewidth}
\citet{Eppstein-Algo00} introduced the following definition under the guise of the ‘treewidth-diameter’ property. A graph class $\mathcal{G}$ has \defn{bounded local treewidth} if there is a function $f$ such that for every graph $G \in \mathcal{G}$, for every vertex $v \in V(G)$ and for every integer $r > 0$, the subgraph of $G$ induced by the set of vertices at distance at most $r$ from $v$ has treewidth at most $f(r)$.

\citet[Lemma~6]{DMW17} proved that every graph with layered treewidth $\ell$ and radius $r$ has treewidth at most $\ell(2r + 1) - 1$. This implies that every graph class with bounded layered treewidth has bounded local treewidth. By \cref{introLTW}, the class of topological $k$-matching-planar graphs with no $t$ pairwise crossing edges incident to a common vertex has bounded local treewidth. In other words, we have the following:

\begin{thm} \label{introradius}   
Let $G$ be a topological $k$-matching-planar graph with radius $r$ such that no $t$ edges incident to a common vertex pairwise cross. Then $\tw(G) \leqslant r \cdot f(k, t)$ for some function~$f$.
\end{thm}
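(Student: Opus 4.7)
The plan is to deduce the bound directly from \cref{introLTW} together with the quoted inequality $\tw(G)\le \ltw(G)(2r+1)-1$ for graphs of radius $r$ \citep[Lemma~6]{DMW17}. First I would invoke \cref{introLTW} to obtain a function $f$ such that $\ltw(G)\le f(k,t)$ for every topological $k$-matching-planar graph $G$ with no $t$ pairwise crossing edges incident to a common vertex. Substituting this into the \citet{DMW17} inequality immediately yields $\tw(G)\le f(k,t)(2r+1)-1$, and absorbing the constants into a new function $g$, say $g(k,t)\coloneqq 3f(k,t)$, gives the stated bound $\tw(G)\le r\cdot g(k,t)$.

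Since the quoted lemma is treated as a black box, the only element worth sketching for intuition is the shape of its proof: one fixes a center $v$ of $G$, forms the BFS layering $(L_0,L_1,\dots,L_r)$ rooted at $v$, and intersects a layered tree decomposition of width $\ell-1$ with this partition; each bag then meets at most $r+1$ levels in at most $\ell$ vertices per level, giving at most $\ell(r+1)$ vertices per bag, and the extra factor of two in \citep{DMW17} accommodates arbitrary (not necessarily centered) BFS layerings. All genuine structural difficulty is concentrated upstream in \cref{introLTW} (and, via \cref{LTWandRTW}, in the main product structure theorem \cref{introRTW}), so there is no real obstacle in this deduction: the theorem is a one-line consequence, and the only presentational question is whether to state it as a standalone result or to append it as a remark to \cref{introLTW}.
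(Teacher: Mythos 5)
Your proposal is correct and matches the paper's own derivation exactly: the paper also obtains \cref{introradius} as an immediate corollary of \cref{introLTW} combined with the bound $\tw(G)\leqslant\ltw(G)(2r+1)-1$ from \citet[Lemma~6]{DMW17}. The small bookkeeping step (absorbing the $(2r+1)-1$ into a factor of $r$ via $g(k,t):=3f(k,t)$, with the $r=0$ case trivial) is sound.
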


\cref{introradius} is a qualitative generalisation of the following classical result of \citet{RS-III}.

\begin{thm} [\citep{RS-III}] \label{PlanarBoundedRadius} Every planar graph with radius $r$ has treewidth at most $3r + 1$.
    
\end{thm}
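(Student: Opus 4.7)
The plan is to build an explicit tree decomposition indexed by triangular faces, using a BFS spanning tree and planar duality. Since $G$ has radius $r$, there exists $v\in V(G)$ with $\dist_G(v,u)\leq r$ for every $u\in V(G)$. Let $T$ be a BFS spanning tree of $G$ rooted at $v$, and for each $u\in V(G)$ denote by $P_u$ the $v$-to-$u$ path in $T$, so $|V(P_u)|\leq r+1$. I would extend $G$ to a plane triangulation $G^+$ by adding chord edges inside each face while keeping $T$ as a spanning subgraph; the paths $P_u$ are unchanged.

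For each face $F$ of $G^+$ with corners $a,b,c$, define the bag $X_F := V(P_a)\cup V(P_b)\cup V(P_c)$. Since all three paths share the root $v$, we have $|X_F|\leq 3(r+1)-2 = 3r+1$. For the underlying tree, I would take the graph $S$ on the set of faces of $G^+$ whose edges are the dual edges corresponding to primal edges \emph{not} in $T$; a classical consequence of planar duality is that $S$ is a spanning tree of the dual $(G^+)^*$, which matches the count $|E(S)|=(3n-6)-(n-1)=2n-5=|V((G^+)^*)|-1$.

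It remains to verify that $(X_F)_{F\in V(S)}$ is a tree decomposition of $G^+$; vertex and edge coverage are immediate since every edge of $G^+$ lies on some face. The only nontrivial property is that for each $u\in V(G)$ the set $\mathcal{F}(u):=\{F : u\in X_F\}$ is connected in $S$. Since $u\in X_F$ iff some corner of $F$ is a descendant of $u$ in $T$, the set $\mathcal{F}(u)$ consists of all faces touching the descendant-subtree $T_u\subseteq T$. I would prove this connectivity by induction on $|V(T_u)|$: around any vertex $w$ the incident faces form a cycle in $(G^+)^*$, and the tree edges of $T$ incident to $w$ cut this cycle into arcs that become paths in $S$; each child-edge gap is bridged inside $\mathcal{F}(u)$ via the inductive hypothesis applied to that child, and the single missing ``parent-edge'' bridge is harmless because $d+1$ arcs joined cyclically by $d$ bridges remain connected.

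The main obstacle is exactly this connectivity verification, which hinges on tracking the cyclic order of faces around each vertex of $T$ and the placement of tree-edges within that cyclic order, together with a careful base case for leaves of $T_u$. Once this is established, the bag-size bound yields treewidth at most $3r$, which is in fact marginally stronger than the stated bound $3r+1$.
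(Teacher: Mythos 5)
The paper does not prove this theorem; it simply cites Robertson and Seymour \citep{RS-III}, so there is no in-paper argument to compare against. Your proof is correct and is in fact the standard ``tree--cotree'' argument that underlies the Robertson--Seymour bound, so it serves as a self-contained justification for the cited result.

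A few remarks on the details. The triangulation step is safe because a BFS tree $T$ survives arbitrary edge additions, and the paths $P_u$ (and hence the bound $|V(P_u)|\leqslant r+1$) are unaffected; for $n\leqslant 3$ the theorem is trivial, and for $n\geqslant 4$ a plane triangulation is $3$-connected, which guarantees that the faces around each vertex form a genuine cycle in the dual rather than just a closed walk. The claim that the non-tree dual edges form a spanning tree $S$ of $(G^+)^*$ is the interdigitating-trees fact; the edge count you give confirms acyclicity suffices, and acyclicity follows because a dual cycle corresponds to a primal bond, which must contain a $T$-edge. The connectivity argument for $\mathcal F(u)$ is the genuine content, and your induction is sound: the two faces flanking a child edge $uc_i$ lie in $\mathcal F(c_i)$, which is connected by induction and sits inside $\mathcal F(u)$, so each child gap is bridged; a cycle of $d+1$ arcs with $d$ of the $d+1$ gaps bridged is still connected, and the root case (all gaps bridged) is even easier. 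Finally, your bag-size count $|X_F|\leqslant 3(r+1)-2=3r+1$ gives width at most $3r$, so you actually recover a marginally sharper bound than the $3r+1$ stated in the theorem; both are consistent with the literature.
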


\header{Approximation Algorithms} As pointed out by \citet[Section~1]{Eppstein-Algo00}, Baker's method \citep{Baker94} shows that graph classes with bounded local treewidth admit linear-time approximation schemes for many NP-complete problems such as maximum independent set, minimum vertex cover, and minimum dominating set. So by \cref{introradius}, these results hold for topological $k$-matching-planar graphs with no $t$ pairwise crossing edges incident to a common vertex.

\header{Boxicity} 
The \defn{boxicity} of a graph $G$, denoted by \defn{$\boxicity(G)$}, is the minimum integer $d \geqslant 1$, such that $G$ is the intersection graph of axis-aligned boxes in $\mathbb{R}^{d}$. \citet{Thomassen86} showed that planar graphs have boxicity at most $3$. \citet{SW20} showed that $\boxicity(G) \leqslant 6\ltw(G) + 4$ for every graph $G$. Thus \cref{introLTW} implies:

\begin{thm} \label{introboxicity} Let $G$ be a topological $k$-matching-planar graph with no $t$ pairwise crossing edges incident to a common vertex. Then $\boxicity(G) \leqslant f(k, t)$ for some function $f$.
\end{thm}

\header{Generalised Colouring Numbers}
\citet{KY03} introduced the concepts of strong and weak colouring numbers. 
For a graph $G$ and an integer $s\geqslant 1$, $\scol_s(G)$ and $\wcol_s(G)$ denote the $s$-strong colouring number of $G$ and the $s$-weak colouring number of $G$ respectively. Colouring numbers are important because they characterise bounded expansion~\citep{Zhu09} and nowhere dense classes \citep{GKRSS18}, and have several algorithmic applications \cite{Dvorak13,GKS17}. Improving upon previous exponential upper bounds, \citet{HOQRS17} proved that $\scol_{s}(G) \leqslant 5s + 1$ and $\wcol_{s}(G) \leqslant \binom{s + 2}{2}(2s + 1)$ for every planar graph $G$. Van den Heuvel and Wood~\citep{vdHW18} proved that $\scol_{s}(G) \leqslant \ltw(G)(2s + 1)$ for every graph $G$. \citet{KY03} showed that $\wcol_{s}(G) \leqslant (\scol_{s}(G))^{s}$ for every graph~$G$. Hence, we have the following immediate corollary of \cref{introLTW}:

\begin{thm} \label{introstrongcolouring}
\label{kMatchingPlanarScol}
Let $G$ be a topological $k$-matching-planar graph with no $t$ pairwise crossing edges incident to a common vertex. Then $\scol_{s}(G) \leqslant f(k, t)(2s + 1)$ and $\wcol_{s}(G) \leqslant g(k, t, s)$ for some functions $f$ and $g$.
\end{thm}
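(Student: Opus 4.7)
The plan is to directly combine three ingredients that are already in hand: the layered treewidth bound of \cref{introLTW}, the bound $\scol_s(G) \leqslant \ltw(G)(2s+1)$ of \citet{vdHW18}, and the bound $\wcol_s(G) \leqslant (\scol_s(G))^s$ of \citet{KY03}. All three are stated in the paragraph preceding the theorem, so the proof is a short chain of inequalities; no new combinatorial work is required.

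First, I would apply \cref{introLTW} to the given topological $k$-matching-planar graph $G$ (with no $t$ edges incident to a common vertex pairwise crossing) to obtain a function $f_0$ with $\ltw(G) \leqslant f_0(k,t)$. Substituting into the van den Heuvel--Wood inequality immediately yields
\[
\scol_s(G) \;\leqslant\; \ltw(G)\,(2s+1) \;\leqslant\; f_0(k,t)\,(2s+1),
\]
which establishes the first claim with $f := f_0$.

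For the weak colouring number, I would then feed this bound into the Kierstead--Yang inequality:
\[
\wcol_s(G) \;\leqslant\; \bigl(\scol_s(G)\bigr)^{s} \;\leqslant\; \bigl(f_0(k,t)\,(2s+1)\bigr)^{s},
\]
so the second claim holds with $g(k,t,s) := \bigl(f_0(k,t)(2s+1)\bigr)^{s}$. Since every step consists of applying a previously cited inequality, there is no real obstacle; the only mildly delicate point is bookkeeping of the function $f$, which is inherited unchanged from \cref{introLTW}. It is worth emphasising that this proof uses \cref{introLTW} rather than \cref{introRTW}, because the van den Heuvel--Wood bound gives a linear dependence on $\ltw(G)$, which is asymptotically better in $s$ than what one would obtain by routing through row treewidth.
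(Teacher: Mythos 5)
Your proposal is correct and mirrors the paper's own derivation exactly: the authors present this result as "an immediate corollary of \cref{introLTW}" combined with the cited van den Heuvel--Wood and Kierstead--Yang inequalities, which is precisely your chain.
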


Strong and weak colouring numbers upper bound numerous graph parameters of interest, including acyclic chromatic number \citep{KY03}, game chromatic number \citep{KT94,KY03}, Ramsey numbers \citep{CS93}, oriented chromatic number \citep{KSZ-JGT97}, arrangeability \citep{CS93}, odd chromatic number \citep{Hickingbotham23a},  and conflict-free chromatic number \citep{Hickingbotham23a}. Thus, by \cref{kMatchingPlanarScol}, all these parameters are bounded for topological $k$-matching-planar graphs with no $t$ pairwise crossing edges incident to a common vertex.

\header{Asymptotic Dimension} 
Asymptotic dimension is a measure of the large-scale shape of a metric space. First introduced by \citet{Gro93} for the study of geometric groups, it has since been studied within structural graph theory; see \citep{BD08} for a survey on asymptotic dimension. \citet{BBEGLPS24} proved that the class of planar graphs has asymptotic dimension $2$. The proof uses their stronger result that for every integer $k \geqslant 1$, the asymptotic dimension of the class of graphs of layered treewidth at most $k$ is $2$. Thus, the following corollary of \cref{introLTW} generalises their above result for planar graphs.

\begin{thm} \label{introdimension} The class of topological $k$-matching-planar graphs with no $t$ pairwise crossing edges incident to a common vertex has asymptotic dimension $2$.
    
\end{thm}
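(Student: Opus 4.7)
The plan is to reduce the statement directly to the two ingredients already available: the bound on layered treewidth for our class, and the theorem of Bonamy, Bousquet, Esperet, Groenland, Liu, Pirot and Scott that any class of graphs of bounded layered treewidth has asymptotic dimension exactly $2$. Since asymptotic dimension equal to $2$ is a combination of an upper bound and a matching lower bound, I will treat these two halves separately.

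For the upper bound, I would apply \cref{introLTW} to conclude that the class $\mathcal{C}_{k,t}$ of topological $k$-matching-planar graphs in which no $t$ edges incident to a common vertex pairwise cross satisfies $\ltw(G)\leqslant f(k,t)$ for every $G\in\mathcal{C}_{k,t}$. Thus $\mathcal{C}_{k,t}$ is contained in the class of graphs of layered treewidth at most $\ell:=f(k,t)$. The theorem of \citet{BBEGLPS24} states that for every integer $\ell\geqslant 1$, the class of graphs of layered treewidth at most $\ell$ has asymptotic dimension at most $2$. Since asymptotic dimension is monotone under taking subclasses, this immediately gives $\operatorname{asdim}(\mathcal{C}_{k,t})\leqslant 2$.

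For the lower bound, I would observe that every planar graph is trivially a topological $0$-matching-planar graph with no two crossing edges at all, hence lies in $\mathcal{C}_{k,t}$ for all $k\geqslant 0$ and $t\geqslant 2$. By the same theorem of \citet{BBEGLPS24}, the class of planar graphs has asymptotic dimension exactly $2$; alternatively, one can quote the classical fact that the infinite square grid (which is planar) has asymptotic dimension $2$, so already a single member of $\mathcal{C}_{k,t}$ witnesses $\operatorname{asdim}(\mathcal{C}_{k,t})\geqslant 2$. Combining the two bounds yields $\operatorname{asdim}(\mathcal{C}_{k,t})=2$.

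In short, this corollary is a one-line deduction from \cref{introLTW} and the layered-treewidth result of \citet{BBEGLPS24}, with no genuine obstacle once the product structure theorem \cref{introRTW} (and its layered-treewidth consequence \cref{introLTW}) has been established; the only thing to be careful about is explicitly noting that $\mathcal{C}_{k,t}$ contains the planar graphs so that the lower bound on asymptotic dimension is not lost.
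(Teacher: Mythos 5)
Your proposal is correct and follows essentially the same route as the paper: Theorem~\ref{introdimension} is deduced directly from the layered treewidth bound of \cref{introLTW} together with the result of \citet{BBEGLPS24} that the class of graphs of layered treewidth at most $\ell$ has asymptotic dimension $2$, with the lower bound coming from the fact that the class contains all planar graphs. Your extra care in separating the upper and lower bounds is fine (the paper leaves the lower bound implicit); just note that the aside about the infinite square grid being ``a single member'' of the class should be phrased via the family of finite grids, since the class consists of finite graphs.
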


\cref{universal,introqueue,intrononrepetitive,introcentred,introLTW,introseparators,introradius,introboxicity,introstrongcolouring,introdimension}  provide the broadest known criteria for a beyond planar graph class to have the respective structural property (see \cref{RTWbounded} for a detailed discussion).

\subsection{Proof Highlights} \label{ProofHighlights}

The proof of \cref{introRTW} introduces a number of results and techniques of independent interest that we now summarise.

\header{Edge Colouring} We prove that the edges of certain topological $k$-matching-planar graphs can be coloured using a bounded number of colours such that monochromatic edges do not cross. An \defn{edge-colouring} of a graph $G$ is a function $\phi : E(G) \rightarrow \mathcal{C}$ for some set $\mathcal{C}$ whose elements are called \defn{colours}. For a positive integer $c$, if $|\mathcal{C}| = c$ then $\phi$ is a \defn{$c$-edge-colouring}. An \defn{ordered $c$-edge-colouring} is an edge-colouring $\phi : E(G) \rightarrow \{1,\dots,c\}$. We say that an edge-colouring of a topological graph is \defn{transparent} if no two edges of the same colour cross. The \defn{topological thickness} of a topological graph $G$ is the minimum positive integer $c$ such that there exists a transparent $c$-edge-colouring of $G$. This definition is related to the notion of geometric thickness, introduced by \citet{DEH00}. The \defn{geometric thickness} of a graph $G$ is the minimum integer $k$ such that $G$ is isomorphic to a geometric graph $H$ with topological thickness at most $k$ (see \citep{BMW06,DujWoo07,Duncan11,Eppstein04,DEH00,DEK-SoCG04}). 

We prove that every simple topological $k$-matching-planar graph has topological thickness $\mathcal{O}(k^{3}\log k)$. In fact, we prove the following qualitatively stronger result without requiring simplicity.

\begin{thm} \label{introtopologicalthickness} 
Let $G$ be a topological $k$-matching-planar graph with no $t$ pairwise crossing edges incident to a common vertex. Then the topological thickness of $G$ is at most some function $f(k, t)$.
    
\end{thm}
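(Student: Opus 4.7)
The plan is to identify the topological thickness of $G$ with the chromatic number of the \emph{crossing graph} $X(G)$, whose vertex set is $E(G)$ and whose edges join pairs of edges of $G$ that cross in the drawing. A transparent $c$-edge-colouring of $G$ is precisely a proper $c$-colouring of $X(G)$, so the task reduces to bounding $\chi(X(G))$ by a function of $k$ and $t$.

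The first step is to bound the clique number of $X(G)$. If $e_1,\dots,e_m\in E(G)$ pairwise cross, then $e_2,\dots,e_m$ all cross $e_1$, and by $k$-matching-planarity their matching number is at most $k$; thus they admit a vertex cover $U\subseteq V(G)$ with $|U|\le 2k$. By pigeonhole, some $v\in U$ is incident to at least $\lceil (m-1)/(2k)\rceil$ of them, and these are pairwise-crossing edges incident to a common vertex, so the $t$-hypothesis forces $(m-1)/(2k)\le t-1$. Hence $\omega(X(G))\le 2k(t-1)+1$.

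The main challenge is to upgrade this clique bound to a chromatic bound. Naive greedy colouring on $X(G)$ is hopeless: $X(G)$ is generally far from degenerate, as witnessed by $X(K_{3,n})$ which has $\Theta(n^2)$ edges on $3n$ vertices. My plan is to use a \emph{vertex-lift} strategy: for each edge $e$ pick a \emph{root} $r(e)\in e$, for each vertex $u\in V(G)$ pick a colour $c(u)$, and colour every edge $e$ by $c(r(e))$. This edge-colouring is transparent precisely when $c$ properly colours the auxiliary graph $H$ on $V(G)$ defined by $v\sim_H w$ iff some crossing pair of edges has roots $v$ and $w$. For each edge $e$, $k$-matching-planarity supplies a vertex cover $U_e$ of size at most $2k$ of the edges crossing $e$; by arranging that the root of every edge crossing $e$ lies in $U_e$, one forces $N_H(v)\subseteq\bigcup_{e\,:\,r(e)=v}U_e$. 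Combining this with the $\omega$-bound just proved and with the $t$-hypothesis (to prevent too many pairwise-crossing covers from accumulating at a common vertex) should force $H$ to have degeneracy $\mathcal{O}_{k,t}(1)$, and hence $\chi(H)\le f(k,t)$, as required.

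The main obstacle is the consistency of the root assignment: a single edge $f$ typically lies in many crossing sets $S_e$ with incompatible demands on where $r(f)$ should sit, so one cannot in general pick a root of $f$ lying in every relevant $U_e$. I would address this by processing edges in a carefully chosen order -- for instance, iteratively peeling an edge whose already-rooted crossings admit a consistent cover and freezing its root at that point -- or else by assigning roots probabilistically and correcting the bad events via a Lovász-Local-Lemma or alteration argument. The $\log k$ factor in the target $\mathcal{O}(k^3\log k)$ bound for the simple case strongly hints that the final step goes through probabilistic concentration rather than a purely deterministic peeling.
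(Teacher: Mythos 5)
Your reduction to colouring the crossing graph $X_G$ is the right starting point, and your clique bound $\omega(X_G)\le 2k(t-1)+1$ is correct. But there are two genuine gaps in the remainder of the plan, and they are not small.

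First, a technical flaw in the vertex-lift reformulation: if two crossing edges $e,f$ both have root $v$, they both receive colour $c(v)$, so the resulting edge-colouring is not transparent even though $c$ properly colours $H$ (there is no self-loop at $v$ to forbid this). You therefore need an additional pre-processing step to guarantee that crossing edges sharing a root can be avoided. The paper handles this by first colouring the edges incident to each vertex $v$ so that monochromatic edges around $v$ do not cross; this uses the $\chi$-boundedness of outerstring graphs (Rok--Walczak), which is where the $t$-hypothesis enters. The set of edges around $v$ has crossing graph an outerstring graph with clique number $<t$, so it needs at most a bounded number of ``sub-roots'' per vertex. Only then does the colouring reduce to the case where no two edges incident to a common vertex cross, i.e.\ to star-forests.

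Second, and more seriously, the step from $\omega$ to $\chi$ is not merely ``the main challenge'' --- it is where a new structural idea is required, and neither greedy peeling nor an LLL/alteration argument supplies it. The class of crossing graphs of topological graphs is exactly the class of string graphs, and string graphs are \emph{not} $\chi$-bounded (Pawlik et al., cited in the paper), so a clique bound on $X_G$ by itself gives you nothing. The paper's route is: (a) Pinchasi's edge-density bound for $k$-cover-planar graphs gives bounded arboricity, hence bounded star arboricity, which decomposes $G$ into $\mathcal{O}(k)$ star-forests in which no two edges at a vertex cross; (b) for a single such star-forest, pass to the \emph{quotient} crossing graph whose vertices are the star-components (each star-component is replaced by a single Jordan curve, so this quotient is again a string graph), and prove by a delicate topological argument --- building a Jordan curve out of at most eight edges and counting the star-components that must cross it --- that this quotient is $K_{s,s}$-free for $s = \Theta(k^2)$; (c) apply Lee's theorem that $K_{s,s}$-free string graphs are $\mathcal{O}(s\log s)$-degenerate, which gives a proper $\mathcal{O}(k^2\log k)$-colouring of the star-components. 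The $\log k$ you correctly spotted is the logarithm in Lee's degeneracy bound, not a probabilistic concentration loss. The biclique-freeness of the quotient crossing graph is the missing key lemma: a clique bound on $X_G$ is far too weak, and without the $K_{s,s}$-freeness plus Lee's theorem there is no route to degeneracy of $H$.
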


The \defn{crossing graph} of a topological graph $G$ is the graph \defn{$X_{G}$} with vertex set $E(G)$, where distinct $e,f\in E(G)$ are adjacent in $X_G$ if and only if $e$ and $f$ cross in $G$. By definition, the topological thickness of a topological graph $G$ equals the chromatic number of $X_{G}$. A graph $G$ is \defn{$d$-degenerate} if every subgraph of $G$ has minimum degree at most $d$. A greedy algorithm shows that every $d$-degenerate graph is $(d + 1)$-colourable. The crossing graph of a topological $k$-planar graph has maximum degree at most $k$ and is thus $k$-degenerate. Hence, every topological $k$-planar graph has topological thickness at most $k + 1$. On the other hand, if $G$ is $k$-matching-planar, then $X_{G}$ can be dense. For example, if $G$ consists of two crossing stars, each with $n$ leaves (shown in \cref{crossingstars}(b)),
then $G$ is a topological 1-matching-planar graph, but $X_{G}$ is isomorphic to $K_{n,n}$, which is dense with unbounded degeneracy. So \cref{introtopologicalthickness} says that a certain class of dense graphs has bounded chromatic number, which therefore is an interesting and non-trivial result.

\header{Coloured Planarisations} Associated with every topological graph $G$ is the planarisation $G'$, which is obtained from $G$ by placing a `dummy' vertex at every crossing point (see \cref{topologicalgraphs}). 
The planarisation $G'$ can be useful in proving that a structural property of planar graphs also holds for topological graphs with few crossings per edge. For example, for every edge $uv$ of a topological $k$-planar graph $G$, $\dist_{G'}(u, v) \leqslant k + 1$. This is the starting point for the proof by \citet{DMW23} and \citet{HW24} that $k$-planar graphs have bounded row treewidth. However, this distance property ceases to be true for topological graphs with many crossings per edge, and this makes the standard planarisation method unsuitable for our purposes. To address this issue, we introduce the notion of a coloured planarisation. Given a topological graph $G$ and a transparent ordered $c$-edge-colouring $\phi$ of $G$, the coloured planarisation $G^{\phi}$ is obtained from $G'$ by contracting certain edges. This enables us to prove an analogous `distance property' for coloured planarisations of certain topological graphs (\cref{distancelemma}). Combining this with other properties, we show that $G$ inherits certain structural properties of the planar graph $G^{\phi}$.

We believe that coloured planarisations are of independent interest and might be applicable for other problems about topological graphs with an unbounded number of crossings per edge.

\header{Weak Shallow Minors} As mentioned above, building on the work of \citet{DMW23}, \citet{HW24} proved product structure theorems for several beyond planar graph classes. Their key observation is that several beyond planar graph classes can be described as a shallow minor of the strong product of a planar graph with a small complete graph. Shallow minors are fundamental to graph sparsity theory (see the book of \citet{Sparsity}). Extending this idea, we introduce the concept of weak shallow minors, which subsume and generalise shallow minors. Generalising a result of \citet{DMW17} about shallow minors, we show that layered treewidth is well-behaved under weak shallow minors (\cref{ltwWeakShallowMinor}). We prove that every weak shallow minor of the strong product of a graph with bounded Euler genus and a small complete graph has bounded row treewidth (\cref{thm:RTWmain}). Interestingly, this statement is false if the `bounded Euler genus' assumption is relaxed. In particular, we construct graphs with arbitrarily large row treewidth that are weak shallow minors of graphs with row treewidth $2$ (\cref{bigrtw}). We thus consider our methods to be pushing the boundaries of graph product structure theory. We believe that the concept of weak shallow minors is of independent interest in graph sparsity theory.

\header{Proof Sketch} Here is a brief sketch of the proofs of \cref{introRTW,introLTW}. Let $G$ be a topological $k$-matching-planar graph with no $t$ pairwise crossing edges incident to a common vertex. By \cref{introtopologicalthickness}, there exists an ordered $c$-edge-colouring $\phi$ of $G$ for some integer $c$ bounded by a function of $k$ and $t$. We use some properties of coloured planarisations to establish that $G$ is a weak shallow minor of $G^{\phi} \boxtimes K_{\ell}$ for some small $\ell$. Using our above-mentioned results about the behaviour of row treewidth and layered treewidth under weak shallow minors, we establish the desired upper bounds on $\rtw(G)$ and $\ltw(G)$.

\subsection{Treewidth and Circular Graphs} \label{introtreewidthsection}

As a byproduct of our proof techniques, we prove upper bounds on the treewidth of circular graphs that are more general than the existing results. Here, a \defn{circular graph} is a geometric graph with its vertices positioned on a circle. Circular graphs (also known as \defn{circular} or \defn{convex} drawings of graphs) are well studied in the literature. For example, there is large literature on the \defn{book thickness} of a graph $G$ (also called \defn{page-number} or \defn{stack-number}), which is equivalent to the minimum integer $k$ such that $G$ is isomorphic to a circular graph with topological thickness $k$; see \citep{Eppstein04,Yann89,Yann20,BKKPRU20,Malitz94a,Malitz94b,DujWoo07,BBKR17}.

If a graph has a well-behaved circular drawing, must the structure of the graph be well-behaved? Circular graphs with no crossings are exactly the outerplanar graphs, which have treewidth at most $2$. Circular $k$-planar graphs (also known as \defn{outer $k$-planar} drawings) were first studied by \citet{WT07}, who proved that the treewidth of every circular $k$-planar graph is at most $3k + 11$; this bound was further improved to $\frac{3}{2}k + 2$ by \citet{FGKO024}. We prove the following result for circular $k$-matching-planar graphs, which is a qualitative generalisation of the above results. 

\begin{thm} \label{introcircularkcoverplanar} 
Every circular $k$-matching-planar graph has treewidth $\mathcal{O}(k^3\log^{2}k)$.
\end{thm}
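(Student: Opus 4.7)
The plan is to combine the edge-colouring result of \cref{introtopologicalthickness} with the coloured-planarisation and weak-shallow-minor machinery developed in the paper, specialised to the circular setting. First observe that in any circular drawing, two edges sharing an endpoint are chords emanating from the same boundary point into the disc and therefore cannot cross. Hence no two edges of $G$ incident to a common vertex pairwise cross, so \cref{introtopologicalthickness} applies with $t=2$ and produces a transparent ordered $c$-edge-colouring $\phi$ of $G$ with $c = \mathcal{O}(k^{3}\log k)$ colours.

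Next I would form the coloured planarisation $G^{\phi}$. In the circular setting the original vertices of $G$ all lie on the outer boundary of the disc, and the key structural step I would verify is that the contractions defining $G^{\phi}$ preserve this property, so that $G^{\phi}$ is outerplanar and in particular $\tw(G^{\phi})\leqslant 2$. The Distance Lemma then bounds $\dist_{G^{\phi}}(u,v)$ in terms of $c$ for every edge $uv\in E(G)$, which is precisely the setting in which the paper's weak-shallow-minor argument upgrades to a strong-product containment $G\subseteq G^{\phi}\boxtimes K_{\ell}$ for some $\ell=\mathcal{O}(c\log c)$, with the extra factor of $\log c$ arising from a covering argument across the $c$ colour classes inside the coloured planarisation. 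Because $G^{\phi}$ is outerplanar, the standard strong-product bound gives $\tw(G^{\phi}\boxtimes K_{\ell})\leqslant (\tw(G^{\phi})+1)\ell-1=\mathcal{O}(\ell)$, and therefore $\tw(G)\leqslant \mathcal{O}(\ell)=\mathcal{O}(c\log c)=\mathcal{O}(k^{3}\log^{2}k)$.

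The main obstacle is the circular-specific structural claim that $G^{\phi}$ is outerplanar, which requires carefully tracking how the contractions in the coloured planarisation interact with the boundary of the disc; and, in parallel, establishing the multiplicative factor $\ell=\mathcal{O}(c\log c)$ in the strong-product containment via the weak-shallow-minor framework. This $\log c$ overhead is exactly what separates the treewidth bound $\mathcal{O}(k^{3}\log^{2}k)$ from the thickness bound $\mathcal{O}(k^{3}\log k)$ of \cref{introtopologicalthickness}; everything else (the outerplanarity of $G^{\phi}$ and the strong-product treewidth inequality) is essentially a clean specialisation of the tools the paper has already set up for the planar and bounded-genus cases.
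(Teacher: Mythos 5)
There are genuine gaps in your plan, and they sit exactly at the two places you flag as "the main obstacle". First, the claim that $G^{\phi}$ is outerplanar is false: the vertices of $G^{\phi}\setminus V(G)$ come from (contracted sections of) crossing points, which lie in the interior of the disc, so they are not on the outer face. What is actually true (and what the paper uses in \cref{circulardrawings}) is that every vertex of $G^{\phi}$ is within distance $c-1$ of a vertex of $G$, so $G^{\phi}$ is $c$-outerplanar and hence $\tw(G^{\phi})\leqslant 3c-1$ by Bodlaender's theorem -- a bound of order $c$, not $2$. Second, the claimed containment $G\subseteq G^{\phi}\boxtimes K_{\ell}$ with $\ell=\mathcal{O}(c\log c)$ "from a covering argument across the colour classes" is not established and does not match the actual Coloured Planarisation Lemma (\cref{generallemma}\ref{CPLb}), which in the circular case gives a minor model of $G$ in $G^{\phi}\boxtimes K_{1+3(c-1)m}$, where $m$ is the per-colour matching bound (here $m\leqslant k$); there is no $\log c$ factor, and the $k$-dependence enters through $m$. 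The Distance Lemma and the weak-shallow-minor machinery are not needed for this treewidth bound at all -- plain minor containment suffices since treewidth is minor-monotone.

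Third, even after repairing these two steps, your choice of colouring is too weak for the stated bound. Using \cref{introtopologicalthickness} with $t=2$ gives $c=\mathcal{O}(k^{3}\log k)$, and the correct combination $\tw(G)\leqslant(\tw(G^{\phi})+1)(1+3(c-1)k)-1=\mathcal{O}(k\,c^{2})$ then yields only $\mathcal{O}(k^{7}\log^{2}k)$. The paper instead exploits your (correct) opening observation differently: since crossing chords of a circle never share an endpoint, a circular $k$-matching-planar graph has no $k+2$ pairwise crossing edges, so Davies' $\chi$-boundedness result for circle graphs gives a transparent colouring with only $c=\mathcal{O}(k\log k)$ colours; feeding this into \cref{circulardrawings} gives $\tw(G)\leqslant 9kc(c-1)+3c-1=\mathcal{O}(k^{3}\log^{2}k)$, as in \cref{outerkcoverplanar}. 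Your final arithmetic $\mathcal{O}(c\log c)=\mathcal{O}(k^{3}\log^{2}k)$ only comes out right because it rests on the two unsupported intermediate claims.
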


A topological graph is \defn{min-$k$-planar}~\citep{BBBDD00LM24} if for any crossing edges $e$ and $f$, at least one of $e$ or $f$ is involved in at most $k$ crossings. Circular min-$k$-planar graphs are also known as \defn{outer min-$k$-planar} drawings. \citet{WT07} actually proved the following result, which is stronger than their above result for circular $k$-planar graphs.

\begin{thm} [\citep{WT07}] \label{knownresult} 
Every circular min-$k$-planar graph has treewidth at most $3k + 11$.

\end{thm}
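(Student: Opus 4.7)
The plan is to proceed by induction on $n := |V(G)|$, with base case $n \leqslant 3k + 12$ where trivially $\tw(G) \leqslant n - 1 \leqslant 3k + 11$. For the inductive step, the goal is to exhibit a separator $S \subseteq V(G)$ of size at most $3k + 12$ whose removal splits $G$ into strictly smaller subgraphs (each still circular min-$k$-planar, since this property is inherited by induced subgraphs: the circular order is preserved and the set of crossings can only shrink), and then assemble the recursive tree decompositions with $S$ as a common bag.

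To find $S$, first observe that the min-$k$-planarity condition guarantees an edge $e = uv$ that is crossed by at most $k$ other edges: if $G$ has no crossings it is outerplanar with $\tw(G) \leqslant 2$, and otherwise any two crossing edges must include one with at most $k$ crossings. Let $F$ be the set of edges crossing $e$, so $|F| \leqslant k$, and let $S_0 := \{u, v\} \cup V(F)$ where $V(F)$ denotes the endpoints of edges in $F$, so that $|S_0| \leqslant 2k + 2$. Because $G$ is circular, the chord $e$ splits the disk into two arcs $A_1, A_2$, and every edge with one endpoint in each arc must cross $e$ and therefore lie in $F$; hence $S_0$ separates $A_1 \setminus S_0$ from $A_2 \setminus S_0$. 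In the generic case where both arcs contain vertices outside $S_0$, recursing on $G[A_1 \cup S_0]$ and $G[A_2 \cup S_0]$ produces strictly smaller instances, giving $\tw(G) \leqslant \max(3k + 11, |S_0| - 1) = 3k + 11$.

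I expect the main obstacle to be the degenerate case where one arc, say $A_2$, is entirely absorbed into $S_0$ (this forces $|A_2| \leqslant 2k$), so that the naive recursion on $G[A_1 \cup S_0]$ does not reduce $n$. I would handle this by inflating the separator to $S := S_0 \cup A_2$, which still satisfies $|S| \leqslant 3k + 2$ in the relevant regime, and recursing only on $G[A_1 \cup \{u, v\}]$, which has strictly fewer vertices provided $A_2 \neq \emptyset$. The remaining boundary case $A_2 = \emptyset$ (i.e.\ $u, v$ are consecutive on the circle) is resolved either by selecting a different edge whose chord is non-trivial, or by peeling off one of $u, v$ as a single-vertex separator augmented by its circular neighbours. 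This absorption step, together with headroom for the base case, is precisely the source of the slack between the naive bound $2k + 1$ arising from $|S_0|$ alone and the stated bound $3k + 11$; the choice of base threshold $3k + 12$ subsumes all small instances where the separator-driven argument is not guaranteed to make progress.
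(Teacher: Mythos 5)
First, note that the paper does not prove this statement itself; it is quoted from \citet{WT07}, so the only thing to assess is whether your sketch would constitute a valid proof. It would not, as written: the assembly step is where the argument breaks. From tree decompositions of $G_1 := G[A_1 \cup S_0]$ and $G_2 := G[A_2 \cup S_0]$ of width at most $3k+11$, you cannot conclude $\tw(G) \leqslant \max(3k+11, |S_0|-1)$ by ``assembling the recursive tree decompositions with $S_0$ as a common bag''. For that gluing to respect the connectivity condition of a tree decomposition, each recursive decomposition must already contain a bag containing all of $S_0$, and the bare inductive hypothesis ``treewidth at most $3k+11$'' gives no such bag. Without it, the inequality $\tw(G) \leqslant \max(\tw(G_1), \tw(G_2), |S|-1)$ for a separator $S$ is simply false: in $C_4$ with parts $\{a,b\}$ and separator $S=\{x,y\}$, both sides $G[\{a,x,y\}]$ and $G[\{b,x,y\}]$ have treewidth $1$ and $|S|-1=1$, yet $\tw(C_4)=2$. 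The standard repairs are either to add a clique on $S_0$ (but then $G_1, G_2$ need no longer be circular min-$k$-planar, so induction does not apply) or to add $S_0$ to every bag of both decompositions (but then the width grows by $|S_0|$ at every level of recursion, giving no absolute bound).

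The genuine fix is to strengthen the induction hypothesis to something like ``every circular min-$k$-planar graph with a prescribed boundary set $X$ of size at most $c(k)$ (typically an interval of consecutive circle vertices together with $O(k)$ crossing-edge endpoints) has a tree decomposition of width at most $3k+11$ in which $X$ lies in a single bag'', and then to show that the boundary set can be kept of size $O(k)$ through the recursion — i.e.\ that when you cut along a lightly crossed chord relative to the current boundary, the new interface ($S_0$ merged with the old boundary) does not accumulate. This bookkeeping is exactly where the constant $3k+11$ comes from in the Wood--Telle argument, and it is the part your proposal glosses over; by contrast, the degenerate case you focus on (one arc absorbed into $S_0$) is comparatively minor. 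So the skeleton (cut along a chord crossed at most $k$ times, whose crossing edges plus endpoints separate the two arcs) is the right starting point, but the proof is incomplete without the strengthened invariant and the argument that the carried boundary stays bounded.
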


\citet{FGKO024} slightly improved this upper bound from $3k + 11$ to $3k + 1$ for $k \geqslant 1$, which was further improved to $3\lfloor \frac{k}{2} \rfloor + 4$ by \citet{Pyzik25}.

We prove the following strengthening of \cref{introcircularkcoverplanar}, which also qualitatively generalises \cref{knownresult} (see \cref{result} for a detailed discussion).

\begin{thm} \label{introcirculardrawingscoloured} 
Let $G$ be a circular graph with a transparent ordered $c$-edge-colouring. Suppose that for any $i, j \in\{1, \dots, c\}$ with $i < j$ and for any edge $e$ of colour $i$, the matching number of the set of edges of colour $j$ that cross $e$ is at most $m$. Then $\tw(G) \leqslant 9mc(c - 1) + 3c - 1$.
\end{thm}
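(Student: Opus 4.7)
The plan is to invoke the coloured planarisation machinery developed earlier in the paper, specialised to the circular setting.

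First, I would construct the coloured planarisation $G^{\phi}$ of $G$ with respect to the given transparent ordered $c$-edge-colouring $\phi$. Because $G$ is drawn with all vertices on the boundary circle, and the contractions that define $G^{\phi}$ from the ordinary planarisation $G'$ respect this cyclic boundary (each contracted edge lies inside the disk and does not interfere with the outer face), I would argue that $G^{\phi}$ is outerplanar; in particular $\tw(G^{\phi}) \leq 2$.

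Next, I would show that $G$ is contained as a subgraph of $G^{\phi} \boxtimes K_{\ell}$ for some $\ell \leq 3mc(c-1) + c$. The intuition is that at each vertex of $G^{\phi}$ only a bounded number of ``copies'' coming from distinct edges of $G$ need to be represented: by K\"onig's bound, the matching hypothesis gives that, for each $i<j$ and each colour-$i$ edge $e$, the colour-$j$ edges crossing $e$ admit a vertex cover of size at most $2m$. Summing these vertex-cover contributions across the $\binom{c}{2}$ unordered colour pairs, together with a constant per-colour contribution from the triangulation structure of the outerplanar graph $G^{\phi}$, yields the stated bound on $\ell$.

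Finally, applying the standard strong-product estimate $\tw(H \boxtimes K_{\ell}) \leq \ell(\tw(H)+1) - 1$ with $H = G^{\phi}$ gives
\[
\tw(G) \;\leq\; \ell(\tw(G^{\phi})+1) - 1 \;\leq\; 3\ell - 1 \;\leq\; 9mc(c-1) + 3c - 1,
\]
as required.

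The main technical obstacle is establishing the containment $G \subseteq G^{\phi} \boxtimes K_{\ell}$ with the correct bound on $\ell$. The asymmetry of the matching hypothesis (only $i < j$) forces a colour-by-colour analysis: each higher-colour crossing of a colour-$i$ edge is charged to a $2m$-sized vertex cover at that colour level, and the nesting of each same-coloured family (ensured by transparency and the circular drawing, which makes each colour class outerplanar) is what keeps the accounting from double-counting across layers.
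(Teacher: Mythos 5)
The key claim that $G^{\phi}$ is outerplanar is false, and this is a genuine gap. The coloured planarisation $G^{\phi}$ contains internal vertices — the images under $\psi$ of the dummy vertices created at crossing points — and these do not lie on the outer face (see \cref{firstexample}(c), where black vertices sit strictly inside the disk). What is true, and what the paper establishes via \cref{6}, is that every vertex of $G^{\phi}$ is within distance $c-1$ in $G^{\phi}$ of a vertex of $G$; since the vertices of $G$ lie on the outer face, $G^{\phi}$ is $c$-outerplanar, and by Bodlaender's bound one gets $\tw(G^{\phi}) \leqslant 3c-1$, not $\leqslant 2$.

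Separately, the bound you state for $\ell$, namely $\ell \leqslant 3mc(c-1) + c$, is both unjustified and much larger than what the \CPL{} (\cref{generallemma}\ref{CPLb}) actually gives, which is $\ell \leqslant 1 + 3(c-1)m$ in the circular case. The mechanism is not a sum of vertex-cover contributions over all $\binom{c}{2}$ colour pairs: rather, the paper fixes a vertex $x$ of $G^{\phi}$ of level $i_0$, observes that only colours $i \geqslant i_0$ contribute branch sets through $x$, and bounds each such colour's contribution by $sm$ where $s = 3$ is the star arboricity of outerplanar graphs (each colour class is outerplanar since $G$ is circular and $\phi$ is transparent). Also, the conclusion of the \CPL{} is that $G$ is a \emph{minor} of $G^{\phi} \boxtimes K_{\ell}$, not a subgraph.

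Your two errors — underestimating $\tw(G^{\phi})$ by roughly a factor of $c$ and overestimating $\ell$ by roughly a factor of $c$ — happen to cancel in the final computation, which is why you still arrive at $9mc(c-1) + 3c - 1$. But the derivation does not hold up: to make the argument correct you need $\tw(G^{\phi}) \leqslant 3c - 1$ and $\ell \leqslant 1 + 3(c-1)m$, exactly as in \cref{circulardrawings}.
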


Note that \cref{introcircularkcoverplanar,introcirculardrawingscoloured} allow an unbounded number of crossings on every edge (see 
\cref{crossingstars}(b)), unlike the previous known results mentioned above. In particular, every circular $k$-planar graph or circular min-$k$-planar graph has edges that are involved in at most $k$ crossings. There is another relevant result in this direction due to \citet{HIMW24}, who proved that a circular graph $G$ satisfies $\tw(G) \leqslant 12t - 23$ if the crossing graph $X_{G}$ is $K_{t}$-minor-free and $t \geqslant 3$. Since $K_{t}$-minor-free graphs are $\mathcal{O}(t\sqrt{\log t})$-degenerate~\citep{Kostochka82,Kostochka84,Thomason84}, there must be edges involved in a bounded number of crossings in such graphs $G$.

We also prove a result related to \cref{introradius} that bounds the treewidth of (not necessarily circular) topological graphs with bounded radius and generalises \cref{PlanarBoundedRadius,introcircularkcoverplanar,introcirculardrawingscoloured} (see \cref{spanningtree}).

\subsection{Outline}

The remainder of the paper is organised as follows. In \cref{preliminaries}, we give basic definitions and review relevant background, including treewidth, layered treewidth, graph products, and topological graphs. We also introduce $k$-cover-planar graphs, which are closely related to $k$-matching-planar graphs. We provide a detailed overview of existing beyond planar graph classes and their relationships to $k$-matching-planar graphs. In \cref{tools}, we define the coloured planarisation and analyse its basic properties. We prove the so-called `Coloured Planarisation Lemma' and `Distance Lemma', which are used in the proofs of our main results providing upper bounds on row treewidth, layered treewidth, and treewidth. In \cref{result}, we prove our results upper-bounding the treewidth of certain beyond planar graphs. In \cref{SectionColouring}, we analyse edge-colourings of topological $k$-matching-planar graphs and prove \cref{introtopologicalthickness}. In \cref{WeakShallowMinorsSection}, we introduce the concept of weak shallow minors. We analyse how row treewidth and layered treewidth behave under weak shallow minors. We prove our main result (\cref{thm:RTWmain}) of this section, which says that every weak shallow minor of the strong product of a graph with bounded genus and a small complete graph admits a product structure theorem. \cref{FinalSection} combines the above material to finish the main proofs. 
In particular, we apply the Coloured Planarisation Lemma, the Distance Lemma and the results of \cref{SectionColouring} to show that certain topological $k$-matching-planar graphs are weak shallow minors of the strong product of a planar graph with a
small complete graph. We use this result and some results of \cref{WeakShallowMinorsSection} to prove \cref{introRTW,introLTW}. Finally, \cref{OpenProblems} concludes with open problems.

\section{Preliminaries} \label{preliminaries}

\subsection{Graph Basics} \label{graphbasics}

We consider simple finite undirected graphs $G$ with vertex set $V(G)$ and edge set $E(G)$. For any undefined graph-theoretic terminology, see \citep{Diestel5}.

A \defn{class} of graphs is a family of graphs that is closed under isomorphism.

The \defn{radius} of a connected graph $G$ is the minimum non-negative integer $r$ such that for some vertex $v \in V(G)$ and for every vertex $w \in V(G)$ we have $\dist_{G}(v, w) \leqslant r$. 

A \defn{matching} is a set of pairwise disjoint edges in a graph. Let $E \subseteq E(G)$ be a set of edges of a graph $G$. The \defn{matching number} of $E$, denoted \defn{$\mu(E)$}, is the size of a largest matching in $G$ that consists of edges in $E$.  A \defn{vertex cover} of $E$ is a set $U \subseteq V(G)$ of vertices such that every edge of $E$ is incident to $U$. The \defn{vertex cover number} of $E$, denoted \defn{$\tau(E)$}, is the minimum size of a vertex cover of $E$. It is folklore that:
\begin{equation} \label{folklore} \tag{1}
\mu(E) \leqslant \tau(E) \leqslant 2\mu(E).
\end{equation}

Let $G$ be a graph. For a set of vertices $V_{1} \subseteq V(G)$, the subgraph of $G$ \defn{induced by $V_{1}$}, denoted \defn{$G[V_{1}]$}, has vertex set $V_{1}$ and its edge set is the set of edges of $G$ with both endpoints in $V_{1}$.  
For a set of edges $E_{1} \subseteq E(G)$, the subgraph of $G$ \defn{induced by $E_{1}$} has edge set $E_1$ and vertex set the set of endpoints of edges in $E_{1}$.

Let $G$ be a graph and $t \geqslant 1$ be an integer. The \defn{$t$-th power} of $G$, denoted \defn{$G^t$}, is the graph with $V(G^t) := V(G)$ and $uv \in E(G^t)$ if and only if $\dist_{G}(u, v) \leqslant t$ and $u \neq v$.

For graphs $G$ and $H$, we say that $G$ is \defn{$H$-free} if $H$ is not isomorphic to a subgraph of $G$.

Let $G$ be a graph. We denote the chromatic number
of $G$ by \defn{$\chi(G)$}, and its clique number (the cardinality of its largest clique) by \defn{$\omega(G)$}. A class of graphs $\mathcal{G}$ is \defn{$\chi$-bounded} if there is a function $f : \mathbb{N} \to \mathbb{N}$ such that every graph $G \in \mathcal{G}$ satisfies $\chi(G) \leqslant f(\omega(G))$.

A \defn{walk} in a graph $G$ is a sequence
$(v_{1}, v_{2},\dots, v_{t})$ of vertices in $G$ such that $v_{i}v_{i + 1} \in E(G)$ for each $i \in \{1,\dots, t - 1\}$. A \defn{path} in a graph $G$ is a walk $(v_{1}, v_{2},\dots, v_{t})$ in $G$ such that $v_{i} \neq v_{j}$ for all distinct $i, j \in \{1,\dots,t\}$. Let $W = (v_{1}, v_{2}, \dots, v_{t})$ be a walk in a graph $G$. We say that $v_{1}$ and $v_{t}$ are the \defn{endpoints} of $W$. For any $i \in \{1,\dots,i - 1\}$, $v_{i}$ and $v_{i + 1}$ are \defn{consecutive} vertices in $W$.

A graph $S$ is a \defn{star} if it is isomorphic to $K_{1}$ or $K_{1, t}$ for some $t \geqslant 1$.  If $S$ is isomorphic to $K_{1}$ or $K_{1, 1}$, then a \defn{centre} of $S$ is an arbitrary vertex of $S$. If $S$ is isomorphic to $K_{1, t}$ for some $t \geqslant 2$, then the \defn{centre} of $S$ is the vertex of $S$ with degree $t$. A graph $G$ is a \defn{star-forest} if $G$ is a forest where every connected component is a star.

The \defn{arboricity} of a graph $G$ is the minimum number of edge-disjoint forests whose union is $G$. The \defn{star arboricity} of a graph $G$, denoted  \defn{$\st(G)$}, is the minimum number of edge-disjoint star-forests whose union is $G$.

The \defn{Euler genus} of a surface with $h$ handles and $c$ cross-caps is $2h+c$. The \defn{Euler genus} of a graph $G$ is the minimum Euler genus of a surface in which $G$ embeds without crossings.

\subsection{Treewidth, Layered Treewidth, Minors, and Graph Products}
\label{LTWsubsection}

For a graph $G$, a \defn{tree decomposition} is a pair $(T, B)$ such that:

\begin{itemize}
    \item $T$ is a tree and $B : V(T) \rightarrow 2^{V(G)}$ is a function,
    \item for every edge $vw \in E(G)$, there exists a node $t \in V(T)$ with $v, w \in B(t)$, and
    \item for every vertex $v \in V(G)$, the subgraph of $T$ induced by $\{t \in V(T) : v \in B(t)\}$ is a non-empty (connected) subtree of $T$.
\end{itemize}

The sets $B(t)$ where $t \in V(T)$ are called \defn{bags} of $(T, B)$. The \defn{width} of a tree decomposition $(T, B)$ is $\max\{|B(t)| : t \in V(T)\} - 1$. The \defn{treewidth} of $G$, denoted \defn{$\tw(G)$}, is the minimum width of a tree decomposition of $G$. Tree decompositions were introduced by \citet{RS-II}. Graphs of bounded treewidth are considered to be ‘easy’ and many problems can be solved for graphs of bounded treewidth. Numerous algorithmic problems can be solved in linear time on any graph class with bounded treewidth \citep{Courcelle90}.

A \defn{vertex-partition}, or simply \defn{partition}, of a graph $G$ is a set $\mathcal{P}$ of non-empty sets (called \defn{parts}) of vertices in $G$ such that each vertex of $G$ is in exactly one element of $\mathcal{P}$. A \defn{layering} of a graph $G$ is a partition $(V_{0}, V_{1},\dots, V_{s})$ of $G$ such that for every edge $vw \in E(G)$, if $v \in V_{i}$ and $w \in V_{j}$, then $|i - j| \leqslant 1$. Each set $V_{i}$ is called a \defn{layer}. The \defn{layered width} of a tree decomposition $(T, B)$ of a graph $G$ is the minimum integer $\ell$ such that, for some layering $(V_{0}, V_{1},\dots, V_{s})$ of $G$, each bag $B(t)$ contains at most $\ell$ vertices in each layer $V_{i}$. The \defn{layered treewidth} of a graph $G$ is the minimum layered width of a tree decomposition of $G$.

We now compare layered treewidth to row treewidth. \cref{LTWandRTW} says that $\ltw(G) \leqslant \rtw(G) + 1$ for every graph $G$. On the other hand, \citet{BDJMW22} showed that row treewidth cannot be upper bounded by any function of layered treewidth.

\begin{thm} [\citep{BDJMW22}] \label{LTWandRTWcomparison} For every integer $n \geqslant 1$, there is a graph with layered treewidth $1$ and row treewidth at least $n$.
\end{thm}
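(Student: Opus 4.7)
The plan is to exhibit an explicit family of graphs $(G_n)_{n \geqslant 1}$ witnessing the separation: $\ltw(G_n) \leqslant 1$ while $\rtw(G_n) \geqslant n$. A first useful observation is that any graph $G$ with $\ltw(G)=1$ must be triangle-free: the three vertices of a triangle all lie in at most two consecutive layers of any valid layering, so two of them share a layer, and then no bag can witness the edge between those two vertices while meeting each layer in at most one vertex. Consequently the separating graphs must be found among triangle-free graphs, and the natural ``clique-minor'' lower-bound technique (if $G$ contains $K_n$ then any $H$ with $G \subseteq H \boxtimes P$ contains $K_{\lceil n/2 \rceil}$, forcing $\tw(H) \geqslant \lceil n/2\rceil - 1$) is unavailable.

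My approach would be to define $G_n$ via a \emph{subtree-intersection} construction: choose a tree $T_n$, a layering $L_0, L_1, \dots, L_s$ of the vertex set of $G_n$, and for every vertex $v$ a subtree $T_v \subseteq T_n$, subject to the rule that subtrees of same-layer vertices are pairwise disjoint and $uv \in E(G_n)$ holds precisely when $u, v$ lie in consecutive layers with $T_u \cap T_v \ne \emptyset$. Setting $B(t) := \{v : t \in T_v\}$, the decomposition $(T_n, B)$ then automatically has layered width exactly $1$, so the bound $\ltw(G_n) \leqslant 1$ is built into the construction.

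The hard step is the lower bound $\rtw(G_n) \geqslant n$. Assuming $G_n \subseteq H \boxtimes P$ for some graph $H$ and path $P$, I would try to extract a large bramble in $H$ (or equivalently a minor of $H$ of treewidth $\geqslant n$) from the combinatorial richness built into $G_n$ by the subtree assignment. Since triangles and, more generally, short $K_{2,2}$-witnesses in two consecutive layers are forbidden by $\ltw = 1$, the argument must route through a subdivided-clique or topological-minor witness: each vertex of the target minor is realised as a connected subgraph of $G_n$ whose image under the coordinate projection to $V(H)$ is small and forces adjacency in $H$, thereby producing the required $K_{n+1}$-minor or bramble of order $n + 1$ in $H$.

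The main obstacle is balancing the two competing constraints: the subtree construction must be ``tree-thin'' in every layer (so that pairwise-disjoint subtrees realise $\ltw = 1$), yet ``globally intricate'' enough that no layering along any path $P$ can collapse the graph into an $H$ of small treewidth. Achieving this balance likely calls for a recursive or iterated construction amplifying the row treewidth while preserving the subtree-intersection structure, so that for every target value $n$ one obtains a concrete witness graph whose embedding into any $H \boxtimes P$ forces $\tw(H) \geqslant n$.
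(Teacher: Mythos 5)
The theorem you are asked to prove is stated in the paper only as a citation to \citep{BDJMW22}; the paper itself gives no proof, so there is nothing in-text for you to match. Your write-up is therefore evaluated on its own merits, and judged that way it is a plan rather than a proof. Your preliminary observations are sound: a graph of layered treewidth $1$ must be triangle-free (any triangle occupies at most two consecutive layers, so two of its vertices share a layer, and no bag can then contain that edge while meeting each layer in at most one vertex), and consequently the usual clique-based lower bound on row treewidth is unavailable. Your ``subtree-intersection'' framework is also a legitimate way to guarantee $\ltw(G_n)\leqslant 1$; it is essentially equivalent to the characterisation the paper itself records in \cref{ltw1}, where graphs of layered treewidth $1$ are obtained from $T\square P$ by contracting and deleting edges of the form $(x,i)(y,i)$.

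The genuine gap is that everything load-bearing is missing. You never specify which tree $T_n$, which layering, and which subtree assignment $v\mapsto T_v$ actually constitute $G_n$, and you give no argument that $\rtw(G_n)\geqslant n$: the third paragraph only says you ``would try to extract a large bramble in $H$'' via some unspecified subdivided-clique or topological-minor witness, and the fourth paragraph explicitly concedes that you do not know how to balance the two constraints and that some unspecified recursive amplification ``likely'' works. In a theorem whose entire content is the quantitative separation between two parameters, the lower-bound argument \emph{is} the proof; a framework that guarantees the easy upper bound $\ltw\leqslant 1$ together with a stated intention to find a lower bound does not establish the theorem. To close the gap you would need (i) a concrete family $G_n$, and (ii) an actual argument showing that for every graph $H$ and path $P$ with $G_n\subseteq H\boxtimes P$ one has $\tw(H)\geqslant n$ --- for instance by exhibiting in $H$ a $K_{n+1}$-minor, a bramble of order $n+2$, or a subgraph of known large treewidth --- and neither of these steps is present even in outline form.
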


This says that row treewidth is a qualitatively stronger parameter than layered treewidth. Indeed, for many applications, row treewidth gives qualitatively stronger results than layered treewidth. For example, graphs of bounded row treewidth have bounded queue number~\citep{DJMMUW20}, but it is open whether graphs of layered treewidth 1 have bounded queue number~\citep{BDJMW22}.

Let $G$ and $H$ be graphs. $G$ is a \defn{minor} of $H$ if a graph isomorphic to $G$ can be obtained from $H$
by vertex deletion, edge deletion, and edge contraction. A \defn{model} of $G$ in $H$ is a function $\mu : V(G) \rightarrow 2^{V(H)}$ such that:
\begin{itemize}
    \item for each $v \in V(G)$, $\mu(v)$ is non-empty and the subgraph of $H$ induced by $\mu(v)$ is connected;

    \item $\mu(v) \cap \mu(w) = \emptyset$ for all distinct $v, w \in V(G)$; and

    \item for every edge $vw \in E(G)$, $ab \in E(H)$ for some $a \in \mu(v)$ and $b \in \mu(w)$.

\end{itemize}

The sets $\mu(v)$ are called \defn{branch sets} of $\mu$. It is folklore that $G$ is a minor of $H$ if and only if there exists a model of $G$ in $H$. It is well-known that if $G$ is a minor of $H$ then $\tw(G) \leqslant \tw(H)$ (see \citep{Bodlaender98} for an implicit proof).

The \defn{cartesian product} of graphs $G_{1}$ and $G_{2}$ is the graph $G_{1} \square G_{2}$ with vertex set $V(G_{1} \square G_{2}) := \{(a, v) : a \in V(G_{1}), v \in V(G_{2})\}$, where distinct vertices $(a, v)$ and $(b, u)$ are adjacent if: $ab \in E(G_{1})$ and $v = u$; or $a = b$ and $uv \in E(G_{2})$. The \defn{strong product} of graphs $G_{1}$ and $G_{2}$ is the graph $G_{1} \boxtimes G_{2}$ with vertex set $V(G_{1} \boxtimes G_{2}) := \{(a, v) : a \in V(G_{1}), v \in V(G_{2})\}$, where distinct vertices $(a, v)$ and $(b, u)$ are adjacent if: $ab \in E(G_{1})$ and $v = u$; or $a = b$ and $uv \in E(G_{2})$; or $ab \in E(G_{1})$ and $uv \in E(G_{2})$. We frequently make use of the well-known fact that $\tw(G \boxtimes K_{n}) \leqslant (\tw(G) + 1)n - 1$ for every graph $G$ and integer $n \geqslant 1$.

\subsection{Topological Graphs} \label{topologicalgraphs}

A \defn{topological graph} $G$ is a graph whose vertices are distinct points in the plane, where each edge $vw$ of $G$ is a non-self-intersecting curve between $v$ and $w$, such that:

\begin{itemize}

\item no edge passes through any vertex different from its endpoints,

\item  each pair of edges intersect at a finite number of points,

\item  no three edges internally intersect at a common point.

\end{itemize}

The language of `topological graph' \citep{AFPS14,PRT06,PT10a,Pach14,KPRT15,PST03a} and `geometric graph' \citep{TV-DCG99,Toth-JCTA00,Valtr-DCG98,Pinchasi08,AE-DCG89,Pach-BritishSurvey} is well-used in the literature.

A \defn{crossing} (or \defn{crossing point}) of distinct edges $e$ and $f$ in a topological graph is an internal intersection point of $e$ and $f$. A topological graph with no crossings is \defn{planar}. A topological graph $G$ is \defn{outerplanar} if $G$ is planar and 
every vertex of $G$ is on the outerface. A graph is \defn{planar} or \defn{outerplanar} if it is respectively isomorphic to a topological planar or outerplanar graph.

The \defn{planarisation} of a topological graph $G$, denoted \defn{$G'$}, is the topological planar graph obtained from $G$ by replacing each crossing with a `dummy' vertex of degree $4$.

\subsection{$k$-Cover-Planar Graphs}

 We now introduce a class of beyond planar graphs, so-called $k$-cover-planar graphs, and discuss their relationship with $k$-matching-planar graphs. For an integer $k \geqslant 0$, a topological graph $G$ is \defn{$k$-cover-planar} if for every edge $e\in E(G)$, the vertex cover number of the set of edges of $G$ that cross $e$ is at most $k$. A graph is \defn{$k$-cover-planar} if it is isomorphic to a topological $k$-cover-planar graph. This definition is closely related to $k$-matching-planar graphs, as shown by the following direct corollary of \cref{folklore}.

\begin{obs} \label{relation} Every (topological) $k$-cover-planar graph is a (topological) $k$-matching-planar graph. Every (topological) $k$-matching-planar graph is a (topological) $2k$-cover-planar graph. 
\end{obs}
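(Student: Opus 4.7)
The proof is an immediate unpacking of the definitions combined with the folklore inequality \cref{folklore}. The plan is to fix a topological graph $G$ and, for each edge $e \in E(G)$, work with the set $E_e \subseteq E(G)$ of edges that cross $e$, applying the two halves of \cref{folklore} to $E_e$.

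For the first statement, I would suppose that $G$ is a topological $k$-cover-planar graph. By definition, $\tau(E_e) \leqslant k$ for every $e \in E(G)$. Applying the left inequality $\mu(E_e) \leqslant \tau(E_e)$ from \cref{folklore} gives $\mu(E_e) \leqslant k$, so $G$ is $k$-matching-planar by definition. To pass from the topological statement to the abstract statement, note that if a graph $G$ is $k$-cover-planar then it is isomorphic to some topological $k$-cover-planar graph $G^*$, which by the above is $k$-matching-planar, so $G$ is $k$-matching-planar.

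For the second statement, I would suppose that $G$ is a topological $k$-matching-planar graph, so $\mu(E_e) \leqslant k$ for every $e \in E(G)$. Applying the right inequality $\tau(E_e) \leqslant 2\mu(E_e)$ from \cref{folklore} gives $\tau(E_e) \leqslant 2k$, so $G$ is $2k$-cover-planar. Again, this transfers from the topological to the abstract version via an isomorphism argument.

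There is no substantial obstacle: the entire content is in \cref{folklore}, and the two implications are symmetric applications of its two inequalities. The only thing to be careful about is to apply the inequalities to the specific edge set $E_e$ of edges crossing $e$, rather than to all of $E(G)$.
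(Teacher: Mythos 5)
Your proof is correct and follows exactly the same route as the paper, which simply presents \cref{relation} as a direct corollary of the folklore inequality \cref{folklore} applied to the set of edges crossing each edge $e$. The unpacking you give, including the observation that the abstract versions follow from the topological ones via isomorphism, is precisely the intended argument.
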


By \cref{relation}, the results of \cref{introRTW,universal,introqueue,intrononrepetitive,introcentred,introLTW,introseparators,introtopologicalthickness,introcircularkcoverplanar,introradius,introboxicity,introstrongcolouring,introdimension} 
also hold for $k$-cover-planar graphs.

Every $k$-planar graph is $k$-cover-planar, but not vice versa. For example, as shown in \cref{K3n}(a), the complete bipartite graph  $K_{3,n}$ is $1$-cover-planar but not $k$-planar for sufficiently large $n$.  More generally, $K_{2k + 2, n}$ is $k$-cover-planar for all $k \geqslant 0$ and $n \geqslant 1$.

We present our main results in the language of $k$-matching-planar graphs since the `matching-planar' definition is more natural, and $k$-cover-planar graphs cannot be described by a single forbidden crossing configuration. Moreover, the vertex cover problem is NP-hard, whereas maximum matchings can be computed in polynomial time. Thus, one can determine in polynomial time whether a topological graph is $k$-matching-planar, unlike recognising $k$-cover-planarity.

Our main  motivation for introducing the concept of cover-planar graphs is the convenience of representing matching-planar graphs as cover-planar graphs using \cref{relation} in the proof of \cref{introtopologicalthickness}. 

\subsection{Related Beyond Planar Graphs} \label{sectionbackground}

We now give an overview of related beyond planar graph classes and discuss their relationships to $k$-matching-planar graphs. First, a simple topological graph $G$ is \defn{fan-planar}~\citep{KU22} if for each edge $e \in E(G)$, all the edges that cross $e$ are incident to a common vertex and no endpoint of $e$ is enclosed by $e$ and the edges that cross $e$. Equivalently, this can be formulated by forbidding three configurations (I, II, III) in \cref{fanplanar}, one of which is the configuration where $e$ is crossed by two edges not incident to a common vertex and the other two where $e$ is crossed by two edges incident to a common vertex in a way that encloses some endpoint of $e$. Note that for simple topological graphs, configurations II and III are well-defined.

\begin{figure}[h]
    \centering
    \begin{subfigure}[t]{0.24\textwidth}
    \centering
        \scalebox{0.8}{\includegraphics{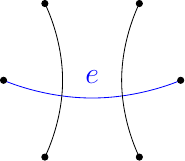}}
        \subcaption*{Configuration I}
    \end{subfigure}
    \begin{subfigure}[t]{0.24\textwidth}
    \centering
        \scalebox{0.8}{\includegraphics{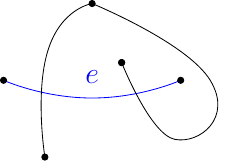}}
        \subcaption*{Configuration II}
    \end{subfigure}
        \begin{subfigure}[t]{0.24\textwidth}
        \centering
        \scalebox{0.8}{\includegraphics{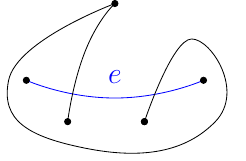}}
        \subcaption*{Configuration III}
    \end{subfigure}
            \begin{subfigure}[t]{0.24\textwidth}
        \centering
        \scalebox{0.8}{\includegraphics{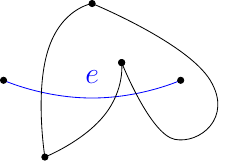}}
        \subcaption*{Configuration IV}
    \end{subfigure}
    \caption{Forbidden crossing configurations. Configuration I: $e$ is crossed by two edges that are not incident to a common vertex. Configuration II: $e$ is crossed by two edges that cross $e$ from different sides when directed away from a common endpoint. Configuration III: both endpoints of $e$ are in the bounded region determined by $e$ and two edges that cross $e$ and are incident to a common vertex. Configuration IV: $e$ is crossed by the edges of a triangle.}
    \label{fanplanar}
\end{figure}

Fan-planar graphs were introduced by \citet{KU22}. In their initial preprint~\citep{KU14}, only configurations I and II were forbidden. \citet{KKRS23} pointed out a missing case in the proof of the edge density upper-bound in~\citep{KU14}. This case was consequently fixed in the journal version \citep{KU22} by introducing forbidden configuration III in the definition of fan-planar graphs. \citet{CFKPS23} distinguish the case, where only configurations I and II are forbidden, and call the corresponding simple topological graphs \defn{weakly fan-planar} (see also \citep{CPS22}). They constructed a topological weakly fan-planar graph that is not isomorphic to a topological fan-planar graph, and hence configuration III is essential for the definition of fan-planar graphs.

\citet{Brandenburg20} considered the following extensions of fan-planar graphs. A topological graph is \defn{fan-crossing} if it is simple and does not allow configurations I and IV in \cref{fanplanar}. A topological graph is \defn{adjacency-crossing} if it is simple and does not allow configuration I. \citet{Brandenburg20} proved that every adjacency-crossing graph is isomorphic to a fan-crossing graph, and hence configuration IV is not necessary for the definition of fan-crossing graphs. He also proved that there exist fan-crossing graphs that are not isomorphic to a weakly fan-planar graph, and hence configuration II is essential for the definition of weakly fan-planar graphs.

Simple topological $1$-matching-planar graphs are exactly adjacency-crossing graphs, and simple topological $1$-cover-planar graphs are exactly fan-crossing graphs. Every fan-planar, weakly fan-planar, or fan-crossing graph is $1$-cover-planar and $1$-matching-planar. \citet*{CPS22} proved that every simple topological fan-planar graph has topological thickness at most $3$. \cref{introtopologicalthickness} generalises this result.

Most of the literature concerning fan-planar, weakly fan-planar, fan-crossing, and adjacency-crossing graphs considers simple topological graphs. A notable exception is the work of \citet{KKRS23}, who extended the definition of topological fan-planar graphs to the non-simple setting and proved the following result.

\begin{thm} [\citep{KKRS23}] \label{SimpleFan} Every non-simple topological fan-planar graph is isomorphic to a simple topological fan-planar graph.

\end{thm}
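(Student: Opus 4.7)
The plan is to convert an arbitrary non-simple topological fan-planar drawing of a graph $G$ into a simple one via local uncrossing moves, preserving fan-planarity at every step. Assign to each drawing of $G$ the complexity $\Phi = (\mathrm{cr}, \mathrm{ns})$, ordered lexicographically, where $\mathrm{cr}$ is the total number of crossings and $\mathrm{ns}$ counts unordered pairs of edges that witness a violation of simplicity (either two adjacent edges that cross, or two non-adjacent edges crossing more than once). If the current drawing is non-simple, I will exhibit a local move that yields another fan-planar drawing of $G$ with strictly smaller $\Phi$; since $\Phi$ ranges over a well-ordered set, iterating produces a simple fan-planar drawing of $G$.

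The move is an empty-lens swap. Suppose two non-adjacent edges $e,f$ have consecutive crossings $p,q$ along $e$; the corresponding arcs of $e$ and $f$ bound a closed disc $L$ (a \emph{lens}). Among all lenses formed by all pairs of edges in the drawing, pick one that is inclusion-minimal. A standard innermost argument shows that the interior of $L$ contains no vertex, no other edge-segment, and no crossing: any such feature would yield a strictly smaller lens, contradicting minimality. Now interchange the subcurves of $e$ and $f$ inside $L$ and smooth the corners at $p$ and $q$. The resulting drawing is still a drawing of $G$ (the endpoints of $e$ and $f$ are unchanged), and the two crossings at $p,q$ have disappeared while no other intersections were affected. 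Adjacent crossing edges $e=uv$, $f=uw$ are handled analogously: take the region bounded by the $u$-to-first-crossing arcs of $e$ and $f$, choose it inclusion-minimal so that it is empty, swap the arcs and smooth the corner at the crossing. Either move strictly decreases $\mathrm{cr}$, hence $\Phi$.

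The crucial observation is that the swap is purely local: outside a neighbourhood of $L$ the drawing is identical to the original, and inside $L$ the only curves present are the two arcs of $e$ and $f$. Consequently, for each edge $h$ of the new drawing, the set of edges that cross $h$ is contained in the set of edges that crossed the corresponding edge in the old drawing; moreover, the cyclic order of crossings along $h$ and the sides from which other edges cross $h$ coincide with those in the old drawing. It follows that any instance of the forbidden configurations I, II or III in the new drawing would already appear in the old drawing, contradicting the assumption that the old drawing is fan-planar. Therefore the new drawing is also fan-planar, completing the inductive step.

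The main obstacle is the verification that empty innermost lenses exist and that the local swap truly does not create any new occurrence of configurations II and III. Unlike configuration I, which forbids a pure incidence pattern among crossing edges and is manifestly monotone under deletion of crossings, configurations II and III depend on the topological arrangement of curves around $e$: the "sides" from which edges cross $e$, and the bounded region determined by $e$ together with two crossing edges emanating from a common vertex. Handling these configurations amounts to observing that the empty-lens swap extends to a homeomorphism of the plane that is the identity outside a small neighbourhood of $L$, so the global topology of the drawing — circular orders at vertices, enclosures of vertices by collections of curves — is preserved away from $L$, while inside $L$ no endpoint of any edge and no portion of any third edge can possibly participate in such a configuration. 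Once this local rigidity is established, the inductive argument closes and the theorem follows.
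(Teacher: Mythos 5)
The paper does not actually prove this statement: it is cited verbatim from Klemz, Knorr, Reddy and Schröder \citep{KKRS23}, so there is no in-paper proof to compare against. Evaluated on its own terms, your proposal sketches the right kind of strategy (uncrossing moves driven by a decreasing potential), but it has several genuine gaps that are exactly the hard parts of the real argument.

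First, the claimed existence of an \emph{empty} innermost lens is not a standard fact. An inclusion-minimal lens $L$ bounded by arcs of $e$ and $f$ can contain a third edge $g$ that enters $L$ through the $e$-arc and exits through the $f$-arc exactly once; this single transversal does not produce a smaller lens with either $e$ or $f$, so the innermost argument does not terminate. (The three-circle ``Venn'' picture is the prototype: every lens contains an arc of the third curve and none is empty. That particular picture violates configuration~I, so you might hope fan-planarity saves you, but you would then need an actual argument showing that in a fan-planar drawing some lens is empty — for instance ruling out the variant where $e,f,g$ all emanate from a common vertex, in which case configuration~I does not immediately apply.) You also need to rule out \emph{vertices} inside the lens, and your sketch simply asserts this.

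Second, the claim that the swap preserves fan-planarity is the crux, and you explicitly name it ``the main obstacle'' without resolving it. Two concrete problems: (i) for adjacent edges $e=uv$, $f=uw$, swapping the initial segments at $u$ changes the rotation at $u$, so the ``cyclic orders and sides are unchanged'' argument breaks precisely in the case it is needed, and configurations~II and~III around $u$ must be re-examined from scratch; (ii) configuration~III refers to a bounded region determined by $e$ and two crossing edges, and rerouting $e$ inside $L$ can in principle move endpoints of $e$ in or out of such a region unless one carefully tracks which arcs of $e$ bound it. ``The swap extends to a homeomorphism of the plane'' is not literally true — a homeomorphism would not change the combinatorial drawing at all — and does not substitute for checking these cases.

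Third, the paper itself points out that configurations~II and~III are only well-defined for \emph{simple} topological graphs. Your induction passes through non-simple intermediate drawings, so you would need to state and work with the extended definition of fan-planarity for non-simple drawings (the one KKRS23 introduced) before you can even formulate what ``the intermediate drawing is fan-planar'' means. As written, the invariant you are maintaining is not well-defined. (Minor: your potential $\Phi=(\mathrm{cr},\mathrm{ns})$ never uses the second coordinate, since you claim every move decreases $\mathrm{cr}$.)

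In short, the skeleton is reasonable, but the steps you wave at — existence of an empty lens, preservation of~II/III under local rerouting (especially the adjacent case), and a working definition of non-simple fan-planarity — are exactly where the substance of the KKRS23 theorem lies, and none of them is carried out.
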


 We do not restrict ourselves to the simple case and analyse topological graphs that can be non-simple.

There are several extensions of $k$-planar graphs in the literature, notably $k$-gap-planar graphs~\citep{GapPlanar18}, min-$k$-planar graphs \citep{BBBDD00LM24}, $k$-quasi-planar graphs \citep{AAPPS-Comb97,FPS13,PRT06,SW13,Suk12}, and $k$-fan-bundle-planar graphs \citep{ABKKS18}. A topological graph is \defn{$k$-gap-planar} if every crossing can be charged to one of the two edges involved so that at most $k$ crossings are charged to each edge. Recall that a topological graph is \defn{min-$k$-planar} if for any crossing edges $e$ and $f$, at least one of $e$ or $f$ is involved in at most $k$ crossings. A topological graph is \defn{$k$-quasi-planar} if no $k$ edges pairwise cross. A graph is \defn{$k$-gap-planar}, \defn{min-$k$-planar}, or \defn{$k$-quasi-planar} if it is isomorphic to a topological $k$-gap-planar, a topological min-$k$-planar, or a topological $k$-quasi-planar graph, respectively. Every min-$k$-planar graph is $k$-gap-planar \citep{BBBDD00LM24}.

Consider the relationship between matching-planar graphs and quasi-planar graphs. By definition, topological $k$-matching-planar graphs can have an unbounded number of pairwise crossing edges, if they are incident to a common vertex. Hence, there exists no function $f$ such that every topological $k$-matching-planar graph is topological $f(k)$-quasi-planar. However, it is easily seen by \cref{relation} that topological $k$-matching-planar graphs with no $t$ pairwise crossing edges incident to a common vertex are topological $(2kt + 2)$-quasi-planar.

The class of $k$-fan-bundle-planar graphs was introduced by \citet{ABKKS18}. They studied edge density and algorithmic properties of $1$-fan-bundle-planar graphs. In \cref{subsectionbundle}, we give the definition of $k$-fan-bundle-planar graphs, show that every $k$-fan-bundle-planar graph is $2k$-matching-planar (see \cref{relationbundlematching}), and prove that for any fixed $k$ there are $1$-matching-planar graphs that are not $k$-fan-bundle-planar. Thus, the class of $k$-matching-planar graphs is a significant generalisation
of the class of $\lfloor \frac{k}{2} \rfloor$-fan-bundle-planar graphs.

We now compare $k$-matching planar graphs with the graph classes introduced by \citet*{AFPS14}. They defined a \defn{$(k,\ell)$-grid} in a topological graph $G$ to be a pair $(E_1,E_2)$ where $E_1, E_2 \subseteq E(G)$ and $|E_1| = k$ and $|E_2| = \ell$ and every edge in  $E_1$ crosses every edge in $E_2$. They considered the class of topological graphs with no $(k, \ell)$-grid. Let $G$ be a topological graph with no $(k,1)$-grid. Each edge of $G$ is crossed by at most $k$ other edges. 
Note that $G$ is $k$-cover-planar (the discussion after Lemma~4.1 in \citep{AFPS14} shows it is $2k$-cover-planar). \cref{introRTW,introLTW} imply that $G$ has bounded row treewidth and layered treewidth. On the other hand, the example in \cref{crossingstars}(b) is $1$-matching-planar but contains an $(n,n)$-grid. So in this sense, topological $k$-matching-planar graphs are more general than topological graphs with no $(k,1)$-grid. 

\citet{AFPS14} also considered $(k,\ell)$-grids $(E_1,E_2)$ `with distinct vertices', meaning 
that no two edges of $E_{1} \cup E_{2}$ are incident to a common vertex. The only difference between topological graphs that contain no $(k+1, 1)$-grid with distinct vertices and $k$-matching-planar graphs is that
the former may have an edge $e$ that is crossed by a matching of size $k+1$ provided that some edge of the matching shares an endpoint with $e$. Thus topological graphs with no $(k, 1)$-grid with distinct vertices are sandwiched between $(k - 1)$-matching-planar and $(k + 1)$-matching-planar graphs, and correspond exactly to $k$-matching-planar for simple topological graphs. \citet[Theorem~1.7]{AFPS14} proved a bound on the edge density of topological graphs with no $(k, 1)$-grid with distinct vertices; see \cref{edgeskmatchingplanar}.

\subsection{When is Row Treewidth Bounded?}
\label{RTWbounded}

The following question naturally arises: What is the most general known beyond planar graph class that has bounded layered treewidth or bounded row treewidth?

\citet{DEW17} showed that every $k$-planar graph has layered treewidth at most $6(k+1)$. Building on the work of \citet{DMW23}, \citet{HW24} proved that every $k$-planar graph has row treewidth at most $6(k + 1)^{2}\binom{k + 4}{3} - 1$, every fan-planar graph\footnote{Note that the proof of \citet{HW24} includes a non-trivial planarisation for fan-planar graphs that, like the coloured planarisation, addresses the issue of  some edges having many crossings.} has layered treewidth at most 45 and 
row treewidth at most $1619$, and every $k$-fan-bundle-planar graph has layered treewidth at most $24k+25$ and row treewidth at most $\binom{2k + 6}{3}6(2k + 3)^{2} - 1$. As explained above, the class of $k$-matching-planar graphs extends $k$-planar graphs, fan-planar graphs and $\lfloor \frac{k}{2} \rfloor$-fan-bundle-planar graphs.  
Indeed, every result in the literature bounding the row treewidth of a class of beyond planar graphs is subsumed by \cref{introRTW} for $k$-matching planar graphs (since the number of pairwise crossing edges incident to a common vertex can be bounded for fan-planar graphs by \cref{SimpleFan}, and for $k$-fan-bundle-planar graphs by \cref{relationbundlematching}).

On the other hand, some notable beyond planar graph classes have unbounded layered treewidth and unbounded row treewidth. In particular, \citet[Proposition~21]{HIMW24} constructed simple topological graphs whose crossing graph is a star-forest, with radius $1$ and arbitrarily large treewidth. Since the crossing graph is a star-forest, these graphs are $1$-gap-planar, min-$1$-planar, and have no ($2,2)$-grid (with or without distinct vertices). Hence, the class of simple topological $1$-gap-planar graphs has unbounded local treewidth, and therefore has unbounded layered treewidth and unbounded row treewidth (by \cref{LTWandRTW}). 
The same holds for  simple topological  min-$1$-planar graphs and simple topological graphs with no $(2,2)$-grid. This says that for $k,\ell\geq 2$, the class of topological graphs with no $(k,\ell)$-grid are broader than the class of $k$-matching-planar graphs.  So our main theorems (\cref{introRTW,introLTW}) cannot be generalised via excluded grids.

Quasi-planar graphs also have arbitrarily large layered treewidth and row treewidth. As explained by \citet[page~$5$]{DSW16},  there is an infinite family of bipartite expander graphs with geometric thickness $2$. By definition, every graph with geometric thickness $2$ is isomorphic to a geometric $3$-quasi-planar graph. Every $n$-vertex expander graph $G$ has treewidth $\Omega(n)$ (see \citep{GM09}). Since $\tw(G)\leqslant 2\sqrt{\ltw(G)n} - 1$ \citep[Lemma~10]{DMW17}, it follows that $\ltw(G) \in \Omega(n)$ also. So the class of geometric  $3$-quasi-planar $n$-vertex graphs has layered treewidth $\Omega(n)$ and row treewidth $\Omega(n)$ (by \cref{LTWandRTW})\footnote{Moreover, the class of quasi-planar graphs fails to have any of the applications listed in \cref{applications}. The key example is the 1-subdivision of $K_n$, which has geometric thickness 2 \citep{Eppstein04} and is thus 3-quasi-planar. On the other hand, the 1-subdivision of $K_n$ has boxicity  $\Theta(\log \log n)$ \citep{EJ13}, $\Omega(\sqrt{n})$ queue-number~\citep{DujWoo05}, and $\Omega(\sqrt{n})$ nonrepetitive chromatic number~\citep{Wood21}. Similarly, the class of graphs obtained from complete graphs by subdividing each edge at least once (which has geometric thickness 2 \citep{Eppstein04}) has unbounded asymptotic dimension.}.

All this is to say that the class of $k$-matching-planar graphs is a good candidate for the answer to the question at the start of \cref{RTWbounded} (and this remains true for simple topological graph classes).

\subsection{$k$-Fan-Bundle-Planar Graphs} \label{subsectionbundle}

We now define the class of $k$-fan-bundle-planar graphs, and show that it is subsumed by the class of $2k$-matching-planar graphs.

A \defn{fan-bundling} of a graph $G$ is an indexed  set $\mathcal{E}=(\mathcal{E}_v:v\in V(G))$ where $\mathcal{E}_v$ is a partition of the set of edges in $G$ incident to $v$. For each $v \in V(G)$, each element of $\mathcal{E}_v$ is called a \defn{fan-bundle}. For a fan-bundling $\mathcal{E}$ of $G$, let $G_\mathcal{E}$ be the graph with $V(G_\mathcal{E}):= V(G) \cup\{z_{B,v}:B\in \mathcal{E}_v,v\in V(G)\}$ and
$E(G_\mathcal{E}):= \{v z_{B,v}:B\in \mathcal{E}_v,v\in V(G)\}
\cup\{ z_{B_1,v}z_{B_2,w}:vw\in E(G),vw\in B_1\in\mathcal{E}_v,vw\in B_2\in\mathcal{E}_w\}$.
Here 
$V(G) \cap\{z_{B,v}:B\in \mathcal{E}_v,v\in V(G)\}=\emptyset$. 
For an integer $k \geqslant 0$, a graph $G$ is \defn{$k$-fan-bundle-planar} if for some fan-bundling $\mathcal{E}$ of $G$, the graph $G_\mathcal{E}$ is (isomorphic to) a topological graph such that each edge $z_{B_1,v}z_{B_2,w}\in E(G_\mathcal{E})$ is in no crossings, and each edge $vz_{B,v}\in E(G_\mathcal{E})$ is in at most $k$ crossings.

\begin{prop} \label{relationbundlematching} 
Every $k$-fan-bundle-planar graph is isomorphic to a topological $2k$-matching-planar graph such that no $2k + 2$ edges incident to a common vertex pairwise cross and any two edges have at most $2k$ crossing points in common.
\end{prop}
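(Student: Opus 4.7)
The plan is to construct an explicit topological drawing $D^{*}$ of $G$ from a given drawing $D$ of $G_\mathcal{E}$ that witnesses the $k$-fan-bundle-planarity of $G$. The construction proceeds by ``suppressing'' every auxiliary vertex $z_{B,v}$: for each edge $e=vw\in E(G)$ with $vw\in B_1\in\mathcal{E}_v$ and $vw\in B_2\in\mathcal{E}_w$, route $e$ in $D^{*}$ along a slight perturbation of the three-edge walk $v\to z_{B_1,v}\to z_{B_2,w}\to w$ in $D$. Edges of $G$ sharing a bundle $B$ at $v$ are drawn as parallel copies of the stub $vz_{B,v}$, fanning out at $z_{B,v}$ into their respective middle edges. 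Since middle edges of $G_\mathcal{E}$ carry no crossings in $D$ and copies of a single stub do not cross each other, the only crossings of $D^{*}$ occur in small neighbourhoods of stub-stub crossings of $D$: at each such crossing between $vz_{B,v}$ and $v'z_{B',v'}$, the $|B|$ copies of the first stub cross the $|B'|$ copies of the second, producing $|B|\cdot|B'|$ crossings between the $G$-edges represented by these copies.

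The matching-planar bound is the most direct. Fix $e=vw\in E(G)$ and let $E'$ be the set of edges crossing $e$ in $D^{*}$. Every such crossing arises from a stub-stub crossing in $D$ involving one of $e$'s two stubs $vz_{B_1,v}$ or $wz_{B_2,w}$ together with some stub $v'z_{B',v'}$. Each of $e$'s stubs has at most $k$ crossings in $D$, and every edge of $G$ routed through the stub $v'z_{B',v'}$ is incident to $v'$. Collecting, for each crossing of a stub of $e$, the corresponding vertex $v'$, yields a vertex cover of $E'$ of size at most $2k$; by \cref{folklore}, $\mu(E')\leqslant 2k$, so $D^{*}$ is $2k$-matching-planar.

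The common-crossings bound follows similarly: each crossing between two edges $e_1,e_2$ of $G$ in $D^{*}$ corresponds to a stub-stub crossing in $D$ involving a stub of $e_1$ and a stub of $e_2$. Since $e_1$ has only two stubs and each has at most $k$ crossings in $D$, at most $2k$ crossings between $e_1$ and $e_2$ can arise.

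For the pairwise-crossing bound, fix a set $\{e_1,\dots,e_n\}$ of edges incident to a common vertex $u$ that are pairwise crossing in $D^{*}$. The stubs used are the $n$ distinct far stubs and at most $n$ near-$u$ stubs (one per bundle $B_\ell\in\mathcal{E}_u$ used, with multiplicity $r_\ell$ counting the $e_i$ lying in $B_\ell$). Each pair $(e_i,e_j)$ must correspond to a stub-stub crossing in $D$ between a stub of $e_i$ and a stub of $e_j$. Classifying these crossings by whether each side is a near-$u$ or a far stub, bounding the pairs produced per crossing of each type by the relevant bundle sizes, and invoking the cap of $k$ crossings per stub, one double-counts $\binom{n}{2}$ against the available crossing budget to conclude $n\leqslant 2k+1$.

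The principal obstacle is the pairwise-crossing bound: a single stub-stub crossing in $D$ between two near-$u$ stubs of heavy bundles is amplified into many pair-crossings in $D^{*}$, and one must carefully balance this amplification against the strict budget of $k$ crossings per stub in $D$. The matching-planar and common-crossings bounds, by contrast, are almost immediate consequences of the two-stubs-per-edge structure of the construction.
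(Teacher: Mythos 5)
Your construction and your first two bounds (the $2k$-matching-planar bound via the size-$\leq 2k$ vertex cover derived from the crossings on $e$'s two stubs, and the $\leq 2k$ common-crossings bound) are correct and essentially identical to the paper's.

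The gap is the pairwise-crossing bound, which is never actually proven. You outline a double-counting plan --- classify stub-stub crossings by type, and charge the $\binom{n}{2}$ required pair-crossings against the budget of $k$ crossings per stub --- but you do not carry it out, and you yourself flag it as ``the principal obstacle.'' It genuinely is a dead end for that plan: as you observe, a single stub-stub crossing between two heavy near-$u$ bundles is amplified into arbitrarily many pair-crossings in $D^{*}$, so the number of pairs cannot be bounded by the number of stub crossings. The paper never counts crossings for this bound; it counts \emph{distinct covering vertices}. Having fixed $e=uv\in E'$, it reuses the cover $V_u\cup V_v$ (of size $\leq 2k$, one vertex per crossing on $e$'s two stubs) and notes that every edge crossing $e$ is incident to $V_u\cup V_v$. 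The edges of $E'\setminus\{e\}$ all share the endpoint $u$, so their \emph{other} endpoints are pairwise distinct, and those other endpoints must lie in $V_u\cup V_v$; hence $|E'|-1\leq |V_u\cup V_v|\leq 2k$. This one-line observation is immune to the amplification you were stuck on. Replacing your double-counting sketch with it completes the proof.
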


\begin{proof} Consider a $k$-fan-bundle-planar graph $G$. For the sake of convenience, we assume that the graph $G_{\mathcal{E}}$ is topological.

Let $\varepsilon > \delta > 0$ be real numbers. For each $v \in V(G_{\mathcal{E}})$, let $S_{v}^{\varepsilon} := \{p \in \mathbb{R}^{2} : \dist_{\mathbb{R}^{2}}(p, v) \leqslant \varepsilon\}$. For each vertex $w \in V(G)$ and fan-bundle $B \in \mathcal{E}_{w}$, $wz_{B,w}$ is an edge of $G_{\mathcal{E}}$ and a curve in the plane. Let $C_{B,w}^{\delta,\varepsilon} := \{p \in \mathbb{R}^{2} : \dist_{\mathbb{R}^{2}}(p, wz_{B,w}) \leqslant \delta\} \setminus (S_{w}^{\varepsilon} \cup S_{z_{B,w}}^{\varepsilon})$.

Choosing $\varepsilon$ and $\delta$ to be sufficiently small, we may assume that:

\begin{enumerate}
    \item for every edge $e = xy \in E(G_{\mathcal{E}})$, $e$ has exactly one intersection point with the boundary of $S_{v}^{\varepsilon}$ for each $v \in \{x, y\}$,

    \item $S_{v_{1}}^{\varepsilon} \cap S_{v_{2}}^{\varepsilon} = \emptyset$ for distinct vertices $v_{1}, v_{2} \in V(G_{\mathcal{E}})$,
    \item $S_{v}^{\varepsilon} \cap C_{B,w}^{\delta,\varepsilon} = \emptyset$ for each $v,w \in V(G_{\mathcal{E}})$ and fan-bundle $B \in \mathcal{E}_{w}$,
    \item $C_{B_{1}, v}^{\delta,\varepsilon} \cap C_{B_{2}, w}^{\delta,\varepsilon} = \emptyset$ for every pair of non-crossing edges $vz_{B_{1},v}$ and $wz_{B_{2},w}$ in $G_{\mathcal{E}}$.
\end{enumerate}

Consider an edge $e=uv$ of $G$. Say $e\in B_u\in\mathcal{E}_u$ and 
$e\in B_v\in\mathcal{E}_v$. So $uz_{B_u,u}, z_{B_u,u}z_{B_v,v}, z_{B_v,v}v \in E(G_{\mathcal{E}})$. For each $x \in \{u, v\}$, let $p_{e,x}$ be the intersection point of the edge $z_{B_u,u}z_{B_v,v}$ in $G_{\mathcal{E}}$ and the boundary of $S_{z_{B_x,x}}^{\varepsilon}$ given by property $1$ above. Draw a non-self-intersecting curve $\gamma_{e,x}$ between $x$ and $p_{e,x}$ in $S_{x}^{\varepsilon} \cup C_{B_x,x}^{\delta,\varepsilon} \cup S_{z_{B_x,x}}^{\varepsilon}$. Do this for every edge of $G$ such that for every $w \in V(G)$ and any two edges $e_{1}, e_{2} \in E(G)$ incident to $w$ that belong to the same fan-bundle, the curves $\gamma_{e_{1}, w}, \gamma_{e_{2}, w}$ do not intersect, except at $w$.

For each edge $e = uv \in E(G)$ with $e\in B_u\in\mathcal{E}_u$ and $e\in B_v\in\mathcal{E}_v$, the curves $\gamma_{e,u}$ and $\gamma_{e,v}$ and the subcurve of the edge $z_{B_u,u}z_{B_v,v}$ between $p_{e,u}$ and $p_{e,v}$ together form a curve $\gamma_{e}$ between $u$ and $v$. Note that $\gamma_{e}$ can be self-intersecting. This can happen if $uz_{B_{u},u}$ crosses $vz_{B_{v},v}$. Let $\gamma_{e}'$ be a non-self-intersecting curve with endpoints $u$ and $v$ in the region $\{d \in \mathbb{R}^{2} : \dist_{\mathbb{R}^{2}}(d, \gamma_{e}) \leqslant \delta_{1}\}$ for some sufficiently small $0 < \delta_{1} < \delta$ (if $\gamma_{e}$ is non-self-intersecting, let $\gamma_{e}' := \gamma_{e}$). We can choose these curves $\gamma_{e}'$ such that whenever $\gamma_{e_1}$ and $\gamma_{e_2}$ do not cross, $\gamma_{e_1}'$ and $\gamma_{e_2}'$ do not cross. By slightly perturbing the curves of $\{\gamma_{e}' : e \in E(G)\}$ without creating new crossings between these curves, we can ensure that no three curves internally intersect at a common point. For each edge $e \in E(G)$, identify $e$ with $\gamma_{e}'$. So now $G$ is a topological graph.

Consider an edge $e = uv \in E(G)$ with $e\in B_u\in\mathcal{E}_u$ and $e\in B_v\in\mathcal{E}_v$. For each $x \in \{u, v\}$, let $V_{x} := \{w \in V(G) : wz_{B, w} \text{ crosses } xz_{B_x, x} \text{ for some } B \in \mathcal{E}_w\}$. Since $G$ is $k$-fan-bundle-planar, $|V_u| \leqslant k$ and $|V_v| \leqslant k$. By construction, every edge $e' \in E(G)$ that crosses $e$ is incident to $V_u \cup V_v$. Thus $G$ is $2k$-cover-planar and $2k$-matching-planar by \cref{relation}. Let $E'$ be a set of pairwise crossing edges incident to $u$ such that $e \in E'$. So every edge of $E' \setminus \{e\}$ is incident to $u$ and to a vertex of $V_{u} \cup V_{v}$. Hence $|E| \leqslant 2k + 1$. Thus no $2k + 2$ edges of $G$ incident to a common vertex pairwise cross.

Consider two edges $e_{1} = uv, e_{2} = ab \in E(G)$ with $e_1 \in B_u \in \mathcal{E}_u$, $e_1\in B_v\in\mathcal{E}_v$, $e_2 \in B_a \in \mathcal{E}_a$, $e_2\in B_b\in\mathcal{E}_b$. Since $G$ is $k$-fan-bundle-planar, for each $x \in \{u, v\}$, the edge $xz_{B_x, x}$ has at most $k$ common crossing points with $az_{B_{a}, a} \cup bz_{B_{b}, b}$. By construction, $e_1$ and $e_2$ have at most $2k$ crossing points in common.
\end{proof}

To distinguish $k$-fan-bundle-planar graphs and $k$-matching-planar graphs, we now show that $K_{3,n}$ is not $k$-fan-bundle-planar for any fixed $k$ and large $n$, whereas $K_{3,n}$ is 1-matching-planar for all $n$, as shown in \cref{K3n}(a). The next proposition qualitatively generalises a result of \citet{ABKKS18} who showed that $K_{4,567}$ is not 1-fan-bundle-planar. 

\begin{prop}
\label{NotFanBundlePlanar} The graph $K_{3, n}$ is not $k$-fan-bundle-planar for every $n \geqslant (12k + 3)^{8}$. 
\end{prop}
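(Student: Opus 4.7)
The plan is to assume for contradiction that $K_{3,n}$ with $n \geq (12k+3)^8$ admits a $k$-fan-bundle-planar drawing $G_\mathcal{E}$, and derive a forbidden $K_{3,3}$-subdivision in a planar auxiliary graph. Denote the parts of $K_{3,n}$ by $\{u_1,u_2,u_3\}$ and $\{v_1,\dots,v_n\}$, set $b_i:=|\mathcal{E}_{u_i}|$, and for each $j\in[n]$ and $i\in[3]$ let $B_i(j)\in\mathcal{E}_{u_i}$ be the bundle at $u_i$ containing $u_iv_j$. The bundle subgraph of $G_\mathcal{E}$ is drawn planarly since bundle edges are uncrossed; moreover each $v_j$'s bundle endpoints all lie in the face containing $v_j$ (because the leg edges at $v_j$ cross no bundle edge), so I will merge $v_j$ with all its bundle endpoints into a single vertex $v_j^*$. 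This yields a planar bipartite graph $H^*$ with parts $\bigcup_{i\in[3]}P_i$ and $\{v_j^*\}_{j\in[n]}$, where $P_i:=\{z_{B,u_i}:B\in\mathcal{E}_{u_i}\}$; each $v_j^*$ has degree exactly three, with neighbour $z_{B_i(j),u_i}$ in $P_i$ for each $i\in[3]$. Since $K_{3,3}$ is non-planar, at most two $v_j^*$'s can share a common triple of neighbours in $H^*$, so the map $j\mapsto(B_1(j),B_2(j),B_3(j))$ has image of size at least $n/2$.

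Next I will consider the topological graph $G^*$ obtained from $G_\mathcal{E}$ by contracting every leg edge at every $v_j$; in $G^*$ only the leg edges at the hubs $u_i$ retain crossings (at most $k$ each), and only with other hub leg edges, so the planarisation $\tilde G^*$ of $G^*$ is planar. Each edge $u_iv_j$ of $K_{3,n}$ gives rise to a path $\pi_{i,j}$ in $\tilde G^*$ from $u_i$ through at most $k$ crossing-dummies on the leg edge $u_iz_{B_i(j),u_i}$ and then via the bundle edge to $v_j^*$. Observing that two paths $\pi_{i,j_l}$ and $\pi_{i,j_m}$ (same hub, different spokes) are internally disjoint exactly when they use different bundles, and two paths $\pi_{i,j_l}$ and $\pi_{i',j_m}$ (different hubs) are internally disjoint exactly when their leg edges do not cross, the problem reduces to finding $j_1,j_2,j_3\in[n]$ such that
\begin{enumerate}
\item[(a)] for each $i\in[3]$, the bundles $B_i(j_1),B_i(j_2),B_i(j_3)$ are pairwise distinct; and
\item[(b)] for all $i\neq i'$ and all $l,m\in[3]$, the leg edges $u_iz_{B_i(j_l),u_i}$ and $u_{i'}z_{B_{i'}(j_m),u_{i'}}$ do not cross in $G_\mathcal{E}$.
\end{enumerate}
The nine paths $\{\pi_{i,j_l}:i,l\in[3]\}$ would then form a $K_{3,3}$-subdivision in $\tilde G^*$, contradicting its planarity.

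The crux of the argument, and what I expect to be the main obstacle, is locating such $j_1,j_2,j_3$ through iterated pigeonhole. Each leg edge at a hub is in at most $k$ crossings, so each bundle at $u_i$ is incompatible with at most $k$ bundles at each other hub. Starting from the $\geq n/2$ distinct realised triples, I will iteratively select one bundle at a time from one of the hubs (by pigeonhole among the surviving triples) and then discard all triples using bundles incompatible with that selection at the other hubs. Each round consumes a multiplicative factor of order $12k+3$, combining the pigeonhole loss for choosing a popular bundle with the loss from the up-to-$k$ incompatible bundles per hub, and eight such rounds suffice to pin down three bundles at each of the three hubs together with three realised coordinate-distinct triples witnessing them; hence the threshold $n\geq(12k+3)^8$. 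The principal difficulty is that the compatibility constraint couples the three hubs pairwise, so restrictions imposed at one hub propagate into constraints on the choices at the others, and the exponent $8$ reflects precisely this compound loss across the three hubs and three spokes.
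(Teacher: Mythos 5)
Your construction breaks at the contraction step, because it silently assumes that only the hub leg edges carry crossings. In the definition of $k$-fan-bundle-planar, \emph{every} leg edge $vz_{B,v}$ — including those at the spokes $v_j$ — may be involved in up to $k$ crossings; only the bundle edges $z_{B_1,v}z_{B_2,w}$ are crossing-free. Contracting the spoke leg edges does not make their crossings disappear: in any honest drawing of $G^*$ the edges extended through the contraction (or, if you instead contract inside the planarisation of $G_\mathcal{E}$, the blobs $v_j^*$, which then absorb dummy vertices lying on other leg edges) inherit those crossings. Consequently your claim that ``in $G^*$ only the leg edges at the hubs retain crossings, and only with other hub leg edges'' is false, and conditions (a) and (b) are not sufficient for the nine paths $\pi_{i,j_l}$ to form a $K_{3,3}$-subdivision: two legs at the \emph{same} hub may cross each other (edges sharing an endpoint may cross here), a selected hub leg may cross a spoke leg of a selected $v_{j_m}$ (so $\pi_{i,j_l}$ passes through the branch vertex $v_{j_m}^*$), and two selected spoke stars may cross each other (so the branch vertices $v_{j_l}^*,v_{j_m}^*$ are not even distinct after contraction). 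The same oversight undermines the first paragraph: merging each $v_j$ with its bundle endpoints is justified by curves (the spoke legs) that may cross one another, so the simultaneous merge is not planarity-preserving and $H^*$ need not be planar; one can realise three spokes sharing the same hub-bundle triple with all crossings confined to leg edges, so ``at most two $v_j^*$'s per triple'' does not follow. (A correct weaker bound of this kind would itself need the non-$k$-planarity of $K_{3,4k+3}$ or a similar result.)

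These are repairable in principle — each leg edge has at most $k$ crossings, so the extra non-crossing requirements (same-hub legs, hub legs versus selected spoke legs, spoke legs versus spoke legs) are finitely many constraints amenable to further pigeonholing — but as written the argument's key topological claims are wrong, and the final eight-round count is only asserted, not carried out, so the threshold $(12k+3)^8$ is not actually derived. For comparison, the paper avoids the spoke-leg problem entirely: it re-embeds each $v_j$ at its (unique) non-singleton bundle vertex, applies three rounds of a ``all in one bundle or all in distinct bundles'' dichotomy at $x_1,x_2,x_3$ (whence $m^8\to m^4\to m^2\to m$), and observes that the surviving copy of $K_{3,m}$ is drawn with each edge inside at most three crossed edges of $G_{\mathcal{E}}$, hence $3k$-planar, contradicting the known fact that $K_{3,12k+3}$ is not $3k$-planar.
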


\begin{proof} Let $m := 12k + 3$. Assume for the sake of contradiction that $G:=K_{3,m^8}$ is $k$-fan-bundle-planar. Let $\{X,Y\}$ be the bipartition of $G$ where $|X|=3$ and $|Y|=m^8$.
Say $X=\{x_1,x_2,x_3\}$. 
Let $\mathcal{E}$ be the fan-bundling of $G$, and $G_{\mathcal{E}}$ be the topological graph witnessing that $G$ is $k$-fan-bundle-planar.

Our goal is to find a $3k$-planar drawing of $K_{3,m}$. To do so, we re-embed the vertices of $G$. We first re-embed the vertices of $Y$. For each $y\in Y$, if $\mathcal{E}_{y}$ has a fan-bundle $B$ of size $2$ or $3$, then this fan-bundle is unique, and we re-embed $y$ at the location of this fan-bundle $z_{B, y}$. Otherwise, $\mathcal{E}_{y}$ has three singleton fan-bundles, and we keep the location of $y$.

Let $E_1$ be a set of $m^4$ edges incident to $x_1$ in $G$ such that either all the edges in $E_1$ are in distinct fan-bundles in $\mathcal{E}_{x_1}$ or all the edges in $E_1$ are in the same fan-bundle in $\mathcal{E}_{x_1}$.
Such a set exists because there are $m^8=(m^4)^2$ edges incident to $x_1$ in $G$.
Let $Y_1$ be the set of vertices in $Y$ incident to the edges in $E_1$.
If all the edges in $E_1$ are in the same fan-bundle $B_{1} \in \mathcal{E}_{x_1}$, then re-embed $x_1$ at the location of this fan-bundle $z_{B_{1}, x_{1}}$.
Let $E_2$ be a set of $m^2$ edges between $x_2$ and $Y_1$ in $G$ such that either all the edges in $E_2$ are in distinct fan-bundles in $\mathcal{E}_{x_2}$ or all the edges in $E_2$ are in the same fan-bundle in $\mathcal{E}_{x_2}$.
Such a set exists because there are $m^4=(m^2)^2$ edges between $x_2$ and $Y_1$ in $G$. Let $Y_2$ be the set of vertices in $Y_1$ incident to the edges in $E_2$.
If all the edges in $E_2$ are in the same fan-bundle $B_{2} \in \mathcal{E}_{x_{2}}$, then re-embed $x_2$ at the location of this fan-bundle $z_{B_{2}, x_{2}}$.
Let $E_3$ be a set of $m$ edges between $x_3$ and $Y_2$ in $G$ such that either all the edges in $E_3$ are in distinct fan-bundles in $\mathcal{E}_{x_3}$ or all the edges in $E_3$ are in the same fan-bundle in $\mathcal{E}_{x_3}$.
Such a set exists because there are $m^2$ edges between $x_3$ and $Y_2$ in $G$.
Let $Y_3$ be the set of vertices in $Y_1$ incident to the edges in $E_2$. So $|Y_3|=m$. 
If all the edges in $E_3$ are in the same fan-bundle $B_{3} \in \mathcal{E}_{x_{3}}$, then re-embed $x_3$ at the location of this fan-bundle~$z_{B_{3}, x_{3}}$.

Now $G_{\mathcal{E}}$ restricts to a drawing of the complete bipartite graph $K_{3,m}$ with bipartition $\{Y_3, X\}$ such that each edge is drawn in the union of at most three crossed edges of $G_{\mathcal{E}}$ and one uncrossed edge of $G_{\mathcal{E}}$.
Since each crossed edge of $G_{\mathcal{E}}$ is involved in at most $k$ crossings, this drawing is $3k$-planar.

We have established that $K_{3, 12k + 3}$ is $3k$-planar. This contradicts a result of \citet{ABKKS18} that says that $K_{3, 4k' + 3}$ is not $k'$-planar for every integer $k' \geqslant 0$. Thus $K_{3, (12k + 3)^{8}}$ is not $k$-fan-bundle-planar, and the result follows.
\end{proof}

\section{Coloured Planarisations} 
\label{tools}

This section introduces an auxiliary graph that is a useful tool in the proofs of our upper bounds on row treewidth, layered treewidth, and treewidth. In what follows, $G$ is a topological graph and $\phi$ is a transparent ordered $c$-edge-colouring of $G$. Recall that $G'$ is the planarisation of $G$ (see \cref{topologicalgraphs}). For any edge $e \in E(G)$, let \defn{$L_{e}$} be the path in $G'$ determined by the curve that $e$ describes in the plane.

Define the \defn{level} of a dummy vertex $d \in e_{1} \cap e_{2}$ to be $\level(d) := \min(\phi(e_{1}), \phi(e_{2}))$. For any $v \in V(G)$, let $\level(v) := 0$. Let \defn{$G^{\phi}$} be the topological planar graph obtained from $G'$ as follows: for each edge $e \in E(G)$ and for any two consecutive (along $e$) dummy vertices $d_{1}, d_{2} \in L_{e}$ such that $\level(d_{1}) = \level(d_{2}) = \phi(e)$, contract the edge $d_{1}d_{2}$ in $G'$. We say that $G^{\phi}$ is the \defn{coloured planarisation} of $G$. See \cref{firstexample,fragmentssections,colouredplanarisation} for examples of coloured planarisations. In these figures, the colours of the edges of $G'$ and $G^{\phi}$ are kept for better visual understanding, but formally speaking we do not define edge-colourings of $G'$ or $G^{\phi}$. The vertices of $G$ are grey, and the vertices of $V(G') \setminus V(G)$ and $V(G^{\phi}) \setminus V(G)$ are black. 

\begin{figure}[h]
    \centering
    \begin{subfigure}[t]{0.29\textwidth}
    \centering
        \scalebox{1}{\includegraphics{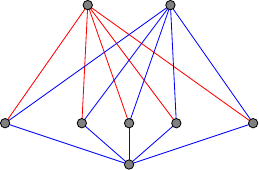}}
        \subcaption{$G$ and $\phi$}
        \label{figurea}
    \end{subfigure}
    \quad \quad
    \begin{subfigure}[t]{0.29\textwidth}
    \centering
        \scalebox{1}{\includegraphics{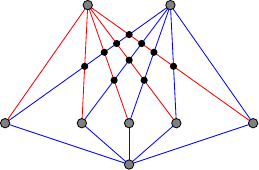}}
        \subcaption{$G'$}
        \label{figureb}
    \end{subfigure}
    \quad \quad
        \begin{subfigure}[t]{0.29\textwidth}
        \centering
        \scalebox{1}{\includegraphics{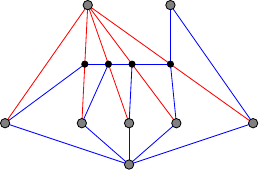}}
        \subcaption{$G^{\phi}$}
        \label{figurec}
    \end{subfigure}
    \caption{An example of a planarisation and a coloured planarisation. (a) A topological graph $G$ isomorphic to $K_{3, 5}$ with a transparent ordered $2$-edge-colouring $\phi$, where colours are: red $= 1$, blue $= 2$. (b) The planarisation $G'$ of $G$ where every dummy vertex has level $1$ and every vertex of $G$ has level $0$. (c) The coloured planarisation $G^{\phi}$ of $G$ obtained by contracting red edges of $G'$ not incident to $V(G)$. Every vertex of $V(G^{\phi}) \setminus V(G)$ has level $1$ and every vertex of $G$ has level $0$.     }
    \label{firstexample}
\end{figure}

Let \defn{$\psi$} $: V(G') \rightarrow V(G^{\phi})$ be the surjective function determined by the contraction operation in the construction of $G^\phi$. We emphasise that $G^{\phi}$ depends upon the ordering of the colours in the ordered $c$-edge-colouring $\phi$. Note that no edge incident to a vertex of $G$ is contracted in the construction of $G^{\phi}$. So $V(G) \subseteq V(G^{\phi})$ and $\psi(v) = v$ for each $v \in V(G)$.

Let $e \in E(G)$ be an arbitrary edge. The crossing points of $e$ and the edges of colour less than $\phi(e)$ split $e$ into subcurves, called the \defn{fragments} of $e$ (see \cref{fragmentssectionsa}). For each $e \in E(G)$, every fragment of $e$ naturally induces a subpath of $L_{e}$. Let $M$ be such a subpath. If $M$ consists of at least three vertices, then the subpath of $M$ obtained by deleting the endpoints of $M$ is called a \defn{section} of $L_{e}$ (see \cref{fragmentssectionsb}). By definition, every section of $L_{e}$ is non-empty.

\begin{figure}[h]
    \centering
    \begin{subfigure}[t]{0.44\textwidth}
    \centering
        \scalebox{1.13}{\includegraphics{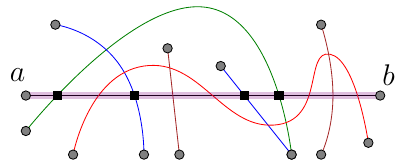}}
            \subcaption{$G$}
            \label{fragmentssectionsa}
    \end{subfigure} \quad \quad  
    \begin{subfigure}[t]{0.5\textwidth}
    \centering
        \scalebox{1.13}{\includegraphics{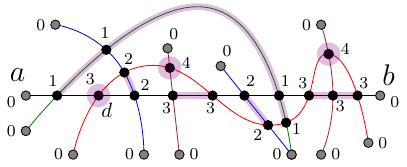}}
        \subcaption{$G'$}
        \label{fragmentssectionsb}
    \end{subfigure}
    \caption{An example of fragments and sections. (a) A topological graph $G$ with a transparent ordered $5$-edge-colouring $\phi$, where colours are: green $= 1$, blue $= 2$, black $= 3$ (only the edge $ab$ is black), red $= 4$, and brown $= 5$. The edge $ab$ is split by the crossing points (marked as squares) of $ab$ and the edges of smaller colours into five fragments (highlighted in purple). (b) The planarisation $G'$ of $G$. Each vertex is labelled by its level. The edges of sections and $1$-vertex sections of $G'$ are highlighted in purple. There are three sections of $L_{ab}$, one of which consists of a single dummy vertex labelled $d$.}
    \label{fragmentssections}
\end{figure}

Let $S_{1}$ be a section of $L_{e_{1}}$ and $S_{2}$ be a section of $L_{e_{2}}$, where $e_{1}, e_{2} \in E(G)$, such that $S_{1} \neq S_{2}$. If $e_{1} = e_{2}$ then $S_{1}$ and $S_{2}$ are disjoint. Otherwise, $e_{1} \neq e_{2}$. If $S_{1} \cap S_{2} \neq \emptyset$ then there exists a dummy vertex $d \in S_{1} \cap S_{2}$, and hence $d \in e_{1} \cap e_{2}$. Since $\phi$ is transparent, $\phi(e_{1}) \neq \phi(e_{2})$. Without loss of generality, $\phi(e_{1}) < \phi(e_{2})$. By definition, $d$ is not a vertex of a section of $L_{e_{2}}$, a contradiction. Thus, sections of $G'$ are pairwise disjoint.

For each section $S$ of $G'$, there exists exactly one edge $e \in E(G)$ such that $L_{e}$ contains $S$ and $\phi(e)$ is equal to the common level of vertices of $S$. The coloured planarisation $G^{\phi}$ is obtained from the planarisation $G'$ by contracting every edge in every section of $G'$ (see \cref{colouredplanarisation}). For $v \in V(G)$, $\psi^{-1}(v) = \{v\}$. Since sections are pairwise disjoint, for $x \in V(G^{\phi}) \setminus V(G)$, $\psi^{-1}(x)$ is the vertex set of a section of $G'$. Note that $(\psi^{-1}(x) : x \in V(G^{\phi}))$ is a partition of $G'$.

\begin{figure}[h]
    \centering
        \scalebox{1.13}{\includegraphics{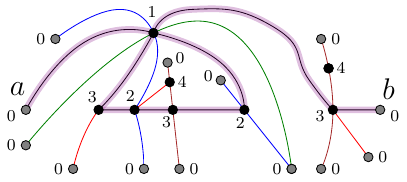}}
    \caption{The coloured planarisation $G^{\phi}$ of the graph $G$ with the transparent ordered $5$-edge-colouring $\phi$ from \cref{fragmentssectionsa}. Each vertex is labelled by its level.
    The edges between consecutive vertices of the walk $W_{ab}$ in $G^{\phi}$ are highlighted in purple.  
    }
    \label{colouredplanarisation}
\end{figure}

For each vertex $x \in V(G^{\phi})$, define the \defn{level} of $x$, denoted $\level(x)$, to be the common level of the vertices in $\psi^{-1}(x)$. Observe that the vertices of level $0$ are exactly the vertices of $G$.

Consider any edge $e = uv \in E(G)$. Let $u = w_{0},\dots, w_{r} = v$ be the path $L_{e}$ in $G'$. Let \defn{$W_{e}$} be the walk in $G^{\phi}$ obtained from $(\psi(w_{0}), \psi(w_{1}),\dots, \psi(w_{r}))$ by identifying consecutive identical vertices.

We now establish several basic properties of coloured planarisations.

\begin{lem} \label{1} For each $uv \in E(G)$, we have $W_{uv} \setminus \{u, v\} \subseteq V(G^{\phi}) \setminus V(G)$.
    
\end{lem}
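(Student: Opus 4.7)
The plan is to unwind the definition of $W_{uv}$ and track which vertices of $G'$ can map into $V(G) \subseteq V(G^\phi)$ under $\psi$. First, I would recall that $L_{uv}$ is the path in $G'$ traced by the curve representing the edge $uv$. Writing $L_{uv} = (w_0, w_1, \dots, w_r)$ with $w_0 = u$ and $w_r = v$, each interior vertex $w_i$ with $1 \le i \le r-1$ is an internal intersection point of the curve $uv$ with some other edge, hence is a dummy vertex of $G'$, i.e.\ $w_i \in V(G') \setminus V(G)$.

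Next, I would use the basic property, already noted right after the definition of $\psi$, that $\psi(v') = v'$ for every $v' \in V(G)$ and hence $\psi^{-1}(v') = \{v'\}$. In particular, if $x \in V(G') \setminus V(G)$, then $\psi(x) \notin V(G)$, because otherwise $\psi(x) = v'$ for some $v' \in V(G)$ would force $x = v'$, contradicting $x \notin V(G)$. Applying this to $w_1, \dots, w_{r-1}$ yields $\psi(w_i) \in V(G^\phi) \setminus V(G)$ for all $1 \le i \le r-1$.

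Finally, by definition $W_{uv}$ is obtained from the sequence $(\psi(w_0), \psi(w_1), \dots, \psi(w_r)) = (u, \psi(w_1), \dots, \psi(w_{r-1}), v)$ by identifying consecutive identical vertices. So the vertex set of $W_{uv}$ is a subset of $\{u, v\} \cup \{\psi(w_1), \dots, \psi(w_{r-1})\}$, and removing $u$ and $v$ leaves only elements of $\{\psi(w_1), \dots, \psi(w_{r-1})\} \subseteq V(G^\phi) \setminus V(G)$, as desired. There is no real obstacle here; the only subtle point is to confirm that the identification step cannot introduce a new vertex of $V(G)$ into the walk, which is clear since $\psi(V(G') \setminus V(G)) \cap V(G) = \emptyset$.
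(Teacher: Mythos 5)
Your proof is correct and follows essentially the same route as the paper's one-line argument: since $L_{uv} \cap V(G) = \{u,v\}$ and $\psi$ fixes $V(G)$ pointwise (so $\psi^{-1}(v')=\{v'\}$ for $v'\in V(G)$), no dummy vertex can map to a vertex of $G$, hence $W_{uv}\cap V(G)=\{u,v\}$. You have simply spelled out the intermediate steps that the paper leaves implicit.
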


\begin{proof}

Since $L_{uv} \cap V(G) = \{u, v\}$, we have $W_{uv} \cap V(G) = \{u, v\}$.
\end{proof}

\begin{lem} \label{2} For each $e \in E(G)$, the level of each vertex in $W_{e}$ is at most $\phi(e)$.
    
\end{lem}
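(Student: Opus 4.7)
The plan is to trace levels directly through the construction, using the definitions of $L_e$, $\psi$, and $\level$. By the definition of $W_e$, every vertex of $W_e$ is of the form $\psi(w)$ for some $w \in V(L_e)$, so it suffices to show that $\level(\psi(w)) \leqslant \phi(e)$ for every $w \in V(L_e)$.

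First I would split into cases according to whether $w \in V(G)$ or $w$ is a dummy vertex. If $w \in V(G)$ then $w \in \{u, v\}$ where $e = uv$, and by definition $\level(w) = 0 \leqslant \phi(e)$. Otherwise $w$ is a dummy vertex located at the crossing of $e$ with some other edge $f \in E(G)$, and by the definition of level we have $\level(w) = \min(\phi(e), \phi(f)) \leqslant \phi(e)$.

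Next I would translate this into a statement about $G^{\phi}$. Recall that $\level(\psi(w))$ is defined as the common level of the vertices in $\psi^{-1}(\psi(w))$, which is a singleton when $w \in V(G)$ and is the vertex set of a section of $L_{e'}$ (for some $e' \in E(G)$) when $w$ is a dummy vertex. In either case $w \in \psi^{-1}(\psi(w))$, so $\level(\psi(w)) = \level(w) \leqslant \phi(e)$, as required.

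I do not anticipate any real obstacle: this lemma is essentially a bookkeeping statement that falls out once one unpacks the definitions of dummy vertex levels and of the contraction map $\psi$. The only subtlety worth double-checking is that $\level$ really is well-defined on $V(G^{\phi})$, i.e.\ that all vertices contracted together in a section share a common level — but this is built into the definition of a section (only edges $d_1 d_2$ with $\level(d_1) = \level(d_2) = \phi(e)$ are contracted).
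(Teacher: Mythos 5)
Your proof is correct and matches the paper's approach: the paper's proof is simply the one-line observation that every vertex of $L_e$ has level at most $\phi(e)$, which passes to $W_e$ via $\psi$, and your write-up just unpacks this by explicitly case-splitting on whether $w \in V(G)$ or $w$ is a dummy vertex and tracing through the definition of $\level$ on $V(G^\phi)$. No gaps.
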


\begin{proof}

By definition, the level of each vertex in $L_{e}$ is at most $\phi(e)$. Hence, the level of each vertex in $W_{e}$ is at most $\phi(e)$.
\end{proof}

\begin{lem} \label{walklength} Let $e \in E(G)$ be an edge involved in at most $t$ crossings with the edges of colour less than $\phi(e)$. Then the length of $W_{e}$ is at most $2(t + 1)$.
    
\end{lem}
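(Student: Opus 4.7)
The plan is to trace $W_{e}$ explicitly along $L_{e}$ and bound its length by counting the fragments of $e$.

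First I would recall that $L_{e}$ is a path $u = w_{0}, w_{1}, \ldots, w_{r} = v$ in $G'$, whose interior vertices are exactly the dummy vertices placed at the crossings of $e$ with other edges of $G$. By the definition of level, every dummy vertex on $e$ has level either equal to $\phi(e)$ or strictly less than $\phi(e)$, and the latter case occurs exactly at the crossings of $e$ with edges of smaller colour. By the hypothesis of the lemma, there are at most $t$ such low-level dummies. Together with $u$ and $v$ (which have level $0 < \phi(e)$), these low-level dummies split $L_{e}$ into at most $t + 1$ consecutive subpaths, one per fragment of $e$.

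Next I would argue that each such subpath contributes at most one interior vertex to $W_{e}$. Indeed, the interior vertices of one such subpath (if any) form a section of $L_{e}$, all lying at level $\phi(e)$; by the contraction rule defining $G^{\phi}$, any two consecutive vertices of a section are contracted, so the whole section collapses under $\psi$ to a single vertex of $G^{\phi}$. Boundary vertices, namely $u$, $v$, and the low-level dummies, all lie at level strictly less than $\phi(e)$, so they are never contracted into each other or into a section vertex. Moreover, distinct sections of $L_{e}$ project to distinct vertices of $G^{\phi}$, since sections of $G'$ are pairwise disjoint as noted above the statement.

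Combining these observations, after identifying consecutive duplicates the walk $W_{e}$ visits at most $t + 2$ boundary vertices (that is, $u$, $v$, and the low-level dummies) interleaved with at most $t + 1$ contracted section vertices, for a total of at most $2t + 3$ vertices and hence length at most $2(t + 1)$. I do not anticipate a serious obstacle here; the only subtle point is ruling out accidental identifications under $\psi$, which is immediate from the strict level gap between boundary vertices and section vertices, together with the disjointness of sections in $G'$.
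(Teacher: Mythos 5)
Your proof is correct and takes essentially the same approach as the paper's: split $L_{e}$ at the at most $t$ dummy vertices of level less than $\phi(e)$, observe that each of the at most $t+1$ resulting subpaths has its interior (a section) contracted to a single vertex of $G^{\phi}$, and count. One minor quibble: your side-claim that boundary vertices are never identified under $\psi$ can fail in the non-simple setting (two crossings of $e$ with the same smaller-colour edge may lie in one section of that edge and be contracted together), but this is harmless, since any such identification only shortens $W_{e}$ and the stated bound still follows.
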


\begin{proof}
    The path $L_{e}$ in $G'$ is split by the dummy vertices of level less than $\phi(e)$ into at most $t + 1$ subpaths. Every such subpath does not contain a dummy vertex of level less than $\phi(e)$. Therefore, the length of $W_{e}$ is at most $2(t + 1)$.
\end{proof}

\begin{lem} \label{3}
    Let $x \in V(G^{\phi}) \setminus V(G)$. Then there exists exactly one edge $e \in E(G)$ such that $\phi(e) = \level(x)$ and $x \in W_{e}$. Moreover, $L_{e}$ contains $\psi^{-1}(x)$.
\end{lem}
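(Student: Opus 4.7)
The plan is to unpack the definitions and apply the uniqueness statement that already appears in the setup just before the lemma. Fix $x \in V(G^{\phi}) \setminus V(G)$. Since $x \notin V(G)$, the construction of $G^{\phi}$ tells us that $S := \psi^{-1}(x)$ is the vertex set of a section of $G'$, and every vertex in $S$ has common level equal to $\level(x)$. The paragraph preceding the lemma asserts that for each such section there is exactly one edge $e \in E(G)$ with $L_e \supseteq S$ and $\phi(e)$ equal to the common level of $S$, so $e$ is determined by $\psi^{-1}(x) \subseteq L_e$ and $\phi(e) = \level(x)$. This already gives the ``moreover'' clause.

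Next I would check that $x \in W_e$. Writing $L_e = (w_0, w_1, \dots, w_r)$, the existence of any $w_i \in \psi^{-1}(x)$ (which follows from $\psi^{-1}(x) \subseteq L_e$) forces $x = \psi(w_i)$ to appear in the sequence $(\psi(w_0), \dots, \psi(w_r))$. The walk $W_e$ is obtained by identifying consecutive identical entries of this sequence, so $x$ survives, giving $x \in W_e$.

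For uniqueness, suppose for contradiction that some $e' \in E(G)$ with $e' \neq e$ also satisfies $\phi(e') = \level(x)$ and $x \in W_{e'}$. By definition of $W_{e'}$, there exists $w \in L_{e'}$ with $\psi(w) = x$, hence $w \in \psi^{-1}(x) = S \subseteq L_e$. Because $x \notin V(G)$, the vertex $w$ lies in $V(G')\setminus V(G)$, so $w$ is a dummy vertex, i.e.\ the crossing point of exactly two edges of $G$; and since $w \in L_e \cap L_{e'}$, those two edges must be $e$ and $e'$. But then $\level(w) = \min(\phi(e), \phi(e'))$, while we have $\level(w) = \level(x) = \phi(e) = \phi(e')$, so $\phi(e) = \phi(e')$. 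This contradicts transparency of $\phi$, since $e$ and $e'$ cross at $w$ yet receive the same colour.

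The main obstacle is the uniqueness argument: it is the one place where we cannot simply quote the bulleted setup, and where the transparency hypothesis on $\phi$ must be used in an essential way. The existence and the ``moreover'' clause are essentially bookkeeping on top of the section/edge correspondence already established before the lemma.
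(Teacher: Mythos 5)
Your proposal is correct and follows essentially the same route as the paper: existence and the ``moreover'' clause come from the section--edge correspondence stated just before the lemma, and uniqueness comes from transparency, since any second edge $e'$ with $x \in W_{e'}$ must pass through a dummy vertex of $\psi^{-1}(x)$ lying on $e$, forcing $e$ and $e'$ to cross while having the same colour. The small detour through $\level(w) = \min(\phi(e),\phi(e'))$ is redundant (both colours are already assumed equal to $\level(x)$), but harmless.
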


\begin{proof}
    By assumption, $\psi^{-1}(x)$ is the vertex set of a section of $G'$. Hence, there exists exactly one edge $e \in E(G)$ such that $\phi(e) = \level(x)$ and $L_{e}$ contains $\psi^{-1}(x)$. Then $x \in W_{e}$. Since $\phi$ is transparent, no edge of $G$ of colour $\phi(e)$ crosses $e$. Hence, $e$ is the only edge of $G$ of colour $\level(x)$ that contains a dummy vertex of $\psi^{-1}(x)$. Thus there is no edge $e_{1} \in E(G) \setminus \{e\}$ such that $\phi(e_{1}) = \level(x)$ and $x \in W_{e_{1}}$.
\end{proof}

\begin{lem} \label{thencross} Let $x \in V(G^{\phi}) \setminus V(G)$ be a vertex and $e \in E(G)$ be an edge such that $\level(x) = \phi(e)$ and $x \in W_{e}$. Let $S$ be the section of $G'$ such that $\psi^{-1}(x) = V(S)$. Let $g \in E(G)$ be an edge such that $\phi(g) > \level(x)$. Then $x \in W_{g}$ if and only if $g$ crosses the fragment of $e$ in $G$ that corresponds to $S$. In particular, if $x \in W_{g}$ then $e$ and $g$ cross.
\end{lem}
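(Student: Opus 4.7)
The plan is to unpack the definitions carefully and reduce everything to the basic fact that $x \in W_g$ if and only if some vertex of $\psi^{-1}(x)$ lies on the path $L_g$ in $G'$.

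First, I would observe that by construction $W_g$ is obtained from $(\psi(w_0),\dots,\psi(w_r))$ where $w_0,\dots,w_r$ is the path $L_g$ in $G'$, so $V(W_g) = \psi(V(L_g))$. Since $\psi^{-1}(x) = V(S)$, this gives the reformulation $x \in W_g \iff V(S) \cap V(L_g) \neq \emptyset$. So the whole task is to characterise when some vertex of the section $S$ lies on $L_g$.

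Next, I would describe $V(S)$ concretely. By the definitions of fragment and section, the vertices of $S$ are the \emph{internal} dummy vertices of the subpath of $L_e$ corresponding to some fragment of $e$. The endpoints of that subpath are either endpoints of $e$ (vertices of $G$, which cannot appear in $S$ since $S$ consists only of dummy vertices, as $\level(x) = \phi(e) \geqslant 1$) or dummy vertices where $e$ crosses an edge of colour strictly less than $\phi(e)$. Consequently, every $d \in V(S)$ is a crossing of $e$ with some edge $h \in E(G)$ satisfying $\phi(h) \geqslant \phi(e)$; and because $\phi$ is transparent, $\phi(h) \neq \phi(e)$, so $\phi(h) > \phi(e)$. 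Moreover, $d$ lies strictly in the interior of the curve representing the fragment of $e$ corresponding to $S$, and conversely every such crossing of $e$ with an edge of colour $>\phi(e)$ internal to the fragment gives a vertex of $S$.

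Now I would combine this with the hypothesis on $g$ and the fact that at most two edges of $G$ pass through any crossing point (a consequence of the definition of a topological graph). Suppose $d \in V(S) \cap V(L_g)$. Since $d$ is a dummy vertex it is a crossing point of exactly two edges of $G$, and these are $e$ and the unique $h$ with $d \in e \cap h$. Because $d \in V(L_g)$, $d$ lies on $g$ as a crossing point, so $g \in \{e,h\}$; since $\phi(g) > \phi(e)$ we have $g \neq e$, hence $g = h$. Thus $d$ is a crossing of $e$ and $g$ lying on the fragment of $e$ corresponding to $S$, so $g$ crosses that fragment. Conversely, if $g$ crosses the fragment of $e$ corresponding to $S$, then the crossing point $d$ lies in the interior of that fragment and $\phi(g) > \phi(e)$, so by the characterisation of $V(S)$ above we have $d \in V(S)$, and clearly $d \in V(L_g)$. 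This gives the equivalence, and the ``in particular'' statement is then immediate.

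The only genuine obstacle is being precise about which dummy vertices belong to a section versus to the endpoints of the associated subpath of $L_e$ — this is pure bookkeeping from the definitions of fragment and section, combined with transparency of $\phi$ (to exclude colour $\phi(e)$) and the fact that crossings involve exactly two edges (to force $g = h$). No computation is required.
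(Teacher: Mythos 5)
Your proof is correct and follows essentially the same route as the paper's: reduce $x \in W_g$ to $V(L_g)$ meeting $\psi^{-1}(x) = V(S)$, identify the vertices of $S$ with crossings of $e$ lying on the corresponding fragment, and use the fact that each dummy vertex is a crossing of exactly two edges (together with transparency and $\phi(g) > \phi(e)$) to force that edge to be $g$. The paper's argument is just a terser version of this same bookkeeping, so no further comment is needed.
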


\begin{proof} By \cref{3}, $L_{e}$ contains $\psi^{-1}(x)$. Let $\gamma$ be the fragment of $e$ that corresponds to $S$. If $g$ crosses $\gamma$ then $L_{g}$ contains a dummy vertex of $\psi^{-1}(x)$, and hence $x \in W_{g}$.

If $x \in W_{g}$ then both $L_{g}$ and $S$ contain a dummy vertex of $\psi^{-1}(x)$, and hence $g$ crosses $\gamma$. Since $\gamma$ is a fragment of $e$, this implies that $e$ and $g$ cross.
\end{proof}

\begin{lem} \label{5} For any edge $e$ of $G$, no two vertices with level $\phi(e)$ are consecutive in $W_{e}$.
\end{lem}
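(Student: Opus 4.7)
The plan is to argue by contradiction directly from the definition of $W_e$ and the contraction rule that produces $G^\phi$. Suppose, for the sake of contradiction, that there exist two distinct vertices $x_1, x_2 \in V(W_e)$ that are consecutive in $W_e$ and satisfy $\level(x_1) = \level(x_2) = \phi(e)$. Write $L_e = (w_0, w_1, \dots, w_r)$ as in the definition of $W_e$. Then, by the construction of $W_e$ from $(\psi(w_0), \dots, \psi(w_r))$ via the identification of consecutive repetitions, there must exist an index $i \in \{0, \dots, r-1\}$ with $\psi(w_i) = x_1$ and $\psi(w_{i+1}) = x_2$.

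Next, I would observe that levels are preserved by $\psi$: by the definition of $\level$ on $V(G^\phi)$, every vertex in $\psi^{-1}(x)$ has the same level as $x$. In particular, $\level(w_i) = \phi(e) = \level(w_{i+1})$. Since $\phi(e) \geqslant 1$ and every vertex of $V(G)$ has level $0$, both $w_i$ and $w_{i+1}$ are dummy vertices of $G'$, not vertices of $V(G)$. Moreover they are consecutive along $L_e$, so they are the endpoints of an edge of $G'$ along the curve representing $e$.

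At this point the contraction rule finishes the argument. By the definition of $G^\phi$, every pair of consecutive dummy vertices $d_1, d_2$ on $L_e$ with $\level(d_1) = \level(d_2) = \phi(e)$ is identified by $\psi$, since the edge $d_1 d_2$ of $G'$ is contracted in the construction of $G^\phi$. Applying this to $w_i$ and $w_{i+1}$ yields $\psi(w_i) = \psi(w_{i+1})$, i.e.\ $x_1 = x_2$, contradicting the assumption that $x_1$ and $x_2$ are distinct consecutive vertices of $W_e$.

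I do not foresee a serious obstacle here: the statement is essentially an unpacking of the defining contraction rule together with the observation that $\psi$ preserves levels and that only non-$V(G)$ vertices can have positive level. The only point requiring any care is to rule out the boundary cases where $w_i$ or $w_{i+1}$ could be $u$ or $v$, which is handled by noting $\level(u) = \level(v) = 0 \neq \phi(e)$.
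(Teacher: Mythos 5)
Your proof is correct and takes essentially the same route as the paper's: both argue by contradiction, pulling the two consecutive vertices of $W_{e}$ back to consecutive dummy vertices of $L_{e}$, both of level $\phi(e)$, and using level-preservation under $\psi$ to rule out vertices of $V(G)$. The only difference is cosmetic: you finish by invoking the raw contraction rule in the definition of $G^{\phi}$ (such a pair would have been contracted, so $\psi(w_i)=\psi(w_{i+1})$), whereas the paper phrases the same fact via its sections formalism ($\psi^{-1}(x)$ and $\psi^{-1}(y)$ are distinct sections of $L_{e}$, and distinct sections have no consecutive dummy vertices).
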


\begin{proof}

Assume for the sake of contradiction that some consecutive vertices $x, y$ in $W_{e}$ have level $\phi(e)$. By definition of $W_{e}$, $x \neq y$. Since $\level(x) = \level(y) = \phi(e)$, $\psi^{-1}(x)$ and $\psi^{-1}(y)$ are the vertex sets of some distinct sections $S_{1}$ and $S_{2}$ of $e$. By definition of $W_{e}$, there exist two dummy vertices $d_{x} \in S_{1}$, $d_{y} \in S_{2}$ such that $\psi(d_{x}) = x$, $\psi(d_{y}) = y$, and $d_{x}$, $d_{y}$ are consecutive vertices in the path $L_{e}$. By definition of sections, no two dummy vertices of distinct sections of $L_{e}$ are consecutive in $L_{e}$, a contradiction.
\end{proof}

\begin{lem} \label{6} For each $x \in V(G^{\phi})$, there exists $v \in V(G)$ such that $\dist_{G^{\phi}}(x, v) \leqslant c - 1$.
\end{lem}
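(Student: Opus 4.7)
The plan is to prove the stronger statement
\[
\dist_{G^{\phi}}(x, V(G)) \leqslant \level(x)
\qquad\text{for every } x \in V(G^{\phi}),
\]
and then deduce \cref{6} from the observation that every level is at most $c-1$. Indeed, for $x \in V(G^{\phi}) \setminus V(G)$, the set $\psi^{-1}(x)$ is the vertex set of some section, whose elements are dummy vertices corresponding to crossings of two edges of $G$; since $\phi$ is transparent these two edges carry distinct colours from $\{1,\dots,c\}$, so their minimum, which equals $\level(x)$, is at most $c-1$.

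I would prove the displayed inequality by induction on $\level(x)$. The base case $\level(x) = 0$ is immediate because the level-$0$ vertices of $G^{\phi}$ are exactly the vertices of $G$. For the inductive step, suppose $\level(x) = \ell \geqslant 1$. By \cref{3} there exists an edge $e \in E(G)$ with $\phi(e) = \ell$ and $x \in W_e$. The walk $W_e$ starts and ends at the endpoints of $e$, which lie in $V(G)$, and by \cref{1} all its interior vertices lie in $V(G^{\phi}) \setminus V(G)$. Since $x \notin V(G)$, it must be an interior vertex of $W_e$ and therefore has a neighbour $y$ along $W_e$; by construction of $W_e$, the pair $xy$ is an edge of $G^{\phi}$. \cref{2} gives $\level(y) \leqslant \phi(e) = \ell$, while \cref{5} rules out $\level(y) = \ell$, so $\level(y) \leqslant \ell - 1$. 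The induction hypothesis applied to $y$ yields $\dist_{G^{\phi}}(y, V(G)) \leqslant \ell - 1$, and hence $\dist_{G^{\phi}}(x, V(G)) \leqslant \ell$, completing the induction.

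The only technical point is verifying that $x$ genuinely has a neighbour in $W_e$ to descend to, which reduces via \cref{1} to the fact that $x$ is not one of the two endpoints of $W_e$; this is immediate from $x \notin V(G)$. The slightly sharper bound $c-1$ rather than $c$ is a genuine use of transparency via the first paragraph: without transparency a dummy vertex could arise from two edges of the same colour $c$, allowing levels as high as $c$.
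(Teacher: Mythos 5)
Your proof is correct and takes essentially the same approach as the paper's: both show, via \cref{3}, \cref{5}, and \cref{2} (with \cref{1} to place $x$ in the interior of $W_e$), that each vertex of $V(G^{\phi}) \setminus V(G)$ has a neighbour of strictly smaller level, then iterate down to level $0$ and use $\level(x) \leqslant c-1$. The paper builds the descending path directly whereas you phrase it as an induction on $\level(x)$, but the content and the invocations of the auxiliary lemmas are identical.
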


\begin{proof}

Let $y \in V(G^{\phi}) \setminus V(G)$ be an arbitrary vertex. By \cref{3}, there exists an edge $e \in E(G)$ such that $\phi(e) = \level(y)$ and $y \in W_{e}$. By \cref{5}, there exists a vertex $z \in W_{e}$ such that $\level(z) \neq \level(y)$ and $yz \in E(G^{\phi})$. Since the level of each vertex in $L_{e}$ is at most $\phi(e)$, we have $\level(z) \leqslant \phi(e) = \level(y)$. So $\level(z) < \level(y)$. Hence, each vertex $y \in V(G^{\phi}) \setminus V(G)$ has a neighbour in $G^{\phi}$ of level less than $\level(y)$.

By definition, the level of each vertex in $G'$ is at most $c - 1$. Therefore, the level of each vertex in $G^{\phi}$ is at most $c - 1$. Hence, $\level(x) \leqslant c - 1$. By the observation above, there exists a path $x = x_{0}, x_{1}, \dots, x_{r} = v$ in $G^{\phi}$ such that $\level(v) = 0$ and $\level(x_{i + 1}) < \level(x_{i})$ for each $i \in \{0,\dots,r - 1\}$. The vertices of level $0$ are exactly the vertices of $G$, so $v \in V(G)$. Therefore, the length of this path is at most $\level(x) \leqslant c - 1$. Thus $\dist_{G^{\phi}}(x, v) \leqslant c - 1$, as desired.
\end{proof}

We now prove the Coloured Planarisation Lemma, which is a crucial ingredient in the proofs of our upper bounds on row treewidth, layered treewidth, and treewidth in \cref{result,FinalSection}. 
The proofs of these results consider models of graphs in $H \boxtimes K_{t}$, where $H$ is planar. Throughout, we assume that $V(K_{t}) = \{1, \dots, t\}$.

\begin{lem} [\CPL] \label{generallemma} Suppose that a topological graph $G$ has a transparent ordered $c$-edge-colouring $\phi$ such that for any $i, j \in \{1,\dots,c\}$ with $i < j$, for any edge $e$ of colour $i$ and for any fragment $\gamma$ of $e$, the matching number of the set of edges of colour $j$ that cross $\gamma$ is at most $m$.

Then there exists a positive integer $t$ and a model $\mu$ of $G$ in $G^{\phi} \boxtimes K_{t}$ such that:

\begin{enumerate}[(a)]
    \item \label{CPLa} $t \leqslant 1 + 5(c - 1)m$,
    \item \label{CPLb} if $G$ is circular, then $t \leqslant 1 + 3(c - 1)m$,
    \item \label{CPLc} for each $v \in V(G)$ and $x \in V(G^{\phi})$, if $(x, i) \in \mu(v)$ for some $i \in \{1, \dots, t\}$, then $x \in W_{vw} \setminus \{v, w\}$ for some edge $vw \in E(G)$ or $x = v$.
\end{enumerate}

\begin{proof}

For each $i \in \{1, \dots, c\}$, let $G_{i}$ be the subgraph of $G$ induced by the set of edges of colour $i$. Since $\phi$ is transparent, $G_{i}$ is planar. Let $s := \max\{\st(G_{1}), \dots, \st(G_{c})\}$. \citet{HMS-DM96} proved that every planar graph has star arboricity at most $5$, so $s \leqslant 5$.

Let $\mathcal{C} := \{(i, j) : i \in \{1,\dots,c\}, \text{ } j \in \{1,\dots,s\}\}$. By definition of $s$, the edges of $G_{i}$ can be coloured with colours $(i, 1), \dots, (i, s)$ such that the subgraph of $G_{i}$ induced by the set of edges of any new colour is a star-forest. So there exists a transparent $sc$-edge-colouring $\phi' : E(G) \rightarrow \mathcal{C}$ such that:

\begin{itemize}

\item for any $i \in \{1, \dots, c\}$ and any $e \in E(G_{i})$, $\phi'(e) = (i, j)$ for some $j \in \{1, \dots, s\}$, and

\item for any $(i, j) \in \mathcal{C}$, the subgraph of $G$ induced by $\{e \in E(G) : \phi'(e) = (i, j)\}$ is a star-forest.

\end{itemize}

For any $(i, j) \in \mathcal{C}$, let $G_{i, j}$ be the subgraph of $G$ induced by $\{e \in E(G) : \phi'(e) = (i, j)\}$. By definition of $\phi'$, $G_{i, j}$ is a star-forest. Fix a centre of each component of $G_{i,j}$. Observe that, for any edge $ab \in E(G_{i, j})$, exactly one of the endpoints of $ab$, say $a$, is the centre of a component of $G_{i, j}$. Then we say that $a$ is the \defn{dominant endpoint} of $ab$. Thus every edge of $G$ has exactly one dominant endpoint.

Recall that, for $e \in E(G)$, $L_{e}$ is the path in the planarisation $G'$ of $G$ associated with $e$ and $W_{e}$ is the walk in the coloured planarisation $G^{\phi}$ of $G$. For any $x \in V(G^{\phi}) \setminus V(G)$ and any $(i, j) \in \mathcal{C}$ with $i \geqslant \level(x)$, let $B_{x}^{(i, j)}$ be the set of dominant endpoints of the edges $e \in E(G)$ such that $\phi'(e) = (i, j)$ and $x \in W_{e}$. For any $v \in V(G)$, let $B_{v} := \{v\}$. For any $x \in V(G^{\phi}) \setminus V(G)$, let 

$$B_{x} := \bigcup_{(i, j) \in \mathcal{C} \text{ } : \text{ } i \geqslant \level(x)} B_{x}^{(i, j)}.$$

Let $t := \max\{|B_{x}| : x \in V(G^{\phi})\}$. We now define a model $\mu$ of $G$ in $G^{\phi} \boxtimes K_{t}$. For each $x \in V(G^{\phi})$, let $\lambda_{x} : B_{x} \rightarrow \{1, \dots, |B_{x}|\}$ be an injective function. For each $v \in V(G)$, define $\mu(v) := \{(x, \lambda_{x}(v)) : v \in B_{x}\}$. Note that $(v, 1) \in \mu(v)$. In the next two claims, we prove that $\mu$ is a model of $G$ in $G^{\phi} \boxtimes K_{t}$.

\begin{claim} \label{almostdecompositionone}
For each $v \in V(G)$, $\mu(v)$ is non-empty and $(G^{\phi} \boxtimes K_{t})[\mu(v)]$ is connected. 
\end{claim}

\begin{proof}
Since $(v, 1) \in \mu(v)$, the set $\mu(v)$ is non-empty. We now show that $(G^{\phi} \boxtimes K_{t})[\mu(v)]$ is connected. Let $(x, i) \in \mu(v)$ for some $x \in V(G^{\phi}) \setminus \{v\}$ and $i \in \{1, \dots, |B_{x}|\}$. By definition of $\mu$, we have $v \in B_{x}$. Since $B_{u} = \{u\}$ for each $u \in V(G)$ and $x \neq v$, this implies that $x \in V(G^{\phi}) \setminus V(G)$. Since $v \in B_{x}$, there is an edge $vw \in E(G)$ such that $v$ is the dominant endpoint of $vw$, and $x \in W_{vw} \setminus \{v, w\}$, and $v \in B_{x}^{\phi'(vw)}$. Let $y \in W_{vw} \setminus \{v, w\}$ be a vertex and $(i_{0}, j_{0}) := \phi'(vw)$. By definition of $\phi'$, $i_{0} = \phi(vw)$. By \cref{2}, $i_{0} \geqslant \level(y)$. By \cref{1}, $y \in V(G^{\phi}) \setminus V(G)$. By definition of $B_{y}^{\phi'(vw)}$, we have $v \in B_{y}^{\phi'(vw)}$, and hence $v \in B_{y}$. For every such vertex $y$, we have $(y, \lambda_{y}(v)) \in \mu(v)$. Consequently, there is a walk in $G^{\phi} \boxtimes K_{t}$ with endpoints $(x, i)$ and $(v, 1)$ such that every vertex of the walk belongs to $\mu(v)$. Thus $(G^{\phi} \boxtimes K_{t})[\mu(v)]$ is connected.
\end{proof}

\begin{claim} \label{almostdecompositiontwo} For all distinct $v, w \in V(G)$, $\mu(v) \cap \mu(w) = \emptyset$. For every edge $vw \in E(G)$, $ab \in E(G^{\phi} \boxtimes K_{t})$ for some $a \in \mu(v)$ and $b \in \mu(w)$.
\end{claim}

\begin{proof}

First, let $v, w \in V(G)$ be distinct. By construction, if $(x, i) \in \mu(v)$ for some $x \in V(G^{\phi})$ and $i \in \{1, \dots, t\}$, then $i = \lambda_{x}(v)$. Similarly, if $(x, i) \in \mu(w)$, then $i = \lambda_{x}(w)$. Since $\lambda_{x}$ is injective, $\mu(v) \cap \mu(w) = \emptyset$.

Now assume that $vw \in E(G)$. Without loss of generality, $v$ is the dominant endpoint of $vw$. Let $x_{0} \in W_{vw} \setminus \{w\}$ be the neighbour of $w$ in $W_{vw}$ such that $x_{0}$ and $w$ are consecutive in $W_{vw}$. If $x_{0} = v$ then $\{v\} = B_{x_{0}}$. Otherwise, $x_{0} \neq v$ and by \cref{1}, we have $x_{0} \in W_{vw} \setminus V(G)$. By \cref{2}, $\phi(vw) \geqslant \level(x_{0})$. By construction,  $v \in B_{x_0}^{\phi'(vw)}$, and hence $v \in B_{x_0}$. Let $a := (x_{0}, \lambda_{x_{0}}(v))$ and $b := (w, 1)$. Since $v \in B_{x_{0}}$ and $\{w\} = B_{w}$, we have $a \in \mu(v)$ and $b \in \mu(w)$. Since $x_{0}w \in E(G^{\phi})$, we have $ab \in E(G^{\phi} \boxtimes K_{t})$, as desired.
\end{proof}

By \cref{almostdecompositionone,almostdecompositiontwo}, $\mu$ is a model of $G$ in $G^{\phi} \boxtimes K_{t}$.

We now show an upper bound on $t$. Fix some $x \in V(G^{\phi}) \setminus V(G)$. By \cref{3}, there exists exactly one edge $e \in E(G)$ such that $\phi(e) = \level(x)$ and $x \in W_{e}$. By construction of $\phi'$, we have $(\level(x), j_{0}) = \phi'(e)$ for some $j_{0} \in \{1, \dots, s\}$. By construction, $|B_{x}^{(\level(x), j_{0})}| = 1$ and $B_{x}^{(\level(x), j)} = \emptyset$ for any $j \in \{1, \dots, s\} \setminus \{j_{0}\}$. Consequently, $|B_{x}^{(\level(x), 1)}| + \dots + |B_{x}^{(\level(x), s)}| = 1$.

Now, fix some $i \in \{\level(x) + 1, \dots, c\}$ and $j \in \{1, \dots, s\}$. Recall that $\psi$ is the surjective function determined by the contraction operation in the construction of $G^\phi$. Since $x \in V(G^{\phi}) \setminus V(G)$, $\psi^{-1}(x)$ is the vertex set of a section $S$ of $G'$. By \cref{3}, $L_{e}$ contains $S$.  Let $\gamma$ be the fragment of $e$ that corresponds to $S$. Let $E$ be the set of edges $g \in E(G)$ that cross $\gamma$ and $\phi'(g) = (i, j)$. By construction of $\phi'$, we have $\phi(g) = i > \level(x) = \phi(e)$ for each $g \in E$. By \cref{thencross}, $x \in W_{g}$ for some $g \in E(G)$ such that $\phi'(g) = (i, j)$ if and only if $g \in E$. By construction, $B_{x}^{(i, j)}$ consists of the dominant endpoints of the edges in $E$. By assumption, the matching number of $E$ is at most $m$. Therefore, $E$ is contained in the union of at most $m$ components of $G_{i, j}$. So $|B_{x}^{(i, j)}| \leqslant m$. Since $x \in V(G^{\phi}) \setminus V(G)$, we have $\level(x) \geqslant 1$. Thus,
\begin{align*}
|B_{x}| &\leqslant \sum\limits_{i \geqslant \level(x)} \sum\limits_{j \in \{1,\dots,s\}} |B_{x}^{(i, j)}| \\
 &= 
\sum\limits_{j \in \{1,\dots,s\}}  |B_{x}^{(\level(x), j)}| \hspace{3mm}  
+ \sum\limits_{i > \level(x)} \sum\limits_{j \in \{1,\dots,s\}} |B_{x}^{(i, j)}| \\ 
&\leqslant  1 + (c - \level(x))sm \\
 &\leqslant 1 + (c - 1)sm.
\end{align*}

For any $v \in V(G)$, $|B_{v}| = 1$. Thus $t \leqslant 1 + (c - 1)sm \leqslant 1 + 5(c - 1)m$. This shows property~\ref{CPLa}.

If $G$ is circular then $G_{i}$ is outerplanar for any $i \in \{1,\dots,c\}$. \citet{HMS-DM96} proved that every outerplanar graph has star arboricity at most $3$. So in this case $s \leqslant 3$ and $t \leqslant 1 + (c - 1)sm \leqslant 1 + 3(c - 1)m$. This shows property \ref{CPLb}.

Let $v \in V(G)$ and $x \in V(G^{\phi})$ be two vertices such that $(x, i) \in \mu(v)$ for some $i \in \{1, \dots, t\}$.  By construction of $\mu$, $v \in B_{x}$. If $x \in V(G)$ then $x = v$ because $B_{u} = \{u\}$ for each $u \in V(G)$. Otherwise, $x \in V(G^{\phi}) \setminus V(G)$. By construction of $B_{x}$, there exists an edge $vw \in E(G)$ such that $x \in W_{vw} \setminus \{v, w\}$. This shows property \ref{CPLc}. Thus $\mu$ satisfies the conditions of the lemma.
\end{proof}
\end{lem}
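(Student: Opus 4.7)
The plan is to exploit the fact that within each colour class the edges form a planar graph, and then further refine the colouring into star-forests so that each edge has a canonical ``dominant'' endpoint. For each $i \in \{1,\dots,c\}$, the graph $G_i$ induced by colour-$i$ edges is planar by transparency of $\phi$, and a result of Hakimi--Mitchem--Schmeichel gives $\st(G_i) \leqslant 5$ (or $\leqslant 3$ when $G$ is circular, since $G_i$ is then outerplanar). Refining $\phi$ accordingly yields an $sc$-edge-colouring $\phi'$ with $s \leqslant 5$ (resp.\ $3$) such that each colour class $G_{i,j}$ is a star-forest; fix a centre for every star-component and call it the dominant endpoint of every edge in that component.

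Next, I will build bags indexed by $V(G^\phi)$. For $v\in V(G)$ set $B_v:=\{v\}$, and for $x\in V(G^\phi)\setminus V(G)$ let $B_x$ be the union, over all $(i,j)\in\{1,\dots,c\}\times\{1,\dots,s\}$ with $i\geqslant\level(x)$, of the dominant endpoints of edges $e$ with $\phi'(e)=(i,j)$ and $x\in W_e$. Let $t:=\max_x|B_x|$, fix an injection $\lambda_x:B_x\hookrightarrow\{1,\dots,t\}$ for each $x$, and define $\mu(v):=\{(x,\lambda_x(v)):v\in B_x\}$. To bound $t$, use \cref{3}: at level $i=\level(x)$ there is a unique edge $e$ with $x\in W_e$, contributing one dominant endpoint total across the $s$ choices of the second coordinate. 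For $i>\level(x)$, \cref{3} identifies $\psi^{-1}(x)$ with a section of $L_e$ and hence with a fragment $\gamma$ of $e$; \cref{thencross} then says the colour-$i$ edges $g$ with $x\in W_g$ are precisely those crossing $\gamma$, a set of matching number $\leqslant m$ by hypothesis. Since each star-component of a star-forest contains at most one edge of any given matching, at most $m$ star-components are touched and hence at most $m$ dominant endpoints are added per sub-colour. Summing gives $|B_x|\leqslant 1+(c-1)sm$, yielding \ref{CPLa} and \ref{CPLb}.

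It remains to check that $\mu$ is a model of $G$ in $G^\phi\boxtimes K_t$. Disjointness of the $\mu(v)$'s is immediate from injectivity of each $\lambda_x$. For the adjacency condition, given $vw\in E(G)$ with (say) $v$ dominant on $vw$, let $x_0$ be the neighbour of $w$ in the walk $W_{vw}$; then \cref{2} gives $\level(x_0)\leqslant\phi(vw)$, so $v\in B_{x_0}$, while $\{w\}=B_w$, and $x_0w\in E(G^\phi)$ lifts to an edge between $\mu(v)$ and $\mu(w)$ in $G^\phi\boxtimes K_t$. For connectivity of $\mu(v)$, given $(x,i)\in\mu(v)$ with $x\ne v$, choose an edge $vw\in E(G)$ on which $v$ is dominant with $x\in W_{vw}$; then every intermediate vertex $y$ on the $x$-to-$v$ portion of $W_{vw}$ satisfies $\level(y)\leqslant\phi(vw)$ by \cref{2}, hence $v\in B_y$, so $(y,\lambda_y(v))\in\mu(v)$ and the walk $W_{vw}$ lifts to a walk in $(G^\phi\boxtimes K_t)[\mu(v)]$ joining $(x,i)$ to $(v,1)$. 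Property \ref{CPLc} is built into the definition of $B_x$.

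The main subtlety is the size bound, which depends critically on correctly interpreting the ``fragment'' hypothesis through \cref{3,thencross}: the point is that $x$ sees a single fragment $\gamma$ of a single lower-coloured edge, and every higher-coloured edge $g$ whose walk contains $x$ must cross exactly that fragment, so the matching-number assumption can be invoked fragment-wise rather than edge-wise. The connectivity argument, while conceptually straightforward, hinges on the subtle interplay between \cref{2} (levels along $W_{vw}$ are bounded by $\phi(vw)$) and the definition of $B_y$ (which includes $v$ whenever $y$'s level is at most $\phi(vw)$ and $v$ is dominant on some edge whose walk contains $y$); these match up precisely.
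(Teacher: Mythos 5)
Your proposal is correct and follows essentially the same route as the paper's proof: the same star-forest refinement of each (outer)planar colour class with dominant endpoints, the same bags $B_x$ indexed by $V(G^{\phi})$ with the counting via \cref{3} and \cref{thencross} done fragment-wise, and the same verification that $\mu$ is a model by lifting the walks $W_{vw}$. No gaps worth noting.
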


The next lemma bounds the distance between two vertices in the coloured planarisation and is used in the proof of our upper bounds on row treewidth and layered treewidth in \cref{FinalSection}. The proof relies on the following definitions about walks. Let $W$ be a walk in a graph $G$ with distinct endpoints. Let $u$ be one of the endpoints of $W$. Then we can enumerate the vertices of $W$ such that $W = (v_{1}, \dots, v_{t})$ and $u = v_{1}$. Let $a$ be a vertex of $W$ such that $a \neq u$. Let $i \in \{2, \dots, t\}$ be the index such that $a = v_{i}$ and $a \neq v_{j}$ for any $j \in \{1,\dots,i - 1\}$. Then we say that $v_{i - 1}$ is the neighbour of $a$ \defn{towards} $u$ in $W$. Since $v_{1} \neq v_{t}$, the neighbour of $a$ towards $u$ in $W$ is unambiguously defined by $a$, $W$ and $u$. Let $b$ be a vertex of $W$. We say that $b$ is \defn{between} $a$ and $u$ in $W$ if there exists $j \in \{1, \dots, i\}$ such that $b = v_{j}$. In particular, $b$ is between $u$ and $a$ if $b \in \{a, u\}$. Observe that the neighbour of $a$ towards $u$ in $W$ is between $a$ and $u$ in $W$. If $b$ is between $a$ and $u$ in $W$ and a vertex $x$ is between $b$ and $u$ in $W$ then $x$ is between $a$ and $u$ in~$W$.

Recall that, for $e \in E(G)$, $W_{e}$ is the walk associated with $e$ in the coloured planarisation $G^{\phi}$ of $G$, where $\phi$ is a transparent ordered $c$-edge-colouring of a topological graph $G$.

\begin{lem} [\DL] \label{distancelemma}
Suppose that a topological graph $G$ has a transparent ordered $c$-edge-colouring $\phi$ such that for any $e \in E(G)$, the vertex cover number of the set of edges of colour less than $\phi(e)$ that cross $e$ is at most $k$. Then, for any $e = uw \in E(G)$ and any $x \in W_{e} \setminus \{u, w\}$, we have $\dist_{G^{\phi}}(u, x) \leqslant \frac{2^{c + 1}k^{c} - 2k - 1}{2k - 1}$.
\end{lem}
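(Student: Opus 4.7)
Setting $f(j) := \frac{2^{j+1}k^{j} - 2k - 1}{2k-1}$, one checks that $f(1) = 1$ and $f(j) = 2k\cdot f(j-1) + 2k + 1$; since $f$ is increasing and $\phi(e) \leq c$, it suffices to prove the stronger statement $\dist_{G^{\phi}}(u, x) \leq f(\phi(e))$ by induction on $j := \phi(e)$. The base case $j = 1$ is immediate: transparency of $\phi$ together with the absence of lower-colour edges forces $L_e$ to have only dummies of level $1$, all lying in a single fragment; by \cref{5} these contract to at most one section vertex in $G^{\phi}$, so $W_e$ is either $(u,w)$ or $(u, z, w)$, giving $\dist_{G^{\phi}}(u,x) \leq 1 = f(1)$.

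For the inductive step with $j \geq 2$, I will describe $W_{e}$ via the sequence of its \emph{boundary vertices} $u = p_{0}, p_{1}, \ldots, p_{t}, p_{t+1} = w$, where each $p_{i}$ with $1 \leq i \leq t$ has level strictly less than $j$ and arises from a crossing of $e$ with a lower-colour edge $f_{i} = a_{i}b_{i}$; by \cref{5}, consecutive $p_{i},p_{i+1}$ are separated in $W_{e}$ by at most one section vertex of level $j$, so walking from $p_{i}$ to $p_{i+1}$ in $G^{\phi}$ costs at most $2$. Since $p_{i} \in W_{f_{i}} \setminus \{a_{i}, b_{i}\}$ and $\phi(f_{i}) < j$, applying the induction hypothesis to $f_{i}$ with either endpoint as the starting vertex yields $\dist_{G^{\phi}}(a_{i}, p_{i}),\ \dist_{G^{\phi}}(b_{i}, p_{i}) \leq f(j-1)$. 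A vertex cover $V_{e} \subseteq V(G)$ of $\{f_{1},\dots,f_{t}\}$ with $|V_{e}| \leq k$ exists by the hypothesis on $e$, and for any $v \in \{a_{i},b_{i}\} \cap V_{e}$ I can \emph{jump} from $p_{i}$ through $v$ to any $p_{i'}$ with $v \in \{a_{i'}, b_{i'}\}$ at total cost at most $2f(j-1)$.

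Every $x \in W_{e} \setminus \{u,w\}$ is either adjacent in $W_{e}$ to $u$ (giving $\dist_{G^{\phi}}(u,x) \leq 1$) or is at distance at most $1$ in $W_{e}$ from some $p_{i_{*}}$ with $1 \leq i_{*} \leq t$, so it suffices to show
\[
\dist_{G^{\phi}}(u, p_{i_{*}}) \leq 2k\cdot f(j-1) + 2k = f(j) - 1.
\]
I will realise this bound by a greedy procedure producing anchor indices $m_{0} := 0 < m_{1} < m_{2} < \cdots$. At iteration $s \geq 1$, first walk from $p_{m_{s-1}}$ to $p_{m_{s-1}+1}$ (cost $\leq 2$; halt if $m_{s-1}+1 = i_{*}$); then select $v_{s} \in \{a_{m_{s-1}+1}, b_{m_{s-1}+1}\} \cap V_{e}$ (nonempty by the vertex cover property), jump via $v_{s}$ to $p_{m_{s}}$ where $m_{s} := \max\{i \leq i_{*} : v_{s} \in \{a_{i}, b_{i}\}\}$ (cost $\leq 2f(j-1)$), and halt if $m_{s} = i_{*}$.

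The main obstacle is to guarantee termination in at most $k$ iterations, which is exactly what matches the target coefficient. First, $m_{s}$ strictly increases (each walk advances the index by $1$ and $m_{s} \geq m_{s-1}+1$), and the $v_{s}$ are pairwise distinct: if $v_{s_{1}} = v_{s_{2}}$ with $s_{1} < s_{2}$, then $m_{s_{2}-1}+1 \leq i_{*}$ is an index with $v_{s_{1}}$-association, contradicting the maximality defining $m_{s_{1}}$ together with $m_{s_{1}} \leq m_{s_{2}-1}$. This already caps the number of jumps at $|V_{e}| \leq k$. To also rule out an extra walk step in a hypothetical $(k+1)$-th iteration, I apply the vertex cover property at $p_{i_{*}}$: any $v^{*} \in \{a_{i_{*}}, b_{i_{*}}\} \cap V_{e}$ must equal some $v_{s_{0}}$ with $s_{0} \leq k$, but then the definition of $m_{s_{0}}$ forces $m_{s_{0}} = i_{*}$ and the procedure would already have halted at iteration $s_{0}$. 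Summing the per-iteration cost $2 + 2f(j-1)$ over at most $k$ iterations yields the desired bound $k(2f(j-1)+2) = f(j)-1$, completing the induction.
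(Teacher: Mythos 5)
Your proposal is correct and follows essentially the same argument as the paper: induction on the colour of $e$ with the recurrence $f(j)=2k\,f(j-1)+2k+1$, using \cref{1,2,3,5,thencross} to structure $W_e$, the vertex cover to allow at most $k$ ``jumps'' through cover vertices, and the inductive bound $f(j-1)$ twice per jump plus at most $2$ steps of walking between consecutive low-level vertices. The only differences are cosmetic: you traverse $W_e$ from $u$ towards $x$ rather than from $x$ towards $u$, and you certify the bound of $k$ iterations via distinctness of the selected cover vertices and maximality of the anchor indices, whereas the paper maintains the equivalent invariant that no vertex associated with an already-used cover vertex lies between the current anchor and $u$.
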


\begin{proof}

Let $h(i) := \frac{2^{i + 1}k^{i} - 2k - 1}{2k - 1}$ for any $i \geqslant 1$. Observe that $h(1) = 1$ and $h(i) = 2k(h(i - 1) + 1) + 1$ for any $i \geqslant 2$. By induction, we prove that for any $e = uw \in E(G)$ and any $x \in W_{e} \setminus \{u, w\}$, $\dist_{G^{\phi}}(x, u) \leqslant h(\phi(e))$. \cref{distancelemma} follows from this because $h(\phi(e)) \leqslant h(c) = \frac{2^{c + 1}k^{c} - 2k - 1}{2k - 1}$.

By \cref{1}, since $x \in W_{e} \setminus \{u, w\}$, we have $x \in W_{e} \setminus V(G)$. Consider the base case with $\phi(e) = 1$. By \cref{walklength}, the length of $W_{e}$ is at most $2$. Then $xu, xw \in E(G^{\phi})$, and hence $\dist_{G^{\phi}}(x, u) = 1 = h(1) = h(\phi(e))$, as desired.

Now assume that $\phi(e) = i$ for some $i \in \{2,\dots,c\}$. By \cref{2}, $\level(x) \leqslant i$. By assumption, there exists a set $X_{e} \subseteq V(G)$ such that $|X_{e}| \leqslant k$ and every edge of colour less than $\phi(e)$ that crosses $e$ is incident to $X_{e}$. For any $v \in X_{e}$, let $E_{v}$ be the set of edges of $G$ of colour at most $i - 1$ that are incident to $v$ and cross $e$. For any $v \in X_{e}$, let $V_{v} := \{y \in W_{e} \setminus \{u, w\} : y \in W_{g} \text{ for some } g \in E_{v}\}$. For any $A \subseteq X_{e}$, define $E_{A} := \bigcup_{v \in A}E_{v}$ and $V_{A} := \bigcup_{v \in A}V_{v}$.

\begin{claim} \label{layeredtreewidthclaim} For any $j \in \{0,\dots,|X_{e}|\}$, there exists a vertex $x_{j} \in W_{e} \setminus \{w\}$ and a set $A_{j} \subseteq X_{e}$ such that:

\begin{itemize} 

\item $\dist_{G^{\phi}}(x, x_{j}) \leqslant 2j(h(i - 1) + 1)$,

\item $|A_{j}| = j$,

\item no vertex of $V_{A_{j}}$ is between $x_{j}$ and $u$ in $W_{e}$.

\end{itemize}
    
\end{claim}

\begin{proof}
    We prove this claim by induction on $j$.

    Consider the base case with $j = 0$. \cref{layeredtreewidthclaim} is trivial for $j = 0$, $x_{0} := x$, and $A_{0} := \emptyset$.

    Now assume that $j \in \{1, \dots, |X_{e}|\}$. By the inductive hypothesis (for \cref{layeredtreewidthclaim}), there exists $x_{j - 1} \in W_{e} \setminus \{w\}$ and a set $A_{j - 1} \subseteq X_{e}$ such that all three properties in \cref{layeredtreewidthclaim} are satisfied for $x_{j - 1}$ and $A_{j - 1}$. Since $j \leqslant |X_{e}|$ and $|A_{j - 1}| = j - 1$, the set $X_{e} \setminus A_{j - 1}$ is non-empty.

    If $x_{j - 1} = u$ then $\dist_{G^{\phi}}(x, u) \leqslant 2(j - 1)(h(i - 1) + 1) \leqslant 2j(h(i - 1) + 1)$. In this case, let $x_{j} := u$ and $A_{j} := A_{j - 1} \cup \{a\}$ for any $a \in X_{e} \setminus A_{j - 1}$. All three properties in \cref{layeredtreewidthclaim} are satisfied for this choice of $x_{j}$ and $A_{j}$.

    Otherwise, $x_{j - 1} \neq u$. Let $y_{j}$ be the neighbour of $x_{j - 1}$ towards $u$ in $W_{e}$.
    
    If $y_{j} = u$ then $\dist_{G^{\phi}}(x, u) \leqslant 2(j - 1)(h(i - 1) + 1) + 1 \leqslant 2j(h(i - 1) + 1)$.  In this case, let $x_{j} := u$ and $A_{j} := A_{j - 1} \cup \{a\}$ for any $a \in X_{e} \setminus A_{j - 1}$. All three properties in \cref{layeredtreewidthclaim} are satisfied for this choice of $x_{j}$ and $A_{j}$. 
    
    Otherwise, $y_{j} \neq u$. By \cref{2}, $\level(x_{j - 1}) \leqslant \phi(e) = i$ and $\level(y_{j}) \leqslant \phi(e) = i$. By \cref{5}, $x_{j - 1}$ or $y_{j}$ has level less than $i$. Let $z_{j} \in \{x_{j - 1}, y_{j}\}$ be such a vertex, so $\level(z_{j}) < i$. By definition of $z_{j}$, $\dist_{G^{\phi}}(x_{j - 1}, z_{j}) \leqslant 1$ and $z_{j}$ is between $x_{j - 1}$ and $u$ in $W_{e}$. Note that $z_{j} \in W_{e} \setminus \{u, w\}$, and hence $z_{j} \in W_{e} \setminus V(G)$ by \cref{1}. By \cref{3}, there exists an edge $e_{j} \in E(G)$ such that $\phi(e_{j}) = \level(z_{j}) < i = \phi(e)$ and $z_{j} \in W_{e_{j}}$. By \cref{thencross}, $e_{j}$ crosses $e$ in $G$.
    By assumption, $e_{j}$ is incident to $X_{e}$. Let $v_{j} \in X_{e}$ be an endpoint of $e_{j}$. Then $e_{j} \in E_{v_{j}}$ and $z_{j} \in V_{v_{j}}$. By the inductive hypothesis (for \cref{layeredtreewidthclaim}), no vertex of $V_{A_{j - 1}}$ is between $x_{j - 1}$ and $u$ in $W_{e}$. Since $z_{j}$ is between $x_{j - 1}$ and $u$ in $W_{e}$, this implies that $v_{j} \notin A_{j - 1}$.

    By the inductive hypothesis (for \cref{distancelemma}), $\dist_{G^{\phi}}(z_{j}, v_{j}) \leqslant h(\phi(e_{j})) \leqslant h(i - 1)$. Since $z_{j} \in V_{v_{j}}$, the set $V_{v_{j}}$ is non-empty. By definition, $u \notin V_{v_{j}}$. Let $a_{j}$ be the first vertex of $W_{e}$ starting at $u$ that is in $V_{v_{j}}$. So no vertex of $V_{v_{j}} \setminus \{a_{j}\}$ is between $a_{j}$ and $u$ in $W_{e}$. Moreover, $a_{j} \neq u$. By the inductive hypothesis (for \cref{distancelemma}), $\dist_{G^{\phi}}(v_{j}, a_{j}) \leqslant h(i - 1)$.

    Let $A_{j} := A_{j - 1} \cup \{v_{j}\}$. Since $v_{j} \notin A_{j - 1}$ and $|A_{j - 1}| = j - 1$, we have $|A_{j}| = j$.

    Let $x_{j}$ be the neighbour of $a_{j}$ towards $u$ in $W_{e}$. Since $z_{j}$ is between $x_{j - 1}$ and $u$ in $W_{e}$ and $a_{j}$ is between $z_{j}$ and $u$ in $W_{e}$, $a_{j}$ is between $x_{j - 1}$ and $u$ in $W_{e}$. Then, by the choice of $x_{j}$, $x_{j}$ is between $x_{j - 1}$ and $u$ in $W_{e}$. Then, since no vertex of $V_{A_{j - 1}}$ is between $x_{j - 1}$ and $u$ in $W_{e}$, no vertex of $V_{A_{j - 1}}$ is between $x_{j}$ and $u$ in $W_{e}$. By the choice of $a_{j}$ and $x_{j}$, no vertex of $V_{v_{j}}$ is between $x_{j}$ and $u$ in $W_{e}$. Thus, no vertex of $V_{A_{j}}$ is between $x_{j}$ and $u$ in $W_{e}$.

    By combining the above distance inequalities, we obtain that
    \begin{align*}
    \dist_{G^{\phi}}(x, x_{j}) &\leqslant \dist_{G^{\phi}}(x, x_{j - 1}) + \dist_{G^{\phi}}(x_{j - 1}, z_{j}) + \dist_{G^{\phi}}(z_{j}, v_{j}) \\ &\phantom{fish}+\dist_{G^{\phi}}(v_{j}, a_{j}) + \dist_{G^{\phi}}(a_{j}, x_{j}) \\ &\leqslant 2(j - 1)(h(i - 1) + 1) + 1 + h(i - 1) \\
    &\phantom{fish}+ h(i - 1) + 1 = 2j(h(i - 1) + 1).\qedhere
    \end{align*} 
    \end{proof}

By \cref{layeredtreewidthclaim} (setting $j = |X_{e}|$), there exists a vertex $r \in W_{e} \setminus \{w\}$ such that $\dist_{G^{\phi}}(x, r) \leqslant 2|X_{e}|(h(i - 1) + 1) \leqslant 2k(h(i - 1) + 1) < h(i)$ and (since $A_{|X_{e}|} = X_{e}$) no vertex of $V_{X_{e}}$ is between $r$ and $u$ in $W_{e}$. If $r = u$ then we are done. Otherwise, let $r_{0}$ be the neighbour of $r$ towards $u$ in $W_{e}$. Then $\dist_{G^{\phi}}(x, r_{0}) \leqslant 2k(h(i - 1) + 1) + 1 = h(i)$. If $r_{0} = u$ then we are done. Otherwise, $r_{0} \neq u$, so $r_{0}, r \in W_{e} \setminus \{u, w\}$ and hence $r_{0}, r \in W_{e} \setminus V(G)$ by \cref{1}.

By \cref{2}, $\level(r) \leqslant \phi(e) = i$ and $\level(r_{0}) \leqslant \phi(e) = i$. By \cref{5}, $r$ or $r_{0}$ has level less than $i$. Let $z \in \{r, r_{0}\}$ be such a vertex, so $\level(z) < i$ and $z$ is between $r$ and $u$ in $W_{e}$.  By \cref{3}, there exists an edge $g \in E(G)$ such that $\phi(g) = \level(z) < i = \phi(e)$ and $z \in W_{g}$. By \cref{thencross}, $g$ crosses $e$, and hence $g$ is incident to $X_{e}$. Therefore, $z \in V_{X_{e}}$, which contradicts \cref{layeredtreewidthclaim}.

We have shown that for any $e = uw \in E(G)$ and any $x \in W_{e} \setminus \{u, w\}$, $\dist_{G^{\phi}}(x, u) \leqslant h(\phi(e))$. Since $h(\phi(e)) \leqslant h(c) = \frac{2^{c + 1}k^{c} - 2k - 1}{2k - 1}$, the result follows.
\end{proof}

\section{Treewidth Bounds}
\label{result}

This section proves \cref{introcircularkcoverplanar,introcirculardrawingscoloured}, which provide upper bounds on the treewidth of certain circular graphs. We extend these results and show an upper bound on the treewidth of certain (not necessarily circular)  topological graphs with bounded radius. The proofs use coloured planarisations (\cref{tools}) and the Coloured Planarisation Lemma (\cref{generallemma}). We start with the following result, which immediately implies \cref{introcirculardrawingscoloured}.

\begin{thm} \label{circulardrawings} Let $G$ be a circular graph with a transparent ordered $c$-edge-colouring $\phi$. Suppose that for any $i, j \in \{1,\dots,c\}$  with $i < j$, for any edge $e$ of colour $i$ and for any fragment $\gamma$ of $e$, the matching number of the set of edges of colour $j$ that cross $\gamma$ is at most $m$. Then $\tw(G) \leqslant 9mc(c - 1) + 3c - 1$.
\end{thm}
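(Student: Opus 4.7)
The plan is to combine the Coloured Planarisation Lemma (\cref{generallemma}) with a treewidth bound on the coloured planarisation $G^{\phi}$. First, apply \cref{generallemma} to $G$ and $\phi$: since $G$ is circular, property~\ref{CPLb} yields a positive integer $t \leqslant 1 + 3(c - 1)m$ such that there is a model of $G$ in $G^{\phi} \boxtimes K_t$. Thus $G$ is a minor of $G^{\phi} \boxtimes K_t$, so
\[
\tw(G) \leqslant \tw(G^{\phi} \boxtimes K_t) \leqslant (\tw(G^{\phi}) + 1) \cdot t - 1.
\]
Substituting the bound on $t$, the target bound $\tw(G) \leqslant 9mc(c-1) + 3c - 1$ reduces to showing $\tw(G^{\phi}) \leqslant 3c - 1$.

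To bound $\tw(G^{\phi})$, note that $G^{\phi}$ is a plane graph, since it is obtained from the planarisation $G'$ by contracting certain edges, and because $G$ is circular, the vertices of $V(G)$ all lie on the boundary of the outer face of $G^{\phi}$. Let $\ell_0$ denote the set of vertices on the outer face of $G^{\phi}$, and inductively let $\ell_i$ denote the set of vertices on the outer face of $G^{\phi} - (\ell_0 \cup \dots \cup \ell_{i-1})$. By \cref{6}, every vertex of $G^{\phi}$ is at distance at most $c - 1$ from $V(G) \subseteq \ell_0$ in $G^{\phi}$. A standard face-adjacency argument shows that if $v$ is adjacent in $G^{\phi}$ to some $u \in \ell_i$, then $v \in \ell_j$ for some $j \leqslant i + 1$: removing $u$ (and the rest of $\ell_i$) causes the face on the inner side of the edge $uv$ to merge with the current outer face, placing $v$ on the new outer face. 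Inducting along a shortest path in $G^{\phi}$ from an arbitrary vertex to $V(G)$, we conclude that every vertex of $G^{\phi}$ lies in $\ell_i$ for some $i \leqslant c - 1$, so $\ell_c = \emptyset$ and $G^{\phi}$ is $c$-outerplanar. By Bodlaender's classical theorem that every $k$-outerplanar graph has treewidth at most $3k - 1$, we obtain $\tw(G^{\phi}) \leqslant 3c - 1$.

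Combining the two bounds gives
\[
\tw(G) \leqslant (3c - 1 + 1)\bigl(1 + 3(c-1)m\bigr) - 1 = 9mc(c-1) + 3c - 1,
\]
as required. The main obstacle is the $c$-outerplanarity claim for $G^{\phi}$: bridging the gap between the purely graph-theoretic distance bound supplied by \cref{6} and the combinatorial, embedding-dependent layer notion of $k$-outerplanarity requires a careful face-adjacency argument in the plane embedding of $G^{\phi}$. Once this bridge is in place, the role of circularity (through property~\ref{CPLb}) in tightening $t$ and the role of planarity (through Bodlaender's bound) in tightening $\tw(G^{\phi})$ combine so that the constants line up exactly with the claimed bound.
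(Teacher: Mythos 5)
Your proof is correct and follows essentially the same route as the paper: use \cref{6} to deduce that $G^{\phi}$ is $c$-outerplanar, apply Bodlaender's $3c-1$ treewidth bound for $c$-outerplanar graphs, and combine with \cref{generallemma}\ref{CPLb} and the standard bound $\tw(H\boxtimes K_t)\leqslant(\tw(H)+1)t-1$. The only difference is that you spell out the face-adjacency argument bridging the distance bound of \cref{6} to the $c$-outerplanarity of $G^{\phi}$, whereas the paper states this implication without elaboration; your extra detail is sound (contractions in $G^{\phi}$ only ever involve dummy vertices, so $V(G)$ remains on the outer face).
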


\begin{proof}

By \cref{6}, the coloured planarisation $G^{\phi}$ of $G$ is $c$-outerplanar\footnote{A topological outerplanar graph is called \defn{$1$-outerplanar}. A topological planar graph is \defn{$c$-outerplanar} if the topological planar graph obtained by deleting the vertices on the outerface is $(c - 1)$-outerplanar.}. \citet{Bodlaender88} proved that every $c$-outerplanar graph has treewidth at most $3c - 1$, so $\tw(G^{\phi}) \leqslant 3c - 1$. By the Coloured Planarisation Lemma (\cref{generallemma}\ref{CPLb}), 
$G$ is a minor of $G^{\phi} \boxtimes K_{1 + 3(c - 1)m}$. Thus $\tw(G) \leqslant \tw(G^{\phi} \boxtimes K_{1 + 3(c - 1)m}) \leqslant (\tw(G^{\phi}) + 1)(1 + 3(c - 1)m) - 1 \leqslant 9mc(c - 1) + 3c - 1$.
\end{proof}

We now explain why \cref{introcirculardrawingscoloured} is a generalisation of \cref{knownresult}, which provides an upper bound on the treewidth of circular min-$k$-planar graphs. Let $G$ be a circular min-$k$-planar graph. Let $E(G) = E_{1} \cup E_{2}$, where $E_{1}$ is the set of edges that are involved in at least $k + 1$ crossings and $E_{2}$ is the set of edges that are involved in at most $k$ crossings. Since $G$ is circular min-$k$-planar, no two edges of $E_{1}$ cross. Greedily colour the edges of $E_{2}$ using colours $1, \dots, k + 1$ so that no two edges of $E_2$ of the same colour cross. Colour all the edges of $E_{1}$ using colour $k + 2$. So no two edges of the same colour cross. By construction, for any $i, j \in \{1, \dots, k + 2\}$ with $i < j$ and for any edge $e$ of colour $i$, the matching number of the set of edges of colour $j$ that cross $e$ is at most $k$. Hence, \cref{introcirculardrawingscoloured} gives the bound $\tw(G) \in \mathcal{O}(k^{3})$. Thus \cref{introcirculardrawingscoloured} implies that circular min-$k$-planar graphs have bounded treewidth, as shown in \cref{knownresult} (which gives a better bound on treewidth).

We now show that \cref{introcirculardrawingscoloured} is in fact a qualitative generalisation of \cref{knownresult} by considering complete bipartite graphs $K_{2,n}$. Let $H$ be a circular graph isomorphic to $K_{2, n}$ for any $n \geqslant 2$. Let $V(H) = \{a, b\} \cup X$, where all the vertices of $X$ are adjacent to both $a$ and $b$, and $ab \notin E(H)$. Colour all the edges of $H$ incident to $a$ by $1$ and colour all the edges of $H$ incident to $b$ by $2$.
Then monochromatic edges do not cross and for any edge $e$ of colour $1$, the matching number of the set of edges of colour $2$ that cross $e$ is at most $1$. So \cref{introcirculardrawingscoloured} is applicable with $m = 1$ and $c = 2$.
On the other hand, we now show that $K_{2, 2k + 3}$ is not isomorphic to a circular min-$k$-planar graph. Let $J$ be a circular min-$k$-planar graph isomorphic to $K_{2, 2k + 3}$. Let $a, b$ be two vertices such that every vertex of $V(J) \setminus \{a, b\}$ is adjacent to both $a$ and $b$. The vertices $a$ and $b$ split the circle into two arcs. One of these arcs contains at least $k + 2$ vertices. Let the order of the vertices in this arc be $a, v_{1}, \dots, v_{s}, b$, where $s \geqslant k + 2$. Then the edge $av_{s}$ crosses all the edges $bv_{1}, \dots, bv_{s - 1}$ and the edge $bv_{1}$ crosses all the edges $av_{2}, \dots, av_{s}$. So $av_{s}$ and $bv_{1}$ cross and each of these edges crosses at least $k + 1$ edges. So $K_{2, 2k + 3}$ is not isomorphic to a circular min-$k$-planar graph. Hence, \cref{knownresult} is not applicable for $K_{2, n}$ with large $n$. Thus,  \cref{introcirculardrawingscoloured} is a qualitative generalisation of \cref{knownresult}. 

Circular graphs are closely related to topological graphs of bounded radius, since one may add a dominant vertex outside the circle without introducing new crossings. Consider the class $\mathcal{G}_{c, m}$ of topological graphs that have a transparent ordered $c$-edge-colouring such that for any $i, j \in \{1,\dots,c\}$ with $i < j$ and for any edge $e$ of colour $i$, the matching number of the set of edges of colour $j$ that cross $e$ is at most $m$. \cref{introcirculardrawingscoloured} suggests that $\mathcal{G}_{c, m}$ might have bounded (as a function of $c$ and $m$) local treewidth. However, this is not true even for $m = 1$ and $c = 2$. For example, consider a geometric planar $(n \times n)$-grid\footnote{The \defn{$(n \times n)$-grid} is the graph with vertex set $\{1, \dots, n\} \times \{1, \dots, n\}$ where vertices $(v_{1}, v_{2})$ and $(u_{1}, u_{2})$ are adjacent whenever $|v_{1} - u_{1}| + |v_{2} - u_{2}| = 1$.}. Add a dominant vertex $v$ in the outerface that is adjacent to every vertex of the grid, let $G$ be the geometric graph obtained. So $G$ has radius $1$. Colour all the edges of the grid by $1$ and colour all the edges incident to $v$ by $2$. For every edge $e \in E(G)$ of colour $1$, all the edges of colour $2$ that cross $e$ are incident to $v$, so $G \in \mathcal{G}_{2, 1}$. But $\tw(G) = n + 1$ since the treewidth of the $(n \times n)$-grid is $n$ for $n \geqslant 2$ (see \citep[Lemma~20]{HW17} for a proof). Thus $\mathcal{G}_{2, 1}$ (and, as a consequence, $\mathcal{G}_{c, m}$ for every $c \geqslant 2$ and $m \geqslant 1$) does not have bounded local treewidth and, as a corollary, bounded layered treewidth and row treewidth (by \cref{LTWandRTW}).

We obtain the following upper bound on the treewidth of graphs in $\mathcal{G}_{c, m}$ that satisfy an additional property. Note that \cref{spanningtree} is a qualitative generalisation of \cref{circulardrawings,introcirculardrawingscoloured} and the classical result of \citet{RS-III} about the treewidth of planar graphs with bounded radius (\cref{PlanarBoundedRadius}).

\begin{thm} \label{spanningtree} Suppose that a topological graph $G$ has a transparent ordered $c$-edge-colouring $\phi$ such that:

\begin{itemize}

\item for any $i, j \in \{1,\dots,c\}$ with $i < j$, for any edge $e$ of colour $i$ and for any fragment $\gamma$ of $e$, the matching number of the set of edges of colour $j$ that cross $\gamma$ is at most $m$. 

\item $G$ has a spanning tree $T$ of radius $r$ such that every edge $e \in E(T)$ is involved in at most $t$ crossings with the edges of $G$ of colour less than $\phi(e)$.

\end{itemize}

Then $\tw(G) \in \mathcal{O}(((t + 1)r + c)cm)$. In particular, $\tw(G) \leqslant (6(t + 1)r + 3c - 1)(1 + 5(c - 1)m) - 1$.
\end{thm}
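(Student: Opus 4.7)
The plan is to mirror the proof of \cref{circulardrawings}, replacing the use of Bodlaender's outerplanarity bound with the classical bound of Robertson and Seymour (\cref{PlanarBoundedRadius}) on the treewidth of planar graphs with bounded radius. The Coloured Planarisation Lemma (\cref{generallemma}\ref{CPLa}) immediately gives a model of $G$ in $G^{\phi} \boxtimes K_{1 + 5(c-1)m}$, so it suffices to bound $\tw(G^{\phi})$ by $6(t+1)r + 3c - 2$. Since $G^{\phi}$ is planar (it is obtained from the planarisation $G'$ by contractions), the task reduces to bounding its radius.

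To bound the radius of $G^{\phi}$, I would first use the spanning tree $T$ of $G$ to control distances between vertices of $V(G)$ inside $G^{\phi}$. Pick a centre $v_0$ of $T$, so $\dist_T(v_0, u) \leqslant r$ for every $u \in V(G)$. For each edge $e = vw \in E(T)$, the hypothesis that $e$ is involved in at most $t$ crossings with edges of colour less than $\phi(e)$ combined with \cref{walklength} gives a walk $W_e$ in $G^{\phi}$ of length at most $2(t+1)$ connecting $v$ and $w$. Concatenating these walks along a $T$-path from $v_0$ to any $u \in V(G)$ yields $\dist_{G^{\phi}}(v_0, u) \leqslant 2(t+1)r$. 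Then, to reach arbitrary vertices of $G^{\phi}$ (including dummy vertices coming from sections), I would invoke \cref{6}, which guarantees that every $x \in V(G^{\phi})$ is within distance $c - 1$ of some vertex of $V(G)$ in $G^{\phi}$. The triangle inequality then gives $\dist_{G^{\phi}}(v_0, x) \leqslant 2(t+1)r + c - 1$ for every $x \in V(G^{\phi})$.

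To finish, apply \cref{PlanarBoundedRadius} to the planar graph $G^{\phi}$ of radius at most $2(t+1)r + c - 1$ to obtain
\[
\tw(G^{\phi}) \leqslant 3(2(t+1)r + c - 1) + 1 = 6(t+1)r + 3c - 2.
\]
Combining this with the model produced by the Coloured Planarisation Lemma and the standard inequality $\tw(H \boxtimes K_n) \leqslant (\tw(H) + 1)n - 1$ gives
\[
\tw(G) \leqslant (\tw(G^{\phi}) + 1)(1 + 5(c-1)m) - 1 \leqslant (6(t+1)r + 3c - 1)(1 + 5(c-1)m) - 1,
\]
as required. I do not anticipate a serious obstacle: the two non-trivial inputs (the Coloured Planarisation Lemma and the bound for bounded radius planar graphs) are already in place, and the only real work is the radius estimate, which follows cleanly by chaining walks $W_e$ along a shortest-path tree in $T$ and then extending by $c-1$ steps using \cref{6}.
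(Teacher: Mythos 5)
Your proof is correct and follows essentially the same approach as the paper: both bound the radius of $G^{\phi}$ by chaining walks $W_e$ of length at most $2(t+1)$ along the spanning tree $T$ and then appending the $(c-1)$-step extension from \cref{6}, then apply \cref{PlanarBoundedRadius} to $G^{\phi}$ and the Coloured Planarisation Lemma exactly as you describe. The paper's only cosmetic difference is that it packages the concatenated walks as the subgraph $G_T$ of $G^{\phi}$ before invoking the radius bound.
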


\begin{proof}

For each $e \in E(T)$, let $E_{e} \subseteq E(G^{\phi})$ be the set of edges between consecutive vertices of $W_{e}$. By \cref{walklength}, the length of $W_{e}$ is at most $2(t + 1)$. Let $E := \bigcup_{e \in E(T)} E_{e}$. Let $G_{T}$ be the subgraph of $G^{\phi}$ induced by $E$. Since $T$ has radius $r$, $G_{T}$ has radius at most $2(t + 1)r$. Since $V(T) = V(G)$, we have $V(G) \subseteq V(G_{T})$. By \cref{6}, for any $x \in V(G^{\phi})$, $\dist_{G^{\phi}}(x, v) \leqslant c - 1$ for some $v \in V(G_{T})$. By triangle inequality, $G^{\phi}$ has radius at most $2(t + 1)r + c - 1$. By \cref{PlanarBoundedRadius}, $\tw(G^{\phi}) \leqslant 6(t + 1)r + 3c - 2$. By the Coloured Planarisation Lemma (\cref{generallemma}\ref{CPLa}), $G$ is a minor of $G^{\phi} \boxtimes K_{1 + 5(c - 1)m}$. Thus $\tw(G) \leqslant \tw(G^{\phi} \boxtimes K_{1 + 5(c - 1)m}) \leqslant (\tw(G^{\phi}) + 1)(1 + 5(c - 1)m) - 1 \leqslant (6(t + 1)r + 3c - 1)(1 + 5(c - 1)m) - 1$.
\end{proof}

We have the following bound on the treewidth of circular $k$-matching-planar graphs.

\begin{cor} \label{outerkcoverplanar} Let $G$ be a circular $k$-matching-planar graph, where $k \geqslant 1$. Then $\tw(G) \in \mathcal{O}(k^3\log^{2}k)$. In particular, $\tw(G) \leqslant 9kc(c - 1) + 3c - 1$, where $c = 2(k + 1)\log_{2}(k + 1) + 2(k + 1)\log_{2}(\log_{2}(k + 1)) + 10k + 10$. 
\end{cor}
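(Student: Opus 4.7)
The plan is to reduce to \cref{circulardrawings} with matching parameter $m = k$; the whole task is to equip $G$ with a transparent ordered $c$-edge-colouring whose number of colours matches the claimed value
$c = 2(k+1)\log_{2}(k+1) + 2(k+1)\log_{2}\log_{2}(k+1) + 10k + 10$. Once such a colouring exists, \cref{circulardrawings} delivers $\tw(G)\leq 9kc(c-1)+3c-1$, and substituting $c \in \mathcal{O}(k\log k)$ yields $\tw(G)\in \mathcal{O}(k^3\log^2 k)$, which is both statements of the corollary.

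The first step is to bound the clique number of the crossing graph $X_G$. Since $G$ is circular, its edges are straight-line chords, and two chords cross precisely when their four endpoints alternate around the circle; in particular, crossing chords share no endpoint. Hence any set of pairwise crossing edges of $G$ is a matching in $G$. If $e_1,\dots,e_t$ pairwise cross, then $\{e_2,\dots,e_t\}$ is a matching of edges all crossing $e_1$, so the $k$-matching-planar hypothesis forces $t-1\leq k$. Therefore $\omega(X_G)\leq k+1$.

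The second step is to colour $X_G$. The crossing graph $X_G$ is the intersection graph of the chords representing the edges of $G$, so it is a circle graph. Invoking a quantitative $\chi$-boundedness bound for circle graphs with $\omega(X_G)\leq k+1$, one obtains a proper vertex colouring of $X_G$ using at most $c$ colours (this is exactly where the $(k+1)\log_2(k+1)$ and $(k+1)\log_2\log_2(k+1)$ terms come from). Read on the edges of $G$ and ordered arbitrarily, this is a transparent ordered $c$-edge-colouring $\phi$ of $G$.

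The last step checks the hypothesis of \cref{circulardrawings}: for any colours $i<j$, any edge $e$ of colour $i$, and any fragment $\gamma$ of $e$, the edges of colour $j$ that cross $\gamma$ form a subset of the edges of $G$ crossing $e$, whose matching number is at most $k$ since $G$ is $k$-matching-planar. Hence \cref{circulardrawings} with $m=k$ gives $\tw(G)\leq 9kc(c-1)+3c-1$, completing the proof. The only nontrivial ingredient is the quantitative $\chi$-boundedness result for circle graphs that yields the precise expression for $c$; this is the main obstacle and is where the nested logarithmic factors enter.
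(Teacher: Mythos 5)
Your proposal is correct and follows essentially the same route as the paper's proof: observe that in a circular (chord) drawing no two crossing edges share an endpoint, so $k$-matching-planarity forces $\omega(X_G)\le k+1$; invoke the quantitative $\chi$-boundedness of circle graphs (the paper cites Davies) to get a transparent ordered $c$-edge-colouring with $c$ exactly as claimed; and then apply \cref{circulardrawings} with $m=k$. You fill in two details the paper leaves implicit — why no $k+2$ edges pairwise cross, and why the per-fragment matching number is at most $k$ — but these are precisely the unstated steps behind the paper's argument, not a different approach.
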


\begin{proof}

By assumption, no $k + 2$ edges of $G$ pairwise cross. 
A result of \citet{Davies22a} (about $\chi$-boundedness of circle graphs) is equivalent to saying that every circular graph with 
no $k + 2$ pairwise crossing edges has topological thickness at most $c$. 
Thus, $G$ has topological thickness at most $c$. The result follows from \cref{introcirculardrawingscoloured} (or \cref{circulardrawings}).
\end{proof}

Note that \cref{outerkcoverplanar} implies \cref{introcircularkcoverplanar}.

\section{Edge Colouring $k$-Matching-Planar Graphs} \label{SectionColouring}

This section proves \cref{introtopologicalthickness}, which bounds the topological thickness of certain topological $k$-matching-planar graphs and is an essential ingredient in the proofs of \cref{introRTW,introLTW} in \cref{FinalSection}.

The starting point is a bound on the edge density of $k$-cover-planar graphs. Although the definition of $k$-cover-planar graphs is introduced in this paper, a similar concept was briefly mentioned by \citet{AFPS14}. In particular, Rom Pinchasi proved the following bound on the number of edges in $k$-cover-planar graphs, where $d_{k} :=  \frac{3(k + 1)^{k + 1}}{k^k}$ for each integer $k \geqslant 0$ (see \citep[Lemma~4.1]{AFPS14}). Note that $d_{k} < 3e(k + 1)$. We include the proof for completeness. 

\begin{lem} [Rom Pinchasi; see \citep{AFPS14}] \label{edgeskcoverplanar} Every $k$-cover-planar graph on $n$ vertices has at most $d_{k}n$ edges.
\end{lem}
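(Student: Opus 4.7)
The plan is to run a probabilistic sampling argument that reduces the question to the edge bound for planar graphs via Euler's formula. Let $G$ be a topological $k$-cover-planar graph on $n$ vertices with $m$ edges. For each edge $e=uv \in E(G)$, fix a vertex cover $C(e) \subseteq V(G)$ of the set of edges crossing $e$ with $|C(e)| \leq k$. The key small observation is that edges sharing an endpoint do not cross (by the definition of a crossing in a topological graph), so every edge crossing $e$ avoids both $u$ and $v$, and I may therefore choose $C(e) \subseteq V(G) \setminus \{u,v\}$.

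Next, I would independently include each vertex of $V(G)$ in a random set $S \subseteq V(G)$ with probability $p \in (0,1)$ to be optimised. For a fixed edge $e = uv$, the event that $e$ survives in $G[S]$ and is uncrossed in $G[S]$ contains the event that $\{u,v\} \subseteq S$ and $C(e) \cap S = \emptyset$: if $C(e) \cap S = \emptyset$, then no edge of $G[S]$ crosses $e$, because every edge crossing $e$ is incident to $C(e)$. Since $C(e)$ is disjoint from $\{u,v\}$, these two events are independent and so
\[
\Pr[e \text{ is an uncrossed edge of } G[S]] \;\geq\; p^{2}(1-p)^{|C(e)|} \;\geq\; p^{2}(1-p)^{k}.
\]

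Let $m'$ denote the number of uncrossed edges of $G[S]$. The uncrossed edges of $G[S]$ together with the vertices of $S$ form a topological planar subgraph, and hence by Euler's formula $m' \leq 3|S|$. Taking expectations and applying linearity gives
\[
m\, p^{2}(1-p)^{k} \;\leq\; \mathbb{E}[m'] \;\leq\; 3\,\mathbb{E}[|S|] \;=\; 3pn,
\]
so $m \leq 3n / \bigl(p(1-p)^{k}\bigr)$. Optimising, the minimum of $1/\bigl(p(1-p)^{k}\bigr)$ over $p \in (0,1)$ is attained at $p = 1/(k+1)$ (for $k \geq 1$), producing the value $(k+1)^{k+1}/k^{k}$, from which $m \leq d_{k} n$ follows immediately.

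There is essentially no main obstacle; the only places to be careful are the convention that adjacent edges do not cross (needed to justify $C(e) \cap \{u,v\} = \emptyset$), the verification that the uncrossed edges of $G[S]$ form a topological planar graph (immediate by deleting crossed edges from the induced drawing), and the degenerate case $k = 0$, where $G$ is already planar and the bound $m \leq 3n - 6 \leq d_{0}\, n = 3n$ is classical.
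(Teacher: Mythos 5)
There is a genuine gap, and it is exactly at the step you call a ``small observation''. Under this paper's definition of a topological graph (\cref{topologicalgraphs}), drawings need not be simple: two edges sharing an endpoint are perfectly allowed to cross at an internal point (indeed, much of the paper is about controlling ``pairwise crossing edges incident to a common vertex''). So the claim that every edge crossing $e=uv$ avoids $u$ and $v$ is false in general, and with it the claim that you may choose a vertex cover $C(e)$ of \emph{all} edges crossing $e$ with $|C(e)|\leqslant k$ and $C(e)\cap\{u,v\}=\emptyset$. Concretely, if many edges $uw_{1},\dots,uw_{N}$ all cross $uv$, the crossing set of $uv$ has a cover $\{u\}$ of size $1$ (so the drawing can be $1$-cover-planar), yet any cover avoiding $u$ must contain all of $w_{1},\dots,w_{N}$. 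Your probability bound $p^{2}(1-p)^{k}$ needs both $|C(e)|\leqslant k$ and $C(e)\cap\{u,v\}=\emptyset$ (otherwise the events $\{u,v\in S\}$ and $\{C(e)\cap S=\emptyset\}$ are not independent, or are even disjoint), so the argument as written only covers simple topological graphs, whereas the lemma is stated and later used in the non-simple setting.

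The paper's proof runs the same sampling scheme but repairs precisely this point: it takes $X_{uv}$ to be only those edges crossing $uv$ that are \emph{not} incident to $\{u,v\}$, so a minimum cover $C_{uv}$ of $X_{uv}$ has size at most $k$ and automatically avoids $u,v$. The price is that in the sampled subgraph $H$ (edges $uv$ with $u,v$ chosen and $C_{uv}$ unchosen) adjacent edges may still cross, so one cannot simply say the surviving drawing is plane; instead the paper observes that any two \emph{independent} edges of $H$ are crossing-free and invokes the (weak) Hanani--Tutte theorem to conclude that $H$ is planar as an abstract graph, giving $|E(H)|\leqslant 3|V(H)|$ and the same optimisation $p=\tfrac{1}{k+1}$. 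To fix your write-up you would either need to restrict to simple topological graphs (which weakens the lemma) or adopt the paper's two-step structure: cover only the non-adjacent crossing edges, and replace ``the uncrossed edges of $G[S]$ form a plane drawing'' by the Hanani--Tutte step.
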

\begin{proof} 
Let $G$ be a topological $k$-cover-planar graph with $m := |E(G)|$. 
For each edge $uv\in E(G)$, let $X_{uv}$ be the set of edges of $G$ that cross $uv$, and are not incident to $\{u, v\}$. Since $G$ is $k$-cover-planar, $\tau(X_{uv}) \leqslant k$. Let $C_{uv}$ be a vertex cover of $X_{uv}$ with minimum size, so $|C_{uv}| = \tau(X_{uv}) \leqslant k$ and $\{u, v\} \cap C_{uv} = \emptyset$. Choose each vertex of $G$ independently with probability $p:=\frac{1}{k+1}$. Let $H$ be the subgraph of $G$ where $V(H)$ is the set of chosen vertices, and $E(H)$ is the set of edges $uv$ in $G$ such that $u$ and $v$ are chosen, but no vertex in $C_{uv}$ is chosen. Let $n^{*}$ and $m^{*}$ be the expected value of $|V(H)|$ and $|E(H)|$ respectively. By definition, $n^{*} = pn$. The probability that an edge $uv\in E(G)$ is in $H$ equals $p^{2}(1 - p)^{|C_{uv}|} \geqslant p^{2}(1 - p)^{k}$. Thus $m^{*} \geqslant p^{2}(1 - p)^{k}m$. Two edges in $H$ may cross only if they are incident to a common vertex. By the Hanani–Tutte Theorem, $H$ is planar  (see \citep{Tutte70} for example). Therefore, $p^{2}(1 - p)^{k}m \leqslant m^{*} \leqslant 3n^{*} = 3pn$, implying $m \leqslant \frac{3}{p(1 - p)^{k}}n =d_{k}n$.
\end{proof}

\cref{edgeskcoverplanar} and \cref{relation} immediately imply the following.

\begin{lem} [\citep{AFPS14}] \label{edgeskmatchingplanar} Every $k$-matching-planar graph on $n$ vertices has at most $d_{2k}n$ edges.
\end{lem}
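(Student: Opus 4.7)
The plan is to deduce \cref{edgeskmatchingplanar} directly from the two immediately preceding results stated in the paper, without any new argument. First I would apply \cref{relation}, which asserts that every topological $k$-matching-planar graph is a topological $2k$-cover-planar graph; passing to isomorphism classes, the same inclusion holds for the abstract graph classes. In particular, any $k$-matching-planar graph $G$ on $n$ vertices is $2k$-cover-planar.

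Next I would invoke \cref{edgeskcoverplanar} with parameter $2k$ in place of $k$: this bounds the number of edges of any $2k$-cover-planar graph on $n$ vertices by $d_{2k} \, n$, where $d_{2k} = \frac{3(2k+1)^{2k+1}}{(2k)^{2k}}$. Combining these two facts yields $|E(G)| \leqslant d_{2k} \, n$, which is exactly the desired conclusion.

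There is no real obstacle: the proof is a two-line corollary. The only thing to be careful about is that \cref{relation} is stated for topological graphs, so strictly speaking one should note that if $G$ is $k$-matching-planar then $G$ is isomorphic to some topological $k$-matching-planar drawing, which by \cref{relation} is also a topological $2k$-cover-planar drawing, witnessing that $G$ is $2k$-cover-planar. Then \cref{edgeskcoverplanar} applies. No probabilistic argument or reworking of Pinchasi's proof is needed, since the edge-density bound for cover-planar graphs has already been established.
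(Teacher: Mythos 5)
Your proof is correct and is exactly the argument the paper uses: the paper states that \cref{edgeskcoverplanar} and \cref{relation} immediately imply \cref{edgeskmatchingplanar}, which is precisely your two-step derivation. Your added remark about passing from topological drawings to isomorphism classes is a reasonable clarification but does not change the substance.
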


As an aside, note that \cref{edgeskmatchingplanar} is useful for proving lower bounds. For example, suppose that $K_n$ is $k$-matching-planar. By \cref{edgeskmatchingplanar}, $\binom{n}{2}\leq d_{2k} n < 3e(2k+1)n$, implying $k\in\Omega(n)$. That is, in every topological $K_n$ there is an edge crossed by a matching of $\Omega(n)$ edges. This argument holds for any graph with $n$ vertices and $\Omega(n^2)$ edges.

We use \cref{edgeskmatchingplanar} to bound the arboricity and star arboricity of $k$-matching-planar graphs.

\begin{lem} \label{stararboricity} 
Every $k$-matching-planar graph $G$ has arboricity at most  $\lceil 2d_{2k} \rceil$ and star arboricity at most $2\lceil 2d_{2k} \rceil$.
\end{lem}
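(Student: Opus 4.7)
The plan is to derive both bounds from the edge-density bound \cref{edgeskmatchingplanar} by combining it with two standard results: the Nash-Williams arboricity formula and the fact that every forest decomposes into two star-forests.

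First, I would observe that the class of $k$-matching-planar graphs is closed under taking subgraphs: given a topological $k$-matching-planar drawing of $G$, restricting to any subset of vertices and edges produces a topological drawing in which the set of edges crossing any remaining edge is a subset of the original, so its matching number is still at most $k$. By the Nash-Williams arboricity theorem, the arboricity of $G$ equals the maximum of $\lceil |E(H)|/(|V(H)|-1) \rceil$ over all subgraphs $H$ with $|V(H)| \ge 2$. By \cref{edgeskmatchingplanar} applied to each such subgraph, $|E(H)| \leqslant d_{2k}|V(H)|$, and since $|V(H)|/(|V(H)|-1) \leqslant 2$ whenever $|V(H)| \geqslant 2$, we obtain $|E(H)|/(|V(H)|-1) \leqslant 2d_{2k}$. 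This yields arboricity at most $\lceil 2d_{2k}\rceil$, as claimed.

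Next, for the star arboricity, the key (folklore) fact I would prove is that every forest decomposes into two star-forests, so $\st(G) \leqslant 2 \cdot \mathrm{arb}(G)$. Given a forest $F$, I root each component tree and assign each vertex the parity of its depth. For $i \in \{0,1\}$, let $F_i$ be the set of edges from a vertex $v$ of depth-parity $i$ to each of its children (which have depth-parity $1-i$). Each $F_i$ is a star-forest: in $F_i$, a vertex of depth-parity $i$ appears only as the center of the star of its children-edges, while a vertex of the opposite parity is incident to exactly one edge of $F_i$ (the edge to its parent). Thus $E(F) = F_0 \cup F_1$ is a partition into two star-forests. Applying this to each forest in an optimal forest decomposition of $G$ gives $\st(G) \leqslant 2 \cdot \mathrm{arb}(G) \leqslant 2\lceil 2d_{2k}\rceil$.

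There is no substantial obstacle: the edge-density bound \cref{edgeskmatchingplanar} does the heavy lifting via Nash-Williams, and the star-forest decomposition is a short rooting argument. The only mildly delicate point is verifying closure of the class under subgraphs, which is immediate from the local (per-edge) nature of the matching-planar condition.
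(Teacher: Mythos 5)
Your proof is correct and follows essentially the same route as the paper: subgraph-closure of $k$-matching-planarity, the edge-density bound from Lemma~\ref{edgeskmatchingplanar}, Nash--Williams for arboricity, and the forest-to-two-star-forests decomposition. The only difference is that you prove the star-forest decomposition from scratch via depth parity, whereas the paper simply cites it.
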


\begin{proof}
Let $n := |V(G)|$. By \cref{edgeskmatchingplanar}, $G$ has at most $d_{2k}n \leqslant \lceil 2d_{2k} \rceil (n - 1)$ edges assuming $n \geqslant 2$.  Every induced subgraph of $G$ is $k$-matching-planar. So by the Nash-Williams arboricity theorem \citep{NW-JLMS64}, $G$ is the union of $\lceil 2d_{2k} \rceil$ forests. Every forest is the union of two star-forests \citep{AA-DM89}. Thus $G$ is the union of $2\lceil 2d_{2k} \rceil$ star-forests.
\end{proof}

\cref{stararboricity} implies that to bound the topological thickness of a general topological $k$-matching-planar graph, it suffices to bound the topological thickness of a topological $k$-matching-planar star-forest. To do so, we employ the following definitions. A graph $J$ is called a \defn{string graph} if it is the intersection graph of a collection of continuous curves in the plane; that is, for each vertex $v \in V(J)$, there is a curve $\alpha_{v}$ in the plane such that distinct vertices $v, w$ are adjacent in $J$ if and only if $\alpha_{v}\cap \alpha_{w}\neq\emptyset$. Let $G$ be a topological graph. An edge $e$ of $G$ \defn{crosses} a component $S$ of $G$ if $e$ crosses an edge of~$S$. Distinct components $S_{1}$ and $S_{2}$ of $G$ \defn{cross} if an edge of $S_{1}$ crosses an edge of $S_{2}$. The \defn{component-crossing-graph} of $G$, denoted by \defn{$H_G$}, is the graph where the vertices of $H_{G}$ are the components of $G$, and two vertices of $H_{G}$ are adjacent if and only if the corresponding components of $G$ cross.

\begin{lem} \label{colouringk-matching-planargraphsstarforests} The topological thickness of every topological $k$-matching-planar star-forest $G_{0}$ such that no two edges incident to a common vertex cross is $\mathcal{O}(k^{2}\log k)$.
\end{lem}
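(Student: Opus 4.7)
The target is to bound the chromatic number of the crossing graph $X_G$ of the given topological star-forest $G$, where $V(X_G) = E(G)$ and adjacency records pairs of crossing edges.

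The first step is the easy clique bound: $\omega(X_G) \leqslant k+1$. Any clique in $X_G$ is a set of pairwise crossing edges of $G$; since no two edges incident to a common vertex cross, this set is a matching in $G$; picking any edge $e$ of the matching, the remaining edges form a matching of crossings of $e$, which by the $k$-matching-planar condition has size at most $k$. In the same spirit, I would record the slightly stronger observation that for each edge $e \in E(G)$, the edges crossing $e$ come from at most $k$ distinct stars of $G$ (since a matching in a star-forest can pick at most one edge per star, so the matching bound of $k$ forces at most $k$ stars to appear).

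The second step is to set up the intersection-graph viewpoint while exposing the special ``bundled'' geometry. The edges of $G$ are a family of curves in the plane partitioned into bundles, one per star, where each bundle is a set of pairwise non-crossing curves emanating from a common grounding point (the star's center). Thus $X_G$ is not an arbitrary string graph but a star-grounded one, which is crucial: by Pawlik et al., general $K_4$-free string graphs have unbounded chromatic number, so the argument must use the bundled, grounded structure essentially.

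The third, main step is to derive a $\chi$-bounding estimate of the form $O(\omega^2\log\omega)$ for such graphs. I see two natural routes. Route (i): reduce to a grounded / outerstring framework by joining the star centers via an auxiliary system of pairwise non-crossing curves (obtained, say, by a careful planarization-plus-spanning-tree step) so that every edge of $G$ can be regarded as a curve grounded on a common ``spine''; one can then invoke a Rok--Walczak-type $\chi$-bound for outerstring graphs of the form $\chi \leqslant O(\omega (\log \omega)^{O(1)})$. Route (ii): perform a recursive/divide-and-conquer coloring. Pick a single edge $e$ (or a short family of edges) acting as a balanced topological separator; by the observation from Step 1, only $O(k)$ stars ``straddle'' the separator, so $O(k)$ stars' edges can be peeled off with $O(k\log k)$ colors via the grounded argument on one side, and induction handles the rest. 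Either way, I would organise the proof so that one factor of $k$ arises from the ``at most $k$ stars per neighborhood'' bound and the remaining $O(k\log k) = O(\omega \log\omega)$ factor arises from a coloring result within each bundle.

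The expected main obstacle is exactly the third step: general string graphs are not $\chi$-bounded, so the proof cannot be a black-box application and must crucially use (a) that curves within a bundle are pairwise non-crossing and share an endpoint, and (b) the $k$-matching-planar condition translated as ``at most $k$ bundles touch each curve.'' Matching the exponents to the desired $O(k^{2}\log k)$ will require careful accounting of how these two conditions combine, and is likely the technical heart of the argument.
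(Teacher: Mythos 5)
There is a genuine gap. Your plan is to $\chi$-bound the edge-level crossing graph $X_G$ in terms of its clique number $\omega \leqslant k+1$, but the quantitative ingredient you invoke in Route (i) does not exist: the Rok--Walczak bound for outerstring graphs is doubly exponential in the clique number (indeed, this paper uses exactly that bound later, in the lemma about topological stars, which is why the final bound in the general colouring theorem is doubly exponential in $t$), not $\mathcal{O}(\omega(\log\omega)^{\mathcal{O}(1)})$. Moreover, the reduction you propose --- joining the star centres by non-crossing curves so that all edges become grounded on a common spine --- is not justified: the centres are scattered in the plane, possibly separated by edges of $G$, and no planarisation-plus-spanning-tree step is given that yields an outerstring (or otherwise grounded) representation of $X_G$. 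Route (ii) is likewise only a sketch: you do not establish that a single edge (or a short family of edges) gives a balanced topological separator, and your bookkeeping is off --- within a bundle the edges pairwise do not cross, so ``colouring within each bundle'' needs one colour, and the claimed split of $k \times k\log k$ does not correspond to any worked-out argument.

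The paper's proof takes a different and essential turn that your proposal misses: it never $\chi$-bounds $X_G$ via its clique number. Instead it colours whole star-components, working with the crossing graph $H_G$ of star-components. Two geometric claims, each built around a Jordan curve assembled from $\mathcal{O}(1)$ edges of $G$ (which, by $k$-matching-planarity, can be crossed by at most $\mathcal{O}(k)$ star-components), show that $H_G$ is $K_{12k^2+3k+2}$-free and, more importantly, $K_{t,t}$-free with $t = 16k^2+3k+1$. Representing each star-component by the boundary curve of a thin neighbourhood exhibits $H_G$ as a string graph, and then Lee's theorem --- every $K_{t,t}$-free string graph is $\mathcal{O}(t\log t)$-degenerate --- gives $\chi(H_G) \in \mathcal{O}(k^2\log k)$; colouring each edge by the colour of its star finishes the proof. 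The decisive idea is the replacement of clique-number-based $\chi$-boundedness (which fails quantitatively for string graphs) by biclique-freeness plus degeneracy, and that is precisely the step absent from your proposal.
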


\begin{proof}

The result is trivial if $k = 0$, so we assume that $k \geqslant 1$.

\begin{claim} \label{firstclaimcolouring} $H_{G_0}$ is $K_{12k^{2} + 3k + 2}$-free.
\end{claim}

\begin{proof}
    Let $t := 12k^{2} + 3k + 2$.
    Assume for the sake of contradiction that $K_{t}$ is contained in~$H_{G_{0}}$. Let $G$ be a minimal subgraph of $G_0$ such that the component-crossing-graph $H_{G}$ of the components of $G$ is isomorphic to $K_t$.

    Let $S_{1}, \dots, S_{t}$ be the components of $G$. Let $e \in E(S_{1})$ be an arbitrary edge. By minimality, there exists a component $S^{e} \in \{S_{2}, \dots, S_{t}\}$ such that $e$ crosses $S^{e}$, but no other edge of $S_{1}$ crosses $S^{e}$ (otherwise $H_{G-e}$ is isomorphic to $H_{G}$).

     Since $G_0$ is $k$-matching-planar, every edge of $S_{1}$ crosses at most $k$ of $S_{2}, \dots, S_{t}$. Since $H_{G}$ is isomorphic to $K_{t}$, the star $S_{1}$ has at least $\lceil\frac{t - 1}{k}\rceil = 12k + 4$ edges. Let $v$ be the centre of~$S_{1}$. Let $e_{1}, \dots, e_{12k + 4}$ be $12k + 4$ edges of $S_{1}$ in the counterclockwise order around $v$. By definition, $S_{1}$, $S^{e_{1}}, \dots, S^{e_{12k + 4}}$ are distinct.

    Let $a$ be the crossing point of $e_{1}$ and $S^{e_{1}}$ such that there are no crossing points of $e_{1}$ and $S^{e_{1}}$ between $a$ and $v$ (along $e_{1}$). As illustrated in \cref{colouring1},  let $\gamma_{1}$ be the subcurve of $e_{1}$ between $v$ and $a$ (green curve in \cref{colouring1}). Similarly,  let $b$ be the crossing point of $e_{6k + 3}$ and $S^{e_{6k + 3}}$ such that there are no crossing points of $e_{6k + 3}$ and $S^{e_{6k + 3}}$ between $b$ and $v$ (along $e_{6k + 3}$). Let $\gamma_{2}$ be the subcurve of $e_{6k + 3}$ between $v$ and $b$ (red curve in \cref{colouring1}). It follows from the definitions of $S^{e_{1}}$, $S^{e_{6k + 3}}$, $a$ and $b$ that no edge of $S_{1} \cup S^{e_{1}} \cup S^{e_{6k + 3}}$ crosses $\gamma_{1} \cup \gamma_{2}$.

    Since $a$ belongs to an edge of $S^{e_{1}}$, $b$ belongs to an edge of $S^{e_{6k + 3}}$, and $S^{e_{1}}$ crosses $S^{e_{6k + 3}}$, there exists a non-self-intersecting curve $\gamma_{3}$ with endpoints $a$ and $b$ such that: 

    \begin{itemize}
        \item $\gamma_{3} \subseteq S^{e_{1}} \cup S^{e_{6k + 3}}$,
        \item $\gamma_{3} \cap S^{e_{1}}$ is a subset of at most two edges of $S^{e_{1}}$ ($\gamma_{3} \cap S^{e_{1}}$ is blue in \cref{colouring1}),
        \item $\gamma_{3} \cap S^{e_{6k + 3}}$ is a subset of at most two edges of $S^{e_{6k + 3}}$ ($\gamma_{3} \cap S^{e_{6k + 3}}$ is purple in \cref{colouring1}).
    \end{itemize}

    Let $\alpha := \gamma_{1} \cup \gamma_{2} \cup \gamma_{3}$. Since  no edge of $S^{e_{1}} \cup S^{e_{6k + 3}}$ crosses $\gamma_{1} \cup \gamma_{2}$ and $\gamma_{3} \subseteq S^{e_{1}} \cup S^{e_{6k + 3}}$,  $\alpha$ is a Jordan curve. Let $F_{1}$ be the interior region in the plane bounded by $\alpha$ and $F_{2}$ be the exterior region in the plane bounded by $\alpha$.

 \begin{figure}
     \centering
          \scalebox{0.8}{\includegraphics{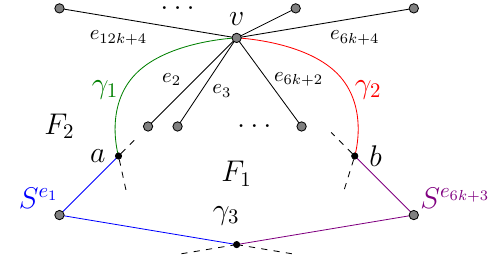}}
     \caption{Proof of \cref{firstclaimcolouring}. The vertices of $G$ are grey, the crossing points are black.}
     \label{colouring1}
 \end{figure}

    Let $E_{1} := \{e_{2}, \dots, e_{6k + 2}\}$, $E_{2} := \{e_{6k + 4}, \dots, e_{12k + 4}\}$, $\mathcal{S}_{1} := \{S^{e_{2}}, \dots, S^{e_{6k + 2}}\}$, and $\mathcal{S}_{2} := \{S^{e_{6k + 4}}, \dots, S^{e_{12k + 4}}\}$. Since $\gamma_{3} \subseteq S^{e_{1}} \cup S^{e_{6k + 3}}$, no edge of $E_{1} \cup E_{2}$ crosses $\gamma_{3}$. Since no two edges of $S_{1}$ cross and $\gamma_{1} \cup \gamma_{2} \subseteq e_{1} \cup e_{6k + 3}$, no edge of $E_{1} \cup E_{2}$ crosses $\alpha$. Without loss of generality, we can assume that for each $i \in \{1, 2\}$, the edges of $E_{i}$ lie in $F_{i}$. That is, each edge $e \in E_{i}$ lies in the interior of $F_{i}$ except for the endpoint $v$. Every edge $e$ of $S_{1}$ crosses $S^{e}$. Therefore, for every edge $e \in E_{i}$, there is a point in the interior of $F_{i}$ that belongs to $S^{e}$. Thus, for each star $S \in \mathcal{S}_{i}$, there exists a point of $S$ that lies in the interior of $F_{i}$. Since $H_{G}$ is complete, every star of $\mathcal{S}_{1}$ crosses every star of $\mathcal{S}_{2}$. Then there are at least $\min(|\mathcal{S}_{1}|, |\mathcal{S}_{2}|) = 6k + 1$ components of $G$ that cross $\alpha$. Since $G_0$ is $k$-matching-planar and there is a set of at most six edges of $G_0$ whose union contains $\alpha$, at most $6k$ components of $G$ cross $\alpha$, which is the desired contradiction.
\end{proof}

\begin{claim} \label{Kttstars} $H_{G_{0}}$ is $K_{16k^{2} + 3k + 1, 16k^{2} + 3k + 1}$-free.
\end{claim}

\begin{proof}
    The proof is analogous to the proof of \cref{firstclaimcolouring}. Let $t := 16k^{2} + 3k + 1$. Assume for the sake of contradiction that $K_{t, t}$ is contained in $H_{G_0}$, and let $G$ be a minimal subgraph of $G_0$ such that $K_{t,t}$ is contained in the component-crossing-graph $H_G$ of the components of~$G$. Let $\mathcal{T}_{1} := \{S_{1}, \dots, S_{t}\}$ and $\mathcal{T}_{2}$ be two sets of components of $G$ such that $|\mathcal{T}_{1}| = |\mathcal{T}_{2}| = t$, $\mathcal{T}_{1} \cap \mathcal{T}_{2} = \emptyset$, $\mathcal{T}_{1} \cup \mathcal{T}_{2}$ is the set of all components of $G$, and every star of $\mathcal{T}_{1}$ crosses every star of $\mathcal{T}_{2}$. By \cref{firstclaimcolouring}, $H_{G}$ is not isomorphic to $K_{2t}$. Without loss of generality, we can assume that the stars $S_{1}$ and $S_{2}$ do not cross.

    Let $e \in E(S_{1})$ be an arbitrary edge.  By minimality, there exists a star $S^{e} \in \mathcal{T}_{2}$ such that $e$ crosses $S^{e}$, but no other edge of $S_{1}$ crosses $S^{e}$ (otherwise $H_{G-e}$ is isomorphic to $H_G$).

    Since $G_0$ is $k$-matching-planar, every edge of $S_{1}$ crosses at most $k$ stars of $\mathcal{T}_{2}$. Since $S_{1}$ crosses every star of $\mathcal{T}_{2}$ and $|\mathcal{T}_{2}| = t$, the star $S_{1}$ has at least $\lceil\frac{t}{k}\rceil = 16k + 4$ edges. Let $v$ be the centre of $S_{1}$. Let $e_{1}, \dots, e_{16k + 4}$ be $16k + 4$ edges of $S_{1}$ in the counterclockwise order around $v$. By definition, the stars $S^{e_{1}}, \dots, S^{e_{16k + 4}}$ are distinct.

    Let $a$ be the crossing point of $e_{1}$ and an edge of $S^{e_{1}}$ such that there are no crossing points of $e_{1}$ and an edge of $S^{e_{1}}$ between $a$ and $v$ (along $e_{1}$). As illustrated in \cref{colouring2}, let $\gamma_{1}$ be the subcurve of $e_{1}$ between $v$ and $a$ (green curve in \cref{colouring2}). Similarly,  let $b$ be the crossing point of  $e_{8k + 3}$ and an edge of $S^{e_{8k + 3}}$ such that there are no crossing points of $e_{8k + 3}$ and an edge of $S^{e_{8k + 3}}$ between $b$ and $v$ (along $e_{8k + 3}$). Let $\gamma_{2}$ be the subcurve of $e_{8k + 3}$ between $v$ and $b$ (red curve in \cref{colouring2}). By definition of $S^{e_{1}}$, $S^{e_{8k + 3}}$, $a$ and $b$, no edge of $S_{1} \cup S^{e_{1}} \cup S^{e_{8k + 3}}$ crosses $\gamma_{1} \cup \gamma_{2}$. Since the stars $S_{1}$ and $S_{2}$ do not cross, no edge of $S_{1} \cup S_{2} \cup S^{e_{1}} \cup S^{e_{8k + 3}}$ crosses $\gamma_{1} \cup \gamma_{2}$.

    \begin{figure}
        \centering
        \scalebox{0.8}{\includegraphics{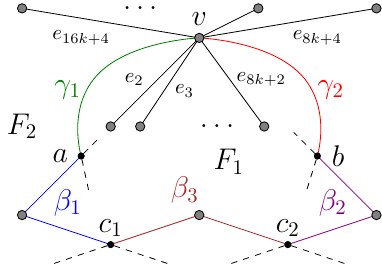}}
        \caption{Proof of \cref{Kttstars}. The vertices of $G$ are grey, the crossing points are black.}
        \label{colouring2}
    \end{figure}

    \begin{subclaim} \label{curvesubclaim} There exists a non-self-intersecting curve $\gamma_{3}$ with endpoints $a$ and $b$ such that $\gamma_{3} \subseteq S^{e_{1}} \cup S^{e_{8k + 3}} \cup S_{2}$ and for each $S \in \{S^{e_{1}}, S^{e_{8k + 3}}, S_{2}\}$, $\gamma_{3} \cap S$ is a subset of at most two edges of $S$.
    \end{subclaim}

    \begin{proof}

    If $S^{e_{1}}$ and $S^{e_{8k + 3}}$ cross then, by an argument similar to that used in the proof of \cref{firstclaimcolouring}, there exists a curve $\gamma_{3}$ that is a subset of $S^{e_{1}} \cup S^{e_{8k + 3}}$ and satisfies the conditions of this subclaim. 
    
    Now assume that $S^{e_{1}}$ and $S^{e_{8k + 3}}$ do not cross. Let $c_{1}$ be the crossing point of $S^{e_{1}}$ and $S_{2}$ such that there are no crossing points of $S^{e_{1}}$ and $S_{2}$ between $a$ and $c_{1}$ along the edges of $S^{e_{1}}$. Let $\beta_{1}$ be the curve with endpoints $a$ and $c_{1}$ that is a subset of at most two edges of $S^{e_{1}}$ (blue curve in \cref{colouring2}). Thus $\beta_{1}$ is not involved in crossings with $S^{e_{8k + 3}} \cup S_{2}$.

    Similarly, let $c_{2}$ be the crossing point of $S^{e_{8k + 3}}$ and $S_{2}$ such that there are no crossing points of $S^{e_{8k + 3}}$ and $S_{2}$ between $b$ and $c_{2}$ along the edges of $S^{e_{1}}$. Let $\beta_{2}$ be the curve with endpoints $b$ and $c_{2}$ that is a subset of at most two edges of $S^{e_{8k + 3}}$ (purple curve in \cref{colouring2}). Thus $\beta_{2}$ is not involved in crossings with $S^{e_{1}} \cup S_{2}$. In particular, $\beta_{1} \cap \beta_{2} = \emptyset$.

    Let $\beta_{3}$ be the curve with endpoints $c_{1}$ and $c_{2}$ that is a subset of at most two edges of $S_{2}$ (brown curve in \cref{colouring2}). By construction, $\beta_{3}$ does not cross $\beta_{1} \cup \beta_{2}$. Thus $\gamma_{3} := \beta_{1} \cup \beta_{2} \cup \beta_{3}$ is suitable.
    \end{proof}

    Let $\gamma_{3}$ be the subcurve given by \cref{curvesubclaim} and $\alpha := \gamma_{1} \cup \gamma_{2} \cup \gamma_{3}$. Since $\gamma_{3} \subseteq S^{e_{1}} \cup S^{e_{8k + 3}} \cup S_{2}$ and no edge of $S^{e_{1}} \cup S^{e_{8k + 3}} \cup S_{2}$ crosses $\gamma_{1} \cup \gamma_{2}$, $\alpha$ is a Jordan curve. Let $F_{1}$ be the interior region in the plane bounded by $\alpha$ and let $F_{2}$ be the exterior region in the plane bounded by $\alpha$.

    Let $E_{1} := \{e_{2}, \dots, e_{8k + 2}\}$, $E_{2} := \{e_{8k + 4}, \dots, e_{16k + 4}\}$, $\mathcal{S}_{1} := \{S^{e_{2}}, \dots, S^{e_{8k + 2}}\}$, and $\mathcal{S}_{2} := \{S^{e_{8k + 4}}, \dots, S^{e_{16k + 4}}\}$.  Since $\gamma_{3} \subseteq S^{e_{1}} \cup S^{e_{8k + 3}} \cup S_{2}$, no edge of $E_{1} \cup E_{2}$ crosses $\gamma_{3}$. Since no two edges of $S_{1}$ cross and $\gamma_{1} \cup \gamma_{2} \subseteq e_{1} \cup e_{8k + 3}$, no edge of $E_{1} \cup E_{2}$ crosses $\alpha$. Without loss of generality, we can assume that for each $i \in \{1, 2\}$, the edges of $E_{i}$ lie in $F_{i}$. That is, each edge $e \in E_{i}$ lies in the interior of $F_{i}$ except for the endpoint $v$. Every edge $e$ of $S_{1}$ crosses~$S^{e}$. Therefore, for every edge $e \in E_{i}$, there is a point in the interior of $F_{i}$ that belongs to $S^{e}$. Thus, for each star $S$ of $\mathcal{S}_{i}$, there exists a point of $S$ that lies in the interior of $F_{i}$. 

    Suppose that for each $i \in \{1, 2\}$, there exists a star $T_{i} \in \mathcal{S}_{i}$ that does not cross $\alpha$. Then $T_{i}$ lies in the interior of $F_{i}$. Since $T_{i} \in \mathcal{T}_{2}$, $T_{i}$ crosses every star of $\mathcal{T}_{1} \setminus \{S_{1}\}$. Note that $\mathcal{T}_{1} \setminus \{S_{1}\} \neq \emptyset$ because $k \geqslant 1$. Since each star of $\mathcal{T}_{1} \setminus \{S_{1}\}$ crosses $T_{1}$ and $T_{2}$, this implies that every star of $\mathcal{T}_{1} \setminus \{S_{1}\}$ crosses $\alpha$.  Since $G_0$ is $k$-matching-planar and there is a set of at most eight edges of $G_0$ whose union contains $\alpha$, at most $8k$ components of $G$ cross $\alpha$. Thus $|\mathcal{T}_{1}| - 1 \leqslant 8k$, a contradiction to $|\mathcal{T}_{1}| = t = 16k^{2} + 3k + 1$. 

    So there exists $i \in \{1, 2\}$ such that every star of $\mathcal{S}_{i}$ crosses $\alpha$. Then $|\mathcal{S}_{i}| \leqslant 8k$, a contradiction to $|\mathcal{S}_{i}| = 8k + 1$. Thus $H_{G_0}$ is $K_{t, t}$-free.
\end{proof}

We now complete the proof of \cref{colouringk-matching-planargraphsstarforests}. For $\varepsilon > \delta > 0$, for each vertex $v \in V(G_0)$ and edge $xy \in E(G_0)$, let $B_{v}^{\varepsilon} := \{p \in \mathbb{R}^{2} : \dist_{\mathbb{R}^{2}}(p, v) \leqslant \varepsilon\}$ and $C_{xy}^{\delta, \varepsilon} := \{p \in \mathbb{R}^{2} : \dist_{\mathbb{R}^{2}}(p, xy) \leqslant \delta\} \setminus (B_{x}^{\varepsilon} \cup B_{y}^{\varepsilon})$. Choosing $\varepsilon$ and $\delta$ to be sufficiently small, we may assume that:

\begin{itemize}
    \item $B_{v_{1}}^{\varepsilon} \cap B_{v_{2}}^{\varepsilon} = \emptyset$ for each pair of distinct vertices $v_{1}, v_{2}$ of $G_0$,
    \item $B_{v}^{\varepsilon} \cap C_{xy}^{\delta, \varepsilon} = \emptyset$ for each vertex $v$ and edge $xy$ of $G_0$,
    \item $C_{x_{1}y_{1}}^{\delta, \varepsilon} \cap C_{x_{2}y_{2}}^{\delta, \varepsilon} = \emptyset$ for every pair of non-crossing edges $x_{1}y_{1}$, $x_{2}y_{2}$ of $G_0$.
\end{itemize}

For each component $S$ of $G_0$, let $A_{S}^{\varepsilon, \delta} := (\bigcup_{v \in V(S)}B_{v}^{\varepsilon}) \cup (\bigcup_{e \in E(S)}C_{e}^{\delta, \varepsilon})$ and $\alpha_{S}$ be the boundary of $A_{S}^{\varepsilon, \delta}$. Observe that $\alpha_{S}$ is a Jordan curve. Thus, for every pair $S_{1}$, $S_{2}$ of distinct components of $G_0$, $\alpha_{S_{1}} \cap \alpha_{S_{2}} = \emptyset$ if and only if $S_{1}$ and $S_{2}$ do not cross.

Let $J$ be the string graph that corresponds to the set of curves $\{\alpha_{S} : S \text{ is a component of } G_0\}$. By \cref{Kttstars}, $J$ is $K_{16k^{2} + 3k + 1, 16k^{2} + 3k + 1}$-free. \citet{Lee17} proved that every $K_{t,t}$-free string graph is $\mathcal{O}(t\log t)$-degenerate. This implies that $\chi(J) \in \mathcal{O}(k^{2}\log k)$. For each component $S$ of $G_0$, colour $\alpha_{S}$ by one of $\mathcal{O}(k^{2}\log k)$ colours such that for any two components $S_{1}$ and $S_{2}$ of $G_0$, the curves $\alpha_{S_{1}}$ and $\alpha_{S_{2}}$ do not cross if they have the same colour. Colour each edge of $S$ by the colour of $\alpha_{S}$. Thus we obtain a transparent $\mathcal{O}(k^{2}\log k)$-edge-colouring of $G_0$.
\end{proof}

We now generalise from star-forests to general graphs.

\begin{thm} \label{colouringkcoverplanar} Let $G$ be a topological $k$-matching-planar graph such that for every vertex $v \in V(G)$, the set of edges incident to $v$ can be coloured with at most $s$ colours such that monochromatic edges do not cross. Then the topological thickness of $G$ is $\mathcal{O}(sk^{3}\log k)$.
\end{thm}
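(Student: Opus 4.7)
The plan is to combine the star-arboricity bound (\cref{stararboricity}) with \cref{colouringk-matching-planargraphsstarforests}, using the per-vertex colouring hypothesis to bridge the gap between arbitrary star-forests and star-forests with no two crossing edges at a common vertex.

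First, I would apply \cref{stararboricity} to decompose $E(G)$ into $r := 2\lceil 2d_{2k} \rceil = \mathcal{O}(k)$ edge-disjoint star-forests $F_1, \ldots, F_r$, each equipped with a choice of centre in every star-component. At this point we cannot apply \cref{colouringk-matching-planargraphsstarforests} directly to each $F_i$, because two edges of $F_i$ incident to a common centre may cross in the drawing. This is where the hypothesis on $s$ is used: for every vertex $v \in V(G)$, fix an edge-colouring $\phi_v : E_v \to \{1, \dots, s\}$ of the edges incident to $v$ such that no two monochromatic edges at $v$ cross. For each star-component $S$ of $F_i$ with centre $v_S$, I would partition $E(S)$ according to $\phi_{v_S}$, obtaining (up to) $s$ substars at $v_S$ within each of which no two edges cross. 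For each $j \in \{1, \ldots, s\}$, let $F_{i,j}$ be the union, over all star-components $S$ of $F_i$, of the colour-$j$ substar of $S$.

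Each $F_{i,j}$ is a star-forest (it is a subgraph of $F_i$ in which every star-component of $F_i$ has been replaced by a smaller star at the same centre), and by construction no two edges incident to a common vertex in $F_{i,j}$ cross in the drawing (the only potentially crossing pairs of edges at a common vertex are pairs sharing a centre of a star-component, and these are now monochromatic under $\phi_{v_S}$). Moreover, $F_{i,j}$ inherits being topological $k$-matching-planar from $G$, since any matching among edges crossing a given edge of $F_{i,j}$ is also a matching among edges of $G$ crossing that edge. Therefore \cref{colouringk-matching-planargraphsstarforests} applies to $F_{i,j}$, yielding a transparent $\mathcal{O}(k^2 \log k)$-edge-colouring of $F_{i,j}$.

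Finally, I would combine these colourings by giving each $(i, j)$ a private palette of $\mathcal{O}(k^2 \log k)$ colours disjoint from all the others. The resulting edge-colouring of $G$ is transparent because monochromatic edges must lie in the same $F_{i,j}$, and within $F_{i,j}$ the chosen colouring is already transparent. The total number of colours is
\[
r \cdot s \cdot \mathcal{O}(k^2 \log k) \;=\; \mathcal{O}(k) \cdot s \cdot \mathcal{O}(k^2 \log k) \;=\; \mathcal{O}(s k^3 \log k),
\]
as required. No step in this plan presents a serious obstacle: the star arboricity and star-forest colouring are packaged as lemmas, and the role of $s$ is precisely to refine each centre's star into non-crossing pieces so that the star-forest lemma becomes applicable.
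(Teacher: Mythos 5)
Your proposal is correct and takes essentially the same route as the paper: both first invoke \cref{stararboricity} to decompose $G$ into $\mathcal{O}(k)$ star-forests, then use the per-vertex $s$-colouring hypothesis to refine each star-component into $s$ pieces so that no two edges at a common vertex cross, and finally apply \cref{colouringk-matching-planargraphsstarforests} to each refined star-forest and take disjoint palettes (a product colouring). Your observation that in a star-forest any two edges sharing a vertex must share it as a centre, which makes the refinement by $\phi_{v_S}$ suffice, is exactly the (implicit) justification in the paper's terse proof.
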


\begin{proof}
By \cref{stararboricity}, $G$ is the union of $2 \lceil 2d_{2k} \rceil$ star-forests. By assumption, $G$ is the union of a set $\mathcal{Q}$ of $2s \lceil 2d_{2k} \rceil \leqslant 2s \lceil 6e(2k + 1)  \rceil \leqslant 34s(2k + 1)$ star-forests, such that for each star-forest $F\in \mathcal{Q}$, no two edges in $F$ incident to a common vertex cross. The result follows from \cref{colouringk-matching-planargraphsstarforests} by taking a product colouring.
\end{proof}

\cref{colouringkcoverplanar} implies that the topological thickness of simple topological $k$-matching-planar graphs is $\mathcal{O}(k^{3}\log k)$. We wish to push the statement of \cref{colouringkcoverplanar} to the most general setting possible and prove  \cref{introtopologicalthickness}. To do this, we apply a result of \citet{RW19} about $\chi$-boundness of outerstring graphs. An \defn{outerstring graph} is the intersection graph of a collection of curves in a closed half-plane such that each curve $\alpha$ has exactly one point on the boundary of the half-plane and that point is an endpoint of $\alpha$.

\begin{lem} \label{starouterstring} A graph is outerstring if and only if it is the crossing graph of a topological star.
\end{lem}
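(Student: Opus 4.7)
For the forward direction, given a topological star $S$ with centre $v$, leaves $u_1,\dots,u_n$ and edges $e_i = vu_i$, the plan is to contract a small disk around $v$ to the boundary of a half-plane. I would choose a closed disk $D$ centred at $v$ small enough that $D$ contains no leaf and no crossing, and each $e_i$ meets $\partial D$ at a unique point $p_i$ with $e_i \cap D$ a single subpath from $v$ to $p_i$; such a $D$ exists because each $e_i$ is a simple curve starting at $v$ and the set of crossings of $S$ is finite. Fix a homeomorphism $h \colon \mathbb{R}^{2} \setminus \mathrm{int}(D) \to H$ onto a closed half-plane $H$ that carries $\partial D$ onto $\partial H$, and set $\alpha_i := h(e_i \setminus \mathrm{int}(D))$. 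Then each $\alpha_i$ is a simple curve in $H$ meeting $\partial H$ only at the endpoint $h(p_i)$. Since every crossing of $S$ lies outside $D$, the edges $e_i$ and $e_j$ cross in $S$ if and only if $\alpha_i \cap \alpha_j \neq \emptyset$, so $\{\alpha_1,\dots,\alpha_n\}$ witnesses that $X_S$ is outerstring.

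For the reverse direction, let $G$ be outerstring, with representation $\{\alpha_1,\dots,\alpha_n\}$ in a closed half-plane $H$ with boundary line $\ell$, each $\alpha_i$ meeting $\ell$ only at one endpoint $p_i$. The idea is to invert the construction above: place $v$ on the opposite side of $\ell$ and join it to each $p_i$ by a straight segment. By a small generic perturbation I may assume the $p_i$ are pairwise distinct, the far endpoints $u_i$ are pairwise distinct and avoid every other curve, any two curves intersect finitely often, and no three curves share an interior point; this does not change the intersection graph. Place a new vertex $v$ in the open half-plane opposite $H$, and for each $i$ let $\beta_i$ be the straight segment from $v$ to $p_i$; define $e_i := \beta_i \cup \alpha_i$, a simple curve from $v$ to $u_i$. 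The segments $\beta_i$ pairwise meet only at $v$, and each $\beta_i$ meets $H$ only at $p_i$, so for distinct $i,j$ we have $e_i \cap e_j = \alpha_i \cap \alpha_j$, which consists of internal crossings. Hence the topological star $S$ with centre $v$, leaves $u_1,\dots,u_n$, and edges $e_1,\dots,e_n$ satisfies $X_S \cong G$.

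The main obstacle is the general-position perturbation in the reverse direction, which must simultaneously preserve the intersection graph while meeting the paper's conditions on a topological graph (finitely many intersections per pair, no three edges meeting at a common internal point, and no edge passing through a non-endpoint vertex). This is standard but requires care; it can be arranged by arbitrarily small local adjustments of the curves near their interior intersections and far endpoints, which do not disturb which pairs intersect.
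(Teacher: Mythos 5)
Your overall plan for the forward direction matches the paper's, but there is a concrete error: no homeomorphism $h\colon \mathbb{R}^{2}\setminus\mathrm{int}(D)\to H$ onto a closed half-plane $H$ that carries $\partial D$ onto $\partial H$ exists. Indeed, such an $h$ would restrict to a homeomorphism from the compact circle $\partial D$ onto the non-compact line $\partial H$, which is impossible; more generally the closed exterior of a disc deformation retracts onto $\partial D$ and has fundamental group $\mathbb{Z}$, while a closed half-plane is contractible, so they are not homeomorphic at all. The fix is to compactify first, which is what the paper's M\"obius transformation on $\hat{\mathbb{C}}=\mathbb{C}\cup\{\infty\}$ accomplishes. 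Concretely, pick a point $p\in\partial D$ lying on no edge of $S$ (possible since each edge meets $\partial D$ in a single point and there are finitely many edges), and apply a M\"obius transformation sending $p$ to $\infty$ and $\partial D$ to a line $\ell$. Then $\hat{\mathbb{C}}\setminus\mathrm{int}(D)$, a closed disc in the sphere, maps onto a closed half-plane together with $\infty$, and since each compact curve $e_i\setminus\mathrm{int}(D)$ avoids $p$, its image is a compact curve in the finite half-plane meeting $\ell$ exactly at the image of $p_i$. The rest of your argument then goes through.

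Your reverse direction is essentially the paper's argument: the paper contracts the boundary line $L$ to the centre $v$, whereas you place $v$ in the complementary open half-plane and attach straight segments from $v$ to the attachment points $p_i$, a slightly more concrete realisation of the same construction. Both versions invoke a perturbation step, and you should be a little more careful than ``generic perturbation'': if two curves $\alpha_i,\alpha_j$ meet only tangentially, a generic small perturbation removes that intersection and changes the intersection graph. So the local redrawing near each intersection point should be chosen to replace it by a transversal crossing while preserving which pairs of curves meet (the paper phrases this as redrawing in a small tube around each curve ``without creating new crossings'', with the implicit understanding that previously intersecting pairs remain intersecting). With the forward direction repaired as above, your proof is correct.
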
 

\begin{proof}

We first show that the crossing graph of a topological star $S$ is outerstring. Let $v$ be a centre of $S$. Let $D$ be a disc of radius $\varepsilon > 0$ centred at $v$. Choosing $\varepsilon$ to be sufficiently small, we may assume that no two edges of $S$ cross in $D$ and each edge of $S$ has exactly one intersection point with the boundary of $D$. Apply a Möbius transformation so that the boundary of $D$ maps to the boundary of a half-plane. The edges of $S$ in $\mathbb{R}^{2} \setminus \Int(D)$ transform into curves, and the crossing graph of $S$ is the intersection graph of these curves, and hence it is an outerstring graph.

Now we show that an outerstring graph $G$ is the crossing graph of a topological star. Let $\{\gamma_{v} : v \in V(G)\}$ be a collection of curves in a closed half-plane $B$ that corresponds to $G$ and $L$ be the boundary of $B$ such that every curve $\gamma_{v}$ has an endpoint $a_{v}$ in $L$. For sufficiently small $\varepsilon > 0$, redraw each curve $\gamma_{v}$ in the region $(\{p \in \mathbb{R}^{2} : \dist_{\mathbb{R}^{2}}(p, \gamma_{v}) < \varepsilon\} \cap \Int(B)) \cup \{a_{v}\}$ without creating new crossings and keeping the endpoint $a_{v}$ in $L$ such that: (i) every new curve has distinct endpoints, (ii) every new curve is non-self-intersecting, (iii) no three curves internally intersect at a common point, and (iv) all curves are pairwise distinct. Contract $L$ to a point and the curves transform into the edges of a topological star. Thus $G$ is isomorphic to the crossing graph of this topological star.
\end{proof}

Although the class of string graphs is not $\chi$-bounded~\citep{Pawlik14}, 
\citet{RW19} proved that the class of outerstring graphs is $\chi$-bounded. Specifically, they proved that $\chi(G) \in 2^{\mathcal{O}(2^{\omega(G)(\omega(G) - 1) / 2})}$ for every outerstring graph $G$. Applying \cref{starouterstring}, we conclude the following.

\begin{lem} \label{starcolouring} Every topological star with no $t$ pairwise crossing edges has topological thickness $2^{\mathcal{O}(2^{(t - 1)(t - 2) / 2})}$.
\end{lem}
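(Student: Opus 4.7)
The plan is to combine \cref{starouterstring} with the $\chi$-boundedness result of \citet{RW19} for outerstring graphs, which states that every outerstring graph $G$ satisfies $\chi(G) \in 2^{\mathcal{O}(2^{\omega(G)(\omega(G)-1)/2})}$. The translation between transparent edge-colourings of a topological star and proper vertex-colourings of its crossing graph is immediate by definition of the crossing graph.

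First, I would let $S$ be a topological star such that no $t$ of its edges pairwise cross, and consider its crossing graph $X_{S}$. By \cref{starouterstring}, $X_{S}$ is an outerstring graph. The assumption that no $t$ edges of $S$ pairwise cross translates directly into $X_{S}$ being $K_{t}$-free, i.e.\ $\omega(X_{S}) \leqslant t-1$.

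Next, I would apply the theorem of \citet{RW19} to the outerstring graph $X_{S}$, yielding
\[
\chi(X_{S}) \in 2^{\mathcal{O}(2^{\omega(X_{S})(\omega(X_{S})-1)/2})} \subseteq 2^{\mathcal{O}(2^{(t-1)(t-2)/2})}.
\]
A proper $c$-colouring of $X_{S}$ is exactly an edge-colouring of $S$ with $c$ colours such that no two monochromatic edges cross, that is, a transparent $c$-edge-colouring of $S$. This gives the desired bound.

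The proof is essentially a one-line reduction, so there is no real obstacle; the technical content is already packaged in \cref{starouterstring} and in the Rok--Walczak $\chi$-bounding theorem. The only small check is confirming that "no $t$ pairwise crossing edges" corresponds to the clique number condition $\omega(X_{S}) \leqslant t-1$, which is immediate from the definition of the crossing graph.
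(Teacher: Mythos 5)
Your proof is correct and is essentially the same as the paper's: both pass from the topological star to its crossing graph via \cref{starouterstring}, observe that the no-$t$-pairwise-crossing hypothesis bounds the clique number, and then invoke the Rok--Walczak $\chi$-boundedness theorem for outerstring graphs. Nothing to add.
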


\cref{colouringkcoverplanar} and \cref{starcolouring} imply the following result, which implies \cref{introtopologicalthickness}.

\begin{thm} \label{colouringtheorem} Every topological $k$-matching-planar graph with no $t$ pairwise crossing edges incident to a common vertex has topological thickness $(k + 1)^{3}\log_{2}(k + 2)2^{\mathcal{O}(2^{(t - 1)(t - 2) / 2})}$.
\end{thm}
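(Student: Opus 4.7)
The plan is to derive \cref{colouringtheorem} as a direct consequence of \cref{colouringkcoverplanar,starcolouring}. The key observation is that the hypothesis on pairwise crossing edges at a common vertex feeds precisely into the input parameter $s$ of \cref{colouringkcoverplanar}, via \cref{starcolouring}.

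First, I would fix a topological $k$-matching-planar graph $G$ with the property that no $t$ edges incident to a common vertex pairwise cross. For each vertex $v \in V(G)$, the drawing of $G$ restricted to $v$ and its incident edges is a topological star $S_v$ with centre $v$. By our hypothesis, no $t$ edges of $S_v$ pairwise cross, so \cref{starcolouring} provides a transparent edge-colouring of $S_v$ using at most $s := 2^{\mathcal{O}(2^{(t-1)(t-2)/2})}$ colours; that is, the edges of $G$ incident to $v$ can be coloured using at most $s$ colours so that monochromatic edges do not cross.

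Second, I would plug this bound into \cref{colouringkcoverplanar}: since the edges of $G$ at each vertex can be transparently coloured using $s$ colours, $G$ has topological thickness $\mathcal{O}(s k^3 \log k)$. Substituting the value of $s$ above gives topological thickness at most
\[
\mathcal{O}(k^3 \log k) \cdot 2^{\mathcal{O}(2^{(t-1)(t-2)/2})} \;=\; (k+1)^3 \log_2(k+2) \cdot 2^{\mathcal{O}(2^{(t-1)(t-2)/2})},
\]
which is the claimed bound.

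Since both ingredients are already proved in the section, the only real work is to notice that the edges incident to a fixed vertex, together with that vertex, genuinely form a topological star in the sense required by \cref{starcolouring}. This is immediate from the definition of a topological graph, so there is no significant obstacle: the proof is essentially a one-line combination of the two earlier results.
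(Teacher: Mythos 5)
Your proposal is correct and matches the paper's approach exactly: the paper presents \cref{colouringtheorem} as an immediate consequence of \cref{colouringkcoverplanar} and \cref{starcolouring}, obtained precisely by using \cref{starcolouring} to produce the input parameter $s$ for \cref{colouringkcoverplanar}, as you describe.
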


\section{Weak Shallow Minors} \label{WeakShallowMinorsSection}

This section introduces weak shallow minors, which subsume and generalise shallow minors. The main result of this section (\cref{thm:RTWmain}) is a product structure theorem for weak shallow minors of the strong product of a graph with bounded Euler genus and a small complete graph. We use \cref{thm:RTWmain} to establish a product structure theorem for certain topological $k$-matching-planar graphs in \cref{FinalSection} (\cref{introRTW}).

We start with definitions. A model $\mu$ of a graph $G$ in a graph $H$ is \defn{$r$-shallow} if for each $v \in V(G)$, the radius of $H[\mu(v)]$ is at most $r$. A graph $G$ is an \defn{$r$-shallow minor} of a graph $H$ if there exists an $r$-shallow model of $G$ in $H$.

Let $H$ be a graph and $A \subseteq V(H)$. The \defn{weak diameter} of $A$ in $H$ is the maximum distance in $H$ between the vertices of $A$; that is, $\max\{\dist_{H}(u,v) : u,v \in A\}$. Weak diameter is an important concept in coarse graph theory~\citep{Hickingbotham25,Distel25,GP25,NSS25}, asymptotic dimension~\citep{BBEGLPS24,Distel23,Liu25}, and graph colouring~\citep{CL25,DN23}. We use the following variant of this definition. The \defn{weak radius} of $A$ in $H$ is the minimum non-negative integer $r$ such that for some  $v \in V(H)$ and for every $a \in A$ we have $\dist_{H}(v, a) \leqslant r$. Such a vertex $v$ is called an \defn{origin} of $A$. Weak diameter and weak radius are within a multiple of $2$ of each other.

We introduce the following definition\footnote{\citet[Observation~$6$]{Hickingbotham25} used a concept that is similar to weak shallow minors in relation to quasi-isometry of graphs.}.  A model $\mu$ of a graph $G$ in a graph $H$ is \defn{weak $r$-shallow} if for each $v \in V(G)$, the weak radius of $H[\mu(v)]$ in $H$ is at most $r$. We say that $G$ is a \defn{weak $r$-shallow minor} of $H$ if there exists a weak $r$-shallow model of $G$ in $H$. Every $r$-shallow minor of $H$ is a weak $r$-shallow minor of $H$. But the converse does not hold. For example, if $W_n$ is the $n$-vertex wheel, then $K_4$ is a weak 1-shallow minor of $W_n$ for every $n \geqslant 4$, but $K_4$ is not an $r$-shallow minor of $W_n$ for any fixed value of $r$ and sufficiently large $n$.

Intuitively speaking, if $G$ is a shallow minor of a graph $H$, then $G$ can be obtained from $H$ by contracting disjoint balls of bounded radius. So in some sense, $G$ inherits the structure of~$H$. It is natural to ask under what circumstances do weak shallow minors behave similarly.

\subsection{Weak Shallow Minors and Layered Treewidth}

\citet[Lemma~9]{DMW17} showed that shallow minors inherit bounded layered treewidth. In particular, for every graph $H$ and every $r$-shallow minor $G$ of $H$, $\ltw(G) \leqslant (4r + 1)\ltw(H)$. We generalise this result by showing that weak shallow minors inherit bounded layered treewidth. Our proof is based on the approach of \citet{DMW17}.

\begin{lem} \label{ltwWeakShallowMinor} For any graph $H$ and any weak $r$-shallow minor $G$ of $H$, $$\ltw(G) \leqslant (4r + 1)\ltw(H).$$
\end{lem}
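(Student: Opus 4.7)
The plan is to adapt the proof of the analogous shallow-minor statement of \citet{DMW17}. Let $\mu$ be a weak $r$-shallow model of $G$ in $H$, and for each $v \in V(G)$ fix an origin $o_v \in V(H)$ with $\dist_H(o_v, x) \leqslant r$ for every $x \in \mu(v)$. Let $(T, B)$ be a tree decomposition of $H$ together with a layering $(V_0, \dots, V_s)$ witnessing $\ltw(H) \leqslant \ell$, where $\ell := \ltw(H)$.

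First I would define a tree decomposition $(T, B')$ of $G$ by setting $B'(t) := \{v \in V(G) : \mu(v) \cap B(t) \neq \emptyset\}$. The three axioms of a tree decomposition follow from the standard arguments; in particular, connectivity of each $H[\mu(v)]$ propagates to connectivity of $\{t \in V(T) : v \in B'(t)\}$ in $T$, and any edge $uv \in E(G)$ is covered by a bag containing an $H$-edge joining $\mu(u)$ and $\mu(v)$.

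The key step is to define a layering of $G$ controlled by the origins. For each $v \in V(G)$, let $L(v)$ be the index with $o_v \in V_{L(v)}$, and set $f(v) := \lfloor L(v)/(2r+1) \rfloor$. The weak-radius hypothesis forces $\mu(v) \subseteq V_{L(v)-r} \cup \dots \cup V_{L(v)+r}$. For any edge $uv \in E(G)$, some adjacent pair $a \in \mu(u)$, $b \in \mu(v)$ lies in $H$-layers differing by at most $1$, while $|L(u) - L(a)| \leqslant r$ and $|L(v) - L(b)| \leqslant r$ by the triangle inequality, so $|L(u) - L(v)| \leqslant 2r+1$. The elementary fact that $|x - y| \leqslant n$ implies $|\lfloor x/n \rfloor - \lfloor y/n \rfloor| \leqslant 1$ then shows that the fibres $W_k := f^{-1}(k)$ form a valid layering of $G$.

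Finally I would bound the layered width of $(T, B')$ with respect to this layering. For each $v \in W_k \cap B'(t)$ pick $x_v \in \mu(v) \cap B(t)$; disjointness of branch sets makes the map $v \mapsto x_v$ injective. From $f(v) = k$ we have $L(v) \in \{k(2r+1), \dots, (k+1)(2r+1) - 1\}$, hence $x_v$ lies in one of the $4r+1$ consecutive layers $V_{k(2r+1)-r}, \dots, V_{k(2r+1)+3r}$. Summing $|B(t) \cap V_i| \leqslant \ell$ over these $4r+1$ indices yields $|W_k \cap B'(t)| \leqslant (4r+1)\ell$, as required. The only point where this differs from the shallow setting is that the origin $o_v$ need not lie in $\mu(v)$, but this is not a genuine obstacle: every step above only uses that $\mu(v)$ is contained in a ball of radius $r$ around some vertex of $H$, which is precisely the definition of weak $r$-shallow.
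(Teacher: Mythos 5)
Your proof is correct and follows essentially the same route as the paper's: both build the bag assignment $B'(t)$ from preimages of branch sets, both derive the layering of $G$ by grouping the origins' layers into blocks of $2r+1$ (your $f(v) = \lfloor L(v)/(2r+1) \rfloor$ is the same partition as the paper's $V'_i$), and both count $4r+1$ consecutive $H$-layers per $G$-layer to conclude. The observation you close with — that the origin need not lie in $\mu(v)$ and that nothing in the argument requires it — is exactly the reason the shallow-minor proof of \citet{DMW17} carries over verbatim, and the paper makes the same adaptation.
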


\begin{proof}

Let $\ell := \ltw(H)$. So there is a tree decomposition $(T, B_{1})$ of $H$, and a layering $(V_{0}, V_{1}, \dots )$ of $H$, such that |$B_{1}(t) \cap V_{i}| \leqslant \ell$ for each $t \in V(T)$ and $i \geqslant 0$. Let $\mu$ be a weak $r$-shallow model of $G$ in $H$. For each $h \in V(H)$, let $X_{h} := \{v \in V(G) : h \in \mu(v)\}$. Since $\mu$ is a model, $|X_{h}| \leqslant 1$. Define $B_{2} : V(T) \rightarrow 2^{V(G)}$ by $B_{2}(t) := \bigcup_{h \in B_{1}(t)}X_{h}$ for each $t \in V(T)$. 

We now show that $(T, B_{2})$ is a tree decomposition of $G$. 
First, consider $vw \in E(G)$. Since $\mu$ is a model, $h_{1}h_{2} \in E(H)$ for some $h_{1} \in \mu(v)$ and $h_{2} \in \mu(w)$. Hence, there exists $t \in V(T)$ such that $h_{1}, h_{2} \in B_{1}(t)$. By construction, $v \in X_{h_{1}}$ and $w \in X_{h_{2}}$. Thus $v, w \in B_{2}(t)$. Second, consider $v \in V(G)$. Since $H[\mu(v)]$ is connected and for each $h \in \mu(v)$, $T[\{t \in V(T) : h \in B_{1}(t)\}]$ is a connected subtree of $T$, $T[\{t \in V(T) : v \in B_{2}(t)\}]$ is connected.

For each $v \in V(G)$, fix an origin $h_{v}$ of $\mu(v)$. So $\dist_{H}(h_{v}, a) \leqslant r$ for every $a \in \mu(v)$.
Since $\mu$ is a model, for each edge $vw \in E(G)$, we have $\dist_{H}(h_{v}, h_{w}) \leqslant 2r + 1$. So if $h_{v} \in V_{i}$ and $h_{w} \in V_{j}$ then $|i - j| \leq 2r + 1$. For each $i \geqslant 0$, let $V_{i}':= \{v \in V(G): h_{v} \in V_{(2r + 1)i} \cup \dots \cup V_{(2r + 1)(i + 1) - 1}\}$.  Hence, a partition of $G$ obtained from $(V_{0}', V_{1}', \dots)$ by excluding empty sets $V_{i}'$ is a layering of $G$.

We now bound $|B_{2}(t) \cap V'_{i}|$ for each $t \in V(T)$ and $i\geqslant 0$. Consider a vertex $v \in B_{2}(t) \cap V'_{i}$. So $h_{v} \in V_{j}$ for some $j\in \{ (2r + 1)i,\dots, (2r + 1)(i + 1) - 1\}$. By definition of $B_{2}$, there is a vertex $h' \in B_{1}(t) \cap \mu(v)$. By definition of $h_{v}$, we have $\dist_{H}(h_{v}, h') \leqslant r$. So $h' \in V_{j - r} \cup \dots \cup V_{j + r}$, implying $h' \in V_{(2r + 1)i - r} \cup \dots \cup V_{(2r + 1)(i + 1) - 1 + r}$. Therefore, $h'$ belongs to one of $((2r + 1)(i + 1) - 1 + r) - ( (2r + 1)i - r - 1)) = 4r + 1$ these layers. Since $h' \in B_{1}(t)$ and |$B_{1}(t) \cap V_{s}| \leqslant \ell$ for each $s \in \{(2r + 1)i - r, \dots, (2r + 1)(i + 1) - 1 + r\}$, there are at most $(4r + 1)\ell$ such vertices $h'$. Each such vertex $h'$ contributes at most one vertex (from $X_{h'}$) to $B_{2}(t) \cap V_{i}'$. So $|B_{2}(t) \cap V'_i| \leqslant (4r + 1)\ell$.  Thus $\ltw(G) \leqslant (4r + 1)\ell$.
\end{proof}

\subsection{Weak Shallow Minors and Row Treewidth}

\citet[Theorem~7]{HW24} showed that shallow minors inherit bounded row treewidth, in the sense that there is a function $f$ such that if a graph $G$ is an $r$-shallow minor of a graph $H$, then $\rtw(G) \leqslant f(\rtw(H),r)$. In light of \cref{ltwWeakShallowMinor}, it is natural to ask if a similar property holds for weak shallow minors. 

\begin{question} \label{rtwquestion}
 Does there exist a function $f$ such that if a graph $G$ is a weak $r$-shallow minor of a graph $H$, then $\rtw(G) \leqslant f(\rtw(H),r)$? 
\end{question}

We now set out to show that (perhaps surprisingly) the answer to \cref{rtwquestion} is ``no'' even when $\rtw(H) = 2$ and $r = 1$. 
The proof relies on the fact that the class of graphs of layered treewidth $1$ have unbounded row treewidth (\cref{LTWandRTWcomparison}). We start by characterising graphs of layered treewidth $1$. 

\begin{lem}
\label{ltw1}
A graph $G$ has layered treewidth $1$ if and only if there is a tree $T$ and a path $P$ such that $G$ can be obtained from $T \square P$ by first contracting edges of the form $(x,i)(y,i)$ where $xy \in E(T)$ and $i \in V(P)$; then deleting all remaining edges of the same form, and then deleting some vertices and edges. 
\end{lem}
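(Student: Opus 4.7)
For the forward direction, suppose $G$ has a tree decomposition $(T, B)$ and a layering $(V_0, V_1, \dots, V_n)$ witnessing $\ltw(G) \leqslant 1$, so that $|B(t) \cap V_i| \leqslant 1$ for every $t \in V(T)$ and every $i$. Take $P$ to be the path $0 - 1 - \dots - n$. For each $v \in V(G)$, let $S_v := \{t \in V(T) : v \in B(t)\}$, which is a connected subtree of $T$, and let $i(v)$ be the unique index with $v \in V_{i(v)}$. In $T \CartProd P$, for each $v \in V(G)$ contract the set of tree-edges $\{(t_1, i(v))(t_2, i(v)) : t_1t_2 \in E(T),\ t_1, t_2 \in S_v\}$; because $|B(t) \cap V_i| \leqslant 1$, each tree-edge of $T \CartProd P$ is contracted on behalf of at most one vertex $v$, so these contractions are well-defined. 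For each $v$, the contractions identify $\{(t, i(v)) : t \in S_v\}$ into a single ``merged'' vertex, which we identify with $v$. Then delete all remaining tree-edges, so that only path-type edges survive: these connect two merged vertices $u$ and $w$ (with $i(w) = i(u) + 1$) precisely when there exists $t \in V(T)$ with $\{u, w\} \subseteq B(t)$. Finally, in step~$3$, delete the singleton vertices $(t, i)$ with $B(t) \cap V_i = \emptyset$ (together with their incident edges), and delete every spurious edge $uw$ arising from a shared bag with $uw \notin E(G)$. Since every edge of $G$ has both endpoints in some common bag, exactly the edges of $G$ remain.

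For the reverse direction, suppose $G$ is obtained from $T \CartProd P$ via the described process. The contractions take place entirely within single ``layers'' $V(T) \times \{i\}$, so every vertex $v$ of the graph produced after steps $1$ and $2$ corresponds to a pair $(S_v, i(v))$, where $S_v \subseteq V(T)$ is a connected subtree of $T$ and $i(v) \in V(P)$. Define a layering of this intermediate graph by $V_i := \{v : i(v) = i\}$; this is valid because all remaining edges are path-type and thus go between consecutive layers. Define a tree decomposition with tree $T$ and bags $B(t) := \{v : t \in S_v\}$. The connectivity of each $S_v$ ensures that $\{t : v \in B(t)\}$ is a connected subtree of $T$. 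Any edge $uv$ in the intermediate graph arose from a surviving path-type edge $(t, i(u))(t, i(v))$, so $t \in S_u \cap S_v$ and $\{u, v\} \subseteq B(t)$. Since $(t, i)$ lies in at most one contracted class, $|B(t) \cap V_i| \leqslant 1$, establishing layered treewidth $\leqslant 1$ for the intermediate graph. The final graph $G$ is a subgraph of this intermediate graph, and restricting the tree decomposition and layering gives $\ltw(G) \leqslant 1$.

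The main obstacle is keeping the bookkeeping of contractions straight in the forward direction, particularly in verifying that step~$3$ is rich enough both to discard the singleton vertices $(t, i)$ with $B(t) \cap V_i = \emptyset$ and to remove the spurious edges between pairs of vertices that share a bag but are not adjacent in $G$. The key structural fact making everything work is that the condition $|B(t) \cap V_i| \leqslant 1$ translates directly into ``at most one contracted class per $(t, i)$'', which is precisely what a tree decomposition over $T$ with layering by $V(P)$ records.
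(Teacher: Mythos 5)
Your proof is correct and follows essentially the same approach as the paper's: in the forward direction you contract, for each $v$, the tree-type edges within $S_v \times \{i(v)\}$, using $|B(t) \cap V_i| \leqslant 1$ to keep the contraction classes disjoint; in the reverse direction you read off the layering from the $P$-projection and a tree decomposition over $T$ from the contraction classes. The bookkeeping details you flag (singleton vertices with empty $B(t) \cap V_i$, spurious shared-bag edges) are handled the same way in the paper, so this is a faithful reconstruction.
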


\begin{proof}
First suppose that $\ltw(G) = 1$. So $G$ has a tree decomposition $(T, B)$ and a layering $(V_{1},V_{2}, \dots, V_{n})$ such that $|B(x) \cap V_{i}| \leqslant 1$ for each $x \in V(T)$ and $i \in \{1, \dots, n\}$. Consider $T \square P$ where $P$ is the path $(1,2,\dots,n)$. For each vertex $v$ of $G$, if $v \in  V_{i}$ and $xy \in E(T)$ with $v \in B(x) \cap B(y)$, then contract the edge $(x,i)(y,i)$ in $T\square P$. After these contractions, each vertex of $G$ is mapped to a single vertex. Delete the remaining edges of the form $(x,i)(y,i)$ where $xy \in E(T)$ and $i \in V(P)$. If $B(x) \cap V_i = \emptyset$ then delete vertex $(x,i)$. Now there is a $1$-$1$ map between $V(G)$ and the remaining vertices. For each edge $vw$ of $G$, there is a bag $B(x)$ containing both $v$ and $w$. Since $|B(x) \cap V_i| \leqslant 1$, $v$ and $w$ must be on distinct layers. So $B(x) \cap V_i = \{v\}$ and $B(x)\cap V_{i+1} = \{w\}$ for some $i\in\{1,\dots,n-1\}$ and node $x \in V(T)$. In the above construction, the edge $(x,i)(x,i+1)$ survives, $(x,i)$ is mapped to $v$, and $(x,i+1)$ is mapped to $w$. So $vw$ is present. Any unused edges can be deleted. 

Now suppose that $G$ can be obtained from $T \square P$ (for some tree $T$ and path $P = (1, \dots, n)$) by first contracting edges of the form $(x,i)(y,i)$ where $xy \in E(T)$ and $i \in V(P)$; then deleting all remaining edges of the same form, and then deleting some vertices and edges. Since the above contractions are of edges of the form $(x,i)(y,i)$ where $xy \in E(T)$ and $i \in V(P)$, each vertex of $G$ projects to a single vertex of $P$. Let $V_i$ be the set of vertices in $G$ that project to $i\in V(P)$. So $(V_1,\dots,V_n)$ is a layering of $G$. We now define a bag assignment $B : V(T) \rightarrow 2^{V(G)}$. For each node $x \in V(T)$, if $v$ is the vertex of $G$ mapped to the vertex obtained from $(x,i)$ after contractions, then put $v$ in the bag $B(x)$. For each vertex $v$ of $G$, the subgraph of $T$ induced by $\{x\in V(T): v\in B(x)\}$ is a connected subtree of $T$. Consider an edge $vw$ of $G$. Since non-contracted edges of the form $(x,i)(y,i)$ where $xy \in E(T)$ and $i \in V(P)$ are deleted, $v$ projects to $i\in V(P)$ and $w$ projects to $i+1\in V(P)$ for some $i\in\{1,\dots,n-1\}$. By definition of $T\square P$, there is a node $x\in V(T)$ such that $(x,i)$ is in the subtree of $T\times\{i\}$ corresponding to $v$, and $(x,i+1)$ is in the subtree of $T\times\{i+1\}$ corresponding to $w$. By construction, $v,w\in B(x)$. So $(T, B)$ is a tree decomposition of $G$. By construction, $|B(x) \cap V_i | \leqslant 1$ for each $x \in V(T)$ and $i \in \{1,\dots,n\}$. Thus $\ltw(G) = 1$. 
\end{proof}

A graph $J$ is an \defn{apex-forest} if $J - A$ is a forest for some $A \subseteq V(J)$ with $|A| \leqslant 1$.

\begin{lem}
\label{ltw1weakradius}
For every graph $G$ with layered treewidth at most $1$, there is an apex-forest $J$ and there is a path $P$, such that $G$ is a weak $1$-shallow minor of $J\square P$. 
\end{lem}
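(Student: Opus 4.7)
The plan is to use the characterisation of layered treewidth $1$ given by \cref{ltw1}. Applying that lemma to $G$, I would obtain a tree $T$ and a path $P = (1,\dots,n)$ such that $G$ arises from $T \square P$ by first contracting a collection of horizontal edges (edges of the form $(x,i)(y,i)$ with $xy \in E(T)$), then deleting the remaining horizontal edges, and finally deleting some vertices and edges. Because only horizontal edges are contracted, each vertex $v \in V(G)$ has a well-defined layer index $i_v \in V(P)$ such that the pre-image of $v$ is contained in $V(T)\times\{i_v\}$.

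Next, I would define $J$ to be the graph obtained from $T$ by adding a single new vertex $a$ adjacent to every vertex of $T$. Since $T$ is a tree and $J - \{a\} = T$ is a forest, $J$ is an apex-forest. For each $v \in V(G)$, let $\mu(v) \subseteq V(T)\times\{i_v\}$ be the pre-image of $v$ under the construction from \cref{ltw1}.

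The verification that $\mu$ is a model of $G$ in $J \square P$ is essentially routine. Each $\mu(v)$ is the vertex set of a connected subtree of $T \times \{i_v\}$ (the subtree that is contracted to a single vertex of $G$), the branch sets are pairwise disjoint, and for every edge $vw \in E(G)$ the construction keeps a surviving vertical edge of $T \square P$ between $\mu(v)$ and $\mu(w)$. All of these properties are inherited by $J \square P$, since $T \square P$ is a spanning subgraph of $J \square P$.

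The one step that is not immediate — and the reason for introducing the apex — is the weak radius bound. For every $v \in V(G)$, the vertex $(a, i_v) \in V(J \square P)$ is adjacent in $J \square P$ to each $(x, i_v) \in \mu(v)$, because $ax \in E(J)$. Hence $(a,i_v)$ serves as an origin for $\mu(v)$ in $J \square P$ with weak radius at most $1$. Without the apex, the branch set of a single $v$ could correspond to an arbitrarily long subpath of $T$ inside layer $i_v$, giving unbounded weak radius in $T \square P$; the apex collapses all of these within-layer distances to $1$ in one stroke. This is the key idea, and there is no further obstacle — the lemma falls out of \cref{ltw1} combined with this observation.
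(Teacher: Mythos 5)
Your proof is correct and follows essentially the same approach as the paper: apply \cref{ltw1} to get the model in $T\square P$ with each branch set lying in a single layer, then add a dominant apex to $T$ so that $(a,i_v)$ serves as an origin of weak radius $1$ for each branch set.
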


\begin{proof}
By \cref{ltw1}, there is a tree $T$ and a path $P$ such that $G$ can be obtained from $T \square P$ by first contracting edges of the form $(x,i)(y,i)$ where $xy \in E(T)$ and $i \in V(P)$; then deleting all remaining edges of the same form, and then deleting some vertices and edges. These operations define a model $\mu$ of $G$ in $T\square P$, such that each branch set of $\mu$ projects to a single vertex of $P$. Let $J$ be the apex-forest obtained from $T$ by adding a dominant vertex. Since each branch set of $\mu$ projects to a single vertex of $P$, its weak radius in $J \square P$ is at most~$1$. Thus $\mu$ is a weak $1$-shallow model of $G$ in $J \square P$. 
\end{proof}

\cref{ltw1weakradius} and \cref{LTWandRTWcomparison} together imply the following.

\begin{cor}
\label{bigrtw}
For every integer $n$ there is a graph $G$ with layered treewidth $1$ and row treewidth at least $n$, such that $G$ is a weak $1$-shallow minor of $J \square P$ for some apex-forest $J$ and path $P$.
\end{cor}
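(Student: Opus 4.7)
The proof is essentially an immediate combination of two previously established results, so the plan is very short.

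First, I would invoke \cref{LTWandRTWcomparison} (the result of \citet{BDJMW22}) to obtain, for the given integer $n$, a graph $G$ with $\ltw(G) = 1$ and $\rtw(G) \geqslant n$. This takes care of the two numerical requirements on $G$.

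Next, I would apply \cref{ltw1weakradius} to this very graph $G$. Since $\ltw(G) \leqslant 1$, that lemma directly supplies an apex-forest $J$ and a path $P$ together with a weak $1$-shallow model of $G$ in $J \square P$, which is precisely the structural conclusion of \cref{bigrtw}.

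Concatenating the two applications yields the statement, so no further work is needed. The main conceptual point (which is not an obstacle at the proof level, but is what makes the corollary interesting) is the contrast with the shallow-minor setting: by \citet[Theorem~7]{HW24}, row treewidth is well-behaved under shallow minors, whereas here the model is only weak $1$-shallow, and the host graph $J \square P$ has row treewidth at most $2$ (since $J$ is an apex-forest, hence $\tw(J)\leqslant 2$), so $\rtw(G)$ can be arbitrarily large even though $\rtw(J\square P)\leqslant 2$. Thus the corollary demonstrates that \cref{rtwquestion} has a negative answer in the strongest possible form.
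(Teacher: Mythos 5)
Your proposal is correct and matches the paper's proof exactly: the corollary is stated in the paper as an immediate consequence of \cref{ltw1weakradius} and \cref{LTWandRTWcomparison}, which is precisely the two-step combination you give. The additional discussion about why this answers \cref{rtwquestion} negatively also mirrors the remark the paper makes immediately after the corollary.
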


Since every apex-forest has treewidth at most $2$, \cref{bigrtw} shows that the answer to \cref{rtwquestion} is ``no'', even with $\rtw(H) = 2$ and $r = 1$.

\subsection{Weak Shallow Minors and Euler Genus}
\label{WeakShallowMinorsEulerGenus}

While the answer to \cref{rtwquestion} is ``no'' in general, the following theorem shows that the answer is ``yes'' in an important case, which we use to prove our product structure theorem for $k$-matching-planar graphs (\cref{introRTW}).

\begin{thm}\label{thm:RTWmain}
Let $r, g \geqslant 0$ and $c \geqslant 1$ be integers. Let $H$ be a graph of Euler genus $g$ and $G$ be a weak $r$-shallow minor of $H \boxtimes K_c$. Then 
$$\rtw(G)\leqslant (4r + 1)c((2(8r+1)c+3)(2g + 7)^{(6r+2)(2g + 5) - 4} - 1) - 1.$$
\end{thm}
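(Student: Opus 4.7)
The plan is to prove a containment $G \subseteq K' \boxtimes K_{(4r+1)c} \boxtimes P$ for some graph $K'$ and path $P$, with $\tw(K')+1 \leqslant (2(8r+1)c+3)(2g+7)^{(6r+2)(2g+5)-4} - 1$. The theorem then follows from $\tw(K' \boxtimes K_n) + 1 \leqslant (\tw(K')+1) n$, so that $\rtw(G) \leqslant (\tw(K')+1)(4r+1)c - 1$. To set up the path direction $P$, I would fix a weak $r$-shallow model $\mu$ of $G$ in $\mathcal{H} := H \boxtimes K_c$ with origins $h_v$, a BFS spanning forest of $H$, and define $\ell(v)$ as the BFS layer of the $H$-projection of $h_v$. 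For each edge $vw \in E(G)$, the triangle inequality in $\mathcal{H}$ yields $|\ell(v) - \ell(w)| \leqslant 2r+1$, so bundling $2r+1$ consecutive BFS layers into blocks produces a valid layering of $G$. Moreover, each branch set $\mu(v) \subseteq B_{\mathcal{H}}(h_v, r)$ spans at most $2r+1$ BFS layers and meets each $K_c$-slice in at most $c$ vertices, giving a cross-section of size at most $(4r+1)c$ across a bundle boundary.

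Next, I would apply the product structure theorem for Euler genus $g$ graphs (from DJMMUW~2020 and DHHW~2022) to sub-bands of $H$ of width $6r+2$ BFS layers. Such a sub-band has Euler genus at most $g$ and bounded radius, and an iteration of the bounded-genus product-structure theorem furnishes a template of treewidth within the target $(2(8r+1)c+3)(2g+7)^{(6r+2)(2g+5)-4}$. The base $2g+7$ reflects the inductive step in that proof (each Euler-genus reduction contributes a factor of this form), the exponent $(6r+2)(2g+5)-4$ is governed by the sub-band width $6r+2$ together with the local treewidth bound $2g+5$ per layer, and the additional factor of $2(8r+1)c+3$ comes from taking the strong product with $K_c$ and handling the $(4r+1)c$-sized cross-sections at bundle boundaries.

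Finally, I would lift this product structure from $\mathcal{H}$ to $G$ by assigning each $v \in V(G)$ a triple $(x_v, i_v, b_v) \in V(K') \times \{1, \dots, (4r+1)c\} \times V(P)$, where $x_v$ encodes the template coordinate of $h_v$, $b_v$ is the bundle containing $\ell(v)$, and $i_v$ distinguishes the (at most $(4r+1)c$) branch sets whose cross-section meets the cell indexed by $(x_v, b_v)$. One then verifies that this map is injective and that every edge of $G$ is realised in the product $K' \boxtimes K_{(4r+1)c} \boxtimes P$. The main obstacle is precisely the consistent assignment of $i_v$ across bundle boundaries: distinct vertices of $G$ may share origins in $\mathcal{H}$, and by \cref{bigrtw} weak shallow minors of products fail in general to inherit bounded row treewidth, so a purely layered-treewidth argument in the spirit of \cref{ltwWeakShallowMinor} does not suffice by itself. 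The bounded Euler genus of $H$ enters decisively here: it furnishes a template for $H$ of bounded treewidth (not merely bounded layered treewidth), which lets us upgrade the layered-treewidth bound of \cref{ltwWeakShallowMinor} into a genuine row-treewidth bound, using the $(4r+1)c$ coordinate to absorb collisions within each template cell.
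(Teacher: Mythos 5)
Your proposal correctly identifies the general framework (a containment $G \subseteq K' \boxtimes K_{(4r+1)c} \boxtimes P$, a layering of $G$ obtained by bundling BFS layers of $H$, the need for bounded genus rather than merely bounded layered treewidth, with \cref{bigrtw} as the cautionary example), and it correctly locates the crux: consistency of the assignment across layer boundaries when distinct branch sets with nearby origins interact through the product structure. But the proposed resolution of that crux is not a proof, and it is not what the paper does.

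The claim that ``the bounded Euler genus of $H$ \dots furnishes a template for $H$ of bounded treewidth \dots which lets us upgrade the layered-treewidth bound of \cref{ltwWeakShallowMinor} into a genuine row-treewidth bound, using the $(4r+1)c$ coordinate to absorb collisions within each template cell'' is where the gap is. Bounded genus gives $H \subseteq H_0 \boxtimes P$ with $\tw(H_0)$ bounded (and the stronger ``$(t,y)$-good'' form via \cref{cor:goodgenusPS}), but having a bounded-treewidth template for $H$ does \emph{not} by itself mean that the natural quotient encoding the adjacency of $G$ on the template has bounded treewidth: distinct template vertices far apart in the path direction can be forced adjacent in that quotient because two branch sets wander along the path. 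The $(4r+1)c$ coordinate only absorbs collisions \emph{within} a fixed template cell and bundle, not this kind of long-range adjacency. The paper's actual mechanism is to define a ``touch graph'' $H^*$ on the template vertices, where $v'$ and $w'$ are adjacent if branch sets anchored below them interact, and then prove a dichotomy (\cref{lem:touchgraphtw2} via \cref{lem:pathwidth}): either $H^*$ has small bags in a tree decomposition built from the closure of a normal tree decomposition, or else one finds sets $S_1,S_2$ with $S_2 \subseteq X_w$ for all $w\in S_1$, and from these (together with the disjoint weak-radius-$r$ subgraphs $U_v$) \cref{FindKst} extracts a $K_{a,b}$ minor, contradicting $K_{3,2g+3}$-minor-freeness of a genus-$g$ graph. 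None of this appears in your sketch. Your ``sub-band'' decomposition of $H$ into strips of $6r+2$ BFS layers is not used in the paper, and it doesn't dissolve the obstacle either — branch sets and touch-graph edges straddle strip boundaries, so you still need a global argument. Finally, the specific bound and its exponent $(6r+2)(2g+5)-4$ come directly from the $(s_1+2)(t+1)^{s_2}$ outcome of \cref{lem:touchgraphtw2} with $t=2g+6$, $y=2g$, $a=3$, $b=2g+3$; they are not produced by ``iterating the bounded-genus product-structure theorem'' layer by layer, so the numerical agreement is pattern-matched rather than derived.
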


The remainder of this section is devoted to proving \cref{thm:RTWmain}. We start with definitions. Let $T$ be a tree rooted at a node $r$. A node $a \in V(T)$ is a \defn{$T$-ancestor} of $x \in V(T)$ (and $x$ is a \defn{$T$-descendant} of $a$) if $a$ is contained in the path in $T$ with endpoints $r$ and $x$. If in addition $a \neq x$, then $a$ is a \defn{strict $T$-ancestor} of $x$. Every node of $T$ is a $T$-ancestor and a $T$-descendant of itself. A non-empty path $(x_{1}, \dots, x_{p})$ in $T$ is \defn{vertical} if for all $i \in \{1, \dots , p\}$ we have $\dist_{T}(x_{i}, r) = \dist_{T}(x_{1}, r) + i - 1$. The \defn{closure} of $T$ is the graph $J$ such that $V(J) := V(T)$ where $vw \in E(J)$ if and only if one of $v$ or $w$ is a strict $T$-ancestor of the other.

\begin{lem}
\label{FindKst} Let $r\geq 0$ and $s, t \geq 1$ be integers. 
Let $X_1,\dots,X_m$ be pairwise disjoint connected subgraphs of a graph $G$, where $m\geq (2rs+1)(t + s - 1) + 1$. Let 
$Y_1,\dots,Y_s$ be pairwise disjoint connected subgraphs of $G$, each with radius at most $r$. Assume that $V(X_i \cap Y_a)\neq\emptyset$ for each $i\in\{1,\dots,m\}$ and $a\in\{1,\dots,s\}$. Then $K_{s,t}$ is a minor of $G$.    
\end{lem}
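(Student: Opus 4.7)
My plan is to use the given $X_i$'s as the $t$-side branch sets of a $K_{s,t}$ minor and to take connected subgraphs $T_a \subseteq Y_a$ as the $s$-side branch sets. The tension is that every $X_i$ meets every $Y_a$, so each $T_a$ must be carved out of $Y_a$ in a way that avoids the chosen $X_{i_j}$'s while still reaching each of them.

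For each $a$, fix a centre $y_a$ of $Y_a$, and for each $i$ fix a shortest path $P_{i,a}$ in $Y_a$ from $y_a$ to some $v_{i,a} \in V(X_i) \cap V(Y_a)$. Since $P_{i,a}$ is shortest, only its endpoint $v_{i,a}$ lies in $X_i$, and $P_{i,a}$ has at most $r$ edges and hence at most $r+1$ vertices. Once indices $i_1, \dots, i_t$ are chosen, I shall define
\[
T_a := \bigcup_{j=1}^{t} \bigl( V(P_{i_j, a}) \setminus \{v_{i_j, a}\} \bigr).
\]
This is connected because all the paths share $y_a$, and distinct $T_a$'s are automatically disjoint because distinct $Y_a$'s are. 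The last edge of each $P_{i_j, a}$ is a candidate $T_a$-to-$X_{i_j}$ edge, so everything reduces to choosing $i_1, \dots, i_t$ so that each $T_a$ is disjoint from every $X_{i_{j'}}$ and so that $y_a \notin V(X_{i_j})$.

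I shall select the $i_j$'s by finding a large independent set in a conflict graph $H$ on $\{1, \dots, m\}$, where $i i' \in E(H)$ iff $V(P_{i,a}) \cap V(X_{i'}) \neq \emptyset$ or $V(P_{i', a}) \cap V(X_i) \neq \emptyset$ for some $a$. The set $B_i := \{i' \neq i : \exists a, \ V(P_{i,a}) \cap V(X_{i'}) \neq \emptyset\}$ has size at most $rs$, because each $P_{i,a}$ contributes at most $r$ vertices other than $v_{i,a} \in X_i$ and each such vertex lies in at most one $X_j$. Thus $|E(H)| \leq \sum_i |B_i| \leq mrs$, and the Caro--Wei/Tur\'an bound gives
\[
\alpha(H) \ \geq \ \frac{m^2}{m + 2|E(H)|} \ \geq \ \frac{m}{2rs + 1}.
\]
With $m \geq (2rs+1)(t+s-1) + 1$, this strictly exceeds $t + s - 1$, so $\alpha(H) \geq t + s$ by integrality. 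Setting $J := \{j : y_a \in V(X_j) \text{ for some } a\}$, the disjointness of the $X_j$'s gives $|J| \leq s$, so $H - J$ still contains an independent set of size $t$; take this to be $\{i_1, \dots, i_t\}$.

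It remains to verify correctness. Since $i_j \notin J$, each $P_{i_j, a}$ has length at least $1$, so $y_a \in T_a$ and the last edge of $P_{i_j, a}$ connects $T_a$ to $X_{i_j}$. Independence in $H$ gives $V(P_{i_j, a}) \cap V(X_{i_{j'}}) = \emptyset$ for all $j \neq j'$ and all $a$, so $T_1, \dots, T_s, X_{i_1}, \dots, X_{i_t}$ are pairwise disjoint and form a $K_{s,t}$-model in $G$. The main technical obstacle is the joint-avoidance requirement: a single $X_{i'}$ could in principle block many shortest paths at once, so the maximum degree of $H$ need not be bounded; this is why I rely on a total edge-count (Tur\'an) bound rather than a greedy degree argument, and the additive $(2rs+1)(s-1)$ in the hypothesis on $m$ is precisely the slack needed to absorb the up to $s$ indices in $J$.
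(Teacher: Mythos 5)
Your proof is correct and is essentially the paper's proof: pick a closest $v_{i,a}\in X_i\cap Y_a$ for each pair, build a conflict graph on $\{1,\dots,m\}$ with at most $rsm$ edges, apply the Tur\'an/Caro--Wei bound to extract an independent set of size $\geq t+s$, and discard the at most $s$ indices where $v_{i,a}=y_a$ before assembling the $K_{s,t}$-model from trimmed shortest paths on the $Y$-side and the surviving $X_i$'s on the other side (the paper does exactly this, phrasing the conflict graph as a digraph of outdegree $\leq rs$ and defining the $Y$-side branch sets over the full independent set rather than only the retained $t$ indices, which is immaterial). One small wording caveat worth fixing: to justify the claim that only the endpoint of $P_{i,a}$ lies in $X_i$, you must take $v_{i,a}$ to be a \emph{closest} vertex of $X_i\cap Y_a$ to $y_a$ (equivalently, let $P_{i,a}$ be a shortest $y_a$-to-$X_i$ path), since a shortest path to an arbitrarily chosen vertex of $X_i\cap Y_a$ could still pass through other vertices of $X_i$.
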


\begin{proof}  We may assume that each $Y_a$ is a tree rooted at a vertex $y_a$, where each vertex in $Y_a$ is at distance at most $r$ from $y_a$. For each $i\in\{1,\dots,m\}$ and $a\in\{1,\dots,s\}$, fix a vertex $v_{i,a}$ in $X_{i}\cap Y_a$ at minimum distance from $y_a$ in $Y_a$. 

Let $H$ be the digraph with $V(H):=\{1,\dots,m\}$, where for distinct $i,j\in\{1,\dots,m\}$, we have $(i,j)\in E(H)$ if and only if, for some $a\in\{1,\dots,s\}$, some strict $Y_{a}$-ancestor of $v_{i,a}$ is in $X_j$. Each vertex $v_{i,a}$ has at most $r$ strict $Y_{a}$-ancestors. Thus, each vertex in $H$ has outdegree at most $rs$. Let $H'$ be the undirected graph underlying $H$. So $|E(H')|\leq |E(H)|\leq rsm$ and $H'$ has average degree at most $2rs$. By Turán's Theorem \citep{Turan41}, $H'$ has an independent set $I$ of size $\ceil{\frac{m}{2rs+1}}\geq t + s$.

For each $a\in\{1,\dots,s\}$, let $Y'_a$ be the subgraph of $Y_{a}$ induced by the union, taken over $i\in I$, of the $v_{i,a}y_a$-path in $Y_a$ excluding $v_{i,a}$. Since $X_{1}, \dots, X_{m}$ are pairwise disjoint, there exists at most one index $i_{a} \in \{1, \dots, m\}$ such that $v_{i_{a}, a} = y_{a}$. If there is no such index, define $i_{a} := 0$. For each $i \in I \setminus \{i_{a}\}$, we have $v_{i, a} \neq y_{a}$. So $Y'_a$ is non-empty and connected because $|I| \geqslant t + s \geqslant 2$. 

Suppose that $Y'_a$ contains a vertex $v$ in $X_i$, for some $a\in\{1,\dots,s\}$ and $i\in I$. By construction, $v$ is a strict $Y_{a}$-ancestor of $v_{j,a}$, for some $j\in I$. If $i=j$ then $v$ contradicts the choice of $v_{i,a}$. If $i \neq j$ then $(j,i)\in E(H)$, contradicting that $I$ is an independent set in $H'$. Hence $Y'_a$ is disjoint from $X_i$, for each $a \in \{1, \dots, s\}$ and $i\in I$. By construction, for each $a\in\{1,\dots,s\}$ and $i\in I \setminus \{i_{a}\}$, the parent of $v_{i,a}$ in $Y_a$ is in $Y'_a$. So $v_{i,a}$, which is in $X_i$, has a neighbour in $Y'_a$. Thus $V(Y_1'),\dots,V(Y_s')$ and $(V(X_i):i\in I \setminus \bigcup_{h \in \{1, \dots, s\}}\{i_{h}\})$ form a model of $K_{s,|I| - s}$ in~$G$. Since $|I| \geqslant t + s$, $K_{s, t}$ is a minor of $G$.   
\end{proof}

\begin{lem} \label{lem:closure} Let $T$ be a rooted tree and $H$ be a spanning subgraph of the closure of $T$. Let $B : V(T) \rightarrow 2^{V(H)}$ be defined as follows. For each $v \in V(T)$, let $B(v)$ be the set consisting of $v$ and all vertices $w \in V(H)$ such that $w$ is a strict $T$-ancestor of $v$ and $wx \in E(H)$ for some $T$-descendant $x$ of $v$. Then $(T,B)$ is a tree decomposition of $H$.    
\end{lem}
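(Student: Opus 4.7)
The plan is to verify directly the two defining axioms of a tree decomposition --- edge coverage and bag connectivity --- using the key fact that every edge of $H$ joins a pair of vertices in strict $T$-ancestor--descendant relation. The edge coverage axiom will be essentially immediate from the definition of $B$, while the bag connectivity axiom requires a short one-step inductive argument that walks upward in $T$.

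For the edge axiom, I would take an arbitrary edge $vw \in E(H)$. Since $H$ is a spanning subgraph of the closure of $T$, without loss of generality $w$ is a strict $T$-ancestor of $v$. Then $v \in B(v)$ by definition, and I claim $w \in B(v)$ as well: $w$ is a strict $T$-ancestor of $v$, and the edge $wv \in E(H)$ exhibits $v$ itself (which is a $T$-descendant of $v$) as the required descendant endpoint. Hence $B(v)$ contains both endpoints of $vw$.

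For the connectivity axiom, I would fix $v \in V(H)$ and set $S := \{t \in V(T) : v \in B(t)\}$. Since $v \in B(v)$, the set $S$ is non-empty. The key single-step claim is: if $t \in S$ and $t \neq v$, then the $T$-parent $t^{+}$ of $t$ (the node adjacent to $t$ on the vertical path from $t$ toward the root) also lies in $S$. Granted this, iterating along the $T$-path from any $t \in S$ up to $v$ shows that this entire path lies in $S$, so the subgraph of $T$ induced by $S$ is connected. To verify the single-step claim, note that $t \in S \setminus \{v\}$ forces $v$ to be a strict $T$-ancestor of $t$ with some $T$-descendant $x$ of $t$ satisfying $vx \in E(H)$. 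Since $t^{+}$ lies on the $T$-path from $t$ toward $v$, either $t^{+} = v$ --- in which case $v \in B(t^{+}) = B(v)$ is immediate --- or $v$ is a strict $T$-ancestor of $t^{+}$; in the latter case $x$, being a $T$-descendant of $t$ and hence of every $T$-ancestor of $t$, remains a $T$-descendant of $t^{+}$, and thus witnesses that $v \in B(t^{+})$.

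The only delicate point in the argument is checking that the witnessing descendant $x$ is preserved under the upward move from $t$ to $t^{+}$; this is precisely why the definition of $B$ quantifies over \emph{all} $T$-descendants, and the verification is immediate from the transitivity of the descendant relation. I do not foresee any deeper obstacle.
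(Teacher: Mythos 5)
Your proof is correct and follows essentially the same route as the paper's: for edge coverage, both observe that $v$ itself witnesses $w\in B(v)$; for bag connectivity, both show that the witness $x$ survives moving toward the root, so the entire vertical path from $t$ up to $v$ lies in $S$. The only stylistic difference is that you phrase the upward walk as a single-step inductive claim, while the paper argues about the whole vertical path at once.
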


\begin{proof} First, consider an edge $vw \in E(H)$. Since $H$ is a subgraph of the closure of $T$, one of $v$ or $w$ is a strict $T$-ancestor of the other. Without loss of generality, $w$ is a strict $T$-ancestor of $v$. By definition, $v, w \in B(v)$ because $v$ is a $T$-descendant of itself.

Second, consider a vertex $w \in V(H)$. Consider any vertex $v \in V(T)$ such that $v \neq w$ and $w \in B(v)$. By definition, $w$ is a strict $T$-ancestor of $v$ and $wx \in E(H)$ for some $T$-descendant $x$ of $v$. Let $P_{vw}$ be the vertical path in $T$ with endpoints $v$ and $w$. For every vertex $v' \in P_{vw} \setminus \{w\}$, $x$ is a $T$-descendant of $v'$, and hence $w \in B(v')$. So all the vertices of $P_{vw}$ are in $\{t \in V(T) : w \in B(t)\}$. Thus $T[\{t \in V(T) : w \in B(t)\}]$ is connected.
\end{proof}

A \defn{path decomposition} of a graph is a tree decomposition $(T,B)$ where $T$ is a path. Define $P_{n}$ to be the graph with $V(P_n) := \{1, \dots, n\}$ and $E(P_n) := \{\{1, 2\}, \{2, 3\}, \dots, \{n - 1, n\}\}$.

\begin{lem}\label{lem:pathwidth}
Let $(P_n, B)$ be a path decomposition of a graph $H$ of width at most $t$. For each vertex $v\in V(H)$, let $\ell_v$ be the minimum index such that $v\in B(\ell_v)$, and let $X_v\subseteq V(H)$ be a set of vertices such that: $(i)$ $v\in X_v$, $(ii)$ $\ell_v\leq \ell_w$ for each $w\in X_v$, and $(iii)$ $H[X_v]$ is connected.  Let $s_{1}$ and $s_{2}$ be positive integers. Assume that for some vertex $v\in V(H)$ there are at least $(s_{1} + 2)(t + 1)^{s_{2}}$ distinct vertices $w\in V(H)$ such that $\ell_w\leq \ell_v$ and $H[X_w\cup X_v]$ is connected. Then there are subsets $S_1$ and $S_2$ of $V(H)$ such that $|S_{1}| \geqslant s_{1}$, $|S_{2}| \geqslant s_{2}$, and for each $w\in S_1$, we have $S_2\subseteq X_w$.
\end{lem}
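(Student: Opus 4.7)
The plan is to prove the lemma by induction on $s_{2}$, preceded by a preliminary observation about how each $X_{w}$ meets the bag $B(\ell_{v})$.

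First I would establish that for every vertex $w$ satisfying the hypothesis, $X_{w}\cap B(\ell_{v})\neq\emptyset$. Since $H[X_{w}\cup X_{v}]$ is connected, there is a walk from $w$ to $v$ inside it. Either $w\in X_{v}$, in which case property $(ii)$ of $X_{v}$ gives $\ell_{v}\leq \ell_{w}$ and combined with $\ell_{w}\leq \ell_{v}$ forces $\ell_{w}=\ell_{v}$ and hence $w\in B(\ell_{v})$; or one may identify the last vertex $u\in X_{w}$ along the walk, whose successor $u'\in X_{v}$ shares some bag $B(j)$ with $u$, where $j\geq \ell_{v}$ because $\ell_{u'}\geq \ell_{v}$. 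I then invoke the standard fact that for every connected subgraph $H[Y]$ the set $\{k:Y\cap B(k)\neq\emptyset\}$ is an interval of $V(P_{n})$; applied to $H[X_{w}]$ meeting both $B(\ell_{w})$ and $B(j)$, this yields $X_{w}\cap B(\ell_{v})\neq\emptyset$.

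For the base case $s_{2}=1$, choose any $y_{w}\in X_{w}\cap B(\ell_{v})$ for each of the $(s_{1}+2)(t+1)$ vertices $w$. Since $|B(\ell_{v})|\leq t+1$, pigeonhole produces some $u\in B(\ell_{v})$ with $y_{w}=u$ for at least $s_{1}+2\geq s_{1}$ of the $w$'s; take $S_{1}$ to be any $s_{1}$ of these and $S_{2}:=\{u\}$. For the inductive step, I pigeonhole over representatives in $B(\ell_{v})$ to extract $u_{1}\in B(\ell_{v})$ with $u_{1}\in X_{w}$ for at least $(s_{1}+2)(t+1)^{s_{2}-1}$ of the $w$'s, call this collection $W_{1}$. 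I then check that the hypothesis of the lemma holds for $(u_{1},W_{1})$ with parameters $s_{1},s_{2}-1$: the inequality $\ell_{w}\leq \ell_{u_{1}}$ follows from $u_{1}\in X_{w}$ and the minimality of $\ell_{w}$ in $X_{w}$, and $H[X_{w}\cup X_{u_{1}}]$ is connected because both sides are connected and share $u_{1}$. Applying the inductive hypothesis yields $S_{1}'\subseteq W_{1}$ and $S_{2}'\subseteq V(H)$ with $|S_{1}'|\geq s_{1}$, $|S_{2}'|\geq s_{2}-1$, and $S_{2}'\subseteq X_{w}$ for each $w\in S_{1}'$. Setting $S_{1}:=S_{1}'$ and $S_{2}:=S_{2}'\cup\{u_{1}\}$ gives $S_{2}\subseteq X_{w}$ for every $w\in S_{1}$.

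The main obstacle will be guaranteeing $|S_{2}|\geq s_{2}$, i.e.\ that $u_{1}\notin S_{2}'$. I plan to resolve this by exploiting the $+2$ slack in the hypothesis to strengthen the induction: for instance, insisting that $S_{2}$ consists of vertices of pairwise distinct $\ell$-values (which is automatic when at each level the chosen $u_{i}\in B(\ell_{v_{i}})$ is forced to have $\ell_{u_{i}}$ strictly smaller than the $\ell$-values of the elements already selected), or refining the pigeonhole so that $u_{1}$ is picked from $B(\ell_{v})\setminus S_{2}'$; the $+2$ slack bounds the loss of at most two $w$'s per recursive step (to exclude $w=u_{1}$ or $w$ hitting the excluded set) without disturbing the recursion. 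An equally natural alternative is to sidestep the induction and run a single pigeonhole over the tuples $(y_{1},\dots,y_{s_{2}})$ with $y_{i}\in X_{w}\cap B(\ell_{v}-s_{2}+i)$, after setting aside the at most $(s_{2}-1)(t+1)$ vertices with $\ell_{w}>\ell_{v}-s_{2}+1$ as a negligible exceptional case; here the $+2$ slack provides room both to absorb these exceptions and to enforce that the chosen tuple consists of $s_{2}$ distinct vertices.
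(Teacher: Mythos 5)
Your preliminary observation that $X_w$ must hit $B(\ell_v)$ is correct and matches what the paper uses (the stronger form: $X_w$ is a hitting set for $B(\ell_w),\dots,B(\ell_v)$). Your base case is also fine. However, the inductive step has a real gap that you notice but do not close: after pigeonholing out $u_1\in B(\ell_v)$ and recursing on $u_1$, nothing prevents the recursion from selecting $u_1$ again (indeed the next representative is drawn from $B(\ell_{u_1})$, which contains $u_1$), so the resulting $S_2$ may contain fewer than $s_2$ distinct vertices. The repair you sketch is not quite right: to force strict decrease of $\ell$-values one would have to recurse on $B(\ell_{u_1}-1)$ rather than $B(\ell_{u_1})$, which requires discarding all $w$ with $\ell_w=\ell_{u_1}$ — up to $t+1$ of them per step, not ``at most two.'' The $+2$ slack can in fact absorb a loss of $t+1$ per step (a geometric series calculation), but this needs to be stated and carried through the induction; as written the accounting is wrong. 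Your non-inductive alternative (pigeonhole over tuples from $s_2$ consecutive bags) has the same distinctness gap, since a single vertex can appear in many consecutive bags.

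The paper's proof is not inductive and sidesteps the issue entirely: for each $w$ it fixes a \emph{minimal} hitting set $Z_w\subseteq X_w$ of the bags $B(\ell_w),\dots,B(\ell_v)$, orders it as $z_{w,1},\dots,z_{w,|Z_w|}$ by non-increasing $\ell$-value, and uses minimality to show $z_{w,1}\in B(\ell_v)$ and $z_{w,i+1}\in B(\ell_{z_{w,i}}-1)$. The latter forces $\ell_{z_{w,i+1}}<\ell_{z_{w,i}}$, so the first $s_2$ elements are automatically distinct. The number of such sequences of length $s_2$ is at most $(t+1)^{s_2}$, and the injective map $w\mapsto (z_{w,1},\dots,z_{w,|Z_w|},w)$ bounds the number of $w$ with $|Z_w|<s_2$ by $\sum_{i<s_2}(t+1)^{i+1}$, which the extra $2(t+1)^{s_2}$ absorbs; a final pigeonhole over the $\leqslant (t+1)^{s_2}$ length-$s_2$ prefixes finishes the proof. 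If you want to salvage your inductive route, you would need to recurse on index $\ell_{u_1}-1$ rather than $\ell_{u_1}$, explicitly strengthen the induction hypothesis to produce an $S_2$ with strictly decreasing $\ell$-values, and do the bookkeeping showing the $+2$ slack survives a loss of $t+1$ per level — which amounts to re-deriving the paper's minimal-hitting-set mechanism in inductive guise.
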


\begin{proof}
    Let $Z$ be the set of vertices $w\in V(H)$ such that $\ell_w\leq \ell_v$ and $H[X_w\cup X_v]$ is connected. So $|Z| \geqslant (s_1 + 2)(t + 1)^{s_2}$.
    For each $w\in Z$, consider the set $I_{w}$ of indices $i \in \{1, \dots, n\}$ such that $B(i) \cap X_w \neq \emptyset$. Since $H[X_{w}]$ is connected and $w \in X_{w} \cap B(\ell_{w})$, $I_{w}$ is an interval in $(1, \dots, n)$ that contains $\ell_w$. By definition of $X_{v}$, we have $X_{v} \cap (B(1) \cup \dots \cup B(\ell_{v} - 1)) = \emptyset$. Since $H[X_{w} \cup X_{v}]$ is connected and by the edge-property of the path decomposition $(P_n, B)$, we have $\ell_{v} \in I_{w}$. So $I_{w}$ is an interval in $(1, \dots, n)$ that contains both $\ell_w$ and $\ell_v$. Thus $X_w$ forms a hitting set for the bags $B(\ell_w),B(\ell_w+1),\dots, B(\ell_v)$.
   
    For each $w \in Z$, let $Z_w\subseteq X_w$ be a minimal hitting set for the bags $B(\ell_w),B(\ell_w+1),\dots, B(\ell_v)$.
    Label the vertices of $Z_w$ by $z_{w,1},z_{w,2},\dots, z_{w,|Z_w|}$ so that $\ell_{z_{w,i}}\leq \ell_{z_{w,j}}$ whenever $i\geq j$. By definition of $Z_{w}$, there exists $i' \in \{1, \dots, |Z_{w}|\}$ such that $z_{w, i'} \in B(\ell_{v})$. Suppose for the sake of contradiction that $i' \neq 1$. Then, since $\ell_{z_{w, i'}} \leqslant \ell_{z_{w, 1}}$, we have that $z_{w, i'}$ hits all the bags of $(B(\ell_w),B(\ell_w+1),\dots, B(\ell_v))$ that are hit by $z_{w, 1}$. Thus $Z_{w} \setminus \{z_{w, 1}\}$ is also a hitting set for the bags $B(\ell_w),B(\ell_w+1),\dots, B(\ell_v)$, a contradiction to the minimality of $Z_{w}$. Thus $z_{w, 1} \in B(\ell_{v})$. By a similar inductive argument that uses the minimality of $Z_{w}$,  we have that $z_{w,i+1}\in B(\ell_{z_{w,i}}-1)$ for each $i \in \{1, \dots, |Z_{w}| - 1\}$.

    For each positive integer $c$, let $\mathcal{S}_c$ be the set of sequences $(v_1,\dots ,v_c)$ of vertices of $H$ such that $v_1\in B(\ell_v)$ and for each $i\in \{1,\dots,c - 1\}$ we have $v_{i+1}\in B(\ell_{v_i}-1)$. Let $\mathcal{S}'_c$ be the set of sequences $(v_1,\dots ,v_{c + 1})$ of vertices of $H$ such that $(v_1,\dots, v_c)\in \mathcal{S}_c$ and $v_{c+1}\in B(\ell_{v_{c}})$. By the observations above, $(z_{w,1},z_{w,2},\dots, z_{w,|Z_w|}) \in \mathcal{S}_{|Z_{w}|}$ for each $w \in Z$. Since $z_{w, |Z_{w}|} \in X_{w}$, we have $\ell_{z_{w, |Z_{w}|}} \geqslant \ell_{w}$. Since $Z_w \cap B(\ell_{w}) \neq \emptyset$ and by definition of the ordering $z_{w,1},z_{w,2},\dots ,z_{w,|Z_w|}$, we have $z_{w, |Z_{w}|} \in B(\ell_{w})$ and $\ell_{z_{w, |Z_{w}|}} = \ell_{w}$. Therefore $w \in B(\ell_{w})=B(\ell_{z_{w},|Z_w|})$.    
    Thus $(z_{w,1},z_{w,2},\dots, z_{w,|Z_w|},w) \in \mathcal{S}'_{|Z_w|}$ because $(z_{w,1},z_{w,2},\dots, z_{w,|Z_w|}) \in \mathcal{S}_{|Z_{w}|}$.

    For positive integers $c\geq c_{0}$, $\mathcal{S}_{c_{0}}$ is exactly the set of prefixes of sequences in $\mathcal{S}_c$ of length~$c_0$.
    By construction, $|\mathcal{S}_1|=|B(\ell_v)|\leq t + 1$. By induction, $|\mathcal{S}_c|\leq (t+1)^c$ for each integer $c \geqslant 1$.
    Similarly, $|\mathcal{S}'_c|\leq (t+1)^{c+1}$.
    There are at most $\sum_{i=1}^{s_2-1}|\mathcal{S}'_i|$ vertices $w \in Z$ with $|Z_{w}| < s_2$.
    Since $|Z|\geq s_{1}(t + 1)^{s_{2}} + 2(t + 1)^{s_{2}} > s_1|\mathcal{S}_{s_2}|+\sum_{i=1}^{s_2-1}|\mathcal{S}'_i|$, there are at least $s_1|\mathcal{S}_{s_{2}}|$ vertices $w \in Z$ such that $|Z_w|\geq s_2$. Therefore there is some set $S_{2} := (v_1,\dots,v_{s_{2}})\in \mathcal{S}_{s_{2}}$ and some set $S_1 \subseteq Z \subseteq V(H)$ of size at least $s_1$ such that for each $w\in S_1$, we have $(z_{w,1}, z_{w, 2}, \dots,  z_{w, s_{2}})=S_{2}$ and hence $S_{2} \subseteq X_w$, as desired.
    \end{proof}

    For an integer $t \geqslant 1$, a \defn{$t$-tree} is an edge-maximal graph of treewidth $t$. Let $T$ be a rooted tree. For each node $x \in V(T)$, define
$$T_{x} := T[\{y \in V(T) : y \text{ is a } T\text{-descendant of }x\}]$$
to be the maximal subtree of $T$ rooted at $x$. We make use of the following well-known normalisation lemma (see \citep[Lemma~8]{DMW23} for a proof).

\begin{lem}\label{lem:normaltd}
    For every graph $H$, there is a rooted tree $T$ with $V(T)=V(H)$ and a tree decomposition $(T,B)$ of width $\tw(H
    )$ such that:
    \begin{enumerate}
        \item $\{v\}\subseteq \{w\in V(T):v\in B(w)\}\subseteq V(T_v)$ for every vertex $v\in V(H)$, and consequently
        \item for every edge $vw\in E(H)$, one of $v$ or $w$ is a strict $T$-ancestor of the other.
    \end{enumerate}
\end{lem}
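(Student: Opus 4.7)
The plan is to use the classical correspondence between treewidth and chordal completions, and then build the desired rooted tree $T$ as an \emph{elimination tree} of a chordal supergraph of $H$. Set $t := \tw(H)$. First I would extend $H$ to a chordal graph $H^+$ with $V(H^+) = V(H)$ and $\omega(H^+) \leq t+1$; this is the standard completion obtained from any width-$t$ tree decomposition of $H$ by turning each bag into a clique. Fix a perfect elimination ordering $(v_1,\dots,v_n)$ of $H^+$, and for each $i$ let $N^+(v_i) := \{v_j : j > i,\ v_iv_j \in E(H^+)\}$; by chordality $N^+(v_i)$ is a clique, so $|N^+(v_i)| \leq t$.

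I would then define the rooted tree $T$ on vertex set $V(T) := V(H)$ with root $v_n$, where the parent of $v_i$ (for $i < n$) is $v_{\pi(i)}$ with $\pi(i) := \min\{j > i : v_iv_j \in E(H^+)\}$ when $N^+(v_i)$ is non-empty, and $\pi(i) := n$ otherwise (this last case applies only to vertices that have no forward neighbours in $H^+$ and do not interact with the rest of the argument). The bag function is $B(v_i) := \{v_i\} \cup N^+(v_i)$, so every bag has size at most $t+1$, matching the target width. The edge-coverage condition of a tree decomposition is immediate: an edge $v_iv_j \in E(H) \subseteq E(H^+)$ with $i < j$ has both endpoints in $B(v_i)$.

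The only nontrivial step is the subtree condition. I would show by induction on $k - i$ that whenever $v_k \in N^+(v_i)$, the parent chain $v_i, v_{\pi(i)}, v_{\pi(\pi(i))}, \dots$ reaches $v_k$ while every intermediate vertex $v_\ell$ satisfies $v_\ell v_k \in E(H^+)$. The inductive step uses precisely the clique property of $N^+(v_i)$: both $v_{\pi(i)}$ and $v_k$ lie in the clique $N^+(v_i)$, so $v_{\pi(i)}v_k \in E(H^+)$, which gives $v_k \in N^+(v_{\pi(i)})$ and lets the induction continue. Consequently, for every vertex $v_k$, the set $\{w \in V(T) : v_k \in B(w)\}$ is a connected subtree of $T$ with top vertex $v_k$ itself, establishing the subtree condition and the width bound $\tw(H)$.

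Properties (1) and (2) then fall out. For (1), $v \in B(v)$ holds by definition, and if $v \in B(u)$ with $v = v_k$ and $u = v_i \neq v$, then $k > i$ and the chain argument above shows $v_k$ is a strict $T$-ancestor of $v_i$, so $u \in V(T_v)$. Property (2) is an immediate consequence of (1) applied to any bag containing both endpoints of an edge. I expect the only real obstacle to be the clean verification of the subtree condition via the clique-chain induction; everything else is routine bookkeeping.
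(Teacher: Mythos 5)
Your proof is correct. Note that the paper itself does not prove this lemma at all --- it is quoted as a known normalisation lemma with the proof deferred to the cited reference [DMW23, Lemma~8] --- so there is nothing in-text to compare against; what you give is the classical elimination-tree argument, which is a perfectly valid self-contained route. The key steps all check out: the completion of a width-$\tw(H)$ tree decomposition is chordal with clique number at most $\tw(H)+1$, the forward neighbourhood $N^+(v_i)$ in a perfect elimination ordering is a clique of size at most $\tw(H)$, and your clique-chain induction correctly shows that every node whose bag contains $v_k$ lies on a parent chain descending from $v_k$ in which every node's bag contains $v_k$; this gives both the subtree condition and the containment $\{w: v\in B(w)\}\subseteq V(T_v)$, from which (2) follows as you say. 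The special case $N^+(v_i)=\emptyset$ (parent set to the root) is harmless exactly for the reason you indicate. One cosmetic remark: your construction directly gives width at most $\tw(H)$, and the width is exactly $\tw(H)$ simply because no tree decomposition of $H$ can have smaller width, so the statement's ``width $\tw(H)$'' is met.
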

A tree decomposition as in \cref{lem:normaltd} is said to be \defn{normal}.

\begin{lem}\label{lem:normalttree}
Let $t$ be a positive integer, let $H$ be a $t$-tree, let $(T,B)$ be a normal tree decomposition of $H$, and let $P$ be a vertical path in $T$. 
 Then:
\begin{enumerate}[(a)]
    \item\label{noramlttree1} for the function $B_P:V(P)\to 2^{V(P)}$ where $B_P(w):=B(w)\cap V(P)$ for all $w\in V(P)$, $(P,B_P)$ is a path decomposition of $H[V(P)]$,
    \item\label{noramlttree2} for every $v\in V(P)$ and every connected subgraph $H'\subseteq H[V(T_v)]$, the subgraph of $H$ induced by $V(P)\cap V(H')$ is connected, and
    \item\label{noramlttree3} for every connected subgraph $H'\subseteq H$ and every vertex $v$ such that $v$ has a strict $T$-ancestor and a $T$-descendant in $H'$, $H[V(H')\cup \{v\}]$ is connected.
\end{enumerate}
\end{lem}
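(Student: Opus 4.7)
The proof rests on one structural fact used throughout: in a $t$-tree $H$ with any normal tree decomposition $(T,B)$, every bag $B(x)$ induces a clique in $H$. This holds because traversing $T$ from its leaves inward gives a perfect elimination ordering of the chordal graph $H$, and in a $t$-tree each eliminated vertex $x$ sees exactly the $t$-clique $B(x)\setminus\{x\}$ as its remaining neighbourhood.

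For part (a), I verify the three path decomposition axioms directly. Containment $v \in B_P(v)$ follows from $v \in B(v)$ by normality. For the subpath condition, normality gives $\{y \in V(P) : v \in B_P(y)\} \subseteq V(T_v) \cap V(P)$; moreover, for any two such $y, y'$ the entire $T$-path between them lies on $P$ (since $P$ is vertical) and is contained in the connected subtree $\{x : v \in B(x)\}$, hence in $\{y \in V(P) : v \in B_P(y)\}$. For the edge property, given $vw \in E(H[V(P)])$ with, say, $w$ a strict $T$-ancestor of $v$, any common bag containing both lies in $V(T_v)$, so by connectivity of $\{x : w \in B(x)\}$ we get $w \in B(v)$; thus $v, w \in B_P(v)$.

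For part (c), I take any $a$-$d$ path $Q = u_0, u_1, \dots, u_k$ in $H'$ and form the subtrees $T_i := \{x \in V(T) : u_i \in B(x)\}$. Each $T_i$ is connected (tree decomposition), consecutive $T_i, T_{i+1}$ share a bag witnessing the edge $u_iu_{i+1}$, and $a \in T_0$ and $d \in T_k$ by normality. Hence $\bigcup_i T_i$ is a connected subtree of $T$ containing both $a$ and $d$, and therefore the entire $T$-path from $a$ to $d$. Since $a$ is a strict $T$-ancestor and $d$ a $T$-descendant of $v$, this path passes through $v$, so $v \in T_i$ for some $i$, i.e.\ $u_i \in B(v)$. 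The conclusion is immediate if $v \in V(H')$; otherwise $u_i \ne v$, and the clique property of $B(v)$ yields $u_iv \in E(H)$, attaching $v$ to $H'$.

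For part (b), I take $u, u' \in S := V(P) \cap V(H')$ and any $u$-$u'$ path in $H'$. If some interior vertex lies in $V(P)$ then it lies in $S$ and I split and recurse; otherwise all interior vertices lie in $V(T_v) \setminus V(P)$, and it suffices to prove $u_0u_k \in E(H)$ (which then places the edge $uu'$ in $H[S]$). I prove this by induction on $k$ for any path $u_0, u_1, \dots, u_k$ in $H$ with endpoints in $V(P) \cap V(T_v)$ and interior in $V(T_v) \setminus V(P)$. The key observation, using verticality of $P$, is that whenever $u_i \in V(P) \cap V(T_v)$ is adjacent to $u_{i+1} \in V(T_v) \setminus V(P)$, the vertex $u_i$ must be a strict $T$-ancestor of $u_{i+1}$; otherwise $u_{i+1}$, being a descendant of $v$ lying on the $T$-path from $v$ to $u_i$ along $P$, would itself lie on $P$. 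For the base case $k = 2$, this forces both $u_0$ and $u_2$ to be strict $T$-ancestors of $u_1$, so $u_0, u_2 \in B(u_1)$ and the clique property yields $u_0u_2 \in E(H)$. For $k \ge 3$, assign each edge $u_iu_{i+1}$ the sign $+1$ if $u_i$ is a strict $T$-ancestor of $u_{i+1}$ and $-1$ otherwise: the first edge has sign $+1$ and the last has sign $-1$ by the observation above, so the signs must reverse at some index $j$. At this reversal, either $u_0, u_2$ (if $j=1$) or $u_{j-1}, u_{j+1}$ (if $j \ge 2$) are both strict $T$-ancestors of the common ``valley'' vertex, whose bag is a clique, producing a shortcut edge that shrinks the path to length $k-1$ while preserving the hypothesis that the interior lies in $V(T_v) \setminus V(P)$. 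Induction closes the proof. The main subtlety is precisely this direction bookkeeping, which goes through cleanly once verticality of $P$ pins down the direction of the endpoint edges.
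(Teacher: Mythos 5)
Your proof is essentially correct, and for part~(b) it takes a genuinely different route from the paper's. For~(a) both arguments are the same. For~(c) both arguments boil down to the same mechanism: locate a vertex of $H'$ in $B(v)$ and invoke the fact that in a $t$-tree no tree decomposition of width $t$ can have a non-adjacent pair in a bag (the paper finds such a vertex via a single edge of $H'$ crossing out of $T_v$; you find it via the whole ancestor--descendant path in $H'$). For~(b), the paper argues by contradiction, taking a minimal bad path $Q$ with no internal vertices in $P$ and analysing the edges of $Q$ that cross $T - E(P)$, whereas you give a direct shortcutting induction on path length driven by sign bookkeeping (ancestor-direction of each edge), using the bag-is-a-clique fact to replace local ``valleys'' $u_{j-1}u_j u_{j+1}$ by the edge $u_{j-1}u_{j+1}$. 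Your version is arguably more systematic and self-contained; the paper's is shorter because it leans on minimality of the counterexample. Both rely on the verticality of $P$ to pin down the direction of edges incident to $P$, and on the clique property of bags.

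One point needs fixing: your justification of the key structural fact that every bag $B(x)$ induces a clique is circular. You argue that a leaves-to-root traversal is a perfect elimination ordering and that the remaining neighbourhood of $x$ is the $t$-clique $B(x)\setminus\{x\}$, but knowing this neighbourhood forms a clique is precisely what you are trying to establish (and the size claim fails, e.g.\ for the root $r$, where $B(r)=\{r\}$). The correct and short argument is via edge-maximality, which is exactly how the paper handles it inline: if $u,w\in B(x)$ with $u\neq w$ and $uw\notin E(H)$, then $(T,B)$ is a tree decomposition of $H+uw$ of width $t$, so $\tw(H+uw)\leqslant t$, contradicting the edge-maximality of the $t$-tree $H$. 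With this substitution, the rest of your proof goes through. You should also make explicit (as you do in part~(a) but only implicitly in~(b)) the normality consequence that whenever $u$ is a strict $T$-ancestor of $w$ and $uw\in E(H)$, we have $u\in B(w)$; this is what licenses the steps ``$u_0,u_2\in B(u_1)$'' and ``$u_{j-1},u_{j+1}\in B(u_j)$''.
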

\begin{proof}
    To prove \ref{noramlttree1},  first observe that since $T[\{h \in V(T):w\in B(h)\}]$ is connected for each $w\in V(P)$, the graph $P[\{h\in V(P):w\in B_P(h)\}]=T[V(P)\cap \{h\in V(T):w\in B(h)\}]$ is also connected.
    Now consider an edge $vw\in E(H[V(P)])$. Since $(T,B)$ is normal, we can assume without loss of generality that $w$ is a strict $T$-ancestor of $v$.
    Let $t_0\in V(T)$ be such that $v, w\in B(t_0)$. Since $(T,B)$ is normal, $t_0\in V(T_v)\cap V(T_w)=V(T_v)$.
    Since $w\in B(w)$ and $T[\{h\in V(T):w\in B(h)\}]$ is connected, we have $w\in B(v)$, and so $\{v,w\}\subseteq B(v)\cap V(P)=B_P(v)$, which completes the proof of~\ref{noramlttree1}.

    To prove \ref{noramlttree2}, suppose for the sake of contradiction that there is some $v\in V(P)$ and some connected subgraph $H'\subseteq H[V(T_v)]$ such that $H[V(P)\cap V(H')]$ is not connected.
    Thus there is a path $Q$ in $H'$ between distinct vertices $u$ and $w$ in $V(P)\cap V(H')$ with no internal vertices in $P$, such that $w$ is a strict $T$-ancestor of $u$ and $uw\notin E(H)$.
    Let $E^*$ be the set of edges of $Q$ whose endpoints lie in distinct components of $T-E(P)$.
    Consider $u'w'\in E^*$ with $w'$ a strict $T$-ancestor of $u'$.
    Since $u'$ and $w'$ are in distinct components of $T-E(P)$, $w'$ is also a $T$-ancestor of a vertex in $P$. 
    Since $w'\in V(Q)\subseteq V(T_v)$, $w'$ is a $T$-descendant of $v$, and hence $w'\in V(P)$.
    Since $Q$ has no internal vetex in $P$, we have $w'\in \{u,w\}$.

    Now consider an edge $u''w''$ in $E(Q)$ with exactly one endpoint $u''$ in $T_u$. Such an edge must exist since $Q$ has exactly one endpoint in $T_u$. By definition, $u''w'' \in E^*$.
    Since $(T,B)$ is normal and $u''w'' \in E^* \subseteq E(H)$, $w''$ is a strict $T$-ancestor of $u''$.
    Thus $w''\in\{u,w\}\setminus V(T_u)$ by the argument in the previous paragraph, implying $w''=w$.
    Let $u^*$ be a vertex such that $w, u''\in B(u^*)$. Since $(T,B)$ is normal, $u^*$ is a $T$-descendant of $u''$ and hence of $u$.
    Thus $w\in B(u^*)\cap B(w)$ and so $u, w \in B(u)$.
    Hence $(T,B)$ is a tree decomposition of the graph obtained by adding $uw$ to $H$, contradicting the fact that $H$ is a $t$-tree.

    To prove \ref{noramlttree3}, consider a connected subgraph $H'\subseteq H$ and a vertex $v$ such that $V(H')$ contains both a strict $T$-ancestor and a $T$-descendant of $v$.
    In particular, $H'$ contains an edge $uw$ such that $u\in V(T_v)$ and $w\notin V(T_v)$.
    Since $(T,B)$ is normal, $w$ is a strict $T$-ancestor of $u$ and $w, u \in B(u')$ for some $u'\in V(T_u)\subseteq V(T_v)$.
    Additionally, $w\in B(w)$ and $v\in B(v)$, and so $w, v \in B(v)$.
    Since $H$ is a $t$-tree, $vw\in E(H)$, and hence $H[V(H')\cup \{v\}]$ is connected.
\end{proof}

    \begin{lem}\label{lem:touchgraphtw2}
        Let $t$ and $z$ be positive integers and let $(T,B)$ be a normal tree decomposition of a $t$-tree $H$. For each $i\in \{1, \dots, z\}$, let $X_i\subseteq V(H)$ be a set of vertices such that $H[X_i]$ is connected.
        Let $H^*$ be the graph with vertex set $V(H)$ such that distinct vertices $v$ and $w$ are adjacent in $H^*$ if and only if there exist $i,j\in \{1, \dots, z\}$ such that $\{v\}\subseteq X_i\subseteq V(T_v)$, $\{w\}\subseteq X_j\subseteq V(T_w)$, and $H[X_i\cup X_j]$ is connected. Then for any integers $s_1 \geqslant 1$ and $s_2 \geqslant 2$ at least one of the following holds:
        \begin{enumerate}
            \item $H^*$ has treewidth at most $(s_1+2)(t+1)^{s_2} - 2$, or
            \item there are subsets $S_1$ and $S_2$ of $V(H)$ such that $|S_1| \geqslant s_1$, $|S_2| \geqslant s_2$, and for each $v\in S_1$ there exists $i\in \{1, \dots, z\}$ such that $\{v\}\subseteq X_i\subseteq V(T_v)$ and $S_2 \subseteq X_i$.
        \end{enumerate}
        \end{lem}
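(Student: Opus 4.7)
\emph{Plan.} I would build a tree decomposition of $H^*$ from $T$ itself and bound each bag via Lemma~\ref{lem:pathwidth}.

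First, observe that $H^*$ is a spanning subgraph of the closure of $T$: if $vw\in E(H^*)$ then the witnessing $X_i\cup X_j$ is connected in $H$, with $X_i\subseteq V(T_v)$ and $X_j\subseteq V(T_w)$. These sets are disjoint whenever $v$ and $w$ are $T$-incomparable, so every edge of $H^*$ joins a $T$-ancestor to a $T$-descendant. Lemma~\ref{lem:closure} then provides a tree decomposition $(T,B^*)$ of $H^*$ in which $B^*(v)=\{v\}\cup A_v$, where $A_v$ is the set of strict $T$-ancestors $w$ of $v$ admitting some $wx\in E(H^*)$ with $x$ a $T$-descendant of $v$. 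For each $w\in A_v$ fix witnessing sets $X_{i_w}\ni w$ and $X_{j_w}\ni x_w$ (with $x_w$ a $T$-descendant of $v$) such that $H[X_{i_w}\cup X_{j_w}]$ is connected.

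Now fix $v$ and let $P$ be the root-to-$v$ path in $T$, so $A_v\subseteq V(P)$. By Lemma~\ref{lem:normalttree}(a), $H[V(P)]$ admits a path decomposition $(P,B_P)$ of width at most $t$, with $\ell_u$ equal to the depth of $u$ in $T$. I would apply Lemma~\ref{lem:pathwidth} to $H[V(P)]$ with $v$ as distinguished vertex and, for each $u\in V(P)$, set
\[
X^*_u := \begin{cases} (V(P)\cap X_{i_u})\cup\{v\} & \text{if } u\in A_v,\\ \{u\} & \text{otherwise.} \end{cases}
\]
Conditions (i) and (ii) of Lemma~\ref{lem:pathwidth} are immediate from $X_{i_u}\subseteq V(T_u)$. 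For (iii), Lemma~\ref{lem:normalttree}(c) applied to the connected subgraph $H[X_{i_u}\cup X_{j_u}]$ (with $v$ having $u$ as a strict $T$-ancestor and $x_u$ as a $T$-descendant inside it) shows $H[X_{i_u}\cup X_{j_u}\cup\{v\}]$ is connected; since this set lies in $V(T_u)$, Lemma~\ref{lem:normalttree}(b) then gives that $H[V(P)\cap(X_{i_u}\cup X_{j_u}\cup\{v\})]=H[X^*_u]$ is connected (using $V(P)\cap X_{j_u}\subseteq V(P)\cap V(T_v)=\{v\}$). Each $u\in A_v\cup\{v\}$ thus satisfies $\ell_u\leq \ell_v$ and $H[X^*_u\cup X^*_v]=H[X^*_u]$ connected, yielding at least $|A_v|+1=|B^*(v)|$ qualifying vertices.

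If case 1 of the conclusion fails then some $|B^*(v)|\geq (s_1+2)(t+1)^{s_2}$, and Lemma~\ref{lem:pathwidth} produces $S_1,S_2\subseteq V(P)$ with $|S_1|\geq s_1$, $|S_2|\geq s_2$, and $S_2\subseteq X^*_u$ for each $u\in S_1$. Since $|S_2|\geq 2$ forces $|X^*_u|\geq 2$, we must have $S_1\subseteq A_v$, so $S_2\subseteq X_{i_u}\cup\{v\}$ for each $u\in S_1$. When $v\notin S_2$ we are done: $\{u\}\subseteq X_{i_u}\subseteq V(T_u)$ and $S_2\subseteq X_{i_u}$, as demanded by case 2. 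The main obstacle is the contingency $v\in S_2$, which only yields $S_2\setminus\{v\}\subseteq X_{i_u}$ of size $\geq s_2-1$. I would absorb this by revisiting the sequence construction inside the proof of Lemma~\ref{lem:pathwidth} and excluding $v$ as a candidate for the first element $v_1$ of the sequence (feasible because $v$ enters each $X^*_u$ only via the artificial inclusion of $\{v\}$), ensuring $v\notin S_2$ and hence recovering exactly the bound claimed in case 1.
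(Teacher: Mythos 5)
Your overall strategy matches the paper's: build the closure tree decomposition $(T,B^*)$ of $H^*$, take the root-to-$v$ vertical path $P$ for a vertex $v$ with a large bag, restrict to $H[V(P)]$ via \cref{lem:normalttree}\ref{noramlttree1}, and feed the projected sets into \cref{lem:pathwidth}. However, there is a genuine flaw in your choice $X^*_u := (V(P)\cap X_{i_u})\cup\{v\}$.

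Adding $\{v\}$ is both unnecessary and harmful. Look at the hypotheses of \cref{lem:pathwidth} carefully: condition (iii) only demands that $H[X_u]$ (i.e.\ $X_u$ by itself) be connected, while the \emph{separate} assumption preceding the conclusion requires that $H[X_w\cup X_v]$ be connected for the many vertices $w$ being counted. The latter is exactly where the connectivity through $v$ belongs. If you instead set $X^*_u := V(P)\cap X_{i_u}$ (as the paper does), then (i)--(iii) hold by \cref{lem:normalttree}\ref{noramlttree1}--\ref{noramlttree2}, and the assumption $H'[X^*_u\cup X^*_v]=H'[X^*_u\cup\{v\}]$ connected is supplied by \cref{lem:normalttree}\ref{noramlttree3} followed by \ref{noramlttree2}, precisely as in your second paragraph. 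The lemma's output then gives $S_2\subseteq X^*_u\subseteq X_{i_u}$ directly, with no contingency about $v\in S_2$.

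By contrast, your patch --- ``revisit the sequence construction inside the proof of \cref{lem:pathwidth} and exclude $v$ as a candidate for $v_1$'' --- is not sound as stated. The first element $z_{u,1}$ of the minimal hitting set $Z_u\subseteq X^*_u$ lives in $B(\ell_v)$, and nothing prevents $v$ from being the \emph{only} vertex of $X^*_u$ in $B(\ell_v)$ (indeed $v$ may genuinely belong to $X_{i_u}$, not merely to the artificial $\{v\}$), in which case $v$ cannot be excluded from the hitting set. Reproving a black-box lemma with an ad hoc restriction is also a poor trade when the cleaner fix is simply not to inject $v$ into the sets in the first place.
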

 \begin{proof} Since $(T,B)$ is normal, $H$ is a spanning subgraph of the closure of $T$. For every edge $vw\in E(H^*)$, we have that some vertex in $T_v$ is either in $T_w$ or adjacent in $H$ to a vertex in $T_w$.
    It follows that one of $v$ or $w$ is a strict $T$-ancestor of the other, meaning $H^*$ is also a spanning subgraph of the closure of $T$.
    Define $B^{*} : V(T) \rightarrow 2^{V(H^{*})}$ as follows. For each $v \in V(T)$, let $B^*(v)$ be the set consisting of $v$ and all vertices $w\in V(H^*)$ such that $w$ is a strict $T$-ancestor of $v$ and $wx \in E(H^*)$ for some $T$-descendant $x$ of $v$. By \cref{lem:closure}, $(T,B^{*})$ is a tree decomposition of~$H^*$.

    If every bag of $(T, B^*)$ has size at most $(s_1+2)(t+1)^{s_2} - 1$, then the first outcome of the lemma is satisfied. Otherwise, there exists $v\in V(H)$ such that $|B^*(v)| \geqslant (s_1+2)(t+1)^{s_2}$. Let $P$ be the vertical path in $T$ from $v$ to the root of $T$, let $H':=H[V(P)]$ and let $(P,B_P)$ be the path decomposition of $H'$ described in Lemma~\ref{lem:normalttree}\ref{noramlttree1}. Note that $(P,B_P)$ has width at most $t$.

        For each $i\in \{1, \dots, z\}$, let $X_{v,i}:=X_i\cap V(P)$. Since $H[X_i]$ is connected, $H'[X_{v,i}]$ is connected by Lemma~\ref{lem:normalttree}\ref{noramlttree2}.
        Now, consider a vertex $w\in B^*(v) \setminus \{v\}$. By definition of $B^*$, $w$ is a strict $T$-ancestor of $v$ (and so $w \in V(H')$) and $wx \in E(H^*)$ for some $T$-descendant $x$ of $v$. By definition of $H^*$, there exist $i_w, j_w\in \{1, \dots, z\}$ such that $\{w\}\subseteq X_{i_w}\subseteq V(T_w)$, $\{x\}\subseteq X_{j_w}\subseteq V(T_x) \subseteq V(T_v)$, and $H[X_{i_w}\cup X_{j_w}]$ is connected.
        Let $X'_w:=X_{v,i_w}$, so $H'[X'_w]$ is connected. 
        By Lemma~\ref{lem:normalttree}\ref{noramlttree3}, since $x\in V(T_v)$ and $w$ is a strict $T$-ancestor of $v$, we have that $H[X_{i_w}\cup X_{j_w}\cup \{v\}]$ is connected.
        Since $X_{j_w}\subseteq V(T_v)$, we have $X_{j_w}\cap V(P)\subseteq \{v\}$ and so $(X_{i_w}\cup X_{j_w}\cup \{v\})\cap V(P)=X'_w\cup \{v\}$.
        Hence $H'[X'_w\cup \{v\}]$ is connected by \cref{lem:normalttree}\ref{noramlttree2}.

        For every other vertex $w$ of $H'$ (that is, for every $w \in (V(H') \setminus B^*(v)) \cup \{v\}$), define $X'_w:=\{w\}$. We wish to apply \cref{lem:pathwidth}. Let $n := |V(H')|$. Recall that $P_n$ is the graph defined before the statement of \cref{lem:pathwidth}. Associate every vertex $x$ of $P$ to a positive integer $\dist_{T}(x, r) + 1 \in \{1, \dots, n\}$, where $r$ is the root of $T$. Let $\tilde{B} : \{1, \dots, n\} \rightarrow 2^{V(H')}$ be a bag assignment obtained from $B_{P}$ using this association. So $(P_n, \tilde{B})$ is a path decomposition of $H'$ of width at most $t$. We now check the conditions of \cref{lem:pathwidth} for the path decomposition $(P_n, \tilde{B})$ of $H'$ and the collection of sets $(X'_{u} : u \in V(H'))$. For every $u \in V(H')$, $u \in X'_{u}$ and $H'[X'_{u}]$ is connected, and hence conditions (i) and (iii) of \cref{lem:pathwidth} are satisfied. By definition, every vertex $w' \in V(X'_{u})$ is a $T$-descendant of $u$. Then, since $(T,B)$ is normal, condition (ii) is satisfied.

        By definition of $B^*$, all the vertices $w$ of $B^*(v)$ are $T$-ancestors of $v$. For every such vertex $w$, the graph $H'[X'_w \cup X'_v]$ is connected because $H'[X'_w \cup \{v\}]$ is connected and $\{v\} = X'_v$. Recall that $|B^{*}(v)| \geqslant (s_{1} + 2)(t + 1)^{s_2}$. Now, by \cref{lem:pathwidth} applied to the path decomposition $(P_n, \tilde{B})$ of $H'$ and the collection of sets $(X'_{u} : u \in V(H'))$, there are subsets $S_1$ and $S_2$ of $V(H')$ such that $|S_1| \geqslant s_1$, $|S_2| \geqslant s_2$, and for each $w\in S_1$, we have $S_2\subseteq X'_w$. If $w \in B^*(v) \setminus \{v\}$ then $\{w\} \subseteq X_{i_w} \subseteq V(T_w)$ and $S_2 \subseteq X'_w \subseteq X_{i_w}$. Otherwise, $w \in (V(H') \setminus B^*(v)) \cup \{v\}$ and $S_2 \subseteq \{w\}$, but this is impossible because $|S_2| \geqslant s_2 \geqslant 2$. Thus at least one of the outcomes of the lemma is satisfied.
    \end{proof}

For integers $t \geqslant 1$ and $y \geqslant 0$, a graph $J$ is \defn{$(t,y)$-good} if there is graph $H$ of treewidth at most $t$ and a path $P$ such that there is a subgraph $J'$ of $H\boxtimes P$ isomorphic to $J$, and for all but at most $y$ vertices $v$ of $H$, $J'[(\{v\}\times V(P))\cap V(J')]$ is a non-empty path.

We now show that, under certain conditions, weak shallow minors inherit product structure.

\begin{lem}\label{lem:RTWtechnical}
Let $r$ and $y$ be non-negative integers and $t, a, b$ and $c$ be positive integers. Let $J$ be a $K_{a, b}$-minor-free $(t,y)$-good graph. If $G$ is a weak $r$-shallow minor of $J\boxtimes K_c$, then $$\rtw(G)\leqslant (4r + 1)c(((8r+1)c(a - 1)+3)(t+1)^{y+(2ra+1)(a + b-1)+1} - 1) - 1.$$ 
\end{lem}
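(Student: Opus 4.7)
The plan is to realise $G$ as a subgraph of $H^{\star}\boxtimes K_{(4r+1)c}\boxtimes P^{\mathrm{new}}$, where $H^{\star}$ is a suitable touch graph on $V(H)$ derived from a weak $r$-shallow model of $G$ and $P^{\mathrm{new}}$ is a quotient path of $P$. The treewidth of $H^{\star}$ is controlled via \cref{lem:touchgraphtw2}, and the bad outcome of that lemma is defeated using $K_{a,b}$-minor-freeness of $J$ through \cref{FindKst}.

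To set up, the $(t,y)$-goodness of $J$ provides an embedding $J\subseteq H\boxtimes P$ with $\tw(H)\leqslant t$; let $V_{\mathrm{bad}}\subseteq V(H)$ be the at most $y$ vertices whose $J$-fiber is not a non-empty path. Adding edges to $H$ (which preserves $J$ as a subgraph of $H\boxtimes P$ and leaves every $J$-fiber unchanged) lets us assume $H$ is a $t$-tree, and \cref{lem:normaltd} supplies a normal tree decomposition $(T,B)$. Fix a weak $r$-shallow model $\mu$ of $G$ in $J\boxtimes K_c\subseteq H\boxtimes P\boxtimes K_c$, with origins $o_u=(h_u,p_u,k_u^{\ast})$ and $(h_u,p_u)\in V(J)$ for each $u\in V(G)$. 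Because projections of connected subgraphs of a strong product are connected, $X_u:=\pi_H(\mu(u))$ is connected in $H$; and since $J\boxtimes K_c$-distances dominate $H\boxtimes P\boxtimes K_c$-distances, which in turn dominate $H$-distances, $X_u$ has $H$-radius at most $r$ centered at $h_u$. Normality of $(T,B)$ then provides a unique $T$-maximal element $v_u$ of $X_u$, with $\{v_u\}\subseteq X_u\subseteq V(T_{v_u})$.

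Apply \cref{lem:touchgraphtw2} to $(X_u)_{u\in V(G)}$ with $s_1:=(8r+1)c(a-1)+1$ and $s_2:=y+(2ra+1)(a+b-1)+1$, obtaining a touch graph $H^{\star}$ on $V(H)$. The first outcome directly gives $\tw(H^{\star})\leqslant((8r+1)c(a-1)+3)(t+1)^{y+(2ra+1)(a+b-1)+1}-2$. To rule out the second outcome, the condition $S_2\subseteq X_{u_v}\subseteq V(T_v)$ for every $v\in S_1$ (together with $S_2\neq\emptyset$) forces $S_1$ to be a chain in $T$; and $S_2':=S_2\setminus V_{\mathrm{bad}}$ has $|S_2'|\geqslant(2ra+1)(a+b-1)+1$, with each fiber $F_s$ ($s\in S_2'$) a non-empty path in $J$, pairwise disjoint across $s$. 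For each $v\in S_1$, build a connected subgraph $\tilde Y_v\subseteq V(J)$ of $J$-radius at most $r$ centered at $j_v:=(h_v,p_v)$, by attaching to $\{j_v\}$ a shortest $J$-path of length at most $r$ from $j_v$ to a chosen vertex of $\pi_J(\mu(u_v))\cap F_s$ for each $s\in S_2'$; such paths exist because $S_2'\subseteq X_{u_v}$ and $\pi_J(\mu(u_v))$ has weak $J$-radius $r$ at $j_v$. The crux is a counting argument showing that at most $(8r+1)c-1$ of the other $\tilde Y_{v'}$ can intersect a given $\tilde Y_v$ in $V(J)$: any intersection forces $\dist_H(h_v,h_{v'})\leqslant 2r$ and $\dist_P(p_v,p_{v'})\leqslant 2r$ (since $\tilde Y_v\subseteq B_r^J(j_v)$ and similarly for $v'$), and the disjointness of the $\mu(u_v)$'s in $V(J\boxtimes K_c)$ limits the number of competing branch sets at each shared point. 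A greedy extraction then yields $a$ pairwise disjoint $\tilde Y_v$'s, and \cref{FindKst} with these as the $Y_i$'s and the fibers in $S_2'$ as the $X_i$'s produces a $K_{a,b}$-minor of $J$, contradicting the hypothesis.

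For the product structure, group $V(P)$ into consecutive chunks of length $2r+1$ to form a new path $P^{\mathrm{new}}$, and map each $u$ to the triple $(v_u,k_u^{\dagger},\mathrm{chunk}(p_u))\in V(H^{\star})\times V(K_{(4r+1)c})\times V(P^{\mathrm{new}})$, where $k_u^{\dagger}$ is a disambiguating index. Edge compatibility in the $P^{\mathrm{new}}$-coordinate follows from $|p_u-p_v|\leqslant 2r+1$ for every $uv\in E(G)$; in the $H^{\star}$-coordinate it follows from the definition of $H^{\star}$ together with the fact that $uv\in E(G)$ forces $H[X_u\cup X_v]$ to be connected. For each pair $(v,\mathrm{chunk})$ at most $(4r+1)c$ vertices qualify for the disambiguation, since each such $u$ contributes a distinct vertex $(v,p^{\ast},k)\in\mu(u)$ with $p^{\ast}$ in an interval of length $4r+1$ (the chunk enlarged by $r$ on each side) and $k\in V(K_c)$, and branch sets are pairwise disjoint. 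Combining these yields $G\subseteq H^{\star}\boxtimes K_{(4r+1)c}\boxtimes P^{\mathrm{new}}$, whence $\rtw(G)\leqslant(\tw(H^{\star})+1)(4r+1)c-1$ matches the claimed bound. The main obstacle is the overlap-counting step showing $|\{v':\tilde Y_v\cap\tilde Y_{v'}\neq\emptyset\}|\leqslant (8r+1)c-1$, since this is precisely the point at which the $K_{a,b}$-minor-freeness of $J$ must be made to interact with branch-set disjointness in $V(J\boxtimes K_c)$ in order to overcome the obstruction highlighted by \cref{bigrtw}.
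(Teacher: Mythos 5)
Your proposal follows the paper's architecture: embed $G$ into $H^\star\boxtimes P'\boxtimes K_{c'}$ via a touch graph $H^\star$ on $V(H)$, bound $\tw(H^\star)$ with \cref{lem:touchgraphtw2}, and defeat the second outcome by building, from the branch sets over the common "core'' $S_2$, a collection of disjoint bounded-radius subtrees and disjoint fibers to which \cref{FindKst} applies. The product-structure bookkeeping at the end, while packaged a little differently (chunking $P$ directly instead of passing through $P^{4r+1}\subseteq P\boxtimes K_{4r+1}$), is essentially the same count and works: among vertices $u$ mapping to a fixed $(v,\mathrm{chunk})$, the vertex $(v,g_2(u),g_3(u))\in\mu(u)$ has $g_2(u)$ in an interval of length $4r+1$ and these triples are distinct by branch-set disjointness.

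The gap is exactly where you flag it: the overlap count for the $\tilde Y_v$'s. You take $j_v$ to be the $J$-projection of the \emph{origin} $o_{u_v}$ and appeal to "disjointness of the $\mu(u_v)$'s at each shared point.'' This does not close the argument for two reasons. First, origins of a weak $r$-shallow model need not lie in their branch sets (that is precisely the difference from an $r$-shallow model), so distinct $u_v$'s may share the same $(p_v,k_{u_v}^*)$, and nothing forces injectivity of $v\mapsto(p_v,k_{u_v}^*)$. Second, a shared point $z\in\tilde Y_v\cap\tilde Y_{v'}$ need not lie in $\pi_J(\mu(u_v))$ or $\pi_J(\mu(u_{v'}))$ — it is an intermediate point of a connecting path — so disjointness of branch sets over the fiber $\{z\}\times V(K_c)$ says nothing about how many $v'$ can touch $z$. (Your quoted intersection bound $\dist_P(p_v,p_{v'})\leq 2r$ is also inconsistent with your target count $(8r+1)c-1$: distance $2r$ would give $(4r+1)c-1$.)

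The missing idea, used by the paper, is to anchor the injectivity inside the branch sets rather than at the origins: fix any $s_0\in S_2$; since $S_2\subseteq X_{u_v}$ for every $v\in S_1$, each $\mu(u_v)$ contains a vertex $(s_0,\ell_1(v),\ell_2(v))$, and branch-set disjointness makes $v\mapsto(\ell_1(v),\ell_2(v))$ injective. Moreover $(s_0,\ell_1(v))$ lies in the radius-$r$ tree $U_v$ witnessing the weak radius of $\mu(u_v)$, so $U_v\cap U_{v'}\neq\emptyset$ forces $\dist_P(\ell_1(v),\ell_1(v'))\leq 4r$, yielding at most $(8r+1)c-1$ overlaps and hence, by greedy, $a$ pairwise disjoint $U_v$'s. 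Your $\tilde Y_v$'s can be made to work the same way, but only if you include $s_0\in S_2'$ and take the reference vertex of $\mu(u_v)$ above $s_0$ as the anchor; as written the counting step does not go through.
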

\begin{proof}
By the definition of $(t,y)$-good, there is a graph $H$ of treewidth at most $t$, a path $P$, and a subgraph $J'$ of $H\boxtimes P$ isomorphic to $J$ such that for all but at most $y$ vertices $v$ of $H$, the set $(\{v\}\times V(P))\cap V(J')$ induces a non-empty path of $J'$. We may assume that $H$ is a $t$-tree. By \cref{lem:normaltd}, there exists a normal tree decomposition $(T,B)$ of $H$.
Let $\mu$ be a weak $r$-shallow model of $G$ in $J'\boxtimes K_c$, and let $g_1:V(G)\to V(H)$, $g_2:V(G) \rightarrow V(P)$ and $g_3:V(G)\to K_c$ be functions such that for all $v\in V(G)$ we have $(g_1(v),g_2(v),g_3(v))\in \mu(v)$ and $\mu(v)\subseteq V(T_{g_1(v)})\times V(P)\times V(K_c)$.
For each $v\in V(G)$, define $X_v$ to be the projection of $\mu(v)$ to $V(H)$.
Note that $\{g_{1}(v)\} \subseteq X_v\subseteq V(T_{g_1(v)})$ and $H[X_v]$ is connected because the subgraph of $J'\boxtimes K_c$ induced by $\mu(v)$ is connected.

Consider an edge $vw\in E(G)$. Since $\mu$ is a model, $(J'\boxtimes K_c)(\mu(v)\cup \mu(w))$ is connected. Hence, $H[X_v\cup X_w]$ is connected. Observe that $\dist_{P}(g_2(v), g_2(w)) \leqslant \dist_{J' \boxtimes K_c}((g_1(v),g_2(v),g_3(v)), (g_1(w),g_2(w),g_3(w))) \leqslant 4r + 1$; the second inequality holds because $\mu$ is a weak $r$-shallow model. Thus $g_2(v)g_2(w)\in E(P^{4r+1})$.

Define $H^*$ to be the graph with vertex set $V(H)$ such that distinct vertices $v'$ and $w'$ are adjacent in $H^*$ if and only if there are vertices $v,w\in V(G)$ such that $g_1(v)=v'$, $g_1(w)=w'$ and $H[X_v\cup X_w]$ is connected. It follows from the above observations that the map $v \rightarrow (g_1(v), g_2(v), g_3(v))$ is an injective homomorphism from $G$ to $H^*\boxtimes P^{4r+1} \boxtimes K_c$. Thus $G$ is contained in $H^*\boxtimes P^{4r+1} \boxtimes K_c$. Since $P^{4r+1}$ is contained in $P\boxtimes K_{4r+1}$, $\rtw(G) \leqslant \tw(H^*\boxtimes K_{4r+1}\boxtimes K_c)\leq (4r+1)c(\tw(H^*)+1)-1$.

Let $s_1 := (8r+1)c(a - 1) + 1$ and $s_2 := y+(2ra+1)(a + b-1)+1$. Note that $s_2 \geqslant 2$. By \cref{lem:touchgraphtw2}, at least one of the following holds: (i) $\tw(H^*) \leqslant (s_1+2)(t+1)^{s_2} - 2$, or (ii) there is a set $S_1\subseteq V(H)$ of size at least $s_1$ and a set $S_2\subseteq V(H)$ of size at least $s_2$ such that for each $v' \in S_1$, there is some $v\in V(G)$ such that $g_1(v)=v'$ and $S_2\subseteq X_v$. If (i) holds, then we are done.

Our goal is to show that the outcome (ii) does not hold. Assume for the sake of contradiction that such sets $S_1$ and $S_2$ exist. Let $S'_1$ be a minimal subset of $V(G)$ such that $g_1(S'_1)=S_1$ and $S_2\subseteq X_v$ for all $v\in S'_1$. So $|S'_1| = |S_1| \geqslant s_1$. Fix an arbitrary vertex $s_0\in S_2$. For each $v\in S'_1$, define $\ell(v):=(\ell_1(v),\ell_2(v))\in V(P \boxtimes K_c)$ such that $(s_0,\ell_1(v),\ell_2(v))\in \mu(v)$. Since $\mu$ is a model, the map $\ell$ is injective.
Since $\mu$ is a weak $r$-shallow model,
for each $v\in S'_1$ there is a tree $U_v\subseteq J'$ of radius at most $r$ such that $\mu(v)\subseteq V(U_v)\times V(K_c)$. If $v,w\in S'_1$ are two vertices such that $U_v$ and $U_w$ intersect, then $\dist_{P}(\ell_1(v), \ell_1(w)) \leqslant 4r$. Since $\ell$ is an injection, for each $v \in S'_1$, there are at most $(8r + 1)c - 1$ vertices $w \in S'_1$ such that $U_v \cap U_w \neq \emptyset$ and $w \neq v$. By a greedy algorithm, there is a set $I_1\subseteq S'_1$ of size at least $\lceil \frac{s_1}{(8r + 1)c} \rceil \geqslant a$ such that the trees in $\{U_v:v\in I_1\}$ are pairwise vertex-disjoint. 
By the definition of $(t,y)$-good, there is a set $I_2\subseteq S_2$ of size at least $s_2 - y = (2ra+1)(a + b-1)+1$ such that for each $v\in I_2$ the set $(\{v\}\times V(P))\cap V(J')$ induces a non-empty path $Q_v$ of $J'$.
Thus $\{U_v:v\in I_1\}$ is a collection of pairwise disjoint connected subgraphs of $J'$, each with radius at most $r$, and $\{Q_v:v\in I_2\}$ is a collection of pairwise disjoint connected subgraphs of $J'$. For each $v\in I_1$ and $w\in I_2$, we have $w \in S_2 \subseteq X_v$. So $Q_{w}$ hits the projection of $\mu(v)$ to $H \boxtimes P$. Since $\mu(v)\subseteq V(U_v)\times V(K_c)$, the projection of $\mu(v)$ to $H \boxtimes P$ lies in $V(U_v)$. Thus $V(U_v \cap Q_w)\neq\emptyset$.
By \cref{FindKst}, $K_{a,b}$ is a minor of $J'$, a contradiction.
\end{proof}

If $r$ is a vertex in a connected graph $G$ and $V_{i}:= \{v \in V (G) : \dist_{G}(r, v) = i\}$ for all $i \geqslant 0$, then $(V_0, V_1, \dots)$ is called a \defn{BFS layering} of $G$ \defn{rooted} at $r$. Associated with a BFS layering is a \defn{BFS spanning tree} $T$ obtained by choosing, for each non-root vertex $v \in V_i$ with $i \geqslant 1$, a neighbour $w$ in $V_{i - 1}$, and adding the edge $vw$ to $T$. Thus $\dist_{T}(r, v) = \dist_{G}(r, v)$ for each vertex $v$ of $G$. For a partition $\mathcal{P}$ of a graph $G$, the \defn{quotient} of $\mathcal{P}$ is the graph, denoted by \defn{$G / \mathcal{P}$}, with vertex set $\mathcal{P}$ where distinct parts $A, B \in \mathcal{P}$ are adjacent in $G / \mathcal{P}$ if and only if some vertex in $A$ is adjacent in $G$ to some vertex in $B$.

To complete the proof of \cref{thm:RTWmain}, we use the following results due to \citet[Lemma~21]{DJMMUW20} and \citet[Corollary~6]{UWY22}.

\begin{lem}[\cite{DJMMUW20}]
\label{lem:genusPS}
Let $G$ be a connected graph with Euler genus $g$.
For every BFS spanning tree $T$ of $G$ rooted at some vertex $r$ with corresponding BFS layering $(V_0,V_1,\dots)$, 
there is a subgraph $Z\subseteq G$ with at most $2g$ vertices in each layer $V_i$, such that $Z$ is connected and $G-V(Z)$ is planar. 
Moreover, there is a connected planar graph $G^+$ containing $G-V(Z)$ as a subgraph, and 
there is a BFS spanning tree $T^+$ of $G^+$ rooted at some vertex $r^+$ 
with corresponding BFS layering $(W_0,W_1,\dots)$ of $G^+$, 
such that  $W_{i} \cap (V(G) \setminus V(Z)) = V_i  \setminus V(Z)$ for all $i\geq 0$, and
$P \cap (V(G) \setminus V(Z))$ is a vertical path in $T$ for every vertical path $P$ in $T^+$. 
\end{lem}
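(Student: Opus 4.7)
The plan is to embed $G$ in a surface $\Sigma$ of Euler genus $g$ and use the BFS tree $T$ to build a small ``skeleton'' $Z$ that carries all the topology of $\Sigma$. For each non-tree edge $e=uv$, let $P_u$ and $P_v$ be the tree paths in $T$ from $u$ and $v$ to $r$; the closed walk $C_e:=P_u\cup\{e\}\cup P_v$ then meets each BFS layer in at most one vertex along $P_u$ and at most one along $P_v$.

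The central step is to choose a minimal set $e_1,\dots,e_k$ of non-tree edges such that $[C_{e_1}],\dots,[C_{e_k}]$ span the image of the cycle space of $G$ in $H_1(\Sigma;\mathbb{Z}_2)$. Since $\dim H_1(\Sigma;\mathbb{Z}_2)=g$ in both the orientable and non-orientable cases, $k\leq g$. Define
\[
Z:=\bigcup_{j=1}^{k}\bigl(P_{u_j}\cup P_{v_j}\cup\{e_j\}\bigr).
\]
Then $Z$ is connected because each $P_{u_j}$ and $P_{v_j}$ passes through $r$; each layer $V_i$ meets at most $2k\leq 2g$ vertices of $Z$, one on each of the $2k$ tree paths; and $G-V(Z)$ is planar, since every cycle of $G-V(Z)$ is a $\mathbb{Z}_2$-sum of the $C_e$ for non-tree edges $e\notin\{e_1,\dots,e_k\}$, hence null-homologous in $\Sigma$ by the choice of the basis, and hence contractible in the induced embedding.

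For the moreover part, cut $\Sigma$ along $Z$ to obtain a planar surface with boundary in which $G-V(Z)$ is embedded. Let $G^+$ be obtained from $G-V(Z)$ by placing a new vertex $r^+$ in a chosen face and, for every vertex $w\in V(G)\setminus V(Z)$ that is adjacent in $G$ to a vertex of $Z$, inserting an internally-disjoint path of length $\dist_G(r,w)$ from $r^+$ to $w$ drawn inside that face without crossings. Choose $T^+$ so that each added path is a branch of $T^+$, and so that for every $v\in V(G)\setminus V(Z)$ its $T^+$-parent is its $T$-parent when $p_T(v)\in V(G)\setminus V(Z)$, and is otherwise the last internal vertex of the added path from $r^+$ to $v$. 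A short triangle-inequality check gives $\dist_{G^+}(r^+,w)=\dist_G(r,w)$ for all $w\in V(G)\setminus V(Z)$, so $W_i\cap(V(G)\setminus V(Z))=V_i\setminus V(Z)$. Moreover, the $T^+$-children of any vertex in $V(G)\setminus V(Z)$ also lie in $V(G)\setminus V(Z)$ (added vertices have only added vertices or $r^+$ as parents), so the restriction of any vertical path of $T^+$ to $V(G)\setminus V(Z)$ is a contiguous suffix whose consecutive vertices are in $T$-parent-child relation, hence is vertical in $T$.

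I expect the main obstacle to be the moreover part: concretely, realising the planar augmentation $G^+$ so that every new $r^+$-to-boundary path of the prescribed length can be drawn simultaneously without crossings. This depends on the cyclic ordering of boundary vertices around each cut-opened face, which in turn relies on both the BFS structure of $T$ and a judicious choice of the basis $\{e_1,\dots,e_k\}$ so that cutting opens each handle or cross-cap into a tractable face -- this is where the detailed book-keeping in \citet{DJMMUW20} is required.
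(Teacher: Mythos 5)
You are proving a lemma that the paper itself does not prove (it is quoted from \citet{DJMMUW20}), so the benchmark is the known argument, which works with a cellular embedding and a tree--cotree (cut-graph) choice of the non-tree edges rather than with homology. Measured against that, your proposal has a genuine gap at the crucial planarity step. You choose $e_1,\dots,e_k$ so that $[C_{e_1}],\dots,[C_{e_k}]$ span the image $W$ of the cycle space in $H_1(\Sigma;\mathbb{Z}_2)$, and then assert that every fundamental cycle $C_e$ with $e\notin\{e_1,\dots,e_k\}$ is ``null-homologous by the choice of the basis''. That is a non sequitur: spanning only gives $[C_e]\in\mathrm{span}([C_{e_1}],\dots,[C_{e_k}])$, not $[C_e]=0$, so a cycle of $G-V(Z)$ may well be non-null-homologous (think of two disjoint homologous non-contractible cycles on the torus, only one of which is touched by $Z$). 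One can partially repair this when the embedding is cellular, since then $W=H_1(\Sigma;\mathbb{Z}_2)$ and the non-degenerate mod-$2$ intersection form forces any cycle vertex-disjoint from $Z$ to be null-homologous; but you neither insist on a cellular embedding nor make that argument, and even then the next inference, ``null-homologous hence contractible'', is false already on the Klein bottle and on every surface of Euler genus at least $3$ (separating curves need not bound disks). The standard proof sidesteps all of this by picking the $e_j$ via the tree--cotree decomposition of a cellular embedding so that cutting along $T\cup\{e_1,\dots,e_g\}$ yields a disk; it is this homotopy-level cut-graph property, not homological spanning, that delivers planarity of $G-V(Z)$.

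The second gap is the ``moreover'' part, which you essentially concede rather than prove: nothing in your construction shows that all vertices of $G-V(Z)$ with a neighbour in $Z$ can be joined to a single new root $r^+$ by internally disjoint paths of the prescribed lengths drawn simultaneously without crossings in some planar embedding of $G-V(Z)$. With only a homological choice of $Z$ there is no control over which faces of $G-V(Z)$ these attachment vertices lie on (cutting along your $Z$ need not even produce a planar surface), so the construction of $G^+$ and $T^+$ --- the substantive half of the statement --- is missing, not merely routine. The parts of your write-up that do go through are the easy ones: $Z$ is connected through $r$, meets each layer in at most $2k\leqslant 2g$ vertices, the distance-preservation check $\dist_{G^+}(r^+,w)=\dist_G(r,w)$ (valid because $r\in V(Z)$ when $k\geqslant 1$, so every shortest $r$--$w$ path leaves $Z$ at some last vertex), and the vertical-path bookkeeping for $T^+$; but all of this presupposes exactly the planar augmentation that has not been established.
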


\begin{thm}[\citep{UWY22}]\label{thm:goodplanarPS} Let $T$ be a rooted spanning tree in a connected planar graph $G$. Then $G$ has a partition $\mathcal{P}$ into vertical paths in $T$ such that $\tw(G/\mathcal{P})\leq 6$.
\end{thm}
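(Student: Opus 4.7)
The plan is to follow the strategy of Ueckerdt, Wood, and Yi, which sharpens the tripod-based argument of \citet{DJMMUW20} (which gives treewidth at most $8$) to obtain the optimal bound of $6$. The final output will be a partition $\mathcal{P}$ of $V(G)$ into vertical paths in $T$ together with a tree decomposition of $G/\mathcal{P}$ of width at most $6$, produced simultaneously by a recursive construction.

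\textbf{Step 1 (Reduction to triangulations).} First I would reduce to the case where $G$ is a plane triangulation. Fix a plane embedding of $G$ and add edges until every face is a triangle. This does not alter $V(G)$, leaves $T$ a spanning tree of the new graph, and does not change the set of vertical paths in $T$ (since vertical paths are a function of $T$ alone). For any partition $\mathcal{P}$ into vertical paths in $T$, the quotient $G/\mathcal{P}$ is a subgraph of the quotient of the triangulation under the same $\mathcal{P}$, so $\tw(G/\mathcal{P})$ is at most the treewidth of the larger quotient. Hence it suffices to prove the theorem for triangulations.

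\textbf{Step 2 (Recursive tripod construction).} Working inside the plane triangulation, I would define $\mathcal{P}$ by induction on the number of vertices of a near-triangulation $G_0$ whose outer boundary is covered by at most three vertical paths in $T$ already placed in $\mathcal{P}$. At each recursive step, choose an interior vertex $v$ of $G_0$ and trace three vertical paths in $T$ from carefully chosen vertices on the boundary of $G_0$ upward (toward the root of $T$) until they meet or until they reach the current boundary paths; this forms a tripod that splits $G_0$ into three smaller near-triangulations, each with its own boundary covered by a bounded number of vertical paths in (the updated) $\mathcal{P}$. Recurse on each. The union of all new vertical paths added across the recursion yields the final partition $\mathcal{P}$.

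\textbf{Step 3 (Tree decomposition of the quotient).} Simultaneously I would build a tree decomposition $(T^{\star},B)$ of $G/\mathcal{P}$ whose underlying tree $T^{\star}$ mirrors the recursion tree. At the node corresponding to a recursive region $G_0$, the bag $B$ consists of those parts of $\mathcal{P}$ that meet the boundary of $G_0$ together with the at most three new parts created at that step. Planarity of $G_0$, together with the fact that any edge of $G$ incident to a vertex in $G_0$ either lies entirely in $G_0$ or is incident to the boundary, ensures that the standard tree-decomposition axioms hold. The crucial quantitative claim is that, with the right choice of tripod apex at each step, each bag has size at most $7$, giving $\tw(G/\mathcal{P}) \leq 6$.

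\textbf{Main obstacle.} The central difficulty lies in choosing the separating vertical paths so that bags of size $7$ suffice rather than the $9$ produced by the naive analysis (which corresponds to the $\tw \leq 8$ bound of \citet{DJMMUW20}). The key insight needed is to pick the tripod apex so that new vertical paths are amalgamated with existing boundary paths of the enclosing region whenever possible, so that the ``old boundary'' parts and ``new tripod'' parts overlap instead of contributing separately. A secondary subtlety is that $T$ is an arbitrary rooted spanning tree, not necessarily BFS, so the argument must rely only on the combinatorial interaction between $T$ and the plane embedding, without appealing to a layered structure. Verifying that a good tripod can always be found, and that the resulting quotient bag is always of size at most $7$, is the technical heart of the proof.
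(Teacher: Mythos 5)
First, note that the paper does not prove this statement at all: it is quoted directly from \citet{UWY22} (their Corollary~6) and used as a black box, so the only fair comparison is with the argument in that paper, which your sketch attempts to reproduce.

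Your proposal names the right strategy (reduce to a plane triangulation, then run a Sperner/tripod recursion on near-triangulations bounded by already-created vertical paths, with a tree decomposition mirroring the recursion), and Step~1 is fine. But there is a genuine gap: the entire content of the theorem — why the bags can be kept to size $7$ rather than the $9$ arising in \citet{DJMMUW20} — is exactly the step you defer ("the technical heart"), and the invariant you do state does not hold together. If each recursive region were really bounded by at most three vertical paths and each step created at most three new parts, the bags would have size at most $6$ and you would have proved $\tw(G/\mathcal{P})\leq 5$, which this method does not deliver; in reality, cutting a region with a tripod produces subregions whose boundaries consist of portions of \emph{two} old boundary paths plus \emph{two} tripod legs, and without a new idea this count grows back to the six boundary parts of the original argument (hence bags of size $9$ and treewidth $8$). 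So the invariant "boundary covered by at most three vertical paths" cannot be maintained through the recursion, and no mechanism is given that keeps the boundary small enough to reach $7$.

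The specific mechanism you gesture at — "amalgamating" new tripod legs with existing boundary paths — is also not sound as stated: the union of a new vertical path with an already-finalised part is in general not a vertical path in $T$ (the two may lie on different branches, or meet only at a crossing of the boundary), and retroactively enlarging parts that already appear in bags higher up the recursion would require re-verifying the vertex- and edge-coverage conditions of the decomposition built so far. The actual improvement in \citet{UWY22} comes from a sharper induction hypothesis on how the boundary of each region decomposes into parts and a more careful choice of the Sperner colouring/tripod, not from merging parts after the fact. As written, your argument establishes (modulo standard details) only the weaker bound of \citet{DJMMUW20}; the step that yields $6$ is missing.
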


Products and partitions are inherently related, as observed by \citet[Observation~35]{DJMMUW20}.

\begin{obs}[\citep{DJMMUW20}]
\label{PartitionsProduct}
For a graph $H$, a graph $G$ is contained in $H\boxtimes P$ for some path $P$ if and only if there is a partition $\PP$ of $G$ and there is a layering $(V_0,V_1,\dots)$ of $G$, such that $G/\PP$ is contained in $H$ and $|X\cap V_i|\leq 1$ for each $X\in\PP$ and $i\geq 0$.
\end{obs}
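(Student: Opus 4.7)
The plan is to prove the two directions of the equivalence by explicitly constructing, in each case, the required embedding or the required partition and layering; both directions are routine translations between the two ways of describing a containment in $H \boxtimes P$.

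For the forward direction, assume $G$ is contained in $H \boxtimes P$, witnessed by an injective homomorphism $\phi : V(G) \to V(H) \times V(P)$. Let $V(P) = \{0, 1, 2, \dots\}$. For each $h \in V(H)$, define the part $X_h := \phi^{-1}(\{h\} \times V(P))$, and let $\mathcal{P}$ be the set of non-empty parts of this form. For each $i \geqslant 0$, let $V_i := \phi^{-1}(V(H) \times \{i\})$. I would then verify: (i) $(V_0, V_1, \dots)$ is a layering of $G$, because any edge of $H \boxtimes P$ connects vertices whose $P$-coordinates differ by at most $1$; (ii) $|X_h \cap V_i| \leqslant 1$ for each $h$ and $i$, because $\phi$ is injective and the single possible preimage of $(h, i)$ is the unique candidate; and (iii) $G / \mathcal{P}$ is contained in $H$ via the map $X_h \mapsto h$, since any edge between parts $X_h$ and $X_{h'}$ of $G/\mathcal{P}$ comes from a $G$-edge whose $\phi$-image is an $(H \boxtimes P)$-edge, forcing $hh' \in E(H)$.

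For the reverse direction, suppose $\mathcal{P}$ is a partition and $(V_0, V_1, \dots)$ is a layering of $G$ satisfying the stated conditions. Fix an injective homomorphism $\iota : V(G/\mathcal{P}) \to V(H)$ witnessing that $G/\mathcal{P}$ is contained in $H$, and let $P$ be the path with $V(P) = \{0, 1, 2, \dots\}$ (truncated to the number of non-empty layers). Define $\phi : V(G) \to V(H) \times V(P)$ by $\phi(v) := (\iota(X), i)$ where $X$ is the unique part containing $v$ and $i$ is the unique layer containing $v$. The map $\phi$ is well-defined since $\mathcal{P}$ and the layering each partition $V(G)$, and injective since $|X \cap V_i| \leqslant 1$ for all $X \in \mathcal{P}$ and $i \geqslant 0$ (if two vertices had equal $\phi$-image, $\iota$ being injective would place them in the same part and the same layer, forcing them to coincide). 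To see that $\phi$ maps edges to edges of $H \boxtimes P$, I would consider an edge $uv \in E(G)$ and split into cases: if $u, v$ lie in the same part $X$, then $\iota(X) = \iota(X)$ and their layer indices differ by at most $1$ and are distinct (as $|X \cap V_i| \leqslant 1$), giving an edge of $H \boxtimes P$; if they lie in distinct parts $X, Y$, then $XY \in E(G/\mathcal{P})$ forces $\iota(X)\iota(Y) \in E(H)$, and their layer indices differ by at most $1$, again producing an $(H \boxtimes P)$-edge.

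The statement is short and essentially a bookkeeping equivalence, so there is no real obstacle — the only care needed is in the reverse direction, where one must check that the layering condition $|X \cap V_i| \leqslant 1$ is exactly what is needed to guarantee injectivity of $\phi$ and, combined with the layering property of $(V_i)_{i \geqslant 0}$, that parts behave like horizontal ``rows'' in $H \boxtimes P$. Everything else follows directly from unpacking the definitions of strong product, quotient, layering, and subgraph containment.
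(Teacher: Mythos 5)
Your proof is correct, and it is exactly the definitional unpacking intended: the paper states this as an observation cited from \citet{DJMMUW20} without giving a proof, and your two directions (reading off parts and layers from the two coordinates of an embedding, and conversely building the embedding $v \mapsto (\iota(X),i)$ from the partition and layering) are the standard argument. The only cosmetic point is that in the forward direction some layers $V_i$ may be empty and should be discarded to match the paper's definition of a layering as a partition, which does not affect the argument.
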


\begin{cor}\label{cor:goodgenusPS}
    Every graph of Euler genus $g$ is $(2g+6,2g)$-good.
\end{cor}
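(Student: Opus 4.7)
The plan is to combine \cref{lem:genusPS} and \cref{thm:goodplanarPS} so that the subgraph $Z$ produced by \cref{lem:genusPS} (with at most $2g$ vertices per BFS layer) supplies exactly the $2g$ permitted exceptional vertices of $H$. I would handle each component of $G$ separately, so without loss of generality assume $G$ is connected.

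First, I would fix a BFS spanning tree $T$ of $G$ rooted at an arbitrary vertex, with BFS layering $(V_0, V_1, \dots)$, and invoke \cref{lem:genusPS} to obtain a connected subgraph $Z \subseteq G$ with $|V(Z) \cap V_i| \leq 2g$ for each $i$, a connected planar graph $G^+ \supseteq G - V(Z)$, and a rooted BFS spanning tree $T^+$ of $G^+$ such that the intersection of every vertical path of $T^+$ with $V(G) \setminus V(Z)$ is a vertical path of $T$. Then I would apply \cref{thm:goodplanarPS} to $(G^+, T^+)$ to obtain a partition $\PP^+$ of $V(G^+)$ into vertical paths in $T^+$ with $\tw(G^+/\PP^+) \leq 6$.

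Next, I would construct a partition $\PP$ of $V(G)$ using two types of parts. Type (i): for each $Q^+ \in \PP^+$ with $Q^+ \cap (V(G) \setminus V(Z)) \neq \emptyset$, include $Q^+ \cap (V(G) \setminus V(Z))$, which is a vertical path of $T$ by the last property in \cref{lem:genusPS}. Type (ii): since $|V(Z) \cap V_i| \leq 2g$ for each $i$, partition $V(Z)$ into at most $2g$ non-empty ``columns'' each meeting every layer in at most one vertex. Every part of $\PP$ then meets each layer in at most one vertex, and there are at most $2g$ type-(ii) parts. Setting $H := G/\PP$, the subgraph of $H$ induced by the type-(i) parts is a subgraph of $G^+/\PP^+$, because any edge of $G$ between two type-(i) parts is an edge of $G - V(Z) \subseteq G^+$ between the corresponding parts of $\PP^+$; hence this subgraph has treewidth at most $6$. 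Adjoining the $\leq 2g$ type-(ii) vertices increases treewidth by at most $2g$, giving $\tw(H) \leq 2g + 6$. By \cref{PartitionsProduct}, $G$ is contained in $H \boxtimes P$ for a path $P$ whose vertex set is indexed by the layers $V_0, V_1, \dots$.

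Finally, I would verify the ``good'' condition. For a type-(i) part $X = \{v_0, v_1, \dots, v_k\}$ which is a vertical path of $T$ with $v_j \in V_{\ell + j}$, the only possible edges inside the column $(\{X\} \times V(P)) \cap V(G)$ are between $(X, \ell + j)$ and $(X, \ell + j + 1)$, and each of these is present because $v_j v_{j+1} \in E(T) \subseteq E(G)$; hence the column induces a non-empty path of length $k$. So only the at most $2g$ type-(ii) vertices of $H$ can fail the path condition, proving that $G$ is $(2g + 6, 2g)$-good.

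There is no substantial obstacle; the main point requiring care is the treewidth bound for the type-(i) subquotient of $H$, which requires using the key property of \cref{lem:genusPS} that restricting $\PP^+$ to $V(G) \setminus V(Z)$ preserves the vertical-path structure, together with $G - V(Z) \subseteq G^+$ to transfer the planar product structure back to $G$.
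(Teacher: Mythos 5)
Your proof is correct and follows essentially the same route as the paper: apply \cref{lem:genusPS} to peel off $Z$, apply \cref{thm:goodplanarPS} to $G^+$, merge the restricted vertical-path parts with at most $2g$ columns of $V(Z)$, bound the treewidth of the quotient by $6+2g$, and invoke \cref{PartitionsProduct}. Your write-up is in fact slightly more careful than the paper's (which ends with ``the result follows from \cref{PartitionsProduct}'') in that you explicitly verify the path-column condition in the definition of $(t,y)$-good, using that type-(i) parts are vertical paths of the BFS tree $T$ and hence their consecutive vertices are joined by tree edges of $G$.
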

\begin{proof}
    The class of $(2g+6,2g)$-good graphs is subgraph-closed, so it suffices to consider an arbitrary connected graph $G$ of Euler genus $g$. 
    Let $T$ be a BFS spanning tree of $G$ and $(V_0,V_1,\dots)$ be the corresponding BFS layering. Then there exist $Z$, $G^+$, $T^+$ and $(W_0, W_1, \dots)$ that satisfy all the properties given by \cref{lem:genusPS}. By \cref{thm:goodplanarPS}, there exists a partition $\mathcal{P}$ of $G^+$ such that:
    \begin{enumerate}
        \item the graph $H:=G^+/\mathcal{P}$ has treewidth at most $6$, and
        \item\label{item2} for each $S\in \mathcal{P}$, $S \cap (V(G)\setminus V(Z))$ induces a vertical path in $T$.
    \end{enumerate} 
    Recall that $|V(Z)\cap V_i|\leq 2g$ for each layer $V_i$, so there is a partition $\mathcal{Z}$ of $V(Z)$ with at most $2g$ parts so that each part contains at most one vertex in each layer.
   Define $\mathcal{P}':=\mathcal{Z}\cup \{S\cap (V(G)\setminus V(Z)):S\in \mathcal{P}\}$.
   By \ref{item2} and the definition of $\mathcal{Z}$, we have $|S\cap V_i| \leqslant 1$ for each $S\in \mathcal{P}'$ and each layer $V_i$.
   Observe that $(G/\mathcal{P}')-\mathcal{Z}\subseteq G^+/\mathcal{P}$, and so $G/\mathcal{P}'$ has treewidth at most $6+|\mathcal{Z}|\leq 6+2g$.
   Thus the result follows from \cref{PartitionsProduct}. 
\end{proof}

\begin{proof}[Proof of \cref{thm:RTWmain}]
As an easy consequence of Euler's formula, for $n\geq 3$ the maximum number of edges of an $n$-vertex bipartite graph of Euler genus $g$ is $2(n+g-2)$, and so $K_{3,2g+3}$ has Euler genus at least $g+1$.
Thus every graph of Euler genus $g$ is $K_{3,2g+3}$-minor-free, and so the result follows from \cref{lem:RTWtechnical} and Corollary~\ref{cor:goodgenusPS}.
\end{proof}

\section{Putting It All Together} 
\label{FinalSection}

This section combines the tools and results of \cref{tools,SectionColouring,WeakShallowMinorsSection} to prove \cref{introRTW,introLTW}, which establish bounds on the row treewidth and layered treewidth of certain topological $k$-matching-planar graphs.

We use the Coloured Planarisation Lemma (\cref{generallemma}) and the Distance Lemma (\cref{distancelemma}) to show that certain topological graphs have bounded row treewidth and layered treewidth.

\begin{lem} \label{generallayeredtreewidth} Suppose that a topological graph $G$ has a transparent ordered $c$-edge-colouring $\phi$ such that:

\begin{itemize}

\item for any $i, j \in \{1,\dots,c\}$ with $i < j$, for any edge $e$ of colour $i$ and for any fragment $\gamma$ of $e$, the matching number of the set of edges of colour $j$ that cross $\gamma$ is at most $m$,

\item for any $e \in E(G)$, the vertex cover number of the set of edges of colour less than $\phi(e)$ that cross $e$ is at most $k$.

\end{itemize}

Then

\begin{enumerate}
    \item $G$ is a weak $r$-shallow minor of $G^{\phi} \boxtimes K_{t}$ where $r := \frac{2^{c + 1}k^{c} - 2k - 1}{2k - 1}$ and $t := 1 + 5(c - 1)m$, and

    \item $\ltw(G) \leqslant 3t(4r + 1)$ and $\rtw(G) \leqslant (4r + 1)t((2(8r+1)t+3)7^{30r+6} - 1) - 1$.
\end{enumerate}

\end{lem}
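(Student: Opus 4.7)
The plan is to combine the Coloured Planarisation Lemma (\cref{generallemma}), the Distance Lemma (\cref{distancelemma}), and the results of \cref{WeakShallowMinorsSection} on weak shallow minors. The hypotheses of the present lemma are precisely what is needed to invoke the first two tools, and the resulting model in $G^{\phi}\boxtimes K_{t}$ naturally becomes a weak shallow minor once the Distance Lemma is applied. Since $G^{\phi}$ is topological planar by construction, the machinery for weak shallow minors of products of bounded-Euler-genus graphs with small complete graphs then yields the claimed bounds on $\ltw(G)$ and $\rtw(G)$.

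For part~1, I would first apply \cref{generallemma} to obtain a model $\mu$ of $G$ in $G^{\phi}\boxtimes K_{t}$ with $t\leq 1+5(c-1)m$, satisfying property~\ref{CPLc}: whenever $(x,i)\in\mu(v)$, either $x=v$ or $x\in W_{vw}\setminus\{v,w\}$ for some edge $vw\in E(G)$. Recall from the construction in \cref{generallemma} that $(v,1)\in\mu(v)$ for every $v\in V(G)$. I would then show that $(v,1)$ is an origin of $\mu(v)$ in $G^{\phi}\boxtimes K_{t}$ at weak radius at most $r$. Consider $(x,i)\in\mu(v)$. If $x=v$ then $\dist_{G^{\phi}\boxtimes K_{t}}((v,1),(v,i))\leq 1\leq r$. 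Otherwise $x\in W_{vw}\setminus\{v,w\}$ for some $vw\in E(G)$, and the Distance Lemma (whose hypothesis is the second bullet in the statement) gives $\dist_{G^{\phi}}(v,x)\leq r$, hence $\dist_{G^{\phi}\boxtimes K_{t}}((v,1),(x,i))\leq r$. Thus $\mu$ is a weak $r$-shallow model and $G$ is a weak $r$-shallow minor of $G^{\phi}\boxtimes K_{t}$.

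For part~2, I would plug this into our two results on weak shallow minors, using the planarity of $G^{\phi}$. For the layered treewidth bound, $\ltw(G^{\phi})\leq 3$ by \cref{layeredtreewidthplanargraphs}, and for any graph $H$ and integer $n\geq 1$ one has $\ltw(H\boxtimes K_n)\leq n\cdot\ltw(H)$ (by taking the layering of $H$ and multiplying each bag by $V(K_n)$). Hence $\ltw(G^{\phi}\boxtimes K_{t})\leq 3t$, and \cref{ltwWeakShallowMinor} gives $\ltw(G)\leq (4r+1)\cdot 3t=3t(4r+1)$. For the row treewidth bound, I would apply \cref{thm:RTWmain} with $H:=G^{\phi}$, which has Euler genus $g=0$, and $c:=t$. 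Substituting $g=0$ into the general bound gives $(2g+7)^{(6r+2)(2g+5)-4}=7^{30r+6}$, which yields exactly $\rtw(G)\leq (4r+1)t((2(8r+1)t+3)7^{30r+6}-1)-1$.

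There is no real obstacle here since all the required technical content has already been established; the argument is essentially a bookkeeping assembly of the Coloured Planarisation Lemma, the Distance Lemma, and \cref{thm:RTWmain,ltwWeakShallowMinor}. The only delicate point is verifying that the model furnished by \cref{generallemma} is weak $r$-shallow rather than merely $r$-shallow: this is why property~\ref{CPLc} is stated in terms of membership in some walk $W_{vw}$ (giving us a distance bound in $G^{\phi}$ from $v$), rather than in terms of the internal metric of the branch set, which could in principle be much larger.
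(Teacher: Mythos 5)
Your proof matches the paper's argument essentially line by line: apply the Coloured Planarisation Lemma to obtain the model of $G$ in $G^{\phi}\boxtimes K_t$, use property~\ref{CPLc} together with the Distance Lemma to certify that $(v,1)$ is an origin of $\mu(v)$ at weak radius at most $r$, and then invoke \cref{ltwWeakShallowMinor} and \cref{thm:RTWmain} with $g=0$ and $c:=t$. The only cosmetic difference is that you spell out the intermediate bound $\ltw(G^{\phi}\boxtimes K_t)\le t\cdot\ltw(G^{\phi})\le 3t$ and the trivial case $x=v$, both of which the paper leaves implicit.
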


\begin{proof}

By the Coloured Planarisation Lemma (\cref{generallemma}\ref{CPLa}), there exists a model $\mu$ of $G$ in $G^{\phi} \boxtimes K_{t}$. Let $v \in V(G)$ and $x \in V(G^{\phi})$ be two vertices such that $(x, i) \in \mu(v)$ for some $i \in \{1, \dots, t\}$. By \cref{generallemma}\ref{CPLc}, $x \in W_{vw} \setminus \{w\}$ for some edge $vw \in E(G)$ or $x = v$. By the Distance Lemma (\cref{distancelemma}), $\dist_{G^{\phi}}(v, x) \leqslant r$. So $\dist_{G^{\phi} \boxtimes K_{t}}((v, 1), (x, i)) \leqslant r$. As such, $\mu(v)$ has weak radius at most $r$ in $G^{\phi} \boxtimes K_{t}$. Thus $G$ is a weak $r$-shallow minor of $G^{\phi} \boxtimes K_{t}$.

By \cref{layeredtreewidthplanargraphs}, $\ltw(G^{\phi}) \leqslant 3$. By \cref{ltwWeakShallowMinor}, $\ltw(G) \leqslant 3t(4r + 1)$. By \cref{thm:RTWmain}, $\rtw(G) \leqslant (4r + 1)t((2(8r+1)t+3)7^{30r+6} - 1) - 1$.
\end{proof}

\cref{generallayeredtreewidth} implies that topological $k$-matching-planar graphs with bounded topological thickness have bounded row treewidth and layered treewidth.

\begin{lem} \label{MatchingPlanarOne} Let $G$ be a topological $k$-matching-planar graph with topological thickness $c$. Then

\begin{enumerate}
    \item $G$ is a weak $r$-shallow minor of $H \boxtimes K_{t}$ for some planar graph $H$ where $r := \frac{2^{2c + 1}k^{c} - 4k - 1}{4k - 1}$ and $t := 1 + 5(c - 1)k$, and

    \item $\ltw(G) \leqslant 3t(4r + 1)$ and $\rtw(G) \leqslant (4r + 1)t((2(8r+1)t+3)7^{30r+6} - 1) - 1$.
\end{enumerate}
    
\end{lem}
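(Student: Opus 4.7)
The plan is to obtain this lemma as an almost immediate specialisation of \cref{generallayeredtreewidth}. By assumption there is a transparent $c$-edge-colouring of $G$; fixing an arbitrary total order on the $c$ colour classes yields a transparent ordered $c$-edge-colouring $\phi$ of $G$. It then remains only to verify the two hypotheses of \cref{generallayeredtreewidth} with suitable parameters.

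For the matching hypothesis, I would take $m := k$. Any fragment $\gamma$ of an edge $e \in E(G)$ is, by definition, a subcurve of $e$, so every edge of $G$ that crosses $\gamma$ also crosses $e$. Since $G$ is topologically $k$-matching-planar, the matching number of the set of edges crossing $e$ is at most $k$, and hence the matching number of the set of edges of colour $j$ crossing $\gamma$ is at most $k$ for every $j$. For the vertex cover hypothesis, I would use \cref{relation} to note that every topological $k$-matching-planar graph is topological $2k$-cover-planar. Thus for every edge $e$ the vertex cover number of the set of edges crossing $e$ is at most $2k$; in particular the vertex cover number of the subset of those edges having colour less than $\phi(e)$ is at most $2k$. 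So the vertex cover parameter in \cref{generallayeredtreewidth} becomes $2k$ rather than $k$.

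Plugging $m = k$ and replacing $k$ by $2k$ into the formulas of \cref{generallayeredtreewidth} yields exactly $r = \frac{2^{c+1}(2k)^c - 2(2k) - 1}{2(2k)-1} = \frac{2^{2c+1}k^c - 4k - 1}{4k-1}$ and $t = 1 + 5(c-1)k$, matching the statement. Since the planarisation $G'$ is planar and $G^{\phi}$ is obtained from $G'$ by edge contractions (and contraction preserves planarity), the graph $H := G^{\phi}$ is planar, which gives the first conclusion. The two bounds on $\ltw(G)$ and $\rtw(G)$ are then just the corresponding bounds from \cref{generallayeredtreewidth} with the same values of $r$ and $t$. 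There is no real obstacle: the only thing to be careful about is applying \cref{relation} correctly so that the vertex cover bound $2k$ (rather than $k$) is used when substituting into the distance parameter $r$.
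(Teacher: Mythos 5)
Your proposal is correct and takes essentially the same route as the paper: fix an arbitrary ordering to get a transparent ordered $c$-edge-colouring, deduce the vertex cover bound $2k$ from $\tau \leqslant 2\mu$ (via \cref{relation}, which the paper invokes as \cref{folklore}), observe that the matching hypothesis of \cref{generallayeredtreewidth} holds with $m=k$ since a fragment is a subcurve of its parent edge, and substitute into \cref{generallayeredtreewidth}. The paper's own proof is just terser, leaving the $m=k$ verification implicit, so the two are the same argument.
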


\begin{proof} Let $\phi$ be a transparent ordered $c$-edge-colouring of $G$, where we ‘order' the colours arbitrarily. By \cref{folklore}, for any $e \in E(G)$, the vertex cover number of the set of edges of colour less than $\phi(e)$ that cross $e$ is at most $2k$. The result follows from \cref{generallayeredtreewidth}.
\end{proof}

We now apply the main results of \cref{SectionColouring} to prove \cref{introRTW,introLTW}. Applying \cref{colouringkcoverplanar} with \cref{MatchingPlanarOne}, we obtain the following.

\begin{thm} \label{rcolouring} Let $G$ be a topological $k$-matching-planar graph such that for every vertex $v \in V(G)$, the set of edges incident to $v$ can be coloured with at most $s$ colours such that monochromatic edges do not cross. Then

\begin{enumerate}
    \item $G$ is a weak $r$-shallow minor of $H \boxtimes K_{\ell}$ for some planar graph $H$ where 
    $r \in 2^{\mathcal{O}(sk^{3}\log^2 k)}$
    and $\ell \in \mathcal{O}(sk^{4}\log k)$, and 
    \item $\ltw(G) \in 2^{\mathcal{O}(sk^{3}\log^2 k)}$
    and $\rtw(G) \in 2^{2^{\mathcal{O}(sk^{3}\log^2 k)}}$.
\end{enumerate}
\end{thm}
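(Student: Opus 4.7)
The plan is to combine Theorem~\ref{colouringkcoverplanar} (which provides a bound on the topological thickness of $G$ under the hypothesis) with Lemma~\ref{MatchingPlanarOne} (which converts a bound on topological thickness into a weak shallow minor representation and bounds on $\ltw(G)$ and $\rtw(G)$), and then verify the stated asymptotic bounds.

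First, I would invoke Theorem~\ref{colouringkcoverplanar} on $G$ to obtain a transparent $c$-edge-colouring of $G$ with $c \in \mathcal{O}(sk^{3}\log k)$. Then I would feed this value of $c$ into Lemma~\ref{MatchingPlanarOne}, which directly yields: $G$ is a weak $r$-shallow minor of $H \boxtimes K_{\ell}$ for some planar graph $H$, where $r := \frac{2^{2c+1}k^{c} - 4k - 1}{4k - 1}$ and $\ell := 1 + 5(c-1)k$, together with the explicit bounds $\ltw(G) \leqslant 3\ell(4r+1)$ and $\rtw(G) \leqslant (4r+1)\ell((2(8r+1)\ell + 3)7^{30r+6} - 1) - 1$.

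The last step is asymptotic bookkeeping. Substituting $c \in \mathcal{O}(sk^{3}\log k)$:
\begin{itemize}
\item $\ell = 1 + 5(c-1)k \in \mathcal{O}(ck) = \mathcal{O}(sk^{4}\log k)$, giving the claimed bound on $\ell$.
\item $\log_2 r = \Theta(c + c\log k) = \Theta(c \log k) \in \mathcal{O}(sk^{3}\log^{2} k)$, so $r \in 2^{\mathcal{O}(sk^{3}\log^{2} k)}$.
\item Then $\ltw(G) \leqslant 3\ell(4r+1)$ is dominated by $r$, giving $\ltw(G) \in 2^{\mathcal{O}(sk^{3}\log^{2} k)}$.
\item For $\rtw(G)$, the dominant factor is $7^{30r+6}$, so $\log_2 \rtw(G) \in \mathcal{O}(r) = 2^{\mathcal{O}(sk^{3}\log^{2} k)}$, giving $\rtw(G) \in 2^{2^{\mathcal{O}(sk^{3}\log^{2} k)}}$.
\end{itemize}

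There is no real obstacle here: both main ingredients have already been established in Sections~\ref{tools}--\ref{WeakShallowMinorsSection} and \ref{SectionColouring}. The only thing to be careful about is that the hypotheses of Theorem~\ref{colouringkcoverplanar} and Lemma~\ref{MatchingPlanarOne} match exactly what is assumed in the current statement (topological $k$-matching-planar together with the per-vertex edge-colouring condition in the first case, and topological $k$-matching-planar of bounded topological thickness in the second). Since the first hypothesis is identical, and the conclusion of the first result is precisely the bounded-topological-thickness hypothesis of the second, the chaining is immediate, and the proof reduces to writing down the composition and performing the asymptotic estimates above.
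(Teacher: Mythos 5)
Your proposal is correct and matches the paper's own derivation: the text immediately preceding Theorem~\ref{rcolouring} states that it follows by ``Applying \cref{colouringkcoverplanar} with \cref{MatchingPlanarOne},'' which is precisely the composition you describe. Your asymptotic bookkeeping from $c \in \mathcal{O}(sk^3\log k)$ to the stated bounds on $r$, $\ell$, $\ltw(G)$, and $\rtw(G)$ is also correct.
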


\cref{rcolouring} implies that simple topological $k$-matching-planar graphs have layered treewidth $2^{\mathcal{O}(k^{3}\log^{2} k)}$ and row treewidth $2^{2^{\mathcal{O}(k^{3}\log^{2} k)}}$.  The following result, which implies \cref{introRTW,introLTW}, is an immediate corollary of \cref{MatchingPlanarOne} and \cref{colouringtheorem}. 

\begin{thm} \label{thm:final} Let $G$ be a topological $k$-matching-planar graph with no $t$ pairwise crossing edges incident to a common vertex. Then 
     $G$ is a weak $r$-shallow minor of $H \boxtimes K_{\ell}$ for some planar graph $H$ where $r \in 2^{(k + 1)^{3}\log_{2}^{2}(k + 2)2^{\mathcal{O}(2^{(t - 1)(t - 2) / 2})}}$ and $\ell \in (k + 1)^{4}\log_{2}(k + 2)2^{\mathcal{O}(2^{(t - 1)(t - 2) / 2})}$. Moreover,
 \begin{align*}
\ltw(G) \in 2^{(k + 1)^{3}\log_{2}^{2}(k + 2)\cdot2^{\mathcal{O}(2^{(t - 1)(t - 2) / 2})}}
\quad\text{and}\quad
\rtw(G) \in 2^{2^{(k + 1)^{3}\log_{2}^{2}(k + 2)\cdot2^{\mathcal{O}(2^{(t - 1)(t - 2) / 2})}}}.
\end{align*}
\end{thm}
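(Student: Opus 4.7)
The plan is to simply chain together two results already proved in the paper: the edge-colouring result \cref{colouringtheorem} and the structural lemma \cref{MatchingPlanarOne}. Since \cref{thm:final} is declared to be an immediate corollary, the proof is essentially a substitution of bounds rather than any new argument.

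First I would invoke \cref{colouringtheorem} on $G$. Since no $t$ edges incident to a common vertex pairwise cross, that theorem yields a transparent $c$-edge-colouring of $G$ with $c \leqslant (k+1)^{3}\log_{2}(k+2)\cdot 2^{\mathcal{O}(2^{(t-1)(t-2)/2})}$. In other words, $G$ has topological thickness at most $c$ with $c$ bounded as above.

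Next I would feed this value of $c$ into \cref{MatchingPlanarOne}. That lemma directly produces a planar graph $H$ such that $G$ is a weak $r$-shallow minor of $H \boxtimes K_{\ell}$, where $r = \frac{2^{2c+1}k^{c} - 4k - 1}{4k - 1}$ and $\ell = 1 + 5(c-1)k$, together with the bounds $\ltw(G) \leqslant 3\ell(4r+1)$ and $\rtw(G) \leqslant (4r+1)\ell\bigl((2(8r+1)\ell + 3)\cdot 7^{30r+6} - 1\bigr) - 1$.

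The only remaining step is routine asymptotic bookkeeping. Substituting the bound on $c$ from the first step gives $r \in 2^{\mathcal{O}(c\log k)} \subseteq 2^{(k+1)^{3}\log_{2}^{2}(k+2)\cdot 2^{\mathcal{O}(2^{(t-1)(t-2)/2})}}$ and $\ell \in \mathcal{O}(ck) \subseteq (k+1)^{4}\log_{2}(k+2)\cdot 2^{\mathcal{O}(2^{(t-1)(t-2)/2})}$; the stated bounds on $\ltw(G)$ and $\rtw(G)$ then follow from the inequalities above, since $\ltw(G) \in 2^{\mathcal{O}(\log r)} \cdot r$ and $\rtw(G) \in 2^{\mathcal{O}(r)}$ dominate the other factors. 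There is no real obstacle here beyond checking these simplifications; the heavy lifting (the edge-colouring of \cref{colouringtheorem} and the weak-shallow-minor construction via the Coloured Planarisation Lemma and Distance Lemma behind \cref{MatchingPlanarOne}) has already been carried out in the previous sections.
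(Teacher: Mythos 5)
Your proposal is correct and follows exactly the paper's route: the paper derives \cref{thm:final} as an immediate corollary of \cref{colouringtheorem} (giving the topological thickness bound $c$) and \cref{MatchingPlanarOne} (giving the weak $r$-shallow minor structure and the $\ltw$/$\rtw$ bounds), with the same asymptotic substitutions you describe.
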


\section{Open Problems} \label{OpenProblems}

We conclude with four inter-related  open problems.

\begin{question} \label{PerfectRTW} 
Do $k$-matching-planar graphs have row treewidth at most some function $f(k)$, independent of the maximum number of pairwise crossing edges incident to a common vertex?
\end{question}

\begin{question} \label{FinalOpen} 
Does there exist a function $f$ such that every $k$-matching-planar graph is isomorphic to a topological $f(k)$-matching-planar graph with no $f(k)$ pairwise crossing edges incident to a common vertex?
\end{question}

Note that a positive answer to \cref{FinalOpen}, combined with \cref{thm:final}, would imply a positive answer to \cref{PerfectRTW}.

Given a beyond planar graph class $\mathcal{G}$, it is natural to ask if a graph in $\mathcal{G}$ can be redrawn in a `simple' way, maintaining the property of the class. Such redrawings are investigated by the graph drawing community. In particular, \cref{SimpleFan} due to \citet{KKRS23} says that every fan-planar graph is isomorphic to a simple topological fan-planar graph. \citet{PRTT-DCG06} proved that every $k$-planar graph for $k \leqslant 3$ is isomorphic to a simple topological $k$-planar graph. This ceases to be true for $k \geqslant 4$, as pointed out by \citet{Schaefer18}. On the other hand, \citet*{HLRT20} proved that every $k$-planar graph is isomorphic to a simple topological $f(k)$-planar graph, for some function $f$.  
\citet{HL24} constructed min-$2$-planar graphs that are not isomorphic to a simple topological min-$k$-planar graph for any fixed $k$.

Consider the analogous  question for $k$-matching-planar graphs. 

\begin{question} \label{SimpleQuestion} Does there exist a function $f$ such that every $k$-matching-planar graph is isomorphic to a simple topological $f(k)$-matching-planar graph?
\end{question}

Note that a positive answer to \cref{SimpleQuestion} would imply positive answers to \cref{FinalOpen,PerfectRTW}.

As discussed in \cref{sectionbackground}, there exists no function $f$ such that every topological $k$-matching-planar graph is topological $f(k)$-quasi-planar (because there might be an unbounded number of pairwise crossing edges incident to a common vertex). On the other hand, $k$-matching-planar graphs might be redrawn as $f(k)$-quasi-planar drawings. This leads to the following question.

\begin{question} \label{Quasi} Does there exist a function $f$ such that every $k$-matching-planar graph is $f(k)$-quasi-planar?
\end{question}

As discussed in \cref{sectionbackground}, topological $k$-matching-planar graphs with no $t$ pairwise crossing edges incident to a common vertex are topological $(2kt + 2)$-quasi-planar. So a positive answer to \cref{FinalOpen} (or \cref{SimpleQuestion}) would imply a positive answer to \cref{Quasi}. And a positive answer to \cref{SimpleQuestion} would imply a positive answer to all the open problems listed in this section.

\subsection*{Acknowledgements}

Thanks to Robert Hickingbotham, Torsten Ueckerdt and Jung Hon Yip for helpful comments.

{
\fontsize{10pt}{11pt}
\selectfont
\bibliographystyle{DavidNatbibStyle}
\bibliography{DavidBibliography}
}
\end{document}